\newtheorem{theorem}{Theorem}[section]
\newtheorem{prop}[theorem]{Proposition}
\newtheorem{lemma}[theorem]{Lemma}
\newtheorem{cor}[theorem]{Corollary}
\theoremstyle{definition}
\newcounter{tenumerate}
\def\P{\mathbb{P}}
\newcommand{\one}{\1}
\renewcommand{\epsilon}{\varepsilon}
\newcommand{\1}{\mathbf{1}}
\DeclareMathOperator{\var}{Var}
\newcommand{\R}{{\mathbb R}}
\newcommand{\N}{{\mathbb N}}
\newcommand{\E}{{\mathbb E}}
\newcommand{\remove}[1]{}
\renewcommand{\le}{\leqslant}
\renewcommand{\ge}{\geqslant}
\renewcommand{\leq}{\leqslant}
\renewcommand{\geq}{\geqslant}
\newcommand{\cov}{\mathrm{Cov}}
\def\XXint#1#2#3{{\setbox0=\hbox{$#1{#2#3}{\int}$}
\vcenter{\hbox{$#2#3$}}\kern-.5\wd0}}
\newcommand{\Cb}{{\bf C}}
\newcommand{\Xc}{X^c}
\newcommand{\mcW}{V}
\newcommand{\brw}{{\vartheta}}
\newcommand{\mbrw}{{\xi}}
\def \BB{{\mathcal B}}
\def \Z{{\mathbb{Z}}}
\def \B{{\wp}}
\begin{document}

\title{Convergence in law of the maximum of the
two-dimensional discrete Gaussian free field}
\author{Maury Bramson\\
University of Minnesota
\thanks{Partially supported by NSF grants DMS-1105668 and DMS-1203201.} \and
Jian Ding \\
University of Chicago\thanks{Partially supported by NSF grant DMS-1313596.} \and Ofer Zeitouni\thanks{Partially supported by
%OO
NSF grant
DMS-1106627, a grant from the Israel Science
Foundation, and the Herman P. Taubman chair of Mathematics at the
Weizmann institute.}
\\Weizmann Institute \\ \& Courant Institute}

%OO
\date{{July 4, 2015}}
\maketitle

\begin{abstract}
  We consider the discrete two-dimensional Gaussian free field on a box of side length $N$,
  with Dirichlet boundary data, and prove the convergence of the law of
  the centered maximum of the field.
\end{abstract}

\section{Introduction}
The discrete Gaussian free field (GFF) $\{\eta_{v,N}: v\in V_N\}$, on the
box $V_N = ([0,N) \cap \mathbb{Z})^2$ 
with Dirichlet boundary data,
is the  
%Ofer
uniquely defined
mean zero Gaussian process that takes the value 0 on $\partial
V_N$ and satisfies the following Markov field condition for all
$v\in V_N\setminus
\partial V_N$: $\eta_{v,N}$ is distributed as a Gaussian random
variable with
variance $1$ and mean equal to the average over its immediate
neighbors, given
the GFF on $V_N\setminus \{v\}$. {(For $A\subseteq \mathbb Z^2$, 
        $\partial A$ designates the boundary vertices of $A$, i.e., $\{v\in A: \mbox{ there exists } 
        u\in \mathbb Z^2\setminus A \mbox{ with } u\sim v\}$.)}
%(For convenience, we will take $V_N$ to have its
%lower left
%corner at the origin.)
One aspect of the GFF that has received intense attention recently is the
behavior of its maximum $\eta_N^*=\max_{v\in V_N}
\eta_{v,N}$. Of greatest relevance to this paper are the
papers \cite{BDG01}, where it is proved that
$\eta_N^*/(2\sqrt{2/\pi}\log N)\to 1$ in probability;
\cite{BZ10}, where it is proved (following partial results
in \cite{BDZ11}) that,
for
\begin{equation}
\label{eq-friday3}
m_N = 2\sqrt{2/\pi} \big(\log N - \tfrac{3}{8} \log\log N\big),\end{equation}
$\E\eta_N^*=m_N+O(1)$
and $\eta_N^*-m_N$ is
a tight sequence of random variables; and \cite{DZ12},
where rough asymptotics of the probability
$\P (\eta_N^*\geq m_N+x)$ are derived for large $x$.
Aspects of this model have been treated in both the mathematics and
physics literature;
we refer the reader to \cite{DZ12} for an extensive discussion
of the history of this problem.
%Ofer1202

Once it has been established that
the
fluctuations of $M_N:=\eta_N^*-m_N$ are of order $1$,
it is natural to study the convergence of the
laws of $M_N$, in particular, whether the laws of $M_N$ do indeed
converge.
Our goal in the current paper is to establish this convergence, as
stated in the following theorem.
\begin{theorem}
  \label{theo-mainintro}
  The law of the
   random variable $\eta_N^*-m_N$ converges in distribution
  to a nondegenerate law $\mu_\infty$ as $N\to\infty$.
\end{theorem}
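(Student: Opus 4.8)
The plan is to prove tightness-plus-uniqueness of subsequential limits via a comparison with the maximum of a branching random walk (BRW), together with a near-independence argument on large sub-boxes. Since \cite{BZ10} already provides tightness of $M_N = \eta_N^*-m_N$, it suffices to identify all subsequential limits. First I would set up the standard multiscale picture: the GFF on $V_N$ is well-approximated (in a sense strong enough to transfer extreme-value information) by a modified branching random walk $\{\mbrw_{v,N}\}$ indexed by a dyadic hierarchy on $V_N$, for which the covariance $\E[\mbrw_{u,N}\mbrw_{v,N}]$ matches $\log(N/|u-v|)$ up to $O(1)$. This is the coupling philosophy of \cite{BDG01,BZ10}: one controls $\E\max$ and fluctuations of the GFF by sandwiching between BRW-type fields, and the key technical input is a Gaussian comparison (Slepian/Sudakov--Fernique together with Dekking--Host-type concentration for hierarchical fields).

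The main step is a \emph{superadditivity / subadditivity} argument for the law of the recentered maximum. Write $V_N$ as a disjoint union of roughly $K^2$ sub-boxes of side $N/K$, separated by corridors of width $\delta N/K$. On each sub-box the restricted GFF is, by the Markov property, an independent copy of $\eta_{\cdot, N/K}$ plus a smooth Gaussian harmonic correction of bounded oscillation on the bulk of the sub-box; the correction contributes a shift that, after recentering, behaves like an independent copy of (a discretized version of) the limiting ``derivative martingale''-type object. The maximum over $V_N$ is then with high probability attained in the union of bulks, so $M_N$ is approximately the maximum of $K^2$ nearly i.i.d. shifted copies of $M_{N/K}$; passing $N\to\infty$ along a subsequence realizing a given limit $\mu$, and then $K\to\infty$, forces $\mu$ to satisfy a stability equation under the ``max of $K^2$ shifted i.i.d. copies'' operation. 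Standard BRW theory (Biggins, Bramson; see also the treatment of the front for log-correlated fields) shows this equation has a unique solution in the relevant class, which pins down $\mu_\infty$. The tightness from \cite{BZ10} guarantees we stay in that class and that the error terms from corridors, boundary effects, and the harmonic corrections are genuinely $o(1)$.

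Carrying this out requires three quantitative ingredients, in order: (i) a right-tail estimate $\P(\eta_N^* \geq m_N + x) \leq C(1+x)e^{-\sqrt{2\pi}\,x}$ (available from \cite{DZ12}, or re-derivable by a first-moment truncation), used to show the maximum is not attained in the corridors or near $\partial V_N$ and to get uniform integrability of the shifts; (ii) a left-tail / lower bound of matching order, needed so that the max over sub-boxes is not dominated by atypically large harmonic corrections; and (iii) control of the harmonic correction field — its maximum and its fluctuation over a bulk sub-box are $O(1)$ — which again follows from Gaussian concentration since the correction is harmonic with boundary values of size $O(\sqrt{\log K})$ decaying into the bulk. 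The hard part will be ingredient (iii) combined with the bookkeeping in step two: one must show that the \emph{joint} law of (sub-box maximum, harmonic shift) decouples across sub-boxes well enough that the $K^2$-fold max genuinely converges, rather than merely that each marginal does. This is where a careful second-moment (or Gaussian comparison) estimate controlling the correlation between maxima of well-separated sub-boxes is essential, and it is the technical heart of the argument.
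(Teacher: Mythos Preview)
Your decomposition into sub-boxes with the Markov property is exactly the right starting point, and it matches the paper's coarse/fine field split. But the route you propose from there has two genuine gaps that would stop the argument.

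\medskip
\textbf{Gap 1: the ``stability equation'' is not closed, and its uniqueness is not standard.} You write that $M_N$ is ``approximately the maximum of $K^2$ nearly i.i.d.\ shifted copies of $M_{N/K}$''. The fine-field maxima are indeed i.i.d.\ across sub-boxes, but the shifts---the harmonic (coarse) field values at the locations of those maxima---are \emph{not} independent across $i$: they are samples of a single correlated Gaussian field whose covariance is the limiting kernel $C^c_K$. So the fixed-point equation you would get is not of BRW type; it involves an auxiliary continuous Gaussian field, and ``standard BRW theory (Biggins, Bramson)'' does not give uniqueness for it. Moreover, your subsequence argument is circular: if $\mu_{N_j}\to\mu$ along $N_j$, you need $\mu_{N_j/K}\to\mu$ as well to write down the stability equation, but $N_j/K$ is a different subsequence and you do not yet know it has the same limit. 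The paper sidesteps both issues by constructing an \emph{explicit} $N$-independent law $\mu_{K,\delta}$ (built from i.i.d.\ Bernoulli/exponential variables \emph{plus} the limiting correlated coarse field) and showing $d(\mu_N,\mu_{K,\delta})\to 0$, which makes $\{\mu_N\}$ Cauchy directly---no fixed-point uniqueness needed.

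\medskip
\textbf{Gap 2: you are missing the precise tail asymptotic and the decoupling lemma.} To build $\mu_{K,\delta}$ one needs not just $\P(\eta_N^*\geq m_N+x)\lesssim x e^{-\sqrt{2\pi}x}$ but the existence of the limit $\P(\eta_N^*\geq m_N+x)\sim\alpha^* x e^{-\sqrt{2\pi}x}$ with a genuine constant $\alpha^*$ and a limiting location density $\psi$. This is the longest and hardest part of the paper (Section~4), done via a delicate second-moment computation on a MBRW/GFF hybrid; your ingredient (ii) (``lower bound of matching order'') is much weaker and would not suffice. Equally essential, and absent from your outline, is the statement that with high probability the global GFF maximum is attained at a point where the \emph{fine} field alone is already atypically large ($\geq m_{N/K}+g(K)$). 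This is what lets you replace ``evaluate the coarse field at the location of the GFF maximum'' (which couples coarse and fine) by ``evaluate the coarse field at the location of the fine-field maximum'' (which decouples them). Without it, your joint-law-decoupling concern in the last paragraph is exactly right, and your ingredient (iii) does not resolve it: the coarse field has variance $\asymp\log K$, so the shifts are of order $\log K$, not $O(1)$, and they compete nontrivially with the fine-field fluctuations.
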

% Maury - I reworded the theorem somewhat.  Besides the word order, I changed "limiting law" to "law" because I don't think "limiting" adds anything, and the reader could conceivably think we mean something special by this.
\noindent
We will also provide a description of the limit law $\mu_\infty$ in
Theorems \ref{limit-law} and \ref{explicit-limit-law} of
Section
\ref{sec-coarsefine}.

Besides the intrinsic interest in the study of the GFF, we note that
it is an example of a logarithmically correlated model.
The behavior of the maxima for such models is conjectured to be universal;
see, e.g.,
\cite{CLD} for (non-rigorous) arguments using a renormalization-group
approach and links to the freezing transition of spin-glass systems, and
\cite{FLD} for further information on extreme distributions.
On the mathematical side, numerous results and conjectures have been formulated
for such models; see \cite{SDals} for recent progress.
Theorem \ref{theo-mainintro} above
provides a partial answer to \cite[Conjecture 12]{SDals}
\footnote{After the ArXiv posting of this paper, another case of 
        \cite[Conjecture 12]{SDals}, namely 
        the case of star-scale kernels, 
        was settled by Madaule \cite{madaule}. The fine structure of
        the extremal process for the GFF, as well as a different proof
        of the characterization of the limit law of the centered maximum of the GFF, was obtained in 
        the important paper 
        \cite{biskup}; see the latter paper for further references.}.

The proof of Theorem \ref{theo-mainintro} can be described roughly as follows.
We fix a large integer $K$,
partition
$V_N$ into $K^2$ boxes of side length $N/K$,
and introduce a Gaussian field $X_v^f$
that we refer to as the \textit{fine field}
and that 
{is a GFF with zero boundary on these sub-boxes -- it is constructed by subtracting from the original  
GFF  its conditional
expectation, given the sigma-algebra generated by the GFF on  the boundary
of these sub-boxes. 
The fine field has the advantage that, due to the Markov
property of the GFF, its values in disjoint boxes are independent.}
We define the \textit{coarse field}
as the difference $X_v^c=\eta_{v,N}-X_v^f$.
The coarse field is, of course, correlated over the whole box $V_N$,
but it is relatively smooth; in fact, for fixed
$K$ as $N\to\infty$,
the field obtained by
rescaling the coarse field onto a box of side length $1$ in $\R^2$
converges to a limiting Gaussian field that possesses continuous sample paths
on appropriate subsets of $[0,1]^2$ (essentially, away from the boundaries
of the sub-boxes).

An important step in our approach is the computation of the
tail probabilities of
the maximum of the fine field, when
restricted to a box of side length $N/K$,
together with the computation of the law of the location of the maximum
(in the scale $N/K$). These computations
are done by building on the tail estimates derived
in \cite{DZ12}, and using a
modified second moment method. Another important step is to show that
the maximum of the GFF occurs only at points where the fine field
is atypically large. Once these two steps are established,
we can describe the limit law of the GFF by an appropriate mixture
%(using the limiting coarse field)
of random variables whose distributions
are determined by the tail of the fine field.
The mixture coefficients are determined by an (independent)
percolation pattern
of potential
locations of the maximum and by the limiting coarse field.

The structure of the paper is as follows. We first introduce, in Section
\ref{sec-coarsefine}, the coarse and fine fields
alluded to above, and restate Theorem \ref{theo-mainintro} in terms of
this decomposition (see Theorem \ref{limit-law}).
We then introduce, in Section \ref{sec-prelim},
auxiliary processes (branching random walk (BRW) and the
modified branching random walk (MBRW) introduced in
\cite{BZ10}), and
recall several Gaussian tools and estimates that will be used
throughout the paper.
The long and technical Section \ref{sec-limittail} is devoted
to the derivation of the limiting tail estimates for the maximum of GFF.
Once this is established, approximations of the law of  $\eta^*_N$
by local maxima of the fine field are presented in Section
\ref{sec-pairap}, which lead, in Section 6, to the proofs of Theorems \ref{limit-law}
and \ref{explicit-limit-law}.

Throughout the paper, we will assume that $N=2^n$ for some $n\in \mathbb{Z}_+$.
This does not affect the structure of the proof of 
Theorem \ref{theo-mainintro}, but substantially simplifies the notation in two ways.
First, choosing $K$ so that {$K$ divides $N$}, 
it allows us to choose the boxes $V_N^{K,i}$ (defined at the 
beginning of the next section) to be of the same size; recall that  
$N\to\infty$ before $K\to\infty$.  One can circumvent 
       {the assumption that $K$ divides $N$}
        by adjusting the size of the outer ring of boxes $V_N^{K,i}$ 
        (see Figure \ref{figure-1}); 
this will have no effect on the limit law because the
probability that the maximum of $\eta_{v,N}$ lies in the outer 
ring of boxes tends to zero as $K\to\infty$.   
The assumption $N=2^n$ is also employed at the beginning of
Section \ref{sec-limittail} in order to ensure that the length $L$ of the boxes $B$ defined there is divisible by $(1-\delta)N$, with $\delta > 0$ fixed
(see Figure \ref{figure-2}).
This restriction on $N$ is easily circumvented by adjusting slightly the width of the $\delta N$-strip along the perimeter of the box 
$V_N$ 
({so that both $K$ and $L$ divide $(1-\delta)N$, which 
allows general $N$}); 
this again does not change the
argument.

%       
%We comment on the use of the assumption $N=2^n$ throughout the proof.
%This assumption 
%was used in many places throughout the proof to simplify notation, 
%in the following way. First, it was used
%to make sure that $N/K$ is an integer so that the boxes $V_N^{K,i}$ are 
%well defined and equal (recall that the order of limits taken
%is first $N\to\infty$ and only then $K\to\infty$). For values of $N$ for which
%$N$ and $K$ are relatively prime, one simply can adjust the
%size of the left most column of boxes $V_N^{K,i}$ (see Figure
%\ref{figure-1}); this will have no effect on the limit law because the
%probability that the maximum lies in that column of boxes tends to zero as $K\to\infty$. The second place where the assumption $N=2^n$ was made is in the 
%proof of Proposition \ref{prop-limiting-tail-gff}. 
%This is easily circumvented by making (say) the $\delta$-safeguard slightly 
%larger in the left-most column of boxes $B$, see Figure \ref{figure-2}, with again
%no change in the argument. 
%               

\medskip

%\\[1em]

\noindent{\bf Notation.}
%In order to avoid cumbersome notation throughout the paper, we will assume
%that $N=2^n$ for some integer $n$. 
%%Ofer
%As we explain in Remark \ref{remark-genN} below,
%the modifications needed in the proof to handle general
%$N$ are minimal and straightforward.
%%, and therefore left to the reader.
%
For functions $F(\cdot)$ and $G(\cdot)$, we write $F \lesssim G$ or $F = O(G)$
if there exists an absolute constant $C>0$ such that
$F \leq C G$ everywhere in the domain. We write $F \asymp G$ if
$F \lesssim G$ and $G\lesssim F$. 
%July162014
For a parameter $\alpha$,
we use $F\lesssim_\alpha G$ and $F\asymp_\alpha G$ if the constant $C$ above depends on the parameter $\alpha$. For a vector $x\in \R^d$,
$[x]$ will denote the lattice point in $\Z^d$ whose coordinates are
the integer parts of the corresponding coordinates of $x$.

\section{The coarse and fine fields and the limit result}
\label{sec-coarsefine}
Let $\eta_{v,N}$ denote the GFF in the box $V_N$. As mentioned above,
we assume for simplicity that
$N=2^n$. Recall that $\eta_N^*=\max_{v\in V_N} \eta_{v,N}$.
Fix $K=2^k$, with $k$ an integer. Tile the box $V_N$ with
$4^k$  shifts of $V_{N/K}$, denoted by $V_N^{K,i}$, and let
${\cal F}={\cal F}_{N,K}$ denote the sigma-algebra generated by
$\{\eta_{v,N}: v\in \cup_i \partial V_N^{K,i}\}$. (The boundaries of
$V_N^{K,i}$ do in fact overlap.) We now define
\begin{equation}
\label{eq-of1}
X_v^c=X_{v,N,K}^c=\E(\eta_{v,N}\mid {\cal F}_{N,K}),\quad
X_v^f=X_{v,N,K}^f=\eta_{v,N}-X_v^c\,.
\end{equation}
Note that $X_v^f$ vanishes on the boundaries of the boxes $V_N^{K,i}$,
that the Gaussian fields $\{X^f\}$ and $\{X^c\}$ are independent, and that the
fields $\{X_v^f\}_{v\in V_N^{K,i}}$ are independent for different $i$ and
identically distributed as the GFF with zero boundary condition in $V_N^{K,i}$.

Throughout the argument, we will need to consider points that are
within an appropriate distance from the boundary of the boxes $V_N^{K,i}$.
Toward this end, fix $\delta>0$, with $1/\delta$ a power of $2$, and define the boxes
\begin{equation}
  \label{eq-maury160113a}
  V_N^{K,\delta,i}=\{x\in V_N^{K,i}: d_\infty(x,\partial V_N^{k,i})\geq
\delta N/K\}\,;
\end{equation}
set
$$V^{K,\delta}_N=\cup_{i} V_N^{K,\delta,i}\,,\quad
\Delta_N=\Delta_N^{K,\delta}=V_N\setminus V^{K,\delta}_N\,.$$
In addition, we write $V_N^\delta=V_N^{1,\delta}$.
Note that $|\Delta_N|\leq 
{8} \delta |V_N|$.
\begin{figure}[htb]
\begin{center}
\includegraphics[width=80mm]{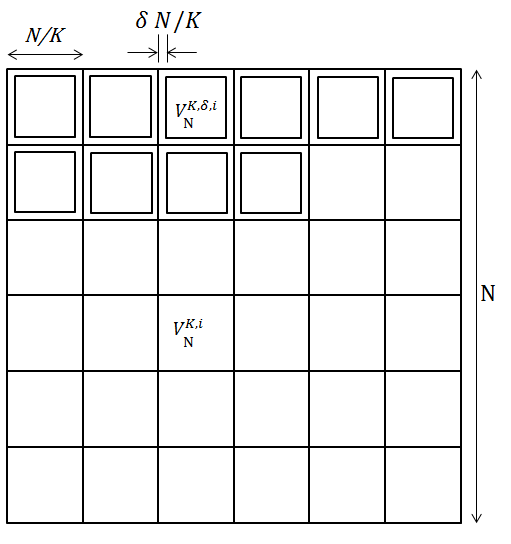}
\caption{The boxes $V_N$, $V_N^{K,i}$, $V_N^{K,\delta,i}$}
\label{figure-1}
\end{center}\end{figure}
\subsection{The coarse field limit}
We describe in this short subsection the scaling limit of the Gaussian
field $\Xc_{\cdot,N,K}$.
Introduce the covariance
%, for $v,v'\in V_N$,
\begin{equation}
  \label{eq-maury160113}
  \Cb^c_{N,K}(v,v')=\E(\Xc_{v,N,K} \Xc_{v',N,K})\,,\quad
v,v'\in V_N
\,.
\end{equation}
Consider the unit square $W=[0,1]^2$ and 
tile it with open squares 
$\{W^i\}_{i=1,\ldots,K^2}$ of side length $1/K$ each.
(We omit $K$ from the notation.)  
Define $W^{\delta,i}$ to be the 
closed subset
of $W^i$ consisting of points whose 
{$\ell_\infty$}-distance to the boundary
of $W^i$ is at least $\delta/K$, and introduce the
closed set $W^\delta=\cup_i W^{\delta,i}$.
Note that when $N,K,1/\delta$ are all powers of $2$, we have
$W^\delta=\mbox{\rm cl}(\{x/N: [x]\in V_N^{K,\delta}\})$; similarly,
$W^{i}=
\{x/N: [x]\in V_N^{K,i}\}^o$
and
$W^{\delta,i}=
\mbox{\rm cl}(\{x/N: [x]\in V_N^{K,\delta,i}\})$. 

In what follows, we let
$\{w_t\}_{t\geq 0}$ denote planar Brownian motion and we write
$\P^x (\E^x)$ for probabilities (expectations) involving the path
of $w_t$, with $w_0=x$.
Set $\tau_i=\min\{t: w_t\in \partial W^{i}\}$
and
$\tau=\min\{t: w_t\in \partial W\}$.
Introduce the Poisson kernels $p(x,z)$ and $p_i(x,z)$ as the functions
satisfying
$$\int_{\partial W} p(x,z) f(z) dz=\E^x(f(w_\tau))\,,
\quad
\int_{\partial W^{i}} p_i(x,z) g(z) dz=\E^x(g(w_{\tau_i}))\,,
$$
for any continuous functions $f,g$, 
{where $dz$ is one-dimensional Lebesgue
measure}. These Poisson kernels give
the exit measures of Brownian motion from $W$ and $W^i$.
For $x,x'\in \cup_\delta W^\delta$, set
$$
{\Cb}^c_{K}(x,x')=\left\{
\begin{array}{ll}
  \frac{2}{\pi}\left(\int_{\partial W} p(x,z)
\log|z-x'|dz-\int_{\partial W^i}p(x,z)\log|z-x'|dz \right)\,,
&
x,x'
\in W^{i} \, \mbox{\rm for some } i\\
\frac{2}{\pi}\left(\int_{\partial W} p(x,z)\log\left(\frac{|z-x'|}{|x-x'|}
\right) dz\right)\,,&
\mbox{\rm otherwise}
\end{array}\right.
\,.$$
Note that, for each fixed $\delta$,
$
{\Cb}^c_K$ is uniformly continuous on $W^\delta\times W^\delta$; in fact, it follows from
the definition that for some constant $c_\delta>0$,
\begin{equation}
        \label{eq-page4}
        \sup_{x\in W^\delta}
        {\Cb}_K^c(x,x)\leq \frac2\pi\log K+c_\delta\,.
\end{equation}

The following result is crucial for our approach and is easily verified.
\begin{lemma}
  \label{lem-coarselimit}
  Fix $\delta,K>0$. Then
  %Let $v,v'\in  V_N^{K,\delta}$. Then there
  %exists a continuous function $\Cb^c_{K,\delta}$ on $W\times W$
  %so that
  \begin{equation}
    \label{eq-of2}
   \sup_{v,v'\in (V_N^{K,\delta})^2}  |\Cb^c_{N,K}(v,v')-\Cb^c_{K}(v/N,v'/N)|\to_{N\to\infty} 0\,.
  \end{equation}
%  uniformly in $(V_N^{K,\delta})^2$.
\end{lemma}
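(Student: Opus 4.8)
\emph{Proof plan.} The plan is to reduce the statement to the convergence of discrete Green's functions, and thence to the convergence of $N$-rescaled random-walk exit measures to Brownian exit measures. First, recall that the covariance of the GFF in a set $D$ equals the Green's function $G_D(v,v')$ of simple random walk killed on exiting $D$. Since $\eta_{\cdot,N}=\Xc+\Xf$ with $\Xc$ and $\Xf$ independent, and $\{\Xf_v\}_{v\in V_N^{K,i}}$ distributed as the zero-boundary GFF in $V_N^{K,i}$ (independently over $i$), this yields the exact identity
\[
\Cb^c_{N,K}(v,v')=G_{V_N}(v,v')-\mathbf{1}\{v,v'\in V_N^{K,i}\text{ for a common }i\}\;G_{V_N^{K,i}}(v,v').
\]
Hence, for $v,v'\in V_N^{K,\delta}$ there are two cases: $v,v'$ in a common box $i$, where $\Cb^c_{N,K}(v,v')=G_{V_N}(v,v')-G_{V_N^{K,i}}(v,v')$, and $v,v'$ in different boxes, where $\Cb^c_{N,K}(v,v')=G_{V_N}(v,v')$; in the latter case the $\delta$-margins force $|v-v'|\ge 2\delta N/K$, and in both cases $v'$ lies at $\ell^\infty$-distance $\ge\delta N/K$ from $\partial V_N$ and, when relevant, from $\partial V_N^{K,i}$.

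Next I would use the classical identity $G_D(v,v')=\E^v[a(S_{\tau_D}-v')]-a(v-v')$, with $a$ the potential kernel of planar simple random walk and $S_{\tau_D}$ the exit point of the walk from $D$, together with $a(x)=\tfrac2\pi\log|x|+\kappa+o(1)$ as $|x|\to\infty$ (and $a(0)=0$). In the same-box case the singular term $-a(v-v')$ is common to $G_{V_N}$ and $G_{V_N^{K,i}}$ and cancels before any limit is taken, so $\Cb^c_{N,K}(v,v')=\E^v[a(S_{\tau_{V_N}}-v')]-\E^v[a(S_{\tau_{V_N^{K,i}}}-v')]$; in the different-box case $\Cb^c_{N,K}(v,v')=\E^v[a(S_{\tau_{V_N}}-v')]-a(v-v')$ with $|v-v'|\ge 2\delta N/K$. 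Every argument of $a$ that appears has modulus $\ge\delta N/K\to\infty$, so substituting the asymptotics of $a$ and cancelling the constants $\kappa$ gives, uniformly over $(V_N^{K,\delta})^2$,
\[
\Cb^c_{N,K}(v,v')=\tfrac2\pi\Big(\E^v[\log|S_{\tau_{V_N}}-v'|]-\E^v[\log|S_{\tau_{V_N^{K,i}}}-v'|]\Big)+o(1)
\]
in the same-box case, and $\Cb^c_{N,K}(v,v')=\tfrac2\pi\big(\E^v[\log|S_{\tau_{V_N}}-v'|]-\log|v-v'|\big)+o(1)$ in the different-box case. Writing $v=[Nx]$, $v'=[Nx']$ and pulling $\log N$ out of each logarithm, the $\log N$'s cancel within each bracket, so it remains only to identify the limits of $\E^{[Nx]}[\log|S_{\tau_{V_N}}/N-x'|]$ and of $\E^{[Nx]}[\log|S_{\tau_{V_N^{K,i}}}/N-x'|]$ with $\int_{\partial W}p(x,z)\log|z-x'|\,dz$ and $\int_{\partial W^i}p_i(x,z)\log|z-x'|\,dz$, respectively.

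The analytic core is the uniform convergence, for $x$ ranging over the compact set $W^\delta\subset\mathrm{int}(W)$, of the $N$-rescaled exit distribution of simple random walk from $V_N$ (respectively from $V_N^{K,i}$, for $x\in W^{\delta,i}$) to the exit distribution of planar Brownian motion from $W$ (respectively from $W^i$), i.e.\ to $p(x,\cdot)$ (respectively $p_i(x,\cdot)$): this is a standard invariance-principle fact for these dyadic, and hence very regular, domains. Because $W^\delta$ sits at distance $\ge\delta/K$ from $\partial W$ and from every $\partial W^i$, for $z\in\partial W$ (resp.\ $z\in\partial W^i$) and $x'\in W^\delta$ one has $|z-x'|\in[\delta/K,\mathrm{diam}(W)]$, so $z\mapsto\log|z-x'|$ is bounded and equicontinuous uniformly in $x'$; combining this with the uniform weak convergence above gives the required uniform convergence of the integrals. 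Plugging the limits back in, the $\tfrac2\pi\log|x-x'|$ term cancels in the same-box case and one lands exactly on the first branch of the definition of $C^c_K$, while in the different-box case no cancellation occurs and one lands on the second branch (using $\int_{\partial W}p(x,z)\,dz=1$). This proves \eqref{eq-of2}.

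The step that requires the most care — though all of it is classical — is the simultaneous bookkeeping of the $\log N$ subtractions and the behaviour near the diagonal $v=v'$: one must not pass to the limit in $G_{V_N}(v,v')$ and $G_{V_N^{K,i}}(v,v')$ separately, since each is of size $\tfrac2\pi\log N$ and, even after subtracting $\tfrac2\pi\log N$, still diverges along the diagonal, whereas $\Cb^c_{N,K}$ is bounded on $(V_N^{K,\delta})^2$; carrying the common singular term $-a(v-v')$ through the subtraction in the same-box case avoids this. Off the diagonal (the different-box case and the rest of the same-box case) no such issue arises, and the remaining estimates are routine.
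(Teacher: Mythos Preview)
Your proposal is correct and follows essentially the same route as the paper: decompose $\Cb^c_{N,K}$ as the difference of the discrete Green's functions $G_{V_N}$ and $G_{V_N^{K,i}}$ via the orthogonality of $\Xc$ and $\Xf$, rewrite each Green's function using the potential-kernel identity $G_D(v,v')=\E^v[a(S_{\tau_D}-v')]-a(v-v')$, and conclude by the invariance principle for the exit measures. Your write-up is in fact more explicit than the paper's about the cancellation of the singular term $-a(v-v')$ in the same-box case and about why the $\log N$'s and the constants $\kappa$ drop out; this is exactly the care the argument needs, and your observation that one must not take the limit in $G_{V_N}$ and $G_{V_N^{K,i}}$ separately near the diagonal is a good one.
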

Note that
the limit $\Cb^c_{K}$ depends on $K$ and that
the convergence rate depends also
on $\delta$.
\begin{proof}
  Employing the orthogonal representation
$\eta_{v,N}=X_v^c+X_v^f$,
%we have that
%\begin{equation}
%  \label{eq-100113b}
%  \E(X_v^c-X_{v'}^c)^2
%    =\E(\eta_{v,N}-\eta_{v',N})^2-\E(X_v^f-X_{v'}^f)^2
%    %=
%    %\E(\eta_{v,N}-\eta_{v',N})^2-
%    %\E(\eta_{v,N/K}-\eta_{v',N/K})^2\,,
%  \end{equation}
%  and
%  \begin{equation}
%    \label{eq-190113a}
%    \E(X_v^c)^2=\E(\eta_{v,N})^2-E(X_v^f)^2\,.
%  \end{equation}
%   Note that in both equations,
%   the terms in the right side
%   involve GFF's in boxes of different sizes.
   %where, in the latter equality, we used that
   %both terms in the middle expression in
   %\eqref{eq-100113b} involve GFF's in boxes of different sizes.
%   Combining \eqref{eq-100113b} and \eqref{eq-190113a}, we get
   \begin{equation}
     \label{eq-110113g}
     \E(X_v^c  X_{v'}^c)=
     \E(\eta_{v,N} \eta_{v',N})-
     \E(X_v^f  X_{v'}^f)\,.
     %\E(\eta_{v,N/K} \eta_{v',N/K})\,.
   \end{equation}
 Recall that
 $$  \E(\eta_{v,N}\eta_{v',N})=
   \E^v(\sum_{n=0}^{\tau_N} {\bf 1}_{\{S_n=v'\}})\,,$$
   where, under the measure $\E^v$,
   $\{S_n\}$ denotes two-dimensional simple random walk starting from $v$ 
   and
   $\tau_N$ is the first exit time from $V_N$ (see
   \cite{BZ10} or \cite{DZ12}).
   Consider the potential kernel for two-dimensional simple random
   walk,
   \begin{equation}
     \label{eq-100113c}
     a(x)=\frac{2}{\pi}\log |x|+
   \frac{2\bar\gamma+\log 8}{\pi}+O(|x|^{-2})\,,
 \end{equation}
 with $a(0)=0$, and where $\bar\gamma$ is the Euler constant
 (see \cite[Theorem 4.4.4]{LL10}).
 From \cite[Theorem 4.6.2]{LL10},
 \begin{equation}
   \label{eq-lawler1}
   \E(\eta_{v,N} \eta_{v',N})=
   \E^v(\sum_{n=0}^{\tau_N} {\bf 1}_{\{S_n=v'\}})=
   \sum_{z\in\partial V_N} \P^v(S_{\tau_N}=z) a(z-v')-a(v'-v)\,.
 \end{equation}
 %  where $\{S_n\}$ denotes two-dimensional simple random walk and
 %  $\tau_N$ is the first exit time from $V_N$.
 Similarly,
   when $v,v'\in V_N^{K,i}$ for some $i$,
 \begin{equation}
   \label{eq-lawler2}
   \E(X_v^f X_{v'}^f)=
   %\E(\eta_{v,N/K}\cdot \eta_{v',N/K})=
   \sum_{z\in\partial V_N^{K,i}} \P^v(S_{\tau_N^{(i)}}=z) a(z-v')-a(v'-v)\,,
 \end{equation}
 where $\tau_N^i$ denotes the first exit time from $V_N^{K,i}$.
 On the other hand, when $v,v'$ do not belong to the same
 box $V_N^{K,i}$,
   $\E(X_v^f X_{v'}^f)=0$.
The conclusion follows from
% \eqref{eq-maury160113},
the convergence of simple random walk to Brownian motion, 
the uniform continuity of $a(\cdot)$ on $\{z: \delta<|z|\leq 1\}$,
\eqref{eq-100113c}, \eqref{eq-lawler1} and \eqref{eq-lawler2}.
\end{proof}

It follows from Lemma \ref{lem-coarselimit}
that $\Cb^c_{K}$ is a
covariance function and therefore there exists a mean zero Gaussian field
$\{Z^c_{K,\delta}(x)\}_{x\in W^{K,\delta}}$ with covariance $\Cb^c_{K}$.
%We will also have the following result.
%\begin{lemma}
  %\label{lem-coarselimitcont}
 % Fix $K,\delta$. The field $\{Z^c_{K,\delta}(x)\}_{x\in W^{K,\delta}}$
 % has continuous sample paths in $x$.
%\end{lemma}
%The proof follows straightforwardly from Lemma \ref{lem-coarsecov} below, using
%Fernique's criterion, see
%\cite{Fernique75} or
%\cite[Thm.\ 4.15]{adler}.

\subsection{The fine field limit}
We fix a function $g(K)$ that grows to $\infty$  with $K$.
% Maury - I changed "sequel" to "remainder of the paper".
(The choice $g(K)=\alpha \log \log K$, with an appropriately chosen
$\alpha>0$, will be used in Proposition \ref{prop-local}.) 
The following two propositions on the right tail behavior of
$\max_v \eta_{N/K}$ will be
demonstrated in Section \ref{sec-limittail}.  
They will be important ingredients for the
proof of Theorem \ref{limit-law} in several places in
Section \ref{sec-proof-of-theorem}.
\begin{prop}
  \label{prop-jian}
  Define the event
  $${\cal A}_{N,K}=\{\max_{v\in V_{N/K}} \eta_{v,N/K}\geq m_{N/K}+g(K)\}\,.$$
  % Maury - I removed "Then," in two places.
  There exists an absolute constant $\alpha^*>0$ so that
  \begin{equation}
    \label{eq-301212a}
    %\lim_{K\to\infty}\lim_{N\to\infty}
    \lim_{K\to\infty}\limsup_{N\to\infty}
    \frac{\mathrm{e}^{\sqrt{2\pi} g(K)}}{g(K)} \P( {\cal A}_{N,K})
    =\lim_{K\to\infty}\liminf_{N\to\infty}
    \frac{\mathrm{e}^{\sqrt{2\pi} g(K)}}{g(K)} \P( {\cal A}_{N,K})
    = \alpha^*\,.
  \end{equation}
  %}
  Choose $v^*=v^*_{N/K}$ so that
  $\max_{v\in V_{N/K}} \eta_{v,N/K}= \eta_{v^*,N/K}.$
  %Then,
There exists a continuous function $\psi:(0,1)^2\to (0,\infty)$,
  with $\int_{(0,1)^2} \psi(y) dy=1$, such that, for any open
  set $A\subset (0,1)^2$ and any sequence $x_K\geq 0$
  not depending on $N$,
  \begin{eqnarray}
    \label{eq-301212b}
    &&
    \lim_{K\to\infty}\limsup_{N\to\infty}
    \frac{\mathrm{e}^{\sqrt{2\pi} x_K} g(K)}{g(K)+x_K}
    \P(
  \max_{v\in V_{N/K}} \eta_{v,N/K}\geq m_{N/K}+g(K)+x_K, Kv^*/N\in A|
    {\cal A}_{N,K})\nonumber\\
    &&=
    \lim_{K\to\infty}\liminf_{N\to\infty}
    \frac{\mathrm{e}^{\sqrt{2\pi} x_K} g(K)}{g(K)+x_K}
    \P(
  \max_{v\in V_{N/K}} \eta_{v,N/K}\geq m_{N/K}+g(K)+x_K, Kv^*/N\in A|
    {\cal A}_{N,K})\nonumber\\
    &&= \int_A \psi(y)dy\,,
  \end{eqnarray}
 with convergence being uniform in the sequence $x_K$.
\end{prop}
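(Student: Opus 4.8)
Write $M=N/K$, so that under $\lim_{N\to\infty}$ with $K$ fixed we are studying the GFF $\eta_{\cdot,M}$ on $V_M$ with $M\to\infty$, and the subsequent $K\to\infty$ then forces the height $u:=g(K)+x_K$ to infinity. The plan is to run a \emph{modified (barrier--truncated) second moment} argument, taking as black--box inputs the two--sided tail bounds $\P(\eta_M^*\ge m_M+u)\asymp u\,e^{-\sqrt{2\pi}u}$ and the companion rough localization estimates from \cite{DZ12}. Fix a logarithmically corrected barrier curve (of the type used in \cite{BZ10,DZ12}) along the dyadic scales $\ell=0,\dots,n$, $n=\log_2 M$, and let $\Gamma_{M,u}$ count the $v\in V_M$ that are (i) high, $\eta_{v,M}\ge m_M+u$; (ii) barrier--respecting, i.e.\ the dyadic coarse--grainings $\E(\eta_{v,M}\mid\mathcal F_{M,2^\ell})$ stay below the barrier for every $\ell$; and (iii) $r$-local maxima for a suitable mesoscopic scale $r$, so that distinct witnesses are well separated. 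For open $A$, let $\Gamma_{M,u}(A)$ in addition require $v/M\in A$. The truncation (ii)--(iii) is exactly what keeps the second moment of $\Gamma_{M,u}$ comparable to the square of its first moment; without (ii) the intermediate dyadic scales would inflate everything by a factor $\log M$.

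\textbf{First moment.}
Writing $\var(\eta_{v,M})=\tfrac2\pi\log M+b(v/M)+o(1)$, with $b$ the limiting $O(1)$ variance profile in the bulk, a Gaussian tail expansion gives $\P(\eta_{v,M}\ge m_M+u)=\kappa\,M^{-2}(\log M)\,e^{\pi b(v/M)}e^{-\sqrt{2\pi}u}(1+o(1))$ for an explicit constant $\kappa$; intersecting with the barrier event and applying a ballot/entropic--repulsion estimate for the near--Gaussian increments of the dyadic path (increment variance $\tfrac2\pi\log 2$ per level) inserts a factor $\asymp u/\log M$ carrying an explicit constant together with a continuous $v/M$-dependent correction near $\partial V_M$; condition (iii) costs at most a $\Theta(1)$ factor, absorbed into the constants. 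Summing over $v\in V_M$, the $M^{-2}$ cancels the number of vertices, the powers of $\log M$ cancel, and the sum becomes a Riemann sum converging, as $M\to\infty$, to $\E[\Gamma_{M,u}(A)]=(\alpha^*+o(1))\,u\,e^{-\sqrt{2\pi}u}\int_A\psi(y)\,dy$, where $\psi>0$ on $(0,1)^2$, $\int_W\psi=1$, is the normalized limiting intensity and $\alpha^*>0$ is its total mass, the $y$-dependence of $\psi$ entering through $b$ and the reshaped near--boundary barrier. Crucially, all error terms here are uniform over $u\ge g(K)$, which is the source of the claimed uniformity in $x_K$.

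\textbf{Second moment, and conclusion.}
For $\E[\Gamma_{M,u}(V_M)^2]=\sum_{v,v'}\P(v,v'\in\Gamma_{M,u})$ I would split by the dyadic scale $2^\ell$ at which the coarse--grainings of $v$ and $v'$ separate (necessarily $\ell\gtrsim\log r$ for two distinct witnesses): the joint event forces the shared part of the path to respect the barrier down to level $\ell$ and then two conditionally independent branches to climb to height $\gtrsim u$, and the barrier estimate bounds the scale-$\ell$ contribution by $C\,2^{-c\ell}\E\Gamma_{M,u}$ for moderate $\ell$ and by $\lesssim(\E\Gamma_{M,u})^2$ for $\ell$ of order $n$; hence $\E[\Gamma_{M,u}^2]=\E\Gamma_{M,u}+O\big((\E\Gamma_{M,u})^2\big)$. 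Since $\E\Gamma_{M,u}\asymp u\,e^{-\sqrt{2\pi}u}\to0$ as $K\to\infty$, a Bonferroni bound gives $\P(\Gamma_{M,u}(A)\ge1)=\E[\Gamma_{M,u}(A)](1+o(1))$, and an entropic--repulsion argument (again from \cite{BZ10,DZ12}) shows that with probability $1-o(\E\Gamma_{M,u})$ the events $\{\eta_M^*\ge m_M+u\}$ and $\{\Gamma_{M,u}\ge1\}$ coincide while the maximizer $v^*$ sits within distance $r$ of the essentially unique barrier--respecting high vertex; therefore $\P(\eta_M^*\ge m_M+u,\ v^*/M\in A)=(\alpha^*+o(1))\,u\,e^{-\sqrt{2\pi}u}\int_A\psi$. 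Note this does not presuppose convergence in law of $\eta_M^*-m_M$: the vanishing of the inclusion--exclusion corrections is harvested from $u\to\infty$ (i.e.\ from $K\to\infty$), not from the inner limit $N\to\infty$. Taking $A=W$ and $u=g(K)$ yields \eqref{eq-301212a}; dividing the two displays and setting $u=g(K)+x_K$ yields \eqref{eq-301212b}, the uniformity in $x_K$ inherited from the moment estimates.

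\textbf{Main obstacle.}
The order of magnitude is essentially \cite{DZ12}; the real work is the upgrade to an \emph{honest limit with identified $\alpha^*$ and $\psi$}. This forces one to: (a) evaluate the ballot/entropic--repulsion probabilities for the \emph{genuine} GFF dyadic path, which is only approximately a Gaussian random walk, up to a $(1+o(1))$ factor---so one must either route through the MBRW of \cite{BZ10} via a Gaussian comparison sharp enough to preserve tail constants (far sharper than the $O(1)$ comparisons needed merely for tightness) or bound the discrepancy directly; (b) turn the first--moment sum into a convergent Riemann sum \emph{uniformly up to $\partial W$}, where the variance profile degenerates and the barrier must be reshaped, this being precisely where the $y$-dependence and continuity of $\psi$ originate; and (c) make every $o(1)$ above uniform in $u\ge g(K)$ and in the open set $A$, so that the two iterated limits can legitimately be taken. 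I expect (a) and (c) together to be the bulk of the long, technical argument.
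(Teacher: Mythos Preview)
Your overall strategy—barrier--truncated first/second moment method, with the second moment decomposed by separation scale—is correct and is what the paper does. The implementation, however, differs in a way that directly addresses your ``main obstacle (a)''.

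Rather than running the barrier argument on the genuine GFF dyadic coarse--grainings $\E(\eta_{v,M}\mid\mathcal F_{M,2^\ell})$ (whose increments are only approximately independent), the paper introduces a two--scale box structure $\tilde L\ll L$ (with $\tilde L\ge 2^{z^4}$ and $L/\tilde L\le\log z$) and writes $\eta_{v,N}=X_{v,N}+Y_{v,N}$ with $X_{v,N}=\E(\eta_{v,N}\mid\partial B_{v,N})$ and $Y_{v,N}$ an independent GFF on each $L$--box. The coarse field is first replaced by its value at the centre of the containing $\tilde L$--box (Lemmas~4.3--4.4, a perturbative step), and then \emph{sandwiched via Slepian} between two MBRW--type processes $X^{\mathrm{up}}$ and $X^{\mathrm{lw}}$ obtained by dropping a bounded number of top or bottom MBRW levels (Lemma~4.6). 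The barrier computation is then carried out for these processes, where it reduces to the reflection principle for genuine Brownian motion. The counting variable $\Lambda_{N,z}$ counts $\tilde L$--\emph{boxes} (not vertices with a local--max condition) whose centre's path stays below the barrier and whose $Y$--field maximum on the box is large enough. The crucial observation is that for a single box the marginal laws of $X^{\mathrm{up}}$ and $X^{\mathrm{lw}}$ coincide, so $\E\Lambda^{\mathrm{up}}_{N,z}=\E\Lambda^{\mathrm{lw}}_{N,z}$ exactly; hence the Slepian sandwich is asymptotically tight at the level of first moments without ever needing the comparison to preserve tail constants. This sidesteps your obstacle (a) rather than confronting it.

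One further difference: the paper does not identify $\alpha^*$ by an explicit first--moment formula. Instead it shows (Lemma~4.13) that $\E\Lambda_{N,z_1}/\E\Lambda_{N,z_2}\to (z_1/z_2)e^{-\sqrt{2\pi}(z_1-z_2)}$ by evaluating both at a common $\tilde L$, and separately (Lemma~4.12) that a limit $\Lambda^*_z$ exists; together these force $\Lambda^*_z=\alpha^* z e^{-\sqrt{2\pi}z}$ for some $\alpha^*$. The density $\psi$ arises from the limiting variance profile $a_{v,N}\to g(x,\hat v)$ of the added Gaussian in $X^{\mathrm{lw}}$, not from your $b(v/M)$ directly. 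Your Riemann--sum picture for the first moment is morally right but would not by itself yield an honest limit without this extra layer.
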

Fix $\delta>0$ small and recall that 
$$\mcW_{N}^\delta=\{x\in V_{N}: 
d_\infty(x, \partial V_{N})\geq \delta N\}\,.$$
Note that $\mcW_N^\delta$ consists of the points in $V_N$ which are at distance
at least $\delta N$ from the boundary of $V_N$; in particular, 
$\mcW_N^\delta$ is not the rescaled version of $W^\delta$.
In the construction of the limit process, we will need a version of 
Proposition
\ref{prop-jian} that restricts attention to $\mcW_{N/K}^\delta$. 
\begin{prop}
  \label{prop-jian-delta}
  Fix $\delta >0$ small and
  let $\alpha^*, \psi$ be as in Proposition \ref{prop-jian}.
  Set $m_\delta=\int_{(\delta,1-\delta)^2}\psi(y) dy$ and
  $\psi^\delta=\psi/m_\delta$.
  Define the event
  $${\cal A}_{N,K,\delta}=
  \{\max_{v\in \mcW_{N/K}^\delta} \eta_{v,N/K}\geq m_{N/K}+g(K)\}\,.$$
  Then
  \begin{equation}
    \label{eq-301212a-neq}
    \lim_{K\to\infty}\lim_{N\to\infty}
    \frac{\mathrm{e}^{\sqrt{2\pi} g(K)}}{g(K)} \P( {\cal A}_{N,K})= 
    \alpha^*
    m_\delta\,.
  \end{equation}
  Choose
 $v^*_\delta=v^*_{N/K,\delta}$ so that
  $\max_{v\in \mcW_{N/K}^\delta} \eta_{v,N/K}= \eta_{v^*_\delta,N/K}$.
  %Then,
%There exists a continuous function $\psi:(0,1)^2\to (0,\infty)$,
%  with $\int_{[0,1]^2} \psi(y) dy=1$, such that, 
  For any open
  set $A\subset (\delta,1-\delta)^2$ and any sequence $x_K\geq 0$
  not depending on $N$,
  \begin{equation}
    \label{eq-301212b-new}
    \lim_{K\to\infty}\lim_{N\to\infty}
    \frac{\mathrm{e}^{\sqrt{2\pi} x_K} g(K)}{g(K)+x_K}
    \P(
    \max_{v\in V_{N/K}^\delta} \eta_{v,N/K}\geq m_{N/K}+g(K)+x_K, Kv^*_\delta/N
  \in A|
    {\cal A}_{N,K,\delta})= \int_A \psi^\delta(y)dy\,,
  \end{equation}
 with convergence being uniform in the sequence $x_K$.
\end{prop}
\subsection{The limit process}
\label{subsec-limit}
%Fix $K,\delta$. Consider the unit square $[0,1]^2$ and partition it
%into non-overlaping squares $\{\mcW_i\}_{i=1,\ldots,K^2}$ of side length $1/K$ each.
%(We omit $K$ from the notation.)  Define $\mcW_i^\delta$ to be the subset
%of $\mcW_i$ consisting of points whose distance to the boundary
%of $\mcW_i$ is at least $\delta/K$.

%Change below since the sequence $z_i^K$ should be prevented from 
%approaching the boundary.
Let $\psi^\delta$ and $\alpha^*$ be as in Proposition \ref{prop-jian-delta}.
In each $W^{\delta,i}$, choose a point 
$z_i^{K,\delta}$ that is distributed according to
the scaled analog of $\psi^\delta$,
with $z_i^{K,\delta}$ being chosen independently for different $i$.
Let $\{\B_i^{K,\delta}\}_{i=1,\ldots,K^2}$ 
denote independent Bernoulli random variables
with $\P(\B_i^{K,\delta}=1)= \alpha^*m_\delta
g(K)\mathrm{e}^{-\sqrt{2\pi}g(K)}$, and let
$\{Y_i^K\}_{i=1,\ldots,K^2}$ denote independent random variables
satisfying
\begin{equation}
  \label{eq-indeptails}
  \P(Y_i^K\geq x)=\frac{g(K)+x}{g(K)} \mathrm{e}^{-\sqrt{2\pi}x}\,,\quad
  x\geq 0\,.\end{equation}
Recall the limiting coarse field $Z^c_{K,\delta}$  and
define
$$G_i^{K,\delta}=
%1_{\{z_i^{K,\delta}\in \mcW_i^\delta\}}
\B_i^K(Y_i^K+g(K))+Z^c_{K,\delta}(z_i^{K,\delta})-2\sqrt{2/\pi}\log K.$$
Set $G^*_{K,\delta}=\max_i G_i^{K,\delta}$ and denote by $\mu_{K,\delta}$ its law.
Note that $\mu_{K,\delta}$ does not depend on $N$.
% Let $\mu_{K,\delta}$ denote the law of $G^*_{K,\delta}$.

Let $d(\cdot,\cdot)$ denote the L\'evy metric, 
which is compatible with the weak convergence
of probability measures on $\R$.
Our main result in the paper is the following.
\begin{theorem}
  \label{limit-law}
  Let $\mu_N$ denote the law of $\max_{v\in V_N}\eta_{v}^N-m_N$.
  Then, with $\mu_{K,\delta}$ defined as above,
  %Ofer erased limsup on delta
  \begin{equation}
    \label{eq-limiteq}
    \lim_{\delta\searrow
    0} \limsup_{K\to\infty}
    \limsup_{N\to\infty}
    d(\mu_N,\mu_{K,\delta})=0\,.
  \end{equation}
  In particular, there exists a probability measure $\mu_\infty$ on $\R$
  such that
  $d(\mu_N,\mu_\infty)\to_{N\to\infty} 0$.
\end{theorem}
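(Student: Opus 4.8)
The plan is to prove \eqref{eq-limiteq} by decomposing the GFF as $\eta_{v,N}=X_v^c+X_v^f$ and arguing that, after recentering, the maximum of $\eta_{v,N}$ over $V_N$ is well approximated by $\max_i [X^c_{v,N,K}(z_i)+(\text{max of the fine field in box }i)]$, where $z_i$ is the (rescaled) location of the fine-field maximum in box $i$. More precisely, I would first show (using tightness of $M_N=\eta_N^*-m_N$ from \cite{BZ10} and the near-independence of the fine fields in distinct boxes) that with high probability the global maximum of $\eta_{v,N}-m_N$ is attained in some box $V_N^{K,i}$ at a point where the fine field exceeds $m_{N/K}+g(K)$; this is the "maximum occurs where the fine field is atypically large" step promised in the introduction, and it is precisely why the truncation $\1_{\{z_i^K\in\mcW_i^\delta\}}\BB_i^K$ appears in the definition of $G_i^{K,\delta}$. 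One must also control the contribution of points in the bad set $\Delta_N$ near the sub-box boundaries, which uses $|\Delta_N|\le 4\delta|V_N|$ and a union-bound on the probability that the fine field there is large; this is why $\delta\searrow0$ is taken last.

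Granting that reduction, the remaining work is to match distributions. Conditionally on $\mathcal F_{N,K}$, the fine fields $\{X^f_v\}_{v\in V_N^{K,i}}$ are i.i.d.\ across $i$ and distributed as independent copies of the GFF on $V_{N/K}$. By Proposition \ref{prop-jian}, for each box the event that the fine-field maximum exceeds $m_{N/K}+g(K)$ has probability $\sim\alpha^* g(K)e^{-\sqrt{2\pi}g(K)}$ (the law of $\BB_i^K$), and conditionally on that event the overshoot has the tail in \eqref{eq-indeptails} (the law of $Y_i^K$) while the rescaled location has limiting density $\psi$ (the law of $z_i^K$), all asymptotically independent across the $K^2$ boxes. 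Meanwhile, by Lemma \ref{lem-coarselimit} the coarse field $X^c_{\cdot,N,K}$, evaluated at the relevant locations, converges as $N\to\infty$ to the Gaussian field $Z^c_{K,\delta}$ with covariance $\Cb^c_K$; since the number of boxes is fixed and the coarse field is independent of the fine field, one can pass to the joint limit. Assembling these pieces shows $d(\mu_N,\mu_{K,\delta})\to0$ as $N\to\infty$ for fixed $K,\delta$, which is the inner $\limsup$.

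For the conclusion that $\mu_\infty$ exists, I would argue that $\{\mu_{K,\delta}\}$ is Cauchy in the metric $d$ as $\delta\searrow0$ then $K\to\infty$: since \eqref{eq-limiteq} forces $d(\mu_N,\mu_{K,\delta})$ and $d(\mu_N,\mu_{K',\delta'})$ both to be small for $N$ large, the triangle inequality gives $d(\mu_{K,\delta},\mu_{K',\delta'})$ small, so $\mu_{K,\delta}$ converges to some limit $\mu_\infty$; combined with tightness of $\{\mu_N\}$ (again from \cite{BZ10}), a standard subsequence argument yields $d(\mu_N,\mu_\infty)\to0$.

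The main obstacle is the first reduction step: showing that the global maximum of $\eta_{v,N}$ truly "lives" on the atypically-large set of the fine field, and quantitatively that the error incurred by replacing $\max_v\eta_{v,N}$ with $\max_i G_i^{K,\delta}$ (after identifying distributions) vanishes in the iterated limit. This requires (a) a right-tail upper bound on $\max_{v\in V_N}\eta_{v,N}$ refined enough to rule out the maximum being attained where the fine field is merely typical, forcing the coarse field there to be unusually large on a scale of order $\log\log K$ — an event whose probability must be controlled uniformly in $N$; (b) handling the boundary layers $\Delta_N$, where the decomposition degenerates; and (c) verifying that the asymptotic independence across boxes, which holds for the fine fields by construction, is not destroyed by conditioning on the location and size of the maximum. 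This is the content of the "long and technical" Section \ref{sec-limittail} and the pairing arguments of Section \ref{sec-pairap}.
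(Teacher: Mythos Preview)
Your strategy matches the paper's closely: restrict to $V_N^{K,\delta}$ (Proposition~\ref{prop-Delta}), localize the global maximum at a fine-field near-maximizer (Proposition~\ref{prop-local}), match the fine-field statistics to $(\B_i^K,Y_i^K,z_i^K)$ via Proposition~\ref{prop-jian}, and pass the coarse field to its continuum limit via Lemma~\ref{lem-coarselimit}. Your Cauchy argument for the existence of $\mu_\infty$ also works; the paper instead observes directly that $\{\mu_N\}$ is Cauchy, which is slightly shorter.

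There is one misstatement and one genuine gap. The misstatement: ``$d(\mu_N,\mu_{K,\delta})\to 0$ as $N\to\infty$ for fixed $K,\delta$'' is false. The matching in Proposition~\ref{prop-jian} and the localization in Proposition~\ref{prop-local} both require $K\to\infty$ after $N\to\infty$; for fixed $K$ the fine-field overshoot in each box does not have exactly the law of $Y_i^K$, only approximately, with error vanishing as $K\to\infty$. The inner $\limsup_{N}$ is thus generally positive and the iterated-limit structure is essential. You seem aware of this when discussing why $\delta\searrow0$ is taken last, but then contradict yourself in the next paragraph.

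The gap: you do not address how to pass from $X^c$ evaluated at the \emph{random} fine-field maximizer $z_i$ to $Z^c_{K,\delta}$ evaluated at the \emph{independent} point $z_i^K$. This is not just ``pass to the joint limit'': the coarse field, though independent of the fine field as a process, is sampled at a location determined by the fine field, and any coupling of the location to $z_i^K$ only guarantees $K|z_i^K - z_i/N|\le \bar\epsilon_K$, not equality. The paper closes this via an explicit coupling construction (Proposition~\ref{prop-coupling}) followed by a continuity lemma (Lemma~\ref{lem-cont}) showing that perturbing the evaluation points of $X^c$ by $O(\bar\epsilon_K N/K)$ changes the law of $\max_i(X_{z_i}^f+X_{z_i}^c)$ by $o(1)$; its proof reuses the Sudakov--Fernique and perturbation machinery of Lemmas~\ref{lem-ferniquecriterion}, \ref{lem-coarsecov} and \ref{lem-gff-perturb}. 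Without this step, your ``one can pass to the joint limit'' does not go through.
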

(The choice of working with the L\'evy metric in Theorem
\ref{limit-law} is for concreteness; any distance
compatible with weak convergence could be used.)
%July182014
The limit $\mu_\infty$ in Theorem \ref{limit-law} can be described as follows.
A similar description was derived in \cite{biskup}.
\begin{theorem}
        \label{explicit-limit-law}
        There exist a random variable $Z>0$ and a constant $\alpha^* > 0$ 
        so that
        \begin{equation}
                \label{eq-July182014-thm}
        \mu_\infty\left( (-\infty,x] \right)=
        \E\left(e^{-\alpha^* Ze^{-\sqrt{2\pi}x}}\right)\,.
\end{equation}
\end{theorem}
Thus, the limit law of the maximum of the GFF is a randomly 
shifted Gumbel distribution.
{(In the statement of Theorem \ref{explicit-limit-law}, the constant $\alpha^*$ can be absorbed 
        into $Z$.
        Since $\alpha^*$ 
is determined by the tail behavior of the GFF (see 
Proposition \ref{prop-limiting-tail-gff}) whereas the random variable
$Z$ is determined 
by the coarse field, they play very different roles in the proof; for this reason, we keep both in the statement.)  Note that the random variable $Z$ 
plays the role of the derivative martingale in the theory of branching
Brownian motion, as in \cite{LS}.}

\section{Preliminaries}
\label{sec-prelim}
\subsection{Branching random walk and modified branching random walk}\label{sec:MBRW}
We first briefly review the construction of branching random walk (BRW)
and modified branching random walk (MBRW), which we construct
so as to simplify comparisons with the GFF.  As before, we let $V_N$ denote the box of side
length $N$ intersected with $\mathbb{Z}^2$ such that the lower left corner is at the
origin, and set $n = \log_2  N$.
% Maury - I rephrased the last several sentences.
% Ofer - started j at 0
For $j\in [0,\ldots,n-1]$, let $\mathfrak{B}_j$ be the collection of squared boxes in $\mathbb{Z}^2$ of side length $2^j$ with corners in $\mathbb{Z}^2$, and let $\mathfrak{BD}_j$ denote the subset of $\mathfrak{B}_j$ consisting of squares of the form $([0, 2^{j}-1] \cap \mathbb{Z})^2 + (i_1 2^j, i_2 2^j)$, with $i_1, i_2 \in \mathbb{N}$. For $v\in \mathbb{Z}^2$, let $\mathfrak{B}_j (v) = \{B\in \mathfrak{B}_j: v\in B\}$ be the collection of boxes in $\mathfrak{B}_j$ that contain $v$, and define $\mathfrak{BD}_j(v)$ to be the (unique) box in $\mathfrak{BD}_j$ that contains $v$. Furthermore, denote by $\mathfrak{B}_{N, j}$ the subset of $\mathfrak{B}_j$ consisting of boxes whose lower left corners are in $V_N$.
Let $\{\bar \phi_{N, j, B}\}_{j\geq 0, B\in \mathcal{BD}_j}$ be i.i.d. mean zero Gaussian variables with variance $\frac{2\log 2}{\pi}$, and define a branching random walk
%Replaced throughout 1..n by 0..n-1
\begin{equation}\label{eq-def-BRW}
        \vartheta_{v,N} = \mbox{$\sum_{j=0}^{n-1}$} \bar\phi_{N, j, \mathcal{BD}_j(v)}\,.
\end{equation}
For $j \in [0,\ldots,n-1]$
and $B\in \mathfrak{B}_{N, j}$, let $\phi_{N, j, B}$ be independent mean zero Gaussian variables with $\var (\phi_{N, j, B}) = \tfrac{2\log 2}{\pi}  \cdot 2^{-2j}$, and define
\begin{equation}\label{eq-define-b}
\phi_{N, j, B}=\phi_{N, j, B'}, \mbox{ for } B\sim_N B' \in \mathcal{B}_{N, j}\,,
\end{equation}
where $B\sim_N B'$ if and only if there exist $i_1, i_2\in \mathbb{Z}$ such that $B = (i_1 N, i_2 N) + B'$. (Note that, for any $B\in \mathcal{B}_j$, there exists a unique $B'\in \mathfrak{B}_{N, j}$ such that $B\sim_N B'$.)
Let $d_N(u, v) = \min_{w\sim_N v}|u-w|$ be the $\ell^2$ distance between $u$ and $v$ when considering $V_N$ as a torus, for all $u, v\in V_N$.
Finally, we define the MBRW $\{\xi_{v,N}: v\in V_N\}$ by
\begin{equation}\label{eq-MBRW}
        \xi_{v,N}= \mbox{$\sum_{j=0}^{n-1}
        \sum_{B\in \mathfrak{B}_j(v)}$} \phi_{N, j, B}\,.
\end{equation}
The motivation for introducing MBRW is that the MBRW approximates the GFF with high precision. That is, the covariance structure
of the MBRW approximates that of the GFF well. This
is elaborated in the next lemma.
\begin{lemma}\label{lem-covariance}
For any $0<\delta<1/100$,
there exists a constant $C = C_\delta$ such that, for all $n$,
\begin{align*}
|\cov(\xi_{u,N}, \xi_{v,N}) - \tfrac{2\log 2}{\pi} (n - \log_2(d_N(u, v)))|& \leq C, \mbox{ for all } u, v\in V_N\,,\\
\big|\cov(\eta_{u,N}, \eta_{v,N}) - \tfrac{2\log 2}{\pi} (n - (0\vee \log_2 |u-v|))\big| &\leq C, \mbox{ for all } u, v \in V'_N\,,
\end{align*}
where $V'_N$ is a box of side length $(1-2\delta)N$ placed at the center of $V_N$.
\end{lemma}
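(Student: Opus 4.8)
The plan is to prove both estimates by explicit covariance computations. For the MBRW bound I would start from \eqref{eq-MBRW} together with the two structural facts behind \eqref{eq-define-b}: the variables $\phi_{N,j,B}$ at different scales $j$ are independent, and at a fixed scale two of them coincide exactly when $B\sim_N B'$ and are independent otherwise. Hence
\[
\cov(\xi_{u,N},\xi_{v,N})=\tfrac{2\log 2}{\pi}\sum_{j=0}^{n}2^{-2j}\,N_j(u,v),
\]
where $N_j(u,v)$ counts the side-$2^j$ boxes that contain both $u$ and $v$ once $V_N$ is viewed as a torus, i.e.\ the number of $B\in\mathfrak B_j(u)$ whose $\sim_N$-translate meeting $v$ lies in $\mathfrak B_j(v)$. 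The key elementary input is the exact evaluation of $N_j(u,v)$: for $j<n$ a box of side $2^j\le N/2$ cannot meet two distinct torus-translates of a lattice point, so $N_j(u,v)=\prod_{i=1,2}\bigl(2^j-d_N^{(i)}(u,v)\bigr)_+$, where $d_N^{(i)}$ is the $i$-th coordinate distance on $\Z/N\Z$; while for $j=n$ every box of side $N$ contains exactly one representative of each residue class, so $N_n(u,v)=2^{2n}$.

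It then remains to evaluate the sum. Writing $\ell=\max_i d_N^{(i)}(u,v)$ and letting $j_0$ be the smallest $j$ with $2^j>\ell$, the terms with $j<j_0$ vanish (the factor indexed by the maximizing coordinate is $0$); for $j_0\le j\le n$ one has $2^{-2j}N_j(u,v)=\prod_i\bigl(1-d_N^{(i)}2^{-j}\bigr)\in[0,1]$, which differs from $1$ by at most $\bigl(d_N^{(1)}+d_N^{(2)}\bigr)2^{-j}\le 2\ell\,2^{-j}$. Summing the resulting geometric bound, $\sum_{j\ge j_0}2\ell\,2^{-j}\le 4$, so $\cov(\xi_{u,N},\xi_{v,N})=\tfrac{2\log 2}{\pi}(n-j_0)+O(1)$ with an absolute constant; and since $|j_0-\log_2\ell|\le 1$ for $\ell\ge 1$ while the $\ell^2$- and $\ell^\infty$-torus distances satisfy $\ell\le d_N(u,v)\le\sqrt2\,\ell$, this equals $\tfrac{2\log 2}{\pi}\bigl(n-\log_2 d_N(u,v)\bigr)+O(1)$, as claimed. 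The diagonal case $u=v$ (where $\ell=0$) is checked directly: there $\cov(\xi_{v,N},\xi_{v,N})=\tfrac{2\log 2}{\pi}(n+1)$, within an absolute constant of $\tfrac{2\log 2}{\pi}n$, so the bound holds under the convention $\log_2 0:=0$ (equivalently, with $\log_2 d_N$ replaced by $0\vee\log_2 d_N$, matching the GFF formula).

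For the GFF bound I would use the random-walk Green's-function representation recorded in \eqref{eq-lawler1}, namely $\cov(\eta_{u,N},\eta_{v,N})=\E^u\bigl[a(S_{\tau_N}-v)\bigr]-a(v-u)$, together with the potential-kernel expansion \eqref{eq-100113c}, $a(x)=\tfrac{2}{\pi}\log|x|+\kappa+O(|x|^{-2})$ with $\kappa=(2\bar\gamma+\log 8)/\pi$ and $a(0)=0$. The role of the hypothesis $v\in V'_N$ is that every $z\in\partial V_N$ satisfies $\delta N\le|z-v|\le\sqrt2\,N$, so $a(z-v)=\tfrac{2}{\pi}\log N+\kappa+O_\delta(1)$ uniformly in $z$ (the $O(|x|^{-2})$ error being $O(N^{-2})$); averaging against the exit measure $\P^u(S_{\tau_N}=\cdot\,)$, which has total mass $1$, gives $\E^u[a(S_{\tau_N}-v)]=\tfrac{2}{\pi}\log N+\kappa+O_\delta(1)$. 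On the other hand $a(v-u)=\tfrac{2}{\pi}\bigl(0\vee\log|u-v|\bigr)+\kappa+O(1)$, using $|u-v|\ge 1$ when $u\neq v$ and $a(0)=0$ when $u=v$. Subtracting, the constants $\kappa$ cancel and $\cov(\eta_{u,N},\eta_{v,N})=\tfrac{2\log 2}{\pi}\bigl(n-(0\vee\log_2|u-v|)\bigr)+O_\delta(1)$, with the constant entering only through the lower bound $\delta N$ on $|z-v|$.

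I expect the main difficulty to be bookkeeping rather than conceptual: on the MBRW side, pinning down $N_j(u,v)$ exactly at and just below the top scale $j=n$, where the identification $\sim_N$ is active, and checking that every error term is genuinely uniform in $n$ and in $u,v$; on the GFF side, the one essential ingredient is the uniform separation $|z-v|\ge\delta N$ supplied by $v\in V'_N$, without which the harmonic-measure average $\E^u[\log|S_{\tau_N}-v|]$ would fail to be $\log N+O(1)$ — for instance when $v$ sits near a corner of $V_N$.
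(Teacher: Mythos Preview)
Your argument is correct and is precisely the standard route: a direct box-counting computation of $\cov(\xi_{u,N},\xi_{v,N})$ from the definition \eqref{eq-MBRW}--\eqref{eq-define-b}, and the potential-kernel identity \eqref{eq-lawler1} together with the asymptotics \eqref{eq-100113c} for the GFF. The paper itself does not spell out a proof but defers to \cite[Lemma~2.2]{BZ10}, where the same computation is carried out; your write-up is essentially a fleshed-out version of that reference, with the torus bookkeeping at scales $j<n$ versus $j=n$ handled carefully.
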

The first part of the lemma is in
%The preceding lemma is slightly stronger than 
\cite[Lemma 2.2]{BZ10},  whereas the second part can be found in 
\cite[Lemma 2.1]{Da06} (and in a slightly different form, dealing with the so called torus GFF, in \cite[Lemma 2.2]{BZ10}).

\subsection{A few Gaussian inequalities}
We will need the following two Gaussian comparison inequalities: the Sudakov-Fernique inequality, which compares the expected maximum of Gaussian processes (see, e.g., \cite{Fernique75} for a proof), and Slepian's comparison lemma \cite{Slepian62}, which compares the maximum of Gaussian processes in the sense of  ``stochastic domination''.
% I removed "the" before Slepians'.  If we wish to include the "the"  change to "Slepian".
\begin{lemma}[Sudakov-Fernique]\label{lem-sudakov-fernique}
Let $\mathcal{A}$ be an arbitrary finite index set and let $\{X_a\}_{a\in \mathcal{A}}$ and $\{Y_a\}_{a\in \mathcal{A}}$ be two centered Gaussian processes such that
\begin{equation}\label{eq-compare-assumption}
\E(X_a - X_b)^2 \geq \E (Y_a - Y_b)^2\,, \mbox{ for all } a, b \in \mathcal{A}\,.
\end{equation}
Then $\E \max_{a\in \mathcal{A}} X_a \geq \E \max_{a\in \mathcal{A}} Y_a$.
\end{lemma}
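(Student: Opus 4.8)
The plan is to run the classical smooth-interpolation argument based on Gaussian integration by parts. First I would reduce to the case in which $\{X_a\}_{a\in\mathcal{A}}$ and $\{Y_a\}_{a\in\mathcal{A}}$ are realized on a common probability space and are \emph{independent} of one another; this costs nothing, since only the two laws enter the statement. Then, for a parameter $\beta>0$, introduce the soft maximum $f_\beta(x)=\beta^{-1}\log\sum_{a\in\mathcal{A}}e^{\beta x_a}$, which satisfies $\max_a x_a\le f_\beta(x)\le \max_a x_a+\beta^{-1}\log|\mathcal{A}|$, has first partials $p_a(x)=\partial_a f_\beta(x)=e^{\beta x_a}/\sum_c e^{\beta x_c}$ forming a probability vector (so $\sum_a p_a\equiv 1$, $p_a\ge 0$), and Hessian $\partial_a\partial_b f_\beta(x)=\beta\,(p_a\delta_{ab}-p_a p_b)$. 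These derivatives are bounded (by $1$ and by $\beta$ respectively), which makes all the interchanges below legitimate.

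Next I would set $Z_a(t)=\sqrt{t}\,X_a+\sqrt{1-t}\,Y_a$ for $t\in[0,1]$, so that $Z(0)$ has the law of $Y$ and $Z(1)$ the law of $X$, and study $\f(t)=\E f_\beta(Z(t))$. Differentiating for $t\in(0,1)$ and applying the Gaussian integration-by-parts identity $\E[U\,h(W)]=\sum_b\E[U W_b]\,\E[\partial_b h(W)]$ to the jointly centered Gaussian pair $(Z(t),\tfrac{d}{dt}Z(t))$, together with the elementary covariance computation
\[
\E\Big[\tfrac{d}{dt}Z_a(t)\,Z_b(t)\Big]=\tfrac12\big(\E X_aX_b-\E Y_aY_b\big),
\]
in which the cross terms vanish by independence and the $t$-dependent factors cancel, yields
\[
\f'(t)=\tfrac12\sum_{a,b}\big(\E X_aX_b-\E Y_aY_b\big)\,\E\big[\partial_a\partial_b f_\beta(Z(t))\big].
\]
In particular the apparent singularities of $\tfrac{d}{dt}Z(t)$ at $t=0,1$ disappear, so $\f$ is continuous on $[0,1]$ and $C^1$ on $(0,1)$.

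The crux is then a purely algebraic rewriting of the right-hand side. Writing $\sigma_{ab}=\E X_aX_b-\E Y_aY_b$ and using $\sum_a p_a=1$, one checks that
\[
\sum_{a,b}\sigma_{ab}\,(p_a\delta_{ab}-p_a p_b)=\sum_a\sigma_{aa}p_a-\sum_{a,b}\sigma_{ab}p_ap_b=\tfrac12\sum_{a,b}\big(\sigma_{aa}-2\sigma_{ab}+\sigma_{bb}\big)\,p_a p_b,
\]
and $\sigma_{aa}-2\sigma_{ab}+\sigma_{bb}=\E(X_a-X_b)^2-\E(Y_a-Y_b)^2\ge 0$ by the hypothesis \eqref{eq-compare-assumption}. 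Since $p_a p_b\ge 0$, this forces $\f'(t)\ge 0$ on $(0,1)$, hence $\E f_\beta(X)=\f(1)\ge\f(0)=\E f_\beta(Y)$; letting $\beta\to\infty$ and invoking the two-sided bound on $f_\beta$ gives $\E\max_a X_a\ge\E\max_a Y_a$. There is no genuine obstacle: the only points needing care are the cancellation of the $t\to 0,1$ singularities in $\f'$, and the softmax identity $\sum_a\partial_a f_\beta\equiv 1$, which is precisely what converts the raw covariances $\sigma_{ab}$ into the increment differences that the hypothesis controls.
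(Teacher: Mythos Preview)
Your argument is correct: this is the standard smooth-interpolation proof via Gaussian integration by parts, and every step---the softmax approximation, the covariance identity for $\tfrac{d}{dt}Z(t)$, the algebraic rewriting that converts $\sigma_{ab}$ into increment differences---checks out. Note, however, that the paper does not supply its own proof of this lemma; it merely states the result and refers the reader to \cite{Fernique75}. So there is no ``paper's approach'' to compare against, and your self-contained interpolation argument is a perfectly acceptable (and standard) way to fill the gap.
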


\begin{lemma}[Slepian]\label{lem-slepian}
Let $\mathcal{A}$ be an arbitrary finite index set and let $\{X_a\}_{a\in \mathcal{A}}$ and $\{Y_a\}_{a\in \mathcal{A}}$ be two centered Gaussian processes such that \eqref{eq-compare-assumption} holds and $\var X_a = \var Y_a$ for all $a\in \mathcal{A}$.
Then $\P(\max_{a\in \mathcal{A}} X_a \geq \lambda) \geq \P(\max_{a\in \mathcal{A}} Y_a \geq \lambda)$, for all $\lambda\in \mathbb{R}$.
\end{lemma}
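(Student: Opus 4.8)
\textbf{Proof plan for Lemma \ref{lem-slepian} (Slepian's inequality).}

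The plan is to prove the statement by an interpolation argument between the two Gaussian vectors, the same technique that underlies the standard proofs of Slepian's and the Sudakov--Fernique inequalities. Since $\mathcal{A}$ is finite, write $n = |\mathcal{A}|$ and regard $X = (X_a)_{a\in\mathcal{A}}$ and $Y = (Y_a)_{a\in\mathcal{A}}$ as centered Gaussian vectors in $\R^n$; by enlarging the probability space we may assume $X$ and $Y$ are independent. For $t\in[0,1]$ set $Z(t) = \sqrt{t}\,X + \sqrt{1-t}\,Y$, so $Z(0) = Y$ and $Z(1) = X$, and $Z(t)$ is centered Gaussian with covariance $\Sigma(t) = t\,\Sigma^X + (1-t)\,\Sigma^Y$. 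The hypotheses say $\Sigma^X_{aa} = \Sigma^Y_{aa}$ for every $a$ (so $\Sigma(t)_{aa}$ is constant in $t$), while $\E(X_a-X_b)^2 \ge \E(Y_a-Y_b)^2$ together with equality of variances forces $\Sigma^X_{ab} \le \Sigma^Y_{ab}$ for all $a\neq b$; hence $\frac{d}{dt}\Sigma(t)_{ab} = \Sigma^X_{ab}-\Sigma^Y_{ab} \le 0$ off the diagonal and $=0$ on the diagonal.

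Next I would express $P(\max_a Z_a(t) \ge \lambda) = 1 - P(Z_a(t) < \lambda \ \forall a)$ and differentiate in $t$ the quantity $F(t) = P(Z_a(t) < \lambda \ \forall a) = \E\big[\prod_a h(Z_a(t))\big]$, where $h$ is a smooth nonincreasing approximation of $\1_{(-\infty,\lambda)}$; a limiting argument at the end removes the smoothing. Using the chain rule and the Gaussian integration-by-parts identity (equivalently, differentiating under the Gaussian density, which is governed by the heat-type equation $\partial_{\Sigma_{ab}} = \partial_{x_a}\partial_{x_b}$ for $a\neq b$ and $\partial_{\Sigma_{aa}} = \tfrac12 \partial_{x_a}^2$), one obtains
\begin{equation}
\label{eq-slepian-deriv}
F'(t) = \sum_{a\neq b} \big(\Sigma^X_{ab}-\Sigma^Y_{ab}\big)\,
\E\!\left[\prod_{c\neq a,b} h(Z_c(t))\cdot h'(Z_a(t))\,h'(Z_b(t))\right].
\end{equation}
Since $h' \le 0$, each factor $h'(Z_a(t))h'(Z_b(t)) \ge 0$, the product over $c$ is nonnegative, and the coefficients $\Sigma^X_{ab}-\Sigma^Y_{ab} \le 0$; therefore $F'(t) \le 0$, so $F(1) \le F(0)$, i.e.\ $P(\max_a X_a < \lambda) \le P(\max_a Y_a < \lambda)$, which is the claim. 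The diagonal terms contribute nothing because $\Sigma(t)_{aa}$ is constant in $t$, which is exactly where the variance-equality hypothesis is used — without it one only controls $\E\max$ (Sudakov--Fernique, Lemma \ref{lem-sudakov-fernique}), not the full distribution.

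The one genuinely delicate point is justifying the differentiation and the integration-by-parts formula \eqref{eq-slepian-deriv} rigorously: when $\Sigma(t)$ is degenerate the Gaussian density need not exist, so I would first prove the inequality assuming $\Sigma^X, \Sigma^Y$ (hence $\Sigma(t)$ for $t\in(0,1)$) are strictly positive definite — replacing $X$ by $X + \epsilon G$ and $Y$ by $Y + \epsilon G$ for an independent standard Gaussian vector $G$ preserves both hypotheses — and then let $\epsilon\downarrow 0$ using continuity of $\lambda\mapsto P(\max_a X_a \ge \lambda)$ at all but countably many $\lambda$, or simply weak convergence, to recover the general case. Likewise the smoothing of $\1_{(-\infty,\lambda)}$ by nonincreasing $h_\delta \uparrow \1_{(-\infty,\lambda)}$ (or rather $h_\delta$ nonincreasing with $h_\delta \to \1_{(-\infty,\lambda]}$) is removed by monotone/dominated convergence together with the fact that $P(Z_a(t) = \lambda \text{ for some } a) = 0$ in the nondegenerate case. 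All of this is classical, so in the write-up I would carry out the interpolation and sign analysis in detail and treat the regularization and smoothing steps briefly.
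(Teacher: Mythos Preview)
Your proof is correct and is the standard Gaussian interpolation argument for Slepian's inequality. Note, however, that the paper does not actually prove this lemma: it is stated as a classical result with a citation to Slepian's original paper \cite{Slepian62}, so there is no ``paper's own proof'' to compare against. Your write-up would serve perfectly well as a self-contained proof, and the care you take with the regularization (adding $\epsilon G$ to ensure nondegeneracy, smoothing the indicator) is exactly what is needed to make the formal computation \eqref{eq-slepian-deriv} rigorous.
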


The following Borell-Tsirelson inequality is a central result in the theory of concentration of
measure; see, for example, \cite[Theorem 7.1, Equation (7.4)]{Ledoux89}.
% Maury - I changed "facet" to "result".

\begin{lemma}\label{lem-gaussian-concentration}
        Consider a Gaussian process $\{\eta_x: x\in \mathcal{A}\}$, with 
        $\mathcal{A}$ 
finite, and set $\sigma
= \sup_{x \in \mathcal{A}} (\E(\eta_x^2))^{1/2}$. Then, for $\alpha>0$,
\[\P\left(\left|\max_{x\in \mathcal{A}} \eta_x - \E \,\max_{x \in \mathcal{A}
} \eta_x\right| > \alpha\right) \leq 2 \exp(-\alpha^2 / 2\sigma^2)\,.\]
\end{lemma}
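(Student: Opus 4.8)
The statement is the Borell--Tsirelson--Sudakov--Ibragimov concentration inequality for the supremum of a Gaussian process, and since the index set is not assumed finite one must be mildly careful about measurability, but the standard route is clean. The plan is to reduce to the finite case and then prove the finite case by the Gaussian isoperimetric inequality (or, alternatively, by a direct semigroup/Herbst argument). First I would reduce to a countable, and then finite, index set: replacing $\sup_{x\in V}\eta_x$ by $\sup_{x\in V_0}\eta_x$ for a countable dense (in $L^2$) subset $V_0$ changes nothing in law by separability of the process, and then $\sup_{x\in V_0}\eta_x=\lim_m \sup_{x\in V_m}\eta_x$ along an increasing sequence of finite sets $V_m\uparrow V_0$, with the $L^2$ norms (hence the $\sigma_m:=\sup_{x\in V_m}(\E\eta_x^2)^{1/2}$) bounded by $\sigma$. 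By monotone convergence $\E\sup_{V_m}\eta_x\uparrow \E\sup_{V_0}\eta_x$ (finiteness of the limit is part of the hypothesis that $\E\sup_V\eta_x$ exists), and the tail bound for the finite sets passes to the limit by Fatou. So it suffices to prove the inequality when $V=\{1,\dots,K\}$ is finite.

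For the finite case I would represent the vector $(\eta_1,\dots,\eta_K)$ as $Ag$ where $g$ is a standard Gaussian vector in $\R^m$ and $A$ is the square root of the covariance matrix, and set $F(y)=\max_{1\le i\le K}(Ay)_i$ for $y\in\R^m$. The key observation is that $F$ is Lipschitz with constant exactly $\sigma$: for $y,y'\in\R^m$, picking the maximizing coordinate $i$ for $F(y)$ gives $F(y)-F(y')\le (A(y-y'))_i\le \|A^\top e_i\|\,\|y-y'\|$, and $\|A^\top e_i\|^2=(AA^\top)_{ii}=\E\eta_i^2\le\sigma^2$; by symmetry $|F(y)-F(y')|\le\sigma\|y-y'\|$. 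Now apply the Gaussian concentration inequality for Lipschitz functions of a standard Gaussian vector: if $F$ is $L$-Lipschitz on $\R^m$ and $g\sim\mathcal N(0,I_m)$, then $\P(|F(g)-\E F(g)|>\alpha)\le 2e^{-\alpha^2/2L^2}$. With $L=\sigma$ this is exactly the claimed bound, since $F(g)\stackrel{d}{=}\sup_{x\in V}\eta_x$ and $\E F(g)=\E\sup_{x\in V}\eta_x$.

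It remains to justify the Gaussian concentration inequality for Lipschitz functions, which I would simply cite (it is \cite[Theorem 7.1, Equation (7.4)]{Ledoux89}, as referenced in the statement, or can be derived from the Gaussian isoperimetric inequality of Borell and of Sudakov--Tsirelson by noting that $\{F\le \mathrm{med}\,F\}$ has measure $\ge 1/2$ and its $\alpha/L$-enlargement has measure $\ge \Phi(\alpha/L)$, then converting the median to the mean with a loss absorbed in the constant $2$). Alternatively, one gives a self-contained proof via the Bakry--Émery / Herbst argument: for smooth $F$, the interpolation formula $\E[F(g)h(g')]$ along $g_t=\cos t\,g+\sin t\,g'$ yields $\mathrm{Cov}(e^{\lambda F},F)\le \lambda\,\E[e^{\lambda F}]\,\|\nabla F\|_\infty^2/(\dots)$; integrating gives $\E e^{\lambda(F-\E F)}\le e^{\lambda^2 L^2/2}$, and Markov's inequality plus optimization in $\lambda$ gives the one-sided bound $e^{-\alpha^2/2L^2}$, doubled for the two-sided statement. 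I expect the main (and only) real obstacle to be the Lipschitz-constant computation $\mathrm{Lip}(F)=\sigma$ and the reduction to finitely many points with control of $\sigma_m\le\sigma$; everything else is a citation or a standard one-line argument. Smoothness of $F=\max$ is handled by the usual mollification, since Lipschitz constants and the Gaussian measure are stable under uniform approximation.
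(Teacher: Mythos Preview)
Your proof sketch is correct and follows the standard route to Borell--Tsirelson concentration: reduce to finite index sets, realize the finite Gaussian vector as $Ag$ with $g$ standard normal, verify that $F(y)=\max_i(Ay)_i$ is $\sigma$-Lipschitz via $\|A^\top e_i\|^2=\E\eta_i^2$, and invoke Gaussian Lipschitz concentration. The measurability/approximation remarks and the alternative Herbst derivation are all accurate.

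That said, you have done considerably more than the paper does. The paper gives no proof whatsoever of this lemma: it simply states it as ``a central result in the theory of concentration of measure'' and cites \cite[Theorem 7.1, Equation (7.4)]{Ledoux89}. So there is nothing to compare your argument against; you have supplied a (correct) proof where the paper is content with a reference. If you are writing this up for the paper's purposes, a one-line citation would suffice; if you want a self-contained argument, what you wrote is fine, though you could trim the alternative Bakry--\'Emery sketch and just cite Ledoux for the Lipschitz concentration step.
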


We will often need to control the expectation of the
maximum of a Gaussian field in terms
of its covariance structure. This is achieved by Fernique's criterion
\cite{Fernique75}.
We quote a version suited to our needs, which follows straightforwardly
from the version in
\cite[Theorem 4.1]{adler} by
using, as the majorizing measure, the normalized
counting measure on $B$.
\begin{lemma}
  \label{lem-ferniquecriterion}
  There exists a universal constant $C_F>0$ with the following property.
  Let $B\subset\mathbb{Z}^2$ denote a (discrete) box of side length
  $b$ and assume $\{G_v\}_{v\in B}$
  is a mean zero Gaussian field satisfying
  $$\E(G_v-G_u)^2\leq |u-v|/b\,, \mbox{ for all } u, v \in B\,.$$
  Then
  \begin{equation}
    \label{eq-ferniquecriterion}
    \E\max_{v\in B} G_v \leq C_F \,.
  \end{equation}
\end{lemma}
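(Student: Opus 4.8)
The plan is to derive the bound from the Fernique majorizing-measure criterion stated in \cite[Theorem 4.1]{adler}, using the uniform (normalized counting) measure $\mu$ on the box $B$ as the majorizing measure. Write $m = b^2 = |B|$ for the number of lattice points. The Fernique functional to control is of the form
$$
\sup_{v\in B}\int_0^{\infty}\sqrt{\log\frac{1}{\mu(B_d(v,\e))}}\,d\e,
$$
where $B_d(v,\e)$ denotes the ball of radius $\e$ around $v$ in the canonical (pseudo)metric $d(u,v)=\sqrt{\E(G_u-G_v)^2}$ of the process. By hypothesis $d(u,v)\le\sqrt{|u-v|/b}$, so $d(u,v)\le\sqrt{b\sqrt2/b}=2^{1/4}$ for all $u,v\in B$ (the $\ell^2$-diameter of $B$ is at most $b\sqrt2$); hence the integral is effectively over a bounded range $[0,c_0]$ with $c_0$ an absolute constant, and the contribution from $\e>c_0$ vanishes.

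The key step is the lower bound on $\mu(B_d(v,\e))$. Since $d(u,v)\le\sqrt{|u-v|/b}$, the Euclidean ball $\{u: |u-v|\le b\e^2\}$ is contained in $B_d(v,\e)$. The number of lattice points of $\Z^2$ in a Euclidean ball of radius $r$ centered at a lattice point is at least $\max(1, c_1 r^2)$ for a universal $c_1>0$ (for $r\ge 1$; for $r<1$ it is at least $1$). Taking $r=b\e^2$ (note $r\le b\cdot c_0^2 \asymp b$, so the ball, intersected with $B$, still captures a definite fraction — here one uses that $v\in B$ and a ball of radius $r\le b$ around $v$ contains at least $c_2 r^2$ points of $B$ itself, by a standard corner-counting argument), we get
$$
\mu(B_d(v,\e))\ \ge\ \frac{\max(1,\,c_2 b^2\e^4)}{b^2}.
$$
Consequently $\log(1/\mu(B_d(v,\e)))\le 2\log(1/\e^2)+\log(1/c_2) = 4\log(1/\e)+O(1)$ when $\e\ge b^{-1/2}c_2^{-1/4}$, and $\log(1/\mu(B_d(v,\e)))\le 2\log b$ when $\e< b^{-1/2}c_2^{-1/4}$. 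Splitting the $\e$-integral at the threshold $\e_0 \asymp b^{-1/2}$: on $[0,\e_0]$ the integrand is at most $\sqrt{2\log b}$ over an interval of length $\asymp b^{-1/2}$, contributing $O(b^{-1/2}\sqrt{\log b})=O(1)$; on $[\e_0,c_0]$ the integrand is $O(\sqrt{\log(1/\e)}+1)$, which is integrable near $0$ and over a bounded interval, contributing another $O(1)$. Thus the Fernique functional is bounded by an absolute constant $C_F$, and \cite[Theorem 4.1]{adler} yields $\E\max_{v\in B}G_v\le C_F$.

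The main obstacle is purely bookkeeping: making the lattice-point count $|B\cap\{u:|u-v|\le r\}|\gtrsim r^2$ uniform over all $v\in B$ and all $1\le r\le b$, since $v$ may sit in a corner of $B$, so one loses a constant factor but no more — any axis-aligned box of side $b$ containing $v$ has at least a quarter of the Euclidean $r$-ball around $v$ inside it (intersect with the quadrant of $B$ anchored at $v$). Once this elementary geometric estimate is in hand, the convergence of the Dudley-type integral $\int_0 \sqrt{\log(1/\e)}\,d\e$ does the rest, and no parameter depends on $b$ except through the harmless endpoint $\e_0\asymp b^{-1/2}$ whose contribution is $o(1)$. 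I would remark that this is exactly the statement that a Gaussian field on a box whose increments are dominated (after normalizing the box to unit side) by those of a field with Hölder-$1/2$ metric has uniformly bounded expected maximum, which is the form in which the lemma is invoked later in the paper.
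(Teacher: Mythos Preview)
Your proposal is correct and follows precisely the route the paper indicates: the paper does not give a detailed proof but simply states that the lemma ``follows straightforwardly from the version in \cite[Theorem 4.1]{adler} by using, as the majorizing measure, the normalized counting measure on $B$.'' You have carried out exactly this computation, bounding the Fernique integral by splitting at $\e_0\asymp b^{-1/2}$ and using the elementary lattice-point count; the details you supply are the ones the paper omits.
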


\subsection{A Brownian motion estimate}

In (\ref{eq-mu-mu*-asympt}), we show that the probabilities for Brownian motion to stay below two close curves are asymptotically the same and, in  (\ref{eq-for-correction-1})
and (\ref{eq-BM-monotone}), we give bounds on the corresponding probabilities.  In Section \ref{sec-limittail}, the asymptotic probabilities, as $N\rightarrow\infty$, of the events in (\ref{eq-big-definition}) will be strongly tied to these bounds.
\begin{lemma}\label{lem-1DRW}
        %July162014
Let $C\geq 0$ be a fixed absolute constant, and
let $\{W_s: s\geq 0\}$ be a mean zero Brownian motion, started at
$0$, with variance $\sigma^2s$, $\sigma >0$, at time $s$.
For $y>1$ and $t>0$, define densities
$\mu_{t, y}(\cdot)$ and $\mu_{t, y}^*(\cdot)$
such that, for all intervals $I$,
\begin{equation} \label{eq-def-mu-mu*}
\begin{split}
\P&(W_t\in I; W_s \leq y \mbox{ for all } 0\leq s\leq t) = \int_I \mu_{t, y}(x) dx\,,\\
\P&(W_t\in I; W_s \leq y + y^{1/20} + C (s \wedge (t-s))^{1/20} \mbox{ for all } 0\leq s\leq t) = \int_I \mu^*_{t, y}(x) dx\,.
\end{split}
\end{equation}
Then there exists $\delta_y$, with $\delta_y \searrow_{y\to \infty} 0$, such that, for all $x\leq 0$ and $t>0$,
\begin{equation}\label{eq-mu-mu*-asympt}
 \mu_{t, y}^*(x) \leq (1+ \delta_y) \mu_{t, y}(x)\,.
\end{equation}
For all $x \leq y + y^{1/20}$,
%July162014
\begin{equation}\label{eq-for-correction-1}
        \mu^*_{t, y}(x) \lesssim y (y+y^{1/20} - x) t^{-3/2} e^{-x^2/2t\sigma^2}\,.
\end{equation}
Furthermore,
\begin{equation}\label{eq-BM-monotone}
\frac{\mu^*_{t, y}(x_1)}{ \mu^*_{t, y}(x_2)} \leq \mathrm{e}^{-\frac{x_1^2 - x_2^2}{2t
\sigma^2 }} \mbox{ for all } 0\leq x_2 \leq x_1 \leq y + y^{1/20}\,.
\end{equation}
\end{lemma}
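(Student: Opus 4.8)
The plan is to reduce everything to explicit one-dimensional Brownian formulas via the reflection principle and then carry out elementary estimates. First I would record that for a single barrier at level $y$, the reflection principle gives the exact density
\[
\mu_{t,y}(x) = \frac{1}{\sqrt{2\pi t \sigma^2}}\left(\mathrm{e}^{-x^2/(2t\sigma^2)} - \mathrm{e}^{-(2y-x)^2/(2t\sigma^2)}\right), \quad x \le y,
\]
and $\mu_{t,y}(x)=0$ for $x>y$. The starred density $\mu^*_{t,y}$ has no such closed form because the barrier $y + y^{1/20} + C(s\wedge(t-s))^{1/20}$ is curved and bulges up in the middle; nevertheless it is bounded above by the density with the constant barrier at the maximal level $y + y^{1/20} + C(t/2)^{1/20}$ and bounded below by the one at level $y + y^{1/20}$ (monotonicity of the event in the barrier height). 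These sandwiching bounds, both of which \emph{are} reflection-principle formulas, are the workhorses.

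For \eqref{eq-mu-mu*-asympt}, I would compare $\mu^*_{t,y}(x)$ (for $x\le 0$) against the lower bound just described with barrier $y+y^{1/20}$, call it $\mu_{t,y+y^{1/20}}(x)$, and then that against $\mu_{t,y}(x)$. The ratio $\mu^*_{t,y}(x)/\mu_{t,y}(x)$ is thus at most an upper-barrier reflection density over a lower-barrier reflection density, and after plugging in the closed form one gets, for $x\le 0$, a ratio of the shape $(1 - \mathrm{e}^{-2(y+c)(y+c-x)/(t\sigma^2)})/(1 - \mathrm{e}^{-2y(y-x)/(t\sigma^2)})$ with $c=y^{1/20}+C(t/2)^{1/20}$; here I need to be slightly careful because when $t$ is very large the curved barrier's bulge $C(t/2)^{1/20}$ is not bounded by $y^{1/20}$, so the clean statement ``$x\le 0$, barrier at most $y+2y^{1/20}$'' fails. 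The fix is that for $x\le 0$ and small $t$ the bulge is negligible, and for large $t$ one uses instead the expansion $1-\mathrm{e}^{-a}\approx a$ when the exponents are small, which makes the ratio $\approx (y+c)(y+c-x)/(y(y-x)) = 1 + O(y^{-1+1/20}) + O(t^{1/20-?}/\cdots)$; tracking this requires care, and this is where one defines $\delta_y$ as the appropriate supremum over $t$ of the deviation from $1$, then checks $\delta_y\to 0$. The inequality \eqref{eq-for-correction-1} follows from the upper sandwich bound: the constant-barrier reflection density at level $h := y+y^{1/20}+C(t\wedge?)^{1/20}$ satisfies, for $x\le h$, $\mu_{t,h}(x)\le \frac{1}{\sqrt{2\pi t\sigma^2}}\cdot\frac{2h(h-x)}{t\sigma^2}\mathrm{e}^{-x^2/(2t\sigma^2)}$ by the elementary bound $1-\mathrm{e}^{-a}\le a$ together with $\mathrm{e}^{-x^2/(2t\sigma^2)}\le 1$ and $\mathrm{e}^{-(2h-x)^2/\cdots}\ge \mathrm{e}^{-x^2/\cdots}(1-2h(h-x)/(t\sigma^2))$; one then absorbs the $y^{1/20}$ and $(t\wedge(t-t))^{1/20}$ corrections into the $y(y+y^{1/20}-x)$ and rewrites $t\asymp N$ (here $t$ ranges over scales comparable to $n$ — I would check the regime in which the lemma is applied, namely $t\asymp n$, so $t^{-3/2}\asymp n^{-3/2}$; if $t$ is allowed smaller the statement as written should be read with $n$ replaced by $t$, but I'll follow the paper's convention).

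For the monotonicity \eqref{eq-BM-monotone}, the point is that $\mu^*_{t,y}(x)$, as a function of $x$ on $[0, y+y^{1/20}]$, is log-concave-ish relative to the Gaussian weight $\mathrm{e}^{-x^2/(2t\sigma^2)}$: writing $\mu^*_{t,y}(x) = \mathrm{e}^{-x^2/(2t\sigma^2)} h(x)$ I would show $h$ is nonincreasing on $[0, y+y^{1/20}]$. Intuitively $h(x)$ is (up to the Gaussian prefactor) the probability that a Brownian bridge-type path from $0$ to $x$ in time $t$ stays under the barrier, and raising the endpoint $x$ only makes the constraint harder; more carefully, using the sandwich and the explicit reflection formula, $h(x)\propto 1 - \mathrm{e}^{-2(\text{barrier})( \text{barrier}-x)/(t\sigma^2)}$ for the constant-barrier comparands, which is manifestly decreasing in $x$, and the curved-barrier $h$ inherits this via a path-decomposition/FKG-type argument (decompose at the time of the running maximum, or condition on the path restricted to $[0,t/2]$ and $[t/2,t]$ and use that shifting the endpoint up translates a conditionally-reflected Gaussian). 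Given $h$ nonincreasing, for $0\le x_2\le x_1\le y+y^{1/20}$ we get $\mu^*_{t,y}(x_1)/\mu^*_{t,y}(x_2) = \mathrm{e}^{-(x_1^2-x_2^2)/(2t\sigma^2)} h(x_1)/h(x_2) \le \mathrm{e}^{-(x_1^2-x_2^2)/(2t\sigma^2)}$, which is exactly \eqref{eq-BM-monotone}.

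The main obstacle I anticipate is \eqref{eq-mu-mu*-asympt}: making the error uniform over \emph{all} $t>0$ while the curved barrier's mid-bulge $C(s\wedge(t-s))^{1/20}$ is unbounded in $t$. The resolution is to split into $t$ small (where the bulge and even $y^{1/20}$ are $o(y)$ relative to the relevant scale and the two reflection formulas are close) versus $t$ large (where all the relevant reflection exponents $2h(h-x)/(t\sigma^2)$ are small, so $1-\mathrm{e}^{-a}=a(1+o(1))$ and the ratio becomes an honest ratio of polynomials in $y$ and $x$ that is $1 + O(y^{-19/20})$ uniformly, using $x\le 0$); the delicate bookkeeping is in the crossover regime $t\asymp y^{?}$, and defining $\delta_y$ as the sup over $t$ of the resulting relative error and then verifying $\delta_y\searrow 0$ is the one genuinely fiddly computation. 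Everything else is reflection principle plus $1-\mathrm{e}^{-a}\le a$ and $1-\mathrm{e}^{-a}\ge a - a^2/2$.
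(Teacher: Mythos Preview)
Your argument for \eqref{eq-BM-monotone} is correct and is exactly what the paper does: decompose $W_s = (W_s - \tfrac{s}{t}W_t) + \tfrac{s}{t}W_t$ into a Brownian bridge independent of $W_t$; conditioning on $W_t = x$ and raising $x$ shifts the path up linearly, so the barrier constraint only gets harder, hence $\mu^*_{t,y}(x)/p_t(x)$ is nonincreasing in $x$ and the Gaussian-density ratio bound follows.

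The plan for \eqref{eq-mu-mu*-asympt} and \eqref{eq-for-correction-1}, however, has a real gap. Your upper sandwich replaces the curved barrier by the constant barrier at its maximum height $h = y + y^{1/20} + C(t/2)^{1/20}$, and after the large-$t$ linearisation $1-\mathrm{e}^{-a}\approx a$ this gives
\[
\frac{\mu^*_{t,y}(x)}{\mu_{t,y}(x)} \;\le\; \frac{h(h-x)}{y(y-x)} \;\le\; \Bigl(1+\tfrac{c}{y}\Bigr)^2,\qquad c = y^{1/20}+C(t/2)^{1/20}.
\]
For $\delta_y\to 0$ you would need $c/y\to 0$ uniformly in $t$, but $c$ contains $C(t/2)^{1/20}$, so for $t\gtrsim y^{20}$ the bound is useless. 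This is precisely the regime of interest: the lemma is applied with $t=n_{v,N}\asymp n$ and $y=z$, and the limit $N\to\infty$ is taken \emph{before} $z\to\infty$. The same defect breaks your sketch of \eqref{eq-for-correction-1}: the upper sandwich yields $h(h-x)t^{-3/2}$ with $h\sim t^{1/20}$, not $y(y+y^{1/20}-x)t^{-3/2}$.

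What the paper does instead is work with the difference $\mu^*_{t,y}(x)-\mu_{t,y}(x)$, which is supported on paths whose running maximum exceeds $y$ yet stays under the curved barrier, and decompose according to the integer interval $[j,j+1]$ containing the time $\tau$ of the global maximum. Applying the Markov property at $j$ and at $\tau$ and using the reflection principle on each piece yields terms
\[
\Phi_j \;\lesssim\; \bigl(\psi^*_{t,y}(j)-y\bigr)^3\, j^{-3/2}(t-j)^{-3/2}\,\bigl(\psi^*_{t,y}(j)-x\bigr)\,\psi^*_{t,y}(j),
\]
where $\psi^*_{t,y}(j)-y \asymp y^{1/20}+(j\wedge(t-j))^{1/20}$. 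The key point, invisible to any constant-barrier sandwich, is that the bulge $(j\wedge(t-j))^{1/20}$ is large only in the \emph{middle} of $[0,t]$, exactly where the ballot-theorem factor $j^{-3/2}(t-j)^{-3/2}$ is smallest; summing $\Phi_j$ over $j$ then gives $\lesssim y(y-x)t^{-3/2}\cdot\bigl(y^{-1/2}\log y + y^2\mathrm{e}^{-(\log y)^2/4\sigma^2}\bigr)$, which is $o(\mu_{t,y}(x))$ uniformly in $t$. The same decomposition handles \eqref{eq-for-correction-1}. This shape-vs-suppression balance is the missing idea in your proposal.
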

%July162014
Note that by setting $C=0$ and $\bar y=y+y^{1/20}$,   
\eqref{eq-BM-monotone} also implies that 
\begin{equation}\label{eq-BM-monotone-bis}
\frac{\mu_{t, \bar y}(x_1)}{ \mu_{t, \bar y}(x_2)} \leq 
\mathrm{e}^{-\frac{x_1^2 - x_2^2}{2t
\sigma^2 }} \mbox{ for all } 0\leq x_2 \leq x_1 \leq \bar y\,. 
\end{equation}
\begin{proof}[Proof of Lemma \ref{lem-1DRW}]
 In order to show \eqref{eq-BM-monotone},  note that $\{W_s - sW_t/t: 0\leq s\leq t\}$ is distributed as a Brownian bridge that is independent of the value $W_t$. 
{So, the probability that the Brownian motion $\{W_s:  0\leq s\leq t\}$  stays below a given curve, after conditioning on $W_t = x$, is decreasing with $x$. Combined with the fact that $\mu^*_{t, y}(x)>0$ for all $x\leq y + y^{1/20}$, } this implies the ratio between $\mu^*_{t, y}(x_1)$ and $\mu^*_{t, y}(x_2)$
is bounded above by the ratio of the densities for the Brownian motion at $x_1$ and $x_2$.

We next prove \eqref{eq-mu-mu*-asympt}. 
By the reflection principle, for all 
%July162014
$w \geq 0$,
\begin{equation}\label{eq-reflection-principle}
\mu_{t, y}(y-w) = \frac{1}{\sqrt{2\pi t} \sigma} (\mathrm{e}^{-\frac{(w-y)^2}{2t \sigma^2}} - \mathrm{e}^{-\frac{(w+y)^2}{2t \sigma^2}}).
\end{equation}
 Let $\tau$ be a global maximizer of $\{W_s: 0\leq s\leq t\}$,
 i.e., $W_\tau=\max_{0\leq s\leq t} W_s$,
 and write $\psi_{t, y}(s) = y + y^{1/20} + C (s \wedge (t-s))^{1/20} $ and $\psi_{t, y}^*(j) = \max_{s\in [j, j+1]} \psi_{t, y}(s)$.
Summing over different $j$ for $\tau \in [j, j+1]$ and
integrating over locations for $W_j$, we obtain from the Markov property
applied at time $j$ and then again at time 
%July162014
%$\tau$
$\min\{s\geq j: W_s\geq y\}\wedge (j+1)$
that, for all $x\leq 0$,
\begin{align*}
&\mu_{t, y}^*(x) - \mu_{t, y}(x)\\
& \leq \sum_{j=0}^{\lfloor t \rfloor}\int_{-\infty}^{\psi_{t, y}^*(j)} 
\mu_{j, \psi_{t, y}^*(j)}(\lambda)
\P(\lambda + \max_{0\leq s\leq 1} W_s
\geq y) \max_{j\leq s\leq j+1} 
\, 
\max_{y\leq y'\leq \psi_{t, y}^*(j)}\mu_{t-s, \psi_{t, y}^*(j) - y'}(x-y') d\lambda\,.
\end{align*}
{In the preceding inequality, the sum over $j$ corresponds to the intervals $[j, j+1)$, with the Brownian motion achieving its maximum at $\tau\in [j, j+1).$ (Note that, on the relevant event, $y\leq W_\tau \leq \psi_{t, y}(\tau)$,  integration over $\lambda$ includes all possible locations of the Brownian motion at time $j$; the term $\P(\lambda + \max_{0\leq s\leq 1} W_s
\geq y) $ bounds the probability for the Brownian motion to hit $y$ (i.e., $W_\tau \geq y$) given its location at time $j$; and the term $\max_{y\leq y'\leq \psi_{t, y}^*(j)}\mu_{t-s, \psi_{t, y}^*(j) - y'}(x-y') $ bounds the probability for the Brownian motion to stay below $W(\tau)$ over the time interval $(\tau, t]$.) }
By \eqref{eq-BM-monotone-bis}, $\max_{y\leq y'\leq \psi_{t, y}^*}\mu_{t-s, \psi_{t, y}^*(j) - y'}(x-y') \leq \mu_{t-s, \psi_{t, y}^*(j) - y}(x-y)$. Therefore,
\begin{align}\label{eq-integration}
\mu_{t, y}^*(x) - \mu_{t,y}(x)
&\lesssim \sum_{j=0}^{\lfloor t \rfloor}\int_{-\infty}^{\psi_{t, y}^*(j)}  \mu_{j, \psi_{t, y}^*(j)}(\lambda)\mathrm{e}^{-\frac{(\lambda - y)_-^2}{2 \sigma^2}}\max_{j\leq s\leq j+1}\mu_{t-s, \psi_{t, y}^*(j) -y}(x-y) d\lambda
=:\sum_{j=0}^{\lfloor t \rfloor} \Phi_j\,,
\end{align}
where $(\lambda - y)_- = \one_{\lambda \leq y} |\lambda - y|$.
%Ofer1202 OK, added :=
% Maury - Elsewhere, we have use =: , so I would use this at the end of the previous display.
Applying \eqref{eq-reflection-principle}, we obtain that, for both $j\leq y^2/(\log y)^2$ and $j\geq t - y^2/(\log y)^2$,
\begin{equation}\label{eq-Phi-j-1}
\Phi_j \lesssim \mathrm{e}^{-(\log y)^2/4 \sigma^2} (y-x) t^{-3/2}\,.
\end{equation}
Again, by the reflection principle, for all $\lambda \leq \psi_{t, y}^*(j)$, $x\leq 0$ and $j-1\leq s\leq j$,
\begin{align*}
\mu_{j, \psi_{t, y}^*(j)} (\lambda) &\lesssim j^{-3/2} (\psi_{t, y}^*(j) -\lambda) \psi_{t, y}^*(j) \\
\mu_{t-s, \psi_{t, y}^*(j) - y} (x-y)& \lesssim (\psi_{t, y}^*(j) - y) (\psi_{t, y}^*(j) - x) (t-j)^{-3/2}\,.
\end{align*}
For $y^2 /(\log y)^2\leq j\leq t-y^2/(\log y)^2$,
plugging the above estimates into the integral in
\eqref{eq-integration} leads to
\begin{align}\label{eq-Phi-j-2}
\Phi_j &\lesssim (\psi_{t, y}^*(j) - y)^3  j^{-3/2} (t-j)^{-3/2} (\psi_{t, y}^*(j)  - x) \psi_{t, y}^*(j) \,.\end{align}
Plugging \eqref{eq-Phi-j-1} and \eqref{eq-Phi-j-2} into
\eqref{eq-integration} and summing over $j$, we obtain
\begin{align*}
\mu_{t, y}^*(x) - \mu_{t, y}(x) \lesssim (y-x) y t^{-3/2} (y^2 \mathrm{e}^{-\frac{(\log y)^2}{4\sigma^2}} +y^{-1/2} \log y) \lesssim \mu_{t, y}(x) (y^2 \mathrm{e}^{-\frac{(\log y)^2}{4\sigma^2}} +y^{-1/2} \log y)\,,
\end{align*}
which completes the proof of \eqref{eq-mu-mu*-asympt} (where we have
used $\mu_{t, y}(x) \asymp y(y-x) t^{-3/2}$).

It remains to show \eqref{eq-for-correction-1}. The result follows by using the same decomposition for the location of the global maximizer $\tau$ of Brownian motion in a manner analogous to the proof of \eqref{eq-mu-mu*-asympt}, together with similar straightforward computations. We omit further details.
\end{proof}

\subsection{Refined estimates on the right tail of the maximum for the GFF}

In \cite{DZ12}, it is shown that
\begin{equation}\label{eq-right-tail}
\P(\eta^*_N> m_N+\lambda)
\asymp \lambda\mathrm{e}^{-\sqrt{2\pi} \lambda} \mbox{ for all } 1\leq \lambda \leq \sqrt{\log N}\,.
\end{equation}
In this subsection, we give a preliminary upper bound on the right tail of the
maximum of GFF over subsets of $V_N$, for values of $\lambda$
that include $\lambda \gg \sqrt{\log N}$.
%Ofer1202 You changed the meaning in a bad way. I reworded.
% Maury - I found the above sentence misleading and so reworded it.
To do this, we first obtain an upper bound on the probability of BRW taking atypically large values.
We will use the notation of
Subsection~\ref{sec:MBRW} and, for convenience, will view each $\vartheta_{v,N}$ as the value
at time $n=\log_2 N$ of a mean zero Brownian motion $\{\vartheta_{v,N}(t): 0\leq t\leq n\}$ with variance rate $\frac{2\log 2}{\pi}$.
More precisely, we associate to each Gaussian variable $\bar\phi_{N, j, B}$ an independent Brownian motion with
variance rate $\frac{2\log 2}{\pi}$ that runs for one unit of time and ends
at the value $\bar\phi_{N, j , B}$, 
{with
$$\vartheta_{v,N}(t) := \sum_{j=0}^{[t]} \bar\phi_{N, j, \mathcal{BD}_j(v)} + \bar \phi_{N, j, \mathcal{BD}_j(v)}(t - [t])\,.$$}
For $\beta>0$, define
\begin{align}\label{eq-def-G-N-prelim}
G_{N}(\beta) &= \bigcup_{v\in V_N}\bigcup_{0 \leq t \leq n}\{\vartheta_{v, N}(t) \geq \beta + 1 + \frac{m_N}{n}t+ 10 (\log(t \wedge (n-t)))_+\}\,.
\end{align}

\begin{lemma}\label{lem-a-priori}
There exists an absolute constant $C>0$ such that
%July162014
$\P(G_{N}(\beta))  \lesssim (\beta\vee 1)
\mathrm{e}^{-\sqrt{2\pi} \beta} \mathrm{e}^{-\beta^2/Cn}$ for all $\beta>0$.
\end{lemma}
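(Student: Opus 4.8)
The plan is a truncated first-moment (ballot-type) argument run directly on the $4$-ary tree underlying the BRW. Set $\sigma^2=\tfrac{2\log2}{\pi}$ and $a=m_N/n$; using $\log N=n\log2$ one checks $a=\sqrt{2\pi}\sigma^2-\tfrac{3\sqrt{2/\pi}}{4}\tfrac{\log\log N}{n}$, so that $\tfrac{a}{\sigma^2}=\sqrt{2\pi}-O(\tfrac{\log\log N}{n})$ and, most importantly,
\[
4^{\ell}e^{-a^2\ell/2\sigma^2}=(\log N)^{3\ell/2n}\,e^{O(\ell(\log\log N)^2/n^2)}\ \le\ (\log N)^{3/2}e^{o(1)}\ \asymp\ n^{3/2}\qquad(0\le\ell\le n);
\]
this is exactly where the $-\tfrac38\log\log N$ correction in $m_N$ enters. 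Write the barrier as $f(t)=\beta+1+at+h(t)$ with $h(t)=10(\log(t\wedge(n-t)))_+$, and note $h(t)\lesssim(t\wedge(n-t))^{1/20}$ since $\log x\lesssim x^{1/20}$. We focus on $\beta\ge1$, the complementary range reducing to a bounded-probability statement.

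First I would pass to the tree. For each $s$, the path $\{\vartheta_{v,N}(t):t\le s\}$ is a function of $\mathfrak{BD}_{n-\lceil s\rceil}(v)$, of which there are $\lesssim4^{\lceil s\rceil}$, and two paths attached to the same dyadic box coincide on $[0,s]$. Decomposing $G_N(\beta)$ according to the \emph{global} first time some path meets $f$, placed in an interval $[\ell,\ell+1]$ with $\ell\in\{0,\dots,n\}$: the box carrying the crossing vertex then has all its ancestors, in particular its level-$(\ell-1)$ ancestor, strictly below $f$ on $[0,\ell-1]$. A union bound over the $\lesssim4^{\ell+1}$ boxes gives $\P(G_N(\beta))\lesssim\sum_{\ell=0}^{n}4^{\ell}q_\ell$, where $W$ is a centred Brownian motion of variance rate $\sigma^2$ and $q_\ell=\P(W_s<f(s)\ \forall s\le\ell-1;\ W_s\ge f(s)\ \text{for some }s\in[\ell,\ell+1])$.

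Next I would remove the drift. Under the measure $\widetilde\P$ tilting $W$ by the drift $a$, the process $\widehat W_s:=W_s-as$ is a centred BM of rate $\sigma^2$ with $\tfrac{d\P}{d\widetilde\P}\big|_{\mathcal F_\tau}=e^{-\frac{a}{\sigma^2}W_\tau+\frac{a^2\tau}{2\sigma^2}}$; choosing $\tau$ the first meeting time of $f$, so $\tau\in[\ell-1,\ell+1]$ and $W_\tau=f(\tau)$ by continuity, yields
\[
q_\ell\ \lesssim\ e^{-\frac{a}{\sigma^2}(\beta+h(\ell))}\,e^{-\frac{a^2\ell}{2\sigma^2}}\;\widetilde q_\ell,\qquad
\widetilde q_\ell=\widetilde\P\big(\widehat W_s<\beta+1+h(s)\ \forall s\le\ell-1;\ \widehat W_s\ge\beta+1+h(s)\ \text{some }s\in[\ell,\ell+1]\big),
\]
with $e^{-\frac{a}{\sigma^2}h(\ell)}\asymp((\ell\wedge(n-\ell))\vee1)^{-10\sqrt{2\pi}}$ and $e^{-\frac{a}{\sigma^2}\beta}=e^{-\sqrt{2\pi}\beta}e^{O(\beta\log\log N/n)}$ (the last factor absorbed into $e^{-\beta^2/Cn}$ or the constant). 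Now $\widetilde q_\ell$ is a ballot probability for $\widehat W$ below the essentially-constant barrier $\beta+1+h(\cdot)$: $h$ is a purely interior bump dominated by $C(s\wedge(\ell-1-s))^{1/20}$ (using $\log x\lesssim x^{1/20}$), so staying below $\beta+1+h(\cdot)$ is comparable to staying below the constant $\beta+1$, and the estimates of Lemma~\ref{lem-1DRW} give $\widetilde q_\ell\lesssim\ell^{-3/2}(\beta+1)e^{-(\beta+1)^2/2\sigma^2\ell}$ up to a harmless polylogarithmic factor; for $\ell=O(1)$ one uses instead the crude reflection bound $\widetilde q_\ell\lesssim e^{-(\beta+1)^2/2\sigma^2\ell}$, adequate there since $4^{\ell}e^{-a^2\ell/2\sigma^2}\asymp1$. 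Plugging in, using $e^{-(\beta+1)^2/2\sigma^2\ell}\le e^{-\beta^2/2\sigma^2 n}$, and summing: the bracketed factor $4^{\ell}e^{-a^2\ell/2\sigma^2}\ell^{-3/2}$ is $O(1)$ when $\ell\asymp n$ and $\le(\log N)^{3/2}$ always, while $10\sqrt{2\pi}>3$ makes $\sum_\ell 4^{\ell}e^{-a^2\ell/2\sigma^2}\ell^{-3/2}((\ell\wedge(n-\ell))\vee1)^{-10\sqrt{2\pi}}=O(1)$ (the part with $\ell$ bounded away from $n$ being $o(1)$). This gives $\P(G_N(\beta))\lesssim(\beta+1)e^{-\sqrt{2\pi}\beta}e^{-\beta^2/Cn}$, which is the claim.

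The main obstacle is the ballot estimate for $\widetilde q_\ell$: extracting the $\ell^{-3/2}$ entropic-repulsion factor \emph{uniformly in $\ell$} is what cancels the $n^{3/2}=(\log N)^{3/2}$ generated by the tree union bound, and this forces one to treat the $\log$-shaped bump $h$ honestly—replacing $h$ by its maximum $\asymp\log n$ would cost a spurious factor $\log n$, so one must use that $h$ is sub-polynomial and, relative to the interval at hand, small near the endpoints, which is precisely what Lemma~\ref{lem-1DRW} encodes after matching $h$ to the $C(s\wedge(t-s))^{1/20}$ form. The remaining work—the change-of-measure bookkeeping and the verification that $4^{\ell}e^{-a^2\ell/2\sigma^2}$ has exactly size $(\log N)^{3\ell/2n}$—is routine but must be carried out with care.
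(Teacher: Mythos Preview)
Your proof is essentially the paper's: both decompose according to the level $\ell$ at which the barrier is first crossed (yielding a factor $4^\ell$ from the tree), apply a Girsanov tilt to remove the drift $a=m_N/n$, and invoke the ballot-type estimate of Lemma~\ref{lem-1DRW} to extract the $\ell^{-3/2}$ factor that cancels $4^\ell e^{-a^2\ell/2\sigma^2}\asymp n^{3\ell/2n}$, with the $\log$ bump $h$ supplying the summable $(\ell\wedge(n-\ell))^{-10\sqrt{2\pi}}$; the only cosmetic difference is that you evaluate the Radon--Nikodym derivative at the hitting time $\tau$, while the paper integrates over the position of $\bar\vartheta_{v,N}(j)$ at the integer time just before the crossing. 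One small inaccuracy worth flagging: $h(s)=10(\log(s\wedge(n-s)))_+$ does \emph{not} vanish near $s=\ell-1$ when $\ell$ is away from $n$, so it is not ``a purely interior bump dominated by $C(s\wedge(\ell-1-s))^{1/20}$'' as you write---the paper handles this by applying Lemma~\ref{lem-1DRW} with the constant part of the barrier taken as $y+y^{1/20}=\psi_{N,\ell,\beta}=\beta+1+h(\ell)$ rather than $\beta+1$, which costs only an extra factor $\beta+h(\ell)$ that is immediately compensated by the $e^{-\sqrt{2\pi}h(\ell)}$ you already have.
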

\begin{proof}
We may assume that $\beta>\beta_0$, for some $\beta_0>0$ large.
 For any $v\in V_N$, write $\bar \vartheta_{v,N}(t) = \vartheta_{v,N}(t) - \tfrac{m_N t}{n}$.
{For $j=1,\ldots,n$,
         define the probability measure $\mathbb{Q}_j$ by
\begin{equation}\label{eq-change-of-measure-1}
\frac{d\mathbb{Q}_j}{d\mathbb{P}} = \mathrm{e}^{\tfrac{\pi m_N}{(2\log 2)
n}
\bar \vartheta_{v,N}(j) + \tfrac{\pi m_N^2}{(4\log 2) n}\cdot\tfrac{j}{n} }\,.
\end{equation}
For $j=n$, we omit the subscript $j$ from the notation.}
Then, under 
{$\mathbb{Q}_j$},
{$\{\bar \vartheta_{v,N}(t): 0\leq t\leq j\}$}
is a mean zero Brownian motion
with variance rate $\frac{2\log 2}{\pi}$.
{We write $\frac{d\mathbb{Q}_j}{d\mathbb{P}}(x)$ for 
the expression in \eqref{eq-change-of-measure-1} when $\bar\vartheta_{v,N}(j)=x$.}

% Maury - I inserted "mean zero".
%Then we have
%\begin{equation}\label{eq-change-of-measure-1}
%\frac{d\mathbb{P}}{d\mathbb{Q}} = \mathrm{e}^{-\tfrac{\pi m_N}{(2\log 2)
%n}
%\bar \vartheta_{v,N}(n) - \tfrac{\pi m_N^2}{(4\log 2) n} }\,.
%\end{equation}
For $0\leq t\leq n$, write $\psi_{N, t, \beta} = \beta+1+
10 (\log (t \wedge (n-t)))_+
% was here n^* 10 \log (t \wedge (n-t)) +
$. For $j \in [1,\ldots,n]$, let $\chi_{N, j}(\cdot)$ be the density function such that, for all $I\subseteq \mathbb{R}$,
$$\P(\bar \vartheta_{v,N}(t) \leq \psi_{N, t, \beta} \mbox{ for all } t\leq j, \bar \vartheta_{v,N}(j) \in I ) = \int_I \chi_{N,j}(x) dx\,.$$
By a straightforward union bound,
%July162014
\begin{align}\label{eq-G-N-v}
        \nonumber
        \P(G_{N}(\beta)) &\leq \sum_{j=1}^{n-1} 4^{j+1}
        \int_{-\infty}^{\psi_{N,j,\beta}} \chi_{N, j}(x) 
        \P(
        {\exists s\in [0, 1]: x +\gamma 
                W(s)    \geq \psi_{N, s, \beta}}) dx\\
        & \quad +
4       \P(\exists s\in [0, 1]:  \bar \vartheta_{v,N}(s)  
            \geq \psi_{N, s, \beta}) 
        \,,
\end{align}
{where $\{W_s: 0\leq s\leq 1\}$ is an independent 
Brownian motion and $\gamma =\sqrt{2\log 2}$.}
Applying \eqref{eq-for-correction-1} (with $j$ in place of $t$ and $\psi_{N, j, \beta}$ 
in place of $y + y^{1/20}$), we see that, for $x\leq \psi_{N, j, \beta}$ and $j\geq 1$,
%July162014
\begin{equation}
        \label{disp-july162014}
        (\frac{d \P}{d \mathbb{Q}_{j}} (x))^{-1} \chi_{N, j}(x) \lesssim j^{-3/2} \psi_{N, j, \beta} (\psi_{N, j, \beta} - x)e^{-\pi x^2/4j\log 2 }\,.
\end{equation}
Combined with \eqref{eq-change-of-measure-1} 
{and using that $(j/n)\log n\leq \log j$},  this yields
\begin{equation}\label{eq-nu}
\chi_{N, j}(x) \lesssim 4^{-j} \mathrm{e}^{-(\sqrt{2\pi}- O((\log n)/n))x} \mathrm{e}^{-\pi x^2/4j\log 2 } \psi_{N, j, \beta} (\psi_{N, j, \beta} - x) \,.
\end{equation}
{On the other hand,
\begin{align}\label{eq-to-add-details-1}
        \P(\exists s\in [0,1]: x + 
        %\mbox{\rm \st{$\bar \vartheta_{v,N}(s) - \bar\vartheta_{v,N}(j)$}}
        \gamma W(s)
  \geq \psi_{N, s, \beta})\lesssim \exp(-(\min_{s\in [j, j+1]} \psi_{N, s, \beta} - x)^2/(2\gamma^2))\,.
\end{align}
}
Since the right hand side of \eqref{eq-to-add-details-1} decays (essentially) exponentially in $|\psi_{N, j, \beta} - x|^2$, the typical value of $x$ that will contribute to the integral of \eqref{eq-G-N-v} will be close to $\psi_{N, j, \beta}$. With this observation in mind,
plugging \eqref{eq-to-add-details-1} and \eqref{eq-nu} into \eqref{eq-G-N-v}, we obtain that, for an absolute constant $C>0$,
%July162014
\begin{align*}
\P(G_{N}(\beta)) &\lesssim \sum_{j=1}^{n} (\beta\vee 1) 4^j 4^{-j} ( j \wedge (n+1-j))^{-2} \mathrm{e}^{-\sqrt{2\pi}\beta}  \mathrm{e}^{-\beta^2/Cn} \lesssim 
(\beta\vee 1) \mathrm{e}^{-\sqrt{2\pi}\beta}  \mathrm{e}^{-\beta^2/Cn}\,,
\end{align*}
where the power 2 in $(j \wedge (n+1-j))^2$ is obtained from the slackness term $10 (\log (j \wedge (n-j)))_+$ (with room to spare).
\end{proof}

\begin{lemma}\label{lem-prelim-tail}
There exist absolute constants $C,C'>0$ such that, for all $N\in \mathbb{N}$ and $z\geq 1$,
\begin{equation}\label{eq-large-tail}
\P(\max_{v\in V_N} \eta_{v,N}\geq m_N + z ) \le C' z \mathrm{e}^{-\sqrt{2\pi} z} \mathrm{e}^{-C^{-1} z^2/n} \,.
\end{equation}
Furthermore,
%for all $A\subseteq V_N$ we have
\begin{equation}\label{eq-max-subset}
\P(\max_{v\in A} \eta_{v,N}\geq m_N + z -y) \lesssim (\frac{|A|}{|V_N|})^{1/2} z \mathrm{e}^{-\sqrt{2\pi} (z-y)}   \mbox{ for all } z\geq 1,y\geq 0\,,
\end{equation}
for all $A\subseteq V_N$.
\end{lemma}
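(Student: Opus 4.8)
The plan is to derive \eqref{eq-large-tail} from the branching-random-walk bound of Lemma \ref{lem-a-priori} via a Gaussian comparison, and then obtain the subset estimate \eqref{eq-max-subset} from \eqref{eq-large-tail} by an FKG/union-type argument restricting to a sub-box. First I would reduce the GFF to the MBRW $\{\xi_{v,N}\}$ using Lemma \ref{lem-covariance}: the two fields have covariance structure agreeing up to an additive constant on the bulk $V_N'$, so by Slepian's lemma (Lemma \ref{lem-slepian}), after adding an independent $O(1)$-variance Gaussian to the smaller-variance field to equalize variances, $\P(\max_{v\in V_N'}\eta_{v,N}\geq m_N+z)$ is bounded by $\P(\max \xi_{v,N}\geq m_N+z-O(1))$ plus an error absorbed into the constant. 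The boundary layer $V_N\setminus V_N'$ is handled separately by choosing $\delta$ small (or by a scaling/union argument over $O(1)$ translated sub-boxes), so it suffices to bound the MBRW maximum.

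Next I would compare the MBRW to the BRW $\{\vartheta_{v,N}\}$. The MBRW is, essentially by construction, dominated in the Slepian sense by a BRW up to constants (the scale-$j$ increments of the MBRW are averages/repetitions of i.i.d.\ pieces, while the BRW sums genuinely independent pieces at each scale), so $\P(\max_v \xi_{v,N}\geq m_N+z-O(1)) \lesssim \P(\max_v \vartheta_{v,N}\geq m_N+z-O(1))$. Now view each $\vartheta_{v,N}$ as the time-$n$ value of the Brownian motion $\vartheta_{v,N}(\cdot)$ as in the setup preceding Lemma \ref{lem-a-priori}. The event $\{\max_v\vartheta_{v,N}(n)\geq m_N+z\}$ is contained in $G_N(\beta)$ for a suitable $\beta\asymp z$: either the path of the maximizing $v$ ever exceeds the barrier $\beta+1+\frac{m_N}{n}t+10(\log(t\wedge(n-t)))_+$, in which case we are in $G_N(\beta)$, or it stays below it and yet reaches $m_N+z$ at time $n$ — but the barrier at $t=n$ is roughly $\beta+1+m_N$, so this forces $\beta\gtrsim z$. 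A small technical point is that the logarithmic slack vanishes at $t=n$, so one should run the Brownian motions slightly past time $n$ or absorb a constant; this is routine. Lemma \ref{lem-a-priori} then gives $\P(G_N(\beta))\lesssim \beta e^{-\sqrt{2\pi}\beta}e^{-\beta^2/Cn}$, which is \eqref{eq-large-tail} after renaming $n=\log_2 N$ and adjusting $C$.

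For \eqref{eq-max-subset}, fix $A\subseteq V_N$. If $|A|/|V_N|$ is bounded below, the bound follows from \eqref{eq-large-tail} (the Gaussian correction $e^{-C^{-1}z^2/n}$ only helps, and $e^{-\sqrt{2\pi}(z-y)}=e^{\sqrt{2\pi}y}e^{-\sqrt{2\pi}z}$), so assume $|A|\le |V_N|$ is small, say $|A|\asymp (N/K)^2$ with $K=\lfloor (|V_N|/|A|)^{1/2}\rfloor$. Cover $A$ by $O(1)$ translates of a box $B$ of side length $N/K$; on each such box, the GFF restricted to $B$ is dominated (again via Slepian, comparing covariances using Lemma \ref{lem-covariance}) by the sum of a GFF on $B$ with zero boundary conditions — which has maximum concentrated near $m_{N/K}+O(1)$ by the known tightness result — plus an independent Gaussian of variance $\frac{2\log 2}{\pi}\log K + O(1)$ coming from the coarser scales, whose value $Y$ satisfies $\P(Y\ge a)\le e^{-\pi a^2/(4\log 2\log K)}$. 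A union over the $O(1)$ boxes and convolution of the two tails, using $m_N-m_{N/K}=2\sqrt{2/\pi}\log K - \tfrac{3}{8}\cdot 2\sqrt{2/\pi}\log(\log N/\log(N/K))+o(1)$ and optimizing the split of $z$ between the two contributions, yields the factor $(|A|/|V_N|)^{1/2}$: the exponent $\tfrac12\log(|V_N|/|A|)=\log K$ comes out of $e^{-\sqrt{2\pi}\cdot 2\sqrt{2/\pi}\log K /2}$-type bookkeeping together with the polynomial prefactor from the GFF tail \eqref{eq-right-tail} at scale $N/K$.

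The main obstacle is the first step — controlling the boundary layer $V_N\setminus V_N'$ in the comparison to MBRW/BRW, since Lemma \ref{lem-covariance} only gives covariance control in the bulk, and the barrier argument for $G_N(\beta)$ is genuinely needed only to push $z$ beyond $\sqrt{\log N}$ (for $z\lesssim\sqrt{\log N}$ one could instead quote \eqref{eq-right-tail} directly). I expect the cleanest route is to split $V_N$ into $O(1)$ overlapping bulk-type regions, apply the MBRW comparison on each, and handle the thin boundary strip by a crude union bound over sub-boxes where the relevant barrier height is lower; the Gaussian-correction term $e^{-C^{-1}z^2/n}$ has enough room to absorb these $O(1)$ losses.
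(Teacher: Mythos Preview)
Your approach to \eqref{eq-large-tail} is essentially sound, and the containment you sketch --- that $\{\max_v\vartheta_{v,N}\geq m_N+z\}\subseteq G_N(\beta)$ once $\beta\leq z-1$, because the endpoint itself already violates the barrier at $t=n$ --- is a valid shortcut. The paper takes a slightly different route: it reduces the GFF to the BRW directly by quoting \cite[Lemma~2.6]{DZ12} (which disposes of the boundary-layer issue you flag as the main obstacle), and then splits $\{\max_v\vartheta_{v,N}\geq m_N+z\}$ into $G_N(z)\cup\bigcup_v F_{v,N}(z,z)$, where $F_{v,N}(\beta,z)$ is the event that the path stays below the barrier yet the endpoint exceeds $m_N+z$; each $\P(F_{v,N}(z,z))$ is estimated via \eqref{eq-for-correction-1} and the exponential change of measure, then summed over $V_N$. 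The reason for keeping the $F/G$ split, rather than your containment shortcut, becomes apparent in the second part.

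Your argument for \eqref{eq-max-subset} has a genuine gap. The claim is for \emph{arbitrary} $A\subseteq V_N$, and you cannot cover a set of cardinality $(N/K)^2$ by $O(1)$ boxes of side $N/K$ unless $A$ is geometrically concentrated; for a spread-out $A$ (say one point in each box of side $K$) you would need of order $|A|$ such boxes, and the union bound then destroys the factor $(|A|/|V_N|)^{1/2}$. Your convolution-of-tails computation is tied to the box geometry and cannot distinguish a set from its spatial envelope. The paper's device is combinatorial instead: it keeps the $F/G$ decomposition but \emph{raises} the barrier to $\beta=z-y+(|V_N|/|A|)^{1/4}$. This makes $\P(G_N(\beta))\lesssim e^{-\sqrt{2\pi}(z-y)}e^{-\sqrt{2\pi}(|V_N|/|A|)^{1/4}}$ negligible, while $\P(F_{v,N}(\beta,z-y))\lesssim 4^{-n}\,\beta\,(|V_N|/|A|)^{1/4}\,e^{-\sqrt{2\pi}(z-y)}$ acquires the extra factor from the gap between the raised barrier and the target level. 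Summing the latter only over $v\in A$ produces a prefactor $|A|/|V_N|$, and $(|A|/|V_N|)\cdot\beta\cdot(|V_N|/|A|)^{1/4}\lesssim (|A|/|V_N|)^{1/2}z$. This barrier-raising trick --- converting the size of $A$ into a barrier shift rather than a geometric cover --- is the idea your proposal is missing.
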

\begin{proof}
By the same argument as in \cite[Lemma\ 2.6]{DZ12}, for an absolute constant $\kappa \ge 0$ and any $A \subseteq V_N$,
$$\P(\max_{v\in A} \eta_{v,N}\geq \lambda) \leq \P(\max_{v\in 2^\kappa A}\vartheta_{v,2^\kappa N} \geq \lambda) \mbox{ for all } \lambda \in \mathbb{R}\,,$$
where $\{\vartheta_{v,2^\kappa N}\}$ is a BRW on 2D box of side length $2^\kappa N$.  By this inequality and a change of variable (replacing $2^\kappa N$ by $N$), it suffices to prove  \eqref{eq-large-tail} and \eqref{eq-max-subset} for BRW. For $\beta>0$, define
\begin{align*}
F_{ v, N}(\beta, z) &= \{\vartheta_{v,N}(t)\leq \beta + 1 + \frac{m_N}{n}t+ 10 \log(t\wedge (n-t))_+ \mbox{ for all } 0 \leq t \leq n;\vartheta_{v,N} \geq m_N + z\}\,.
\end{align*}
We first prove the BRW version for \eqref{eq-large-tail}, using the notation $\mu_{n, z}(\cdot)$ and $\mu^*_{n, z}(\cdot)$ from \eqref{eq-def-mu-mu*} (with variance rate $\sigma^2 = \frac{2\log 2}{\pi}$). By \eqref{eq-for-correction-1}, and using $d\P/d\mathbb{Q}$ as in Lemma \ref{lem-a-priori} (with $z'$ satisfying $z' + (z')^{1/20} = z+1$),
\begin{align}
\P(F_{v, N}(z, z)) &= \int_{z}^{z+ 1} \frac{d \P}{d\mathbb{Q} }(x+m_N)\mu^*_{n, z'}(x )dx \nonumber\\
&\leq \int_{z}^{z+1} 4^{-n} n^{3/2} \mathrm{e}^{-(\sqrt{2\pi}- O((\log n)/n))x}  \mathrm{e}^{- \pi z^2/4n\log 2} \cdot z n^{-3/2} dx \nonumber\\
&\lesssim 4^{-n} z  \mathrm{e}^{-\sqrt{2\pi} z}\mathrm{e}^{- \pi z^2/4n\log 2} \mathrm{e}^{O((\log n)/n) z} \,.\label{eq-E-v-est}
\end{align}
Inequality \eqref{eq-large-tail} follows by summing the above inequality over $v\in V_N$ and applying Lemma~\ref{lem-a-priori}.

We next demonstrate \eqref{eq-max-subset}. First note that, if $z - y + (|V_N|/|A|)^{1/4} \leq 1$, then \eqref{eq-max-subset} holds automatically. We next consider the case when $z - y + (|V_N|/|A|)^{1/4} \geq 1$ and set $\beta = z - y + (|V_N|/|A|)^{1/4}$. By Lemma~\ref{lem-a-priori},
%July162014
\begin{equation}\label{eq-G-N-beta}
\P(G_{N}(\beta)) \lesssim (\beta \vee 1)
\mathrm{e}^{-\sqrt{2\pi}(z-y)}  \mathrm{e}^{-\sqrt{2\pi} (|V_N|/|A|)^{1/4}}\,.
\end{equation}
Analogous to the derivation of \eqref{eq-E-v-est}, we obtain
$$\P(F_{v, N}(\beta, z-y)) \lesssim 4^{-n} 
%July162014
%(z - y +  (|V_N|/|A|)^{1/4} ) 
(\beta\vee 1) (|V_N|/|A|)^{1/4}  \mathrm{e}^{-\sqrt{2\pi}(z-y)}\,.$$
{Therefore, we get that
$$\P(\max_{v\in A} \eta_{v,N}\geq m_N + z -y) \lesssim  \P(G_N(\beta)) + \sum_{v\in A} \P(F_{v, N}(\beta, z-y)) \lesssim (\frac{|A|}{|V_N|})^{1/2}  z \mathrm{e}^{-\sqrt{2\pi} (z-y)} \,,$$
where we have used the facts that $\beta \leq 2z (|V_N|/|A|)^{1/4} $ and $|V_N| = 4^n$. This completes the verification of 
\eqref{eq-max-subset}.}
\end{proof}

\subsection{Robustness of the maximum for the GFF}

The following lemma shows a form of robustness for the maximum of the GFF under perturbation.
\begin{lemma}\label{lem-gff-perturb}
Let $\phi_{u, N}$ be independent variables such that, for all $u\in V_N$ 
{and $y\geq 0$},
$$\P(\phi_{u, N} \geq 1+ y) \leq \mathrm{e}^{-y^2}\,.$$
There exist absolute 
{constants $c,C>0$} such that, for any 
$\epsilon>0$, $N\in \mathbb{N}$ and 
{$- c\epsilon^{-1/2} \leq x\leq \sqrt{\log N}$},
\begin{equation}\label{eq-prob-perturb}
\P(\max_{u\in V_N} ( \eta_{u,N} + \epsilon \phi_{ u, N}) \geq m_N+x) \leq   \P(\max_{u\in V_N} \eta_{u,N} \geq m_N+x - \sqrt{\epsilon}) (1+ O(\mathrm{e}^{-C^{-1} \epsilon^{-1}}))\,.
\end{equation}
% Maury - On the LHS of the above equation and in the analogous term in the next display, I added parentheses.
Furthermore, $\E \max_{u\in V_N} (\eta_{u, N} + \epsilon \phi_{u, N}) \leq \E \max_{u\in V_N} \eta_{u, N} + C\sqrt{\epsilon} + C \epsilon^2 \one_{\epsilon \geq 1}$.
\end{lemma}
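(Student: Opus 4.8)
The plan is to prove the tail bound \eqref{eq-prob-perturb} first and then extract the expectation bound from it by integration. For the tail bound, the natural approach is to split on the size of the perturbation at the (essentially unique) near-maximizer and use independence of the $\phi_{u,N}$ from $\eta_{\cdot,N}$. Concretely, I would condition on the GFF and write, for a threshold parameter $r>0$ to be chosen of order $\epsilon^{-1/2}$,
\[
\P(\max_u(\eta_{u,N}+\epsilon\phi_{u,N})\ge m_N+x)
\le \P(\max_u \eta_{u,N}\ge m_N+x-\sqrt\epsilon)
+\P\big(\exists u: \eta_{u,N}\ge m_N+x-\sqrt\epsilon,\ \epsilon\phi_{u,N}\ge \sqrt\epsilon\,\big),
\]
wait — more carefully, the event on the left forces some $u$ with $\eta_{u,N}+\epsilon\phi_{u,N}\ge m_N+x$, hence either $\eta_{u,N}\ge m_N+x-\sqrt\epsilon$ (first term) or $\epsilon\phi_{u,N}>\sqrt\epsilon$, i.e. $\phi_{u,N}>\epsilon^{-1/2}$, at some point where $\eta_{u,N}\ge m_N+x-\epsilon\phi_{u,N}$. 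For the second contribution I would further dyadically decompose over the size of $\phi_{u,N}\in[2^{j-1}\epsilon^{-1/2},2^j\epsilon^{-1/2}]$, $j\ge1$: on that block we need $\eta_{u,N}\ge m_N+x-2^j\sqrt\epsilon$ at $u$, which has probability $\lesssim 2^{j}\epsilon^{-1/2}|x|\,e^{-\sqrt{2\pi}(x-2^j\sqrt\epsilon)}\cdot$ (a $\Delta_N$-type volume factor is not needed since we union over all of $V_N$) using \eqref{eq-max-subset} of Lemma \ref{lem-prelim-tail} with $|A|=|V_N|$, while independently $\P(\phi_{u,N}\ge 2^{j-1}\epsilon^{-1/2})\le e^{-2^{2j-2}\epsilon^{-1}}$. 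The Gaussian decay in $j$ from the $\phi$ tail dominates the at-most-exponential growth $e^{\sqrt{2\pi}2^j\sqrt\epsilon}$ and the polynomial factor $2^j$, so summing over $j\ge1$ gives a total bounded by $\P(\max_u\eta_{u,N}\ge m_N+x-\sqrt\epsilon)\cdot O(e^{-C^{-1}\epsilon^{-1}})$ once we compare back against \eqref{eq-right-tail}/\eqref{eq-large-tail}; the constraint $x\ge-\epsilon^{-1/2}$ is exactly what keeps $m_N+x-2^j\sqrt\epsilon$ in the regime where these tail estimates apply and keeps the comparison ratio bounded.

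For the expectation bound I would integrate the tail estimate. Write $Z=\max_u\eta_{u,N}$ and $Z'=\max_u(\eta_{u,N}+\epsilon\phi_{u,N})$. Using \eqref{eq-prob-perturb},
\[
\E Z' = \E Z'\one_{Z'\ge m_N-\epsilon^{-1/2}}+\E Z'\one_{Z'< m_N-\epsilon^{-1/2}}
\le \big(\E Z+\sqrt\epsilon\big)+ \int \P(Z'\ge m_N+x)\,dx\cdot O(e^{-C^{-1}\epsilon^{-1}}) + (\text{low part}),
\]
where the low part $\E Z'\one_{Z'<m_N-\epsilon^{-1/2}}$ is handled crudely: $|Z'|\le |Z|+\epsilon\max_u|\phi_{u,N}|$ and $\E\max_u|\phi_{u,N}|\lesssim \sqrt{\log|V_N|}\lesssim \sqrt n$, but this event has doubly-exponentially small probability by Gaussian concentration for $Z$ (Lemma \ref{lem-gaussian-concentration}, since $\E Z=m_N+O(1)$) together with the $\phi$ tails, which absorbs the $\sqrt n$. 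The genuinely new term $C\epsilon^2\one_{\epsilon\ge1}$ comes only from the regime $\epsilon\ge1$, where $\sqrt\epsilon$ is not the right scale: there one simply bounds $Z'\le Z+\epsilon\max_u|\phi_{u,N}|$ and uses $\E\max_u|\phi_{u,N}|=O(1)$ (the $\phi$ tails are sub-Gaussian with $O(1)$ parameter, and there are $|V_N|$ of them, giving $O(\sqrt{\log|V_N|})$ — hmm, this needs the $\one_{\epsilon\ge1}$ slack of $\epsilon^2$ rather than $\epsilon$, so I would instead route the $\epsilon\ge1$ case through a cruder bound where the constant $C$ is allowed to depend on nothing but still the $\epsilon^2$ beats $\epsilon\sqrt{n}$ only because... ) — more honestly, for $\epsilon\ge 1$ one reduces to comparing $\max_u(\eta_{u,N}+\epsilon\phi_{u,N})$ to $\max_u\eta_{u,N}$ via a Sudakov--Fernique-type or direct union-bound argument, and the $\epsilon^2$ is the clean power that emerges; I would present that case separately and briefly.

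The main obstacle I anticipate is the bookkeeping in the dyadic sum: one must verify that the ratio
\[
\frac{2^{j}\,\epsilon^{-1/2}\,|x|\,e^{-\sqrt{2\pi}(x-2^j\sqrt\epsilon)}\,e^{-2^{2j-2}\epsilon^{-1}}}{|x|\,e^{-\sqrt{2\pi}x}}
= 2^{j}\,\epsilon^{-1/2}\,e^{\sqrt{2\pi}\,2^j\sqrt\epsilon - 2^{2j-2}\epsilon^{-1}}
\]
summed over $j\ge1$ is $O(e^{-C^{-1}\epsilon^{-1}})$ uniformly in $\epsilon$ — the $j=1$ term gives $\epsilon^{-1/2}e^{O(\sqrt\epsilon)-\epsilon^{-1}/4}$, which is indeed $O(e^{-C^{-1}\epsilon^{-1}})$, and for larger $j$ the quadratic exponent $-2^{2j}\epsilon^{-1}$ crushes everything. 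The other delicate point is legitimately invoking \eqref{eq-max-subset} with the shifted level $m_N+x-2^j\sqrt\epsilon$ when $x$ is as negative as $-\epsilon^{-1/2}$: here $x-2^j\sqrt\epsilon$ can be very negative, but \eqref{eq-max-subset} is stated for all $z\ge1$ with $z-y$ the effective level and carries the factor $z\,e^{-\sqrt{2\pi}(z-y)}$, so one sets $z$ large enough and $y=z-(x-2^j\sqrt\epsilon)$ — checking that the resulting bound still beats the comparison denominator is the crux. I would organize the write-up so that the case $-\epsilon^{-1/2}\le x\le 0$ and the case $x\ge0$ are treated uniformly by always comparing to $\P(Z\ge m_N+x-\sqrt\epsilon)$ and absorbing constants into the $(1+O(e^{-C^{-1}\epsilon^{-1}}))$ factor.
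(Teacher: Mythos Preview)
Your dyadic decomposition on the size of $\phi_{u,N}$ matches the paper's structure, but there is a genuine gap in how you bound each block. You write that on the $j$-th block the contribution is essentially
\[
\P\Big(\max_{u\in V_N}\eta_{u,N}\ge m_N+x-2^j\sqrt\epsilon\Big)\cdot \P\big(\phi_{u,N}\ge 2^{j-1}\epsilon^{-1/2}\big),
\]
invoking \eqref{eq-max-subset} with $|A|=|V_N|$ and then multiplying by the single-point $\phi$-tail. This product is \emph{not} an upper bound for $\P(\exists u:\eta_{u,N}\ge m_N+x-2^j\sqrt\epsilon,\ \phi_{u,N}\ge 2^{j-1}\epsilon^{-1/2})$: the two constraints share the same index $u$, and the independence of $\phi_{u}$ from $\eta$ for a fixed $u$ does not let you decouple the union. (In fact, if the $\phi$-tail were the same at every $u$, your product would be a \emph{lower} bound: take $u$ to be the $\eta$-maximizer and use that $\phi_{u^*}$ is independent of $\eta$.) A correct pointwise union bound gives instead $\sum_u \P(\eta_{u,N}\ge L)\,\P(\phi_{u,N}\ge M)$, i.e.\ the first moment $\E|\{u:\eta_{u,N}\ge L\}|$ times the $\phi$-tail; for $L=m_N+x-2^j\sqrt\epsilon$ this first moment is of order $\sqrt n\,e^{-\sqrt{2\pi}(x-2^j\sqrt\epsilon)}$, and the extra $\sqrt n$ cannot be absorbed into an $N$-free factor $O(e^{-C^{-1}\epsilon^{-1}})$.

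The missing idea is precisely the ``volume factor'' you dismiss. The paper conditions on the random set $\Gamma_{2^i\sqrt\epsilon}=\{u:\epsilon\phi_{u,N}\in[2^{i-1}\sqrt\epsilon,2^i\sqrt\epsilon]\}$, applies \eqref{eq-max-subset} with $A=\Gamma_{2^i\sqrt\epsilon}$ to pick up the factor $(|\Gamma_{2^i\sqrt\epsilon}|/|V_N|)^{1/2}$, and then takes expectation in $\phi$, using $\E(|\Gamma|/|V_N|)^{1/2}\le(\P(\epsilon\phi_u\ge 2^{i-1}\sqrt\epsilon))^{1/2}\le e^{-4^i/(C\epsilon)}$. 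The square root of the $\phi$-tail still decays like $e^{-c/\epsilon}$, and now the bound is uniform in $N$. Once this is in place, your dyadic bookkeeping and the comparison against \eqref{eq-right-tail} go through. For the expectation statement, the paper truncates at $m_N-\epsilon^{-1/2}$ (setting $\bar M_N=\eta_N^*\vee(m_N-\epsilon^{-1/2})$) and controls $\E(\bar M_N-\eta_N^*)$ by the left-tail estimate of \cite{Ding11} rather than Borell--Tsirelson; the case $\epsilon\ge1$ is handled separately by a straightforward variant, which is where the $C\epsilon^2\one_{\epsilon\ge1}$ appears.
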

\begin{proof}
%The case for $\epsilon>1$ is simpler and follows from similar arguments to those when $\epsilon \le 1$. We therefore omit the proof for the case $\epsilon>1$.
% Maury - I changed < 1 to \le 1 and shortened the sentence.
Consider first the case $\epsilon\leq 1$.
Setting $\Gamma_y = \{u\in V_N: y/2 \leq \epsilon \phi_ {u, N} \leq y\}$, we have
\begin{align}\label{eq-for-ofer}
\P(\max_{u\in  V_N}  (\eta_{u,N} + \epsilon \phi_{u, N}) \geq m_N+x) &\leq  \P(\max_{u\in  V_N} \eta_{u,N} \geq m_N+x- \sqrt{\epsilon}) \nonumber \\
&\qquad+ \sum_{i=0}^\infty \E( \P(\max_{u\in \Gamma_{2^i \sqrt{\epsilon}}} \eta_{u,N} \geq m_N + x - 2^i \sqrt{\epsilon} \mid \Gamma_{2^i \sqrt{\epsilon}})), \,
\end{align}
where the conditioning on the right side of the last
display means conditioning on the locations of points
$u\in V_N$ where the random field $\phi_{u,N}$ takes the
prescribed values.
By Lemma~\ref{lem-prelim-tail} (for
{any $x$, setting
        $z= (x\vee 1)$ and $y = 2^i \sqrt{\epsilon}-(x\vee 1)$}),
\begin{align*}
\sum_{i=0}^\infty \E( \P(\max_{u\in \Gamma_{2^i \sqrt{\epsilon}}} \eta_{u,N} \geq m_N + x - 2^i \sqrt{\epsilon} \mid \Gamma_{2^i \sqrt{\epsilon}}))  \lesssim \frac{x \vee 1}{\mathrm{e}^{\sqrt{2\pi}x}} \sum_{i=0}^\infty \E ( |\Gamma_{2^i \sqrt{\epsilon}}| /N^2)^{1/2} \mathrm{e}^{\sqrt{2\pi} 2^i \sqrt{\epsilon}} \,.
\end{align*}

A simple computation yields
$\E( |\Gamma_{2^i \sqrt{\epsilon}}| /N^2)^{1/2}
\leq \mathrm{e}^{-4^i (C\epsilon)^{-1}}$, for an absolute constant $C>0$.
Combining this with the last two displays, it 
follows that, for an absolute constant 
{$C^*>0$},
\begin{equation*}
\sum_{i=0}^\infty \E( \P(\max_{u\in \Gamma_{2^i \sqrt{\epsilon}}} \eta_{u,N} \geq m_N + x - 2^i \sqrt{\epsilon} \mid \Gamma_{2^i \sqrt{\epsilon}})) \lesssim \frac{
{x\vee 1}}{\mathrm{e}^{\sqrt{2\pi}x}} \mathrm{e}^{- (C^*\epsilon)^{-1}}\,.
\end{equation*}
Together with \eqref{eq-right-tail}, 
{this implies, for 
        $x>-1/(2\sqrt{2\pi}C^*\epsilon)$,
$$\sum_{i=0}^\infty \E( \P(\max_{u\in \Gamma_{2^i \sqrt{\epsilon}}} \eta_{u,N} \geq m_N + x - 2^i \sqrt{\epsilon} \mid \Gamma_{2^i \sqrt{\epsilon}})) \lesssim \P(\max_{u\in  V_N} \eta_{u,N} \geq m_N+x- \sqrt{\epsilon}) \cdot \mathrm{e}^{-
        \frac12 (C^*\epsilon)^{-1}}\,.$$
The inequality \eqref{eq-prob-perturb}, for $\varepsilon \le 1$, follows from this display and \eqref{eq-for-ofer}.
}

We next estimate the expectation of the maximum. Let 
$\bar M_N = \eta^*_N \vee (m_N - c\epsilon^{-1/2}-\sqrt{\epsilon})$.
By the estimate on the left tail in \cite[Theorem 1.1]{Ding11}, 
{$\E (\bar M_N - \eta^*_N) \leq C\sqrt{\epsilon}$ 
}, 
where $C$ is an absolute constant. (Theorem 1.1 in \cite{Ding11} actually gives a much sharper estimate.) Set 
{$\tilde M_N = \max_{u\in V_N} \left(\eta_{u, N} + 
\epsilon \phi_{u, N}\right)$}. By \eqref{eq-prob-perturb},
{
        \begin{eqnarray*}
                \E \tilde M_N & \leq &m_N-c\epsilon^{-1/2}+
                \int_0^\infty \P(\tilde M_N>m_N-c\epsilon^{-1/2}+x) dx\\
                &\leq& m_N-c\epsilon^{-1/2} +
                (1+e^{-C^{-1}\epsilon^{-1}})\int_0^\infty 
                \P(\eta_N^*>m_N-c\epsilon^{-1/2}-\sqrt{\epsilon}+x) dx\\
                &\leq& m_N-c\epsilon^{-1/2} +
                (1+e^{-C^{-1}\epsilon^{-1}})
                \int_{-c\epsilon^{-1/2}-\sqrt{\epsilon}}^\infty 
                \P(\eta_N^*>m_N+x) dx\\
                &\leq& \sqrt{\epsilon}+ 
                        (m_N-c\epsilon^{-1/2}-\sqrt{\epsilon})
                +\int_{-c\epsilon^{-1/2}-\sqrt{\epsilon}}^\infty 
                \P(\eta_N^*>m_N+x) dx\\
                &&+
                e^{-C^{-1}\epsilon^{-1}}
                \int_{-c\epsilon^{-1/2}-\sqrt{\epsilon}}^\infty
                P(\eta_N^*>m_N+x) dx \\
                &\leq& \sqrt{\epsilon}+\E\eta_N^*+\E(\bar M_N-\eta_N^*)+
                \E(\eta_N^*-m_N-c\epsilon^{-1/2}-\sqrt{\epsilon})_+
                e^{-C^{-1}\epsilon^{-1}}
                        \,.
                \end{eqnarray*}
        %\st{\begin{align*}
%\E (\tilde M_N - \bar M_N) \lesssim \sqrt{\epsilon} +  \mathrm{e}^{-C^{-1} %\epsilon^{-1}} \int_{-\epsilon^{-1/2}}^\infty  \P(\max_{u\in \tilde V_N} %\eta_{u,N} \geq m_N+x - \sqrt{\epsilon}) dx \lesssim  \sqrt{\epsilon}\,,
%\end{align*}}
        This completes the proof of the lemma for $\epsilon \leq 1$.}

The case $\epsilon>1$ is simpler and follows by repeating the same argument
with $\Gamma_{2^i\epsilon^2}$ replacing $\Gamma_{2^i\sqrt{\epsilon}}$. We omit
further details.
\end{proof}

\subsection{A covariance computation}
\label{sec-covariance}
We will also need the following
estimate on the coarse field $X^c_{\cdot,N,K}$.
\begin{lemma}
  \label{lem-coarsecov}
  There exists a constant $c_\delta$, not depending on $K,N$, such that,
  for all $i$ and all $v,v'\in V_N^{K,\delta,i}$,
  \begin{equation}
    \label{eq-100113a}
    \E
    {(}(X_v^c-X_{v'}^c)^2
    {)}
    \leq c_\delta \left(\frac{|v-v'|}{N/K}\right)^2\,.
  \end{equation}
 Moreover, there exist constants $C_\delta,C_\delta',
  K_0(\delta)>0$, so that,
  {for  all $v,v'\in V_N^{K,\delta}$}
  and
  all $K>K_0(\delta)$,
  \begin{equation}
    \label{eq-100113anew}
    C_\delta 
    { \left(\left(\frac{|v-v'|}{N/K}\right)^2 \wedge 1\right)}\leq 
    \E(X_v^c-X_{v'}^c)^2\,
    \end{equation}
    and,  
    {for all $i$ and all $v,v'\in V_N^{K,\delta,i}$,}
    \begin{equation}
            \label{eq-100113anewadd}
    |\E 
    {(}(X_v^c)^2
    {)}-\E
    {(}(X_{v'}^c)^2
    {)}|
    \leq 
    C_\delta' \frac{|v-v'|}{N/K} \,.
    \end{equation}
\end{lemma}
\begin{proof}
  We assume
  that $i$ is fixed  and
  suppress it from the notation, since the estimates will not depend
  on $i$. For convenience, set $V_N^{K,i}=V_{N/K}$.

Recalling \eqref{eq-lawler1}, it follows that 
\begin{equation}
        \label{eq-morning1}
        \E(\eta_{v,N}-\eta_{v',N})^2=
   2a(v-v')+
  \sum_{z\in \partial V_N}
  \left(\P^v(S_{\tau_N}=z)-P^{v'}(S_{\tau_N}=z)\right)
  \left(a(z-v)-a(z-v')\right),
  \end{equation}
  where $\tau_N$ denotes the exit time of the simple random walk from $V_N$.
  By \eqref{eq-100113c} and $v\neq v'$, one has 
$$\max_{z\in \partial V_N}
 |a(z-v)-a(z-v')|\leq 2 \frac{|v-v'|}{
 {\delta N}}+(C K^2/(\delta^2 N^2))\,,$$
 where 
 we have used the lower bound 
 {$\delta N$}
 on the distances from $v$ and $v'$ 
to $\partial    V_N$. Similarly, using  
that 
  $\P^v(S_{\tau_N}=z)$ is harmonic in $v$ and applying the
Harnack estimates (see, e.g., 
\cite[Theorem 6.3.8]{LL10}), one gets that, for any $v,v'$ and 
$z\in \partial V_N$,
$$|\P^v(S_{\tau_N}=z)-
\P^{v'}(S_{\tau_N}=z)|\leq C_\delta  N^{-1}
 \frac{|v-v'|}{
 {\delta N}}.$$
 Together with \eqref{eq-morning1}, the last two displays imply that
 \begin{equation}
   \label{eq-january2a}
   |\E(\eta_{v,N}-\eta_{v',N})^2-
   2a(v-v')|\leq  
    C_\delta \left(\frac{|v-v'|}{
    {N}}\right)^2 \,.
  \end{equation}
        Applying this formula to $V_{N/K}$, and using  
        \begin{equation}
          \label{eq-january2b}
          \E(X_v^c-X_{v'}^c)^2=\E
          {(\eta_{v,N}-\eta_{v',N})^2}-
        \E(X_v^f-X_{v'}^f)^2
      \end{equation}
        and the observation that $X_v^f$ is a GFF in the box $V_{N/K}$, yields 
    \eqref{eq-100113a}.

The argument for  \eqref{eq-100113anew} is 
more delicate, due to possible cancellations. While one could argue 
along the lines of the above argument using a comparison 
with Brownian motion, we prefer to use a more direct argument, 
employing the notion of
resistance. Recall (see, e.g., \cite{LP} for background on electrical
networks and their relation with the Green function of simple random walk)
that 
%The key step is the claim that
   $\E(\eta_{v,N}-\eta_{v',N})^2$ is twice the effective resistance 
   between $v$ and $v'$ in the square grid resistor network of side $N$ 
   with wired boundary, denoted by $R_{\rm eff}^{v,v',N}$, and
   that $R_{\rm eff}^{v,v',\infty}=a(v-v')$. 
   
   We first consider the case where $v,v'\in V_N^{K,\delta,i}$ for some $i$.
   We claim that, in this case,
   \begin{equation}
     \label{eq-january2c}
     R_{\rm eff}^{v,v',N}\leq R_{\rm eff}^{v,v',\infty}-
    c_\delta \left(\frac{|v-v'|}{N}\right)^2 \,.
  \end{equation}
  Applying \eqref{eq-january2c}, the representation 
  \eqref{eq-january2b} and the estimate 
   \eqref{eq-january2a} yield 
 \eqref{eq-100113anew}.
     
 To prove \eqref{eq-january2c}, recall that 
 $$R_{\rm eff}^{v,v',N}=\min_{\{i(e)\}\in I_{v,v',N}}
 \{ \sum_e i^2(e)\}\,,$$
 where the sum is over
 all edges and $I_{v,v',N}$ denotes the unit flows from $v$ to $v'$ with
 wired boundary at $\partial V_N$. 
 Similarly, 
 {denoting by $I_{v, v'}$ the unit flows from $v$ to $v'$ in the infinite lattice $\mathbb Z^2$,}
 $$R_{\rm eff}^{v,v',\infty}=\min_{\{i(e)\}\in I_{v,v'}}
   \{ \sum_e (i(e))^2\}\,.$$
 Since 
 {each flow in $ I_{v,v'}$ restricted to $V_N$ gives a valid flow in $I_{v, v', N}$}, letting $i^\infty$ denote the flow 
 achieving $R_{\rm eff}^{v,v',\infty}$, it follows that 
 \begin{equation}
	 \label{eq-44new}
	 R_{\rm eff}^{v,v',N}\leq 
 R_{\rm eff}^{v,v',\infty} -\sum_{e\not\in V_N} (i^\infty(e))^2\,.
 \end{equation}
 In order to prove \eqref{eq-january2c}, it thus remains to estimate
 $i^\infty(e)$.

 Toward this end, note (see, e.g., \cite[Proposition 4.7]{LP}) that,
 if $e$ is the edge $(w,w')$, then
 $i^\infty(e)=2[a(w-v')-a(w-v)-a(w'-v')+a(w'-v)]$. We recall a strengthened
 version of \eqref{eq-100113c}, namely, 
 $$
 a(x)=\frac{2}{\pi}\log |x|+
   \frac{2\bar\gamma+\log 8}{\pi}+
   \frac{1}{6\pi} \cdot \frac{\Re(x^4)}{|x|^6}+
   O(|x|^{-4})\,,
   $$
  which was proved in \cite{FU96}. 
   ({Here, $\Re(x^4)$ is the real part of $x^4$}.)
   Setting  $\bar e=w'-w$, $\bar s=(v-v')/|v-v'|$
   and $\bar u=(w-v)/|w-v|$,
   we then obtain that 
   \begin{equation}\label{eq-flow-estimate}
   i^\infty(e)=\frac{|v-v'|}{|w-v|^2}\langle \bar e,\bar 
   s\rangle[1-\langle \bar u,\bar s\rangle]+O\left(
   \frac{|v-v'|^2}{|w-v|^3}\right)\,,\end{equation}
   {where $\langle \bar e, 
   \bar s\rangle$ denotes the inner product of $\bar e$ and $\bar s$,
   which are viewed as vectors in $\mathbb R^2$}.
   In particular,
\begin{equation}\label{eq-flow}
\sum_{e=(w,w'): |w|\geq N}
   (i^\infty(e))^2\geq C |v-v'|^2\sum_{r=N}^\infty \frac{1}{r^3} +O\left(
   \left(\frac{|v-v'|}{N}\right)^3\right)\,.\end{equation}
     This yields \eqref{eq-january2c} and therefore 
     completes the proof of  \eqref{eq-100113anew}
     in the case when $v, v'\in V_{N}^{K, \delta, i}$ for some $i$.

     {We next consider the case when $v\in V_N^{K, \delta, i}$, $v'\in V_N^{K, \delta, i'}$ for $i\neq i'$. The strategy of the proof is similar to that given above when
$v,v'\in V_{N}^{K, \delta, i}$. Denote by $R_{\mathrm{eff}}(v, \partial V_N^{K, \delta, i})$ the effective resistance between $v$ and $\partial V_N^{K, \delta, i}$ (with similar notation for $R_{\mathrm{eff}}(v, \partial V_N^{K, \delta, i'})$).  

When $|v-v'| \geq 4 N/K$, we denote by $B(v, 2N/K)$ the box centered at $v$ with side length $2N/K$ and obtain that 
          \begin{equation}
          \label{eq-1new}
          \sum_{e= (w, w'): w, w'\not\in V_{N}^{K, \delta, i} \cup V_N^{K, \delta, i'}} (i^\infty(e))^2 \geq  \sum_{e= (w, w'): w, w'\in B(v, 2N/K) \setminus V_N^{K, \delta, i}} (i^\infty(e))^2  \geq C_\delta
\end{equation}
(with perhaps a new choice of $C_\delta$).
          Here, we used the fact that the total amount of flow through any cut set between $v$ and $v'$ is at least 1, and that, in $B(v, 2N/K) \setminus V_N^{K, \delta, i}$, there are $N/2K$ disjoint cut sets, each of which is of cardinality $O(N/K)$; since the sum of $(i^\infty(e))^2$ is at least
$O(K/N)$ by the Cauchy-Schwarz inequality, \eqref{eq-1new} follows for $v,v'$ satisfying $|v-v'|\geq 4N/K$.

When $|v- v'| \leq 4 N/K$ (recall that then $|v-v'|\geq \delta N/K$ also holds since $v,v'$ are assumed to belong to different boxes $V_N^{K,\delta,i},V_N^{K,\delta,i'}$
of side length $N/K$), we obtain that, for $\hat C\geq 8$,
     \begin{align*}
     \sum_{e= (w, w'): w, w'\not\in V_{N}^{K, \delta, i} \cup V_N^{K, \delta, i'}} (i^\infty(e))^2 \quad &\geq \sum_{e= (w, w'): |w-v| \geq \hat C N/K} (i^\infty(e))^2 \\
     & \geq C |v-v'|^2\sum_{r=\hat CN/K}^\infty \frac{1}{r^3} +O\left(
   \left(\frac{|v-v'|}{\hat C N/K}\right)^3\right) \,,
   \end{align*}
   where the last inequality follows from \eqref{eq-flow-estimate}. Choosing $\hat C$ large enough so that the term $O\left(
   \left(\frac{|v-v'|}{\hat C N/K}\right)^3\right)$ is absorbed, it follows that 
   \begin{equation}
   \label{eq-2new}
    \sum_{e= (w, w'): w, w'\not\in V_{N}^{K, \delta, i} \cup V_N^{K, \delta, i'}} (i^\infty(e))^2  \geq C_\delta\,.
\end{equation}
  }
  
  Combining \eqref{eq-1new} and \eqref{eq-2new}, and using the same flow and argument that led to \eqref{eq-44new},
 we obtain that, 
  for any $v,v'$ in disjoint boxes $V_N^{K,\delta,i},V_N^{K,\delta,i'}$,
     $$R_{\mathrm{eff}}^{v, v', \infty} - R_{\mathrm{eff}}(v, \partial V_N^{K, \delta, i}) -R_{\mathrm{eff}}(v', \partial V_N^{K, \delta, i'}) \geq C_\delta\,.$$
     Together with the representation 
  \eqref{eq-january2b} and the estimate 
   \eqref{eq-january2a}, this completes the proof of
 \eqref{eq-100113anew} in case $v$ and $v'$ belong to different boxes $V_N^{K,\delta,i},V_N^{K,\delta,i'}$.

The proof of \eqref{eq-100113anewadd} is similar to the proof of \eqref{eq-100113a}  .
    First note that
    $$ \E\eta_{v,N}^2=\sum_{z\in \partial V_N} \P^v(S_{\tau_N}=z)a(z-v)\,.$$
    So, fixing a point $z_0\in \partial V_N$,
    \begin{eqnarray*}
            &&|\E\eta_{v,N}^2-
    \E\eta_{v',N}^2|\\
    &\leq&
    \sum_{z\in \partial V_N} [\P^v(S_{\tau_N}=z)-\P^{v'}(S_{\tau_N}=z)]a(z-v)+
    \sum_{z\in \partial V_N} \P^{v'}(S_{\tau_N}=z)|a(z-v)-a(z-v')|\\
    &\leq&
    \sum_{z\in \partial V_N} [\P^v(S_{\tau_N}=z)-\P^{v'}(S_{\tau_N}=z)][a(z-v)-a(z_0,v)]+
    \sum_{z\in \partial V_N} \P^{v'}(S_{\tau_N}=z)|a(z-v)-a(z-v')|\\
    &\leq&
    C_\delta'
    \sum_{z\in \partial V_N} |\P^v(S_{\tau_N}=z)-\P^{v'}(S_{\tau_N}=z)|
    +\sum_{z\in \partial V_N} \P^{v'}(S_{\tau_N}=z)|a(z-v)-a(z-v')| \,.\\
    \end{eqnarray*}
    The conclusion follows by straight forward manipulation of the above quantities.
  \end{proof}

\section{The limiting tail of the GFF maximum}
\label{sec-limittail}

Recall that $\{\eta_{v,N}: v\in V_N\}$ denotes the GFF on the
two-dimensional box $V_N$ and that $\eta_N^*=
\max_{v\in V_N}\eta_{v,N}$.
For an open set $A \subseteq (0, 1)^2$, let $N A = \{v\in V_N :  v/N \in A\}$.
% Maury - I changed $[0,1]^2$ to $(0,1)^2$ for consistency with the following proposition.
The main result of this section is the following proposition.
\begin{prop}\label{prop-limiting-tail-gff}
There exists an absolute constant  $\alpha^*>0$ such that
\begin{equation}
\label{eqnewMB1}
\lim_{z\to \infty}\limsup_{N\to \infty}|z^{-1} \mathrm{e}^{\sqrt{2\pi }z}\P(\eta^*_N \geq m_N +z) - \alpha^*| =0\,.
\end{equation}
Furthermore, there exists a continuous function $\psi: (0, 1)^2 \mapsto (0, \infty)$ with $\int_{[0, 1]^2} \psi(x) dx =1$ such that, for any open set $A\subseteq (0, 1)^2$,
\begin{equation}
\label{eqnewMB2}
\lim_{z\to \infty}\limsup_{N\to \infty}|z^{-1} \mathrm{e}^{\sqrt{2\pi }z}\P(\max_{v\in NA}\eta_{v,N} \geq m_N +z) - \alpha^* \int_{A} \psi(x) dx| =0\,.
\end{equation}
\end{prop}
The following corollary follows quickly from Proposition \ref{prop-limiting-tail-gff}.
\begin{cor}\label{cor-no-two-large-clusters}
For any open box $A\subseteq (0, 1)^2$,
$$\lim_{z\to \infty} \limsup_{N\to \infty} z^{-1}\mathrm{e}^{\sqrt{2\pi }z}\P(\max_{v\in N A}\eta_{v,N} \geq m_N+z,  \max_{v\in V_N \setminus N A} \eta_{v,N} \geq m_N+ z) = 0\,.$$
\end{cor}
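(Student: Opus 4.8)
The plan is to derive Corollary \ref{cor-no-two-large-clusters} from Proposition \ref{prop-limiting-tail-gff} by an inclusion-exclusion argument. Write $p_N(z,A) = \P(\max_{v\in NA}\eta_{v,N}\geq m_N+z)$ and note that the event in the corollary is $\{\max_{v\in NA}\eta_{v,N}\geq m_N+z\}\cap\{\max_{v\in V_N\setminus NA}\eta_{v,N}\geq m_N+z\}$. By the trivial union bound $\P(\max_{v\in V_N}\eta_{v,N}\geq m_N+z)\geq p_N(z,A)+p_N(z,V_N\setminus NA)$ minus the probability of the intersection. First I would rewrite this as
\begin{equation*}
\P(\max_{v\in NA}\eta_{v,N}\geq m_N+z,\ \max_{v\in V_N\setminus NA}\eta_{v,N}\geq m_N+z) \leq p_N(z,A)+p_N(z,V_N\setminus NA)-\P(\eta^*_N\geq m_N+z)\,.
\end{equation*}

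Next I would apply Proposition \ref{prop-limiting-tail-gff} to each of the three terms on the right. For the first term, multiply by $z^{-1}\mathrm{e}^{\sqrt{2\pi}z}$ and pass to the limit to get $\alpha^*\int_A\psi(x)\,dx$. The second term requires the set $V_N\setminus NA$; since $A$ is an open box, $(0,1)^2\setminus \bar A$ is open and $p_N(z,V_N\setminus NA)$ differs from $p_N(z, N((0,1)^2\setminus\bar A))$ only by the contribution of the boundary strip near $\partial A$, which by \eqref{eq-max-subset} of Lemma \ref{lem-prelim-tail} is $O((|\text{strip}|/|V_N|)^{1/2} z\mathrm{e}^{-\sqrt{2\pi}z})$; letting $N\to\infty$ this contribution vanishes since the strip has zero Lebesgue measure, so the limit is $\alpha^*\int_{(0,1)^2\setminus A}\psi(x)\,dx$. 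The third term gives $\alpha^*\int_{(0,1)^2}\psi(x)\,dx = \alpha^*$. Since $\int_A\psi + \int_{(0,1)^2\setminus A}\psi = \int_{(0,1)^2}\psi = 1$ (using $\partial A$ has measure zero), the three limits cancel:
\begin{equation*}
\limsup_{N\to\infty} z^{-1}\mathrm{e}^{\sqrt{2\pi}z}\,\P(\max_{v\in NA}\eta_{v,N}\geq m_N+z,\ \max_{v\in V_N\setminus NA}\eta_{v,N}\geq m_N+z) \leq o_z(1)\,,
\end{equation*}
where $o_z(1)\to 0$ as $z\to\infty$ after accounting for the three error terms from Proposition \ref{prop-limiting-tail-gff}, each of which is $o(1)$ in the appropriate double-limit sense. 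Taking $z\to\infty$ completes the proof.

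The main obstacle I anticipate is handling the boundary of $A$ carefully: Proposition \ref{prop-limiting-tail-gff} is stated for open sets, so to apply it to the complement $V_N\setminus NA$ one must either replace $A$ by a slightly enlarged open box $A_\rho\supset\bar A$ (so that $V_N\setminus NA_\rho$ is well inside an open set) and use continuity of $\psi$ and monotonicity in the set to control the discrepancy, or invoke Lemma \ref{lem-prelim-tail} directly for the boundary strip as above. One needs to ensure that the order of limits ($N\to\infty$ first, then $z\to\infty$) is respected and that the error terms coming from Proposition \ref{prop-limiting-tail-gff} (which are $o(1)$ only after taking $N\to\infty$ and then $z\to\infty$) do not interfere. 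A clean way to organize this is: fix $z$, take $\limsup_{N\to\infty}$ of the displayed inequality, bound the result by (error in first term) $+$ (error in second term) $+$ (error in third term) $+$ (boundary-strip contribution, which is already $0$ after $N\to\infty$), and then take $z\to\infty$ so that all error terms vanish. The boundary-strip estimate via \eqref{eq-max-subset} is the only genuinely new computation, and it is routine.
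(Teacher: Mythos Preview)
Your proposal is correct and follows essentially the same approach as the paper: inclusion--exclusion combined with Proposition~\ref{prop-limiting-tail-gff}, plus Lemma~\ref{lem-prelim-tail} (display~\eqref{eq-max-subset}) to dispose of the thin boundary region. The paper organizes the argument slightly differently---it introduces an auxiliary open set $V_\delta^c$ (away from both $\bar A$ and $\partial[0,1]^2$), applies inclusion--exclusion to the two disjoint open sets $A$ and $V_\delta^c$ so that Proposition~\ref{prop-limiting-tail-gff} applies directly to each piece, and then uses \eqref{eq-max-subset} on the remaining set $V_N\setminus(NA\cup NV_\delta^c)$ before sending $\delta\searrow 0$---which is exactly the ``enlarged box $A_\rho$'' variant you yourself suggest as an alternative.
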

\begin{proof}
For $0<\delta<1/10$, let $V_\delta = \{v\in [0, 1]^2:
 \mathrm{dist}(v, \overline{A}) \leq \delta\}$,
 where $\overline{A}$
denotes the
closure of $A$.
Consider the open sets $A$ and $V_\delta^c$.
By Proposition~\ref{prop-limiting-tail-gff} and
the inclusion-exclusion principle,
$$\lim_{z\to \infty} \limsup_{N\to \infty} z^{-1}\mathrm{e}^{\sqrt{2\pi }z}\P(\max_{v\in N A} \eta_{v,N} \geq m_N + z, \max_{v\in N V_\delta^c} \eta_{v,N} \geq m_N+z) = 0\,.$$
Applying \eqref{eq-max-subset} of
Lemma~\ref{lem-prelim-tail} to the set $V_N\setminus (NA\cup NV_\delta^c)$
and letting $\delta\searrow 0$ completes the proof of the corollary.
\end{proof}

Before proceeding to the proof of
Proposition \ref{prop-limiting-tail-gff}, we
show that Propositions \ref{prop-jian} and \ref{prop-jian-delta} follow directly from it and
Corollary \ref{cor-no-two-large-clusters}. Recall that
$g(\cdot)$ is
a function with $g(K) \rightarrow\infty$ as $K\rightarrow\infty$.

\medskip

\noindent
{\it Proof of Propositions \ref{prop-jian} and
\ref{prop-jian-delta} (assuming Proposition
\ref{prop-limiting-tail-gff} and Corollary \ref{cor-no-two-large-clusters}).}
We prove Proposition
\ref{prop-jian}.
Display \eqref{eq-301212a} is simply a reformulation of (\ref{eqnewMB1}).
%
% Proposition~\ref{prop-limiting-tail-gff}. 
%
In order to prove \eqref{eq-301212b}, it suffices to consider the case when $A$ is an open box.
% Maury - I inserted "open" again on the line above.
Using (\ref{eqnewMB2}) and Corollary~\ref{cor-no-two-large-clusters}, we obtain
$$\lim_{K\to\infty} \limsup_{N\to \infty}\left|
\frac{\mathrm{e}^{\sqrt{2\pi} (x_K +g(K))}}{g(K)+x_K} \P( \max_{v\in V_{N/K}} \eta_{v,N/K}\geq m_{N/K}+g(K)+x_K, \frac{K}{N}v^*\in A) - \alpha^*\int_A \psi(y) dy\right|=0\,.$$
Combining this with \eqref{eq-301212a}, the desired equality \eqref{eq-301212b} follows by
Bayes' formula.

The proof of Proposition \ref{prop-jian-delta} is analogous, but instead using 
%
%the second display in Proposition \ref{prop-limiting-tail-gff} 
%
(\ref{eqnewMB2}) (rather than (\ref{eqnewMB1})), with $A=(\delta,1-\delta)^2$. We omit further details.
\qed

\begin{figure}[htb]
\begin{center}
\includegraphics[width=120mm]{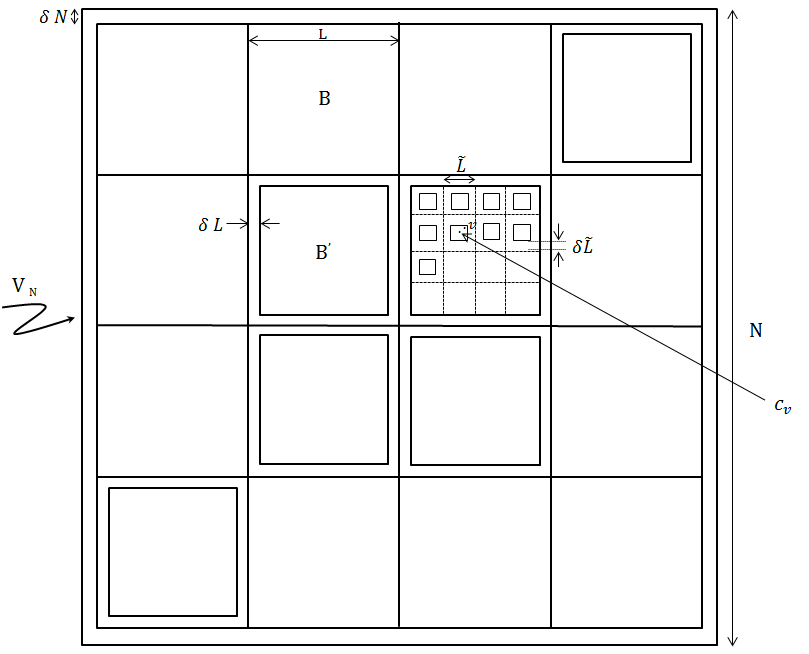}
\caption{The boxes $B\in\mathcal{B}_N$, $B'$, $\tilde B_{v,N}$}
\label{figure-2}
\end{center}\end{figure}

%Ofer1202 actually, if we put c_v we should also have v, and there is no room
% for that. For this reason, i didn't modify the figure yet. i'll see
%what can be done
% Maury - Figure 1, and especially Figure 2, are quite helpful.  In the case of Figure 2, I suggest that the notation for the point $v$ be replaced with $c_v$, since it is at the middle of its box.  I would prefer not to mess with the figures myself.

In order to prove Proposition~\ref{prop-limiting-tail-gff},
we will study a sparse version of the lattice $V_N$.
Consider $0<\delta<1/100$ chosen independently of  the other constants. Let $V'_N \subseteq V_N$ be a box in the center of $V_N$ with side length $N ' = (1 - 2\delta) N$. Let $L$, $\tilde L$ and $h$ be integer-valued functions of $z$,
with $h=L/\tilde L$, that satisfy
%= h \tilde L$,  where
\begin{equation}\label{eq-L-tilde-L}
\tilde L \geq 2^{z^4}, \quad h\leq \log z, \mbox{ and } h \to_{z\to \infty} \infty\,.
\end{equation}
Besides \eqref{eq-L-tilde-L}, 
%we will also assume that $h\to_{z\to\infty}
%\infty$ slowly enough; 
the only other
assumption we impose on $L$, $\tilde L$ and $h$ 
is that they do not depend on $N$.
In particular, in this section, when taking multiple limits, we will
let $N\rightarrow\infty$
before taking other limits.
% Maury - I modified somewhat the above few lines.
(Note that the number of boxes with side length $L$ will go to infinity before
$L$ increases.
This order differs from that in, e.g.,
Proposition \ref{prop-jian}, where the lengths of boxes of side length
$N/K$ go to infinity before the number of such boxes is allowed to increase.)
Throughout the rest of this section, we write
$$n = \log_2 N,  \,\ell = \log_2 L, \mbox{ and }\tilde \ell = \log_2 \tilde L\,.$$  By \eqref{eq-L-tilde-L}, $\tilde \ell \geq z^4$ and $\ell \leq \tilde \ell+\log_2\log z$.

Let $\mathcal{B}_N$ be the collection of boxes of side length $L$ obtained by partitioning $V'_N$ into $((1-2\delta)N/L)^2$ sub-boxes. For every $B\in \mathcal{B}_N$, let $B' \subseteq B$ be the box in the center of $B$ with side length $(1 - 2\delta)L$, and let $\tilde{\mathcal{B}}_B$ be the collection of $((1-2\delta)L/\tilde L)^2$  boxes of side length 
%July162014
$(1-2\delta)\tilde L$ placed inside $B'$ such that every two boxes are at least $2\delta \tilde L$ distance apart.
(This collection is obtained by removing from $B$ a grid-patterned set
of width
$2\delta \tilde L$.)
% Maury - I replaced "a" with "the" several times.
Set $\tilde{\mathcal B}_N = \cup_{B\in \mathcal{B}_N} \tilde {\mathcal B}_B$ and, for each $\tilde B \in \tilde {\mathcal{B}}_N$, denote by $c_{\tilde B}$ the center of $\tilde B$. Furthermore, for each $v\in V_N$, we denote by $B_{v, N}$ and $\tilde B_{v, N}$ the boxes in $\mathcal{B}_N$ and $\mathcal{\tilde B}_N$ that contain $v$ (if they exist), respectively. Write $\tilde V_N = \cup_{\tilde B\in \mathcal{\tilde B}_N} \tilde B$. Finally, for $v\in \tilde V_N$, denote by $c_v = c_{\tilde B_{v, N}}$ the center of the $\tilde B$-box that contains $v$.
(By center, we mean the vertex in $V_N$ that is closest to the geometric center
of $\tilde B_{v,N}$ among those
vertices both of whose coordinates are smaller than those of the geometric
center.)
The following proposition is the crucial step in proving
Proposition~\ref{prop-limiting-tail-gff}.
\begin{prop}\label{prop-limiting-tail-gff-delta}
For any $0<\delta\leq 1/100$, there exists a constant  $\alpha_{\delta}^*>0$ such that
$$\lim_{z\to \infty}\limsup_{N\to \infty}|z^{-1} \mathrm{e}^{\sqrt{2\pi }z}\P(\max_{v\in \tilde V_N} \eta_{v,N} \geq m_N +z) - \alpha_{\delta}^*| =0\,.$$
Furthermore, there exists a continuous function $\psi_\delta: [\delta, 1-\delta]^2 \mapsto (0, \infty)$, with
 $\int_{[\delta, 1-\delta]^2} \psi_\delta(x) dx = 1$, and a continuous function $\psi: (0, 1)^2 \mapsto (0, \infty)$, with $\psi_\delta(x) \to \psi(x)$
uniformly in $x$ on closed sets as $\delta \searrow 0$,
such that, for any open set
$A\subseteq [\delta, 1-\delta]^2$,
$$\lim_{z\to \infty}\limsup_{N\to \infty}|z^{-1} \mathrm{e}^{\sqrt{2\pi }z}\P(\max_{v\in NA \cap \tilde V_N}\eta_{v,N} \geq m_N +z) - \alpha^*_\delta \int_{A} \psi_\delta(x) dx| =0\,.$$
\end{prop}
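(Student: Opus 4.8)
The plan is to prove Proposition~\ref{prop-limiting-tail-gff-delta} via a \emph{modified second moment method} over the sparse set $\tilde V_N$, carried out in three stages: first for the MBRW $\{\xi_{v,N}\}$ (where the tree/recursive structure makes the first- and second-moment computations tractable), then transferring to the GFF using the covariance comparison of Lemma~\ref{lem-covariance} together with Sudakov--Fernique (Lemma~\ref{lem-sudakov-fernique}) and Slepian (Lemma~\ref{lem-slepian}), and finally controlling the location of the maximizer. The key point enabling the sparse set is the separation condition: distinct $\tilde B$-boxes are $\geq 2\delta\tilde L$ apart, so that inside a box $B\in\mathcal B_N$ the fields on different $\tilde B$'s decouple at the top $\tilde\ell$ scales, and the conditional maxima over the various $\tilde B$ behave almost like independent events once one conditions on the ``coarse'' part of the field at scales coarser than $\tilde L$.

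First I would set up the one-box estimate. Fix $B\in\mathcal B_N$ and a box $\tilde B\in\tilde{\mathcal B}_B$ with center $c_{\tilde B}$. Write the MBRW as a sum over scales $0\le j\le n$, split at scale $\tilde\ell$: the contribution of scales $j\le n-\tilde\ell$ is a smooth ``background'' field $S$ that is essentially constant on $\tilde B$ up to $O(1)$ fluctuations (controlled by Fernique's criterion, Lemma~\ref{lem-ferniquecriterion}), while the contribution of scales $j>n-\tilde\ell$ is an independent MBRW on $\tilde B$ of depth $\tilde\ell$. Conditioning on the value of the background at $c_{\tilde B}$, the event $\{\max_{v\in\tilde B}\eta_{v,N}\ge m_N+z\}$ becomes, after centering, the event that the depth-$\tilde\ell$ MBRW exceeds $m_{\tilde L}+z'$ for an appropriate $z'$; here I would invoke the refined BRW/MBRW tail machinery already developed (Lemmas~\ref{lem-a-priori}, \ref{lem-prelim-tail}, and the Brownian-motion barrier estimate Lemma~\ref{lem-1DRW}, in particular the asymptotic insensitivity \eqref{eq-mu-mu*-asympt} to small perturbations of the barrier and the monotonicity \eqref{eq-BM-monotone}). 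The right-barrier/left-barrier arguments show the leading order of the one-box probability is $c\,z e^{-\sqrt{2\pi}z}$ times the probability that the maximizing particle sits near a prescribed relative location, which is where the density $\psi_\delta$ is born: $\psi_\delta(x)$ is (the limit in $z$ of) the conditional law of $Kv^*/N$ given that the max is large, expressed via the entrance law of the barrier-constrained walk. The condition $\tilde\ell\ge z^4$ guarantees $z\ll\sqrt{\tilde\ell}$, so the Gaussian correction $e^{-cz^2/\tilde\ell}$ is $1+o(1)$ and does not spoil the $z e^{-\sqrt{2\pi}z}$ asymptotics.

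Next I would run the second-moment (truncated) argument to upgrade the one-box first-moment estimate into a genuine limit for $\P(\max_{v\in\tilde V_N}\eta_{v,N}\ge m_N+z)$. Writing $X_{\tilde B}=\1\{\max_{v\in\tilde B}\eta_{v,N}\ge m_N+z,\ \text{barrier event holds}\}$ and $X=\sum_{\tilde B}X_{\tilde B}$, the first moment $\E X\to \alpha^*_\delta z e^{-\sqrt{2\pi}z}$ follows from Stage 1 (summing the per-box probability over the $\asymp (N/\tilde L)^2$ boxes and using the barrier truncation to kill the Gaussian correction with room to spare, exactly as the slackness term $10\log(j\wedge(n-j))_+$ is used in Lemma~\ref{lem-a-priori}). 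For the second moment one must bound $\E[X_{\tilde B}X_{\tilde B'}]$ for $\tilde B\ne\tilde B'$: if they lie in the same $B\in\mathcal B_N$, the shared coarse field at scales between $\tilde\ell$ and $\ell$ contributes, but because the two boxes are $\ge 2\delta\tilde L$ apart their top $\tilde\ell - O(\log(1/\delta))$ scales are independent, and the correlated part is spread over only $\le h\le \log z$ scales, so a union bound over the branching point plus the tail estimate \eqref{eq-max-subset} gives a contribution that is $o((\E X)^2)$ after summing; if they lie in different $B$-boxes, independence of the fine structure combined with the background-field comparison makes the joint probability essentially the product. Putting $\P(X\ge 1)=\E X - O(\E[X(X-1)])$ and $\P(\max\ge m_N+z)=\P(X\ge 1)+\P(\text{barrier fails but max large})$, with the last term controlled by Lemma~\ref{lem-a-priori} / Lemma~\ref{lem-prelim-tail}, yields the stated limit with $\alpha^*_\delta$; the localized version (with $NA\cap\tilde V_N$) is identical, restricting the sum over $\tilde B$ to those with $c_{\tilde B}/N\in A$, which produces $\alpha^*_\delta\int_A\psi_\delta$. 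The transfer MBRW $\to$ GFF is then done by the comparison in Lemma~\ref{lem-covariance}: on $V'_N$ the GFF and MBRW covariances agree up to $O(1)$, so Slepian/Sudakov--Fernique sandwich the GFF tail between MBRW tails with $z$ shifted by a constant, and one lets that constant be absorbed by first sending $N\to\infty$ and then $z\to\infty$ (this is why the order of limits, $N$ first, matters here). Continuity and positivity of $\psi_\delta$, and the uniform convergence $\psi_\delta\to\psi$ on closed subsets of $(0,1)^2$ as $\delta\searrow 0$, come from the explicit representation of $\psi_\delta$ through the barrier entrance law, which depends on $\delta$ only through the width of the removed grid and converges as $\delta\to 0$.

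The main obstacle I anticipate is the second-moment bound for pairs $\tilde B,\tilde B'$ in the \emph{same} $B$-box at ``intermediate'' separation --- close enough that they share many scales of the field but far enough that one still needs the full barrier/entropic-repulsion input to see that the conditional maxima cannot both be large without paying an extra $e^{-\sqrt{2\pi}\cdot(\text{branch depth})}$ factor. This is precisely the place where the choice $\tilde L\ge 2^{z^4}$, $h\le\log z$, $h\to\infty$ is engineered: $h\to\infty$ forces the common depth to grow so the product structure kicks in, while $h\le\log z$ keeps the number of correlated scales small enough that the union bound over the branch point is summable against $z^2 e^{-2\sqrt{2\pi}z}$. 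Getting the bookkeeping of these barrier estimates to close --- in particular verifying that the $O(1)$ discrepancies from Lemma~\ref{lem-covariance} and the perturbation insensitivity \eqref{eq-mu-mu*-asympt} genuinely do not affect the constant $\alpha^*_\delta$ in the $z\to\infty$ limit --- is the technical heart of the argument and accounts for the length of Section~\ref{sec-limittail}.
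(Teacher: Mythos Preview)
Your overall architecture---a truncated second-moment method counting ``good'' clusters indexed by the $\tilde B$-boxes, with a barrier event on the coarse-scale trajectory and the fine-scale maximum handled by the tail estimates of Lemmas~\ref{lem-a-priori}--\ref{lem-prelim-tail} and \ref{lem-1DRW}---is indeed what the paper does. The three-regime decomposition of the second moment by ``splitting scale'' is also correct in spirit.

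There is, however, a genuine gap in your transfer step. You propose to run the whole computation for the MBRW and then sandwich the GFF tail between MBRW tails ``with $z$ shifted by a constant'', claiming the constant is absorbed by taking $z\to\infty$. But a shift $z\mapsto z\pm c$ changes $z\,\mathrm{e}^{-\sqrt{2\pi}z}$ by the multiplicative factor $\mathrm{e}^{\mp\sqrt{2\pi}c}(1+o(1))$; sending $z\to\infty$ does \emph{not} remove this factor. So a Slepian sandwich based on the $O(1)$ covariance discrepancy of Lemma~\ref{lem-covariance} would only trap $\alpha^*_\delta$ in an interval of fixed positive width, never identify it. (Relatedly, your $\psi_\delta$ would be the MBRW's density, whereas the actual $\psi_\delta$ encodes how $\var\eta_{v,N}$ varies with the position of $v$ inside $V_N$---a GFF-specific effect invisible to the pure MBRW.)

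The paper's remedy is to use a \emph{hybrid} field rather than a pure MBRW. It first decomposes $\eta_{v,N}=X_{v,N}+Y_{v,N}$ at the $L$-box (not $\tilde L$-box) scale, keeping the fine part $Y_{v,N}$ as a genuine GFF in each $B\in\mathcal B_N$, and replacing $X_{v,N}$ by its value $X_{c_v,N}$ at the center of the $\tilde B$-box (Lemma~\ref{lem-eta-tilde-eta} shows this costs only $\delta_z\to 0$). Only this coarse piece is then compared to MBRW: one constructs upper and lower processes $X^{\mathrm{up}}_{v,N},X^{\mathrm{lw}}_{v,N}$ by dropping $r=r_\delta$ MBRW levels and adding, respectively, \emph{independent} Gaussians or a single \emph{common} Gaussian, so that Slepian applies exactly (Lemma~\ref{lem-X-MBRW}). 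The crucial observation is that, at each fixed $v$, $X^{\mathrm{up}}_{v,N}$ and $X^{\mathrm{lw}}_{v,N}$ have the \emph{same marginal law} (both adjusted to $\var X_{v,N}$), hence the first moments of the cluster counts coincide, $\E\Lambda^{\mathrm{up}}_{N,z}=\E\Lambda^{\mathrm{lw}}_{N,z}$ (equation~\eqref{eq-morningcoffee}). This is what makes the Slepian sandwich tight and allows the second-moment computation on $\Lambda^{\mathrm{lw}}$ (Lemma~\ref{lem-second-moment}) to identify a single constant. The position dependence of the added variance $a_{v,N}$ (equation~\eqref{eq-a-N-v-converge}) is then exactly what produces $\psi_\delta$ in Proposition~\ref{prop-asymptotic-first-moment}.
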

Proposition~\ref{prop-limiting-tail-gff} follows immediately from 
Proposition \ref{prop-limiting-tail-gff-delta}
and Lemma~\ref{lem-prelim-tail} by letting 
$\delta\searrow 0$, since,  by
Lemma~\ref{lem-prelim-tail},
$\sup_{\delta\leq 1/100} \alpha_\delta^*<\infty$,
while, from the definition of $\tilde V_N$, $\alpha_\delta^*$ 
is monotone decreasing in $\delta$
and  thus $\alpha_\delta^*\to_{\delta\to 0}
\alpha^*\in (0,\infty)$. 
%Ofer 1202 - it is OK. I hesitated whether to replace ``close sets'' by
% ``closed subsets of $(0,1)^2$, but decided this is not needed.
% Maury - Somewhere back the statement of P4.3 was messed up a bit.  I've corrected it, but you should check everything.

The rest of the section is devoted to the proof of Proposition \ref{prop-limiting-tail-gff-delta}. In what follows, we consider $\delta > 0$ to be fixed and suppress any dependence on $\delta$ in the notation except in cases where it is important to stress the dependence.

\subsection{A mixture of MBRW and GFF}
In the proof of Proposition
\ref{prop-limiting-tail-gff-delta}, we will approximate the GFF by a mixture
of a MBRW (in coarse scales) and a copy of the GFF (in fine scales).
The approximation consists of
two main steps. First, in analogy with the coarse-fine decomposition introduced
in Section
\ref{sec-coarsefine}, but employing very different scales,
we
write  the GFF as a sum of two independent Gaussian fields. The ``fine'' field
will consist of independent copies
%Ofer1202 - reinstated independent.  It is crucial that the GFFs in small boxes
%are independent
% Maury - I've replaced "independent copies" with " a copy".  Let me know if you disagree.
of the GFF in the boxes $B_{v,N}$, while the
``coarse'' field will be approximated by a Gaussian field that is piecewise constant 
over each of the smaller boxes $\tilde B_{v,N}$.
In the second step, we then further approximate the coarse field by a
MBRW.
%Gaussian field, andnn
%then approximate the coarse field by a representative.
%where in one field we approximate each cluster by a representative
%and the other field is formed by a number of independent GFF's in smaller boxes; (2) Approximate the representative Gaussian field by a MBRW.

\medskip
\noindent{\bf Step 1.} For $v\in \tilde V_N$, define (in analogy with
\eqref{eq-of1}, except that box sizes are different)
\begin{align}
X_{v,N} &=  \E (\eta_{v,N} \mid \mathcal{F}_{\partial B_{v, N}}) \mbox{ and } Y_{v,N} = \eta_{v,N}  - X_{v,N}\,,\nonumber\\
\tilde \eta_{v,N} & = X_{c_{v},N} + Y_{v,N}\,. \label{eq-def-tilde-eta}
\end{align}
Note that, 
{for each $B\in \mathcal B_N$,} the process $\mathcal{Y}_B = \{Y_{v,N}: v\in B\}$ is distributed as a GFF on $B$ with Dirichlet boundary data. Moreover,
as in the decomposition into the coarse and fine fields in Section \ref{sec-coarsefine},
\begin{align}
%       \label{eq-independence}
&\{X_{v,N}: v\in \tilde V_N\} \mbox{ is independent of } 
\{Y_{v,N} : v\in \tilde V_N\} \nonumber\\
        \label{eq-independence}
&
\!\!\!\!\!\!\!\!
\!\!\!\!\!\!\!\!
\!\!\!\!\!\!\!\!
\!\!\!\!\!\!\!\!
\!\!\!\!\!\!\!\!
\!\!\!\!\!\!\!\!
\!\!\!\!\!\!\!\!
\mbox{ and }\\
& \{\mathcal Y_B: B\in \mathcal{B}_N\} \mbox{ are independent }.
\nonumber
\end{align}

We first show that the limiting right tail for the maximum of $\{\eta_{\cdot,N}\}$ can be approximated by that of $\{\tilde \eta_{\cdot,N}\}$.
We start with the following preparatory  lemma.
This is the only place where the assumption
%In the proof, we use that
$h\to\infty$ is used.
\begin{lemma}\label{lem-green-functions-1}
        For every choice of sequence $h=h(z)$ satisfying
        \eqref{eq-L-tilde-L},
there exists $N_0=N_0(z,\ell,\tilde \ell)$ and
$\epsilon_z$ with $\epsilon_z \searrow_{z\to \infty} 0$ such that, for all $u, v\in \tilde V_N$ and all $N>N_0$,
$$(1 - \epsilon_z/\log N)^2 \E X_{u,N} X_{v,N} \leq \E X_{c_u,N} X_{c_v,N}  \leq (1+\epsilon_z/\log N)^2 \E X_{u,N} X_{v,N}\,.$$
\end{lemma}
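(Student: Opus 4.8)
The plan is to estimate the covariance $\E X_{u,N}X_{v,N}$ by comparing it with the value one gets by replacing $u$ and $v$ with the centers $c_u, c_v$ of their respective $\tilde B$-boxes, and to show the perturbation is multiplicatively of order $1\pm \epsilon_z/\log N$, with $\epsilon_z\to 0$. Recall that $X_{v,N}=\E(\eta_{v,N}\mid \mathcal F_{\partial B_{v,N}})$ is the harmonic extension into $B_{v,N}$ of the GFF restricted to $\partial B_{v,N}$, so by the same Green's-function computation used in the proof of Lemma~\ref{lem-coarselimit} (formulas \eqref{eq-lawler1}, \eqref{eq-lawler2}, and the potential-kernel asymptotics \eqref{eq-100113c}), one can write $\E X_{u,N}X_{v,N}$ in terms of $\E(\eta_{u,N}\eta_{v,N})$ minus $\E(Y_{u,N}Y_{v,N})$, where the latter vanishes unless $u,v$ lie in the same box $B\in\mathcal B_N$ and otherwise equals the Green's function of simple random walk in $B$. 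The point is that all these quantities are, up to additive $O(1)$ errors, given by $\tfrac{2\log 2}{\pi}$ times logarithmic expressions in the relevant distances (as in Lemma~\ref{lem-covariance}), and the overall scale of $\E X_{u,N}X_{v,N}$ is of order $\log N$.

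First I would record that $X_{u,N}$ and $X_{c_u,N}$ differ only through the boundary data on $\partial B_{u,N}=\partial B_{c_u,N}$ — note $B_{u,N}=B_{c_u,N}$ since $u$ and $c_u$ lie in the same $\tilde B\subseteq B$ — so $X_{u,N}-X_{c_u,N}$ is the harmonic function in $B$ with boundary values $0$, i.e. the difference is purely the evaluation of the same harmonic extension at two different interior points $u$ and $c_u$, whose $\ell^\infty$-distance is at most $\tilde L$. Then I would bound $|\E X_{u,N}X_{v,N}-\E X_{c_u,N}X_{c_v,N}|$ by splitting into the two differences $|\E(X_{u,N}-X_{c_u,N})X_{v,N}|$ and $|\E X_{c_u,N}(X_{v,N}-X_{c_v,N})|$, and estimating each via Cauchy--Schwarz together with the variance bound $\E(X_{u,N}-X_{c_u,N})^2\lesssim |u-c_u|/(N/K)$-type estimate — here the analog is $\E(X_{u,N}-X_{c_u,N})^2\lesssim |u-c_u|/L \le \tilde L/L = 1/h$, which is exactly where $h\to\infty$ enters and gives a $o(1)$ factor. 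Combined with $\E X_{v,N}^2 = O(\log N)$ and $\E X_{u,N}X_{v,N}\asymp \log N$ in the regime that matters, this yields a multiplicative error of size $O(1/\sqrt h)=:\epsilon_z$; absorbing the $\log N$ denominator gives the stated $(1\pm\epsilon_z/\log N)$ form after replacing $\epsilon_z$ by a constant multiple and noting $\E X_{u,N}X_{v,N} \geq c\log N$.

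The main obstacle, and the subtlety to get right, is the sign/positivity issue: the statement is phrased multiplicatively, $(1-\epsilon_z/\log N)^2\,\E X_{u,N}X_{v,N}\le \E X_{c_u,N}X_{c_v,N}\le (1+\epsilon_z/\log N)^2\,\E X_{u,N}X_{v,N}$, which requires $\E X_{u,N}X_{v,N}>0$ and requires the additive error $O(\sqrt{(\log N)/h})$ to be re-expressed as a multiplicative error — this is legitimate precisely because $\E X_{u,N}X_{v,N}$ is bounded below by a constant times $\log N$ (both $u$ and $v$ are deep inside $V_N'$, so the harmonic-extension covariance is of full order $\log N$, not $O(1)$; this uses that $\tilde V_N\subseteq V_N'$ and the $\delta$-insulation built into the construction of $\tilde{\mathcal B}_N$). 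I would also need to handle the case $u,v$ in different $B$-boxes versus the same $B$-box uniformly, but in the former case $\E X_{u,N}X_{v,N}$ is simply $\E(\eta_{u,N}\eta_{v,N})$ restricted appropriately and the argument is unchanged. The only genuinely quantitative input is the variance estimate $\E(X_{u,N}-X_{c_u,N})^2\lesssim 1/h$, proved exactly as Lemma~\ref{lem-coarsecov} (the potential-kernel Lipschitz bound applied on scale $L$), so I expect the whole proof to be short once that estimate is in hand.
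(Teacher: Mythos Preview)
Your Cauchy--Schwarz approach has a genuine gap, and in fact breaks down in two related places.

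\textbf{The lower bound $\E X_{u,N}X_{v,N}\gtrsim \log N$ is false in general.} When $u,v$ lie in different boxes of $\mathcal B_N$, the fine parts are independent and $\E X_{u,N}X_{v,N}=\E \eta_{u,N}\eta_{v,N}$, which by Lemma~\ref{lem-covariance} is of order $\log(N/d_{u,v})$. For $d_{u,v}$ comparable to $N$ (say $u$ near one corner of $V_N'$ and $v$ near the opposite corner) this is $O(1)$, not of order $\log N$. Being ``deep inside $V_N'$'' controls the \emph{variance} of $X_{u,N}$, not the cross-covariance.

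\textbf{Even when $\E X_{u,N}X_{v,N}\asymp\log N$, Cauchy--Schwarz is too crude.} Your bound gives
\[
|\E(X_{u,N}-X_{c_u,N})X_{v,N}|\le \sqrt{\E(X_{u,N}-X_{c_u,N})^2}\cdot\sqrt{\E X_{v,N}^2}\lesssim \sqrt{(1/h)\cdot\log N}=\sqrt{\log N/h}.
\]
Dividing by $\E X_{u,N}X_{v,N}\asymp\log N$ yields a multiplicative error $1/\sqrt{h\log N}$. To match the statement you must write this as $\epsilon_z/\log N$, forcing $\epsilon_z\gtrsim\sqrt{\log N/h}$; but $h=h(z)$ is fixed as $N\to\infty$, so no $N$-independent $\epsilon_z$ works. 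And in the far-apart regime where $\E X_{u,N}X_{v,N}=O(1)$, the same additive error $\sqrt{\log N/h}$ gives a multiplicative error that blows up with $N$.

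The paper avoids both problems by estimating the difference \emph{directly} from the Green's-function representation: it bounds the total-variation distance of exit measures started at $v$ versus $c_v$ (Proposition~8.1.4 of \cite{LL10}) by $O(\tilde L/N)$ on $\partial V_N$ and $O(\tilde L/L)$ on $\partial B$, and combines this with the Lipschitz behavior of the potential kernel to get additive errors $O(\tilde L/d_{u,v})$ (different-box case) and $O(\tilde L/L)$ (same-box case). These bounds \emph{scale with} $\E X_{u,N}X_{v,N}$ in exactly the right way to produce the multiplicative $(1\pm\epsilon_z/\log N)^2$ form with $\epsilon_z=C\tilde L/L=C/h$. Cauchy--Schwarz throws away precisely this adaptivity.
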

\begin{proof}
We first consider the case when $u$ and $v$ belong to different boxes in $\mathcal{B}_N$.
Setting $d_{u,v} = \|u-v\|_2$, we have $d_{u, v} \gtrsim L$.
% Maury - The definition of d was removed; I assume this was accidental.
In addition, by the independence in \eqref{eq-independence},
$$\E X_{u,N} X_{v,N} = \E \eta_{u,N} \eta_{v,N} \mbox{ and } \E X_{c_u,N} X_{c_v,N} = \E \eta_{c_u,N} \eta_{c_v,N}\,.$$
Let $H_{v, N}$ and $H_{ c_v, N}$ be the
exit measures on
$\partial V_N$ for random walks started at $v$ and $c_v$, respectively. By \cite[Proposition 8.1.4]{LL10}, $\|H_{ v, N} -  H _{c_v, N}\|_{\mathrm{TV}} \lesssim \tilde L/N$, where $\|\mu - \nu\|_{\mathrm{TV}}$ denotes   the total variation distance between  measures $\mu$ and $\nu$. Combined with 
the Green function estimates of simple random walk \cite[Proposition 4.6.2]{LL10},
this implies
\begin{equation}\label{eq-green-function-case-1}
|\E \eta_{u,N} \eta_{v,N} - \E \eta_{c_u,N} \eta_{c_v,N} |\lesssim \frac{\tilde L}{N} + \log (1+ \frac{\tilde L}{d_{u, v}}) \lesssim  \frac{\tilde L}{d_{u, v}} \lesssim \frac{\tilde L}{L} \frac{\log (\frac{N}{d_{u, v}})\vee 2)}{\log N} \asymp\frac{\tilde L}{L} \frac{\E X_{u,N} X_{v,N}}{\log N}\,.
\end{equation}
Since $\E X_{u,N}X_{v,N}=\E\eta_{u,N}\eta_{v,N}$, the last display demonstrates the lemma, when $u,v$ belong to different
boxes in $\mathcal{B}_N$, by choosing $\epsilon_z=C\tilde L/L$,
for some fixed, absolute constant $C$.

We next consider $u, v\in B$ for a given $B\in \mathcal{B}_N$. Let $H'_v$ and $H'_{c_v}$ be the
exit measures on
$\partial B$ for random walks started at $v$ and $c_v$, respectively. By \cite[Proposition 8.1.4]{LL10}, $\|H'_v -  H' _{c_v}\|_{\mathrm{TV}} \lesssim \tilde L/L$. Combined with \cite[Lemma 4.6.2]{LL10}, this implies that
\begin{align*}
|\E X_{u,N} X_{v,N} -  \E X_{c_u,N} X_{c_v,N} |& \leq |\E \eta_{u,N} \eta_{v,N} - \E \eta_{c_u,N} \eta_{c_v,N} | + |\E Y_{u,N} Y_{v,N} - \E Y_{c_u,N} Y_{c_v,N} |\\
&\lesssim \tilde L/N + \tilde L/L \lesssim \tilde L/L\,.
% Maury - I added a trivial simplification in the above display.
\end{align*}
Furthermore,
$$\E X_{u,N}X_{v,N} = \E \eta_{u,N} \eta_{v,N} - \E Y_{u,N} Y_{v,N} = 
\frac2\pi \log N - O(\log L)\,,$$
where the last equality uses 
Lemma \ref{lem-covariance} and the observation 
that $\{Y_{v,N}\}$ is a GFF in the box $B$.
Together, the last two displays imply that
$$|\E X_{u,N} X_{v,N} -  \E X_{c_u,N} X_{c_v,N} | \lesssim (\tilde L/L) \cdot (\E X_{u,N} X_{v,N} /\log N)\,. $$
%Combined with \eqref{eq-green-function-case-1},
Setting $\epsilon_z=C\tilde L/L$, with $C$ a fixed absolute constant,
demonstrates the lemma when $u,v$ belong
to the same box in $\mathcal{B}_N$.
\end{proof}

We next compare the maxima of $\eta_{\cdot, N}$ and $\tilde \eta_{\cdot, N}$.
\begin{lemma}\label{lem-eta-tilde-eta}
There exist $\delta_z$, with $\delta_z \searrow_{z\to \infty} 0$,
%and independent Gaussian variables $\phi_{z, N}$
such that
\begin{eqnarray}
\label{eq-ofer230113b}
&&\liminf_{z\to \infty} \liminf_{N\to \infty} \frac{\P(\max_{v\in \tilde V_N} \eta_{v,N} \geq m_N+z)}{ \P(\max_{v\in \tilde V_N} \tilde \eta_{v,N} \geq m_N+z +\delta_z)}  \geq 1,\,\\
\label{eq-ofer230113c}
&&\limsup_{z\to \infty} \limsup_{N\to \infty} \frac{\P(\max_{v\in \tilde V_N} \eta_{v,N} \geq m_N+z)}{ \P(\max_{v\in \tilde V_N} \tilde \eta_{v,N} \geq m_N+z  - \delta_z)} \leq 1  \,.
\end{eqnarray}
\end{lemma}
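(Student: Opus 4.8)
The plan is to compare the two Gaussian fields $\{\eta_{v,N}\}$ and $\{\tilde\eta_{v,N}\}$ over $\tilde V_N$ via a Slepian-type interpolation, exploiting the fact that by \eqref{eq-def-tilde-eta} the two fields differ only in their ``coarse'' components: $\eta_{v,N}=X_{v,N}+Y_{v,N}$ while $\tilde\eta_{v,N}=X_{c_v,N}+Y_{v,N}$, with the same fine field $Y_{v,N}$ and with $\{X_{v,N}\}$ independent of $\{Y_{v,N}\}$. So it suffices to understand how much the maximum of the sum can change when $X_{v,N}$ is replaced by the piecewise-constant-on-$\tilde B$-boxes field $X_{c_v,N}$. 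The key quantitative input is Lemma \ref{lem-green-functions-1}, which says the covariance of the coarse field is multiplicatively perturbed only by a factor $(1\pm\epsilon_z/\log N)$ when a point is moved to the center of its $\tilde B$-box.

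First I would make the covariance comparison precise. Write $X_{v,N}=(1-\epsilon_z/\log N)\hat X_{v,N}+R_{v,N}$ (or the analogous additive split), where $\hat X$ has the covariance of $X_{c_\cdot,N}$ up to the required accuracy, and where the remainder $R_{v,N}$ is a Gaussian field with variance $O(\epsilon_z)$ uniformly (since $\var X_{v,N}=\log N-O(\log L)$ and the covariance discrepancy is $O((\tilde L/L)\var X/\log N)=O(\epsilon_z)$). More robustly, I would phrase the comparison so that $\tilde\eta_{v,N}$ is stochastically sandwiched, in the sense of maxima over $\tilde V_N$, between $(1\mp c\epsilon_z/\log N)\eta_{v,N}$ plus an independent perturbation field of variance $O(\epsilon_z)$; the scaling $1\mp c\epsilon_z/\log N$ multiplying a field whose maximum is $\asymp\log N$ costs only an additive $O(\epsilon_z)=:\delta_z/2$, and by Lemma \ref{lem-gff-perturb} (applied with $\phi$ of the prescribed sub-Gaussian tail and $\epsilon=O(\epsilon_z)$) adding an independent perturbation of that size costs only another $O(\sqrt{\epsilon_z})$ in the location of the tail, absorbed into $\delta_z$. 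Running this in both directions — bounding $\E(\tilde\eta_a-\tilde\eta_b)^2$ above and below by $\E(\eta_a-\eta_b)^2$ up to the stated multiplicative error, and matching variances by adding or subtracting the small independent field — yields, via Slepian's comparison lemma (Lemma \ref{lem-slepian}) together with the a priori tail bound \eqref{eq-right-tail} and \eqref{eq-large-tail}, both \eqref{eq-ofer230113b} and \eqref{eq-ofer230113c}.

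The one technical subtlety is that Slepian's lemma requires matched variances, while Lemma \ref{lem-green-functions-1} only controls covariances (hence also variances) up to the multiplicative factor; so I would first rescale to equalize variances and then account separately for the fact that a multiplicative $(1\pm\epsilon_z/\log N)$ distortion of a field with maximum $\asymp m_N\asymp\log N$ shifts the recentered maximum by $O(\epsilon_z)$, which is where the $\delta_z$ shift in the denominators comes from. Concretely: replace $\tilde\eta$ by $\tilde\eta'$ with $\var\tilde\eta'_v=\var\eta_v$ obtained by adding an independent Gaussian field of variance $O(\epsilon_z)$ (for one inequality) or by the matching subtraction argument applied to $\eta$ (for the other); check the increment inequality \eqref{eq-compare-assumption} holds using Lemma \ref{lem-green-functions-1}; invoke Slepian to compare $\P(\max\ge m_N+z)$ of the two; then undo the variance-matching perturbation using Lemma \ref{lem-gff-perturb}, and undo the rescaling at a cost of $\delta_z$ in the threshold.

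The main obstacle I anticipate is the direction of the Slepian inequality combined with getting the error to be \emph{additive} and \emph{$N$-uniform}: the covariance perturbation in Lemma \ref{lem-green-functions-1} is $\epsilon_z/\log N$, which is tiny, but it multiplies variances of order $\log N$, so one must be careful that the errors in the increment comparison \eqref{eq-compare-assumption} have the right sign for the Slepian bound one wants, in each of the two inequalities. Choosing which field to perturb (adding a small independent field to $\tilde\eta$ versus to $\eta$) so that both the variance-matching and the increment inequality point the correct way is the delicate bookkeeping step; once that is arranged, the tail inputs \eqref{eq-right-tail} and Lemma \ref{lem-prelim-tail} make the ratios in \eqref{eq-ofer230113b}–\eqref{eq-ofer230113c} converge to $1$ as $z\to\infty$ (with $N\to\infty$ taken first), since $\delta_z\to 0$ and the tail $z\mapsto z\mathrm{e}^{-\sqrt{2\pi}z}$ is slowly varying in the relevant sense. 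I would also note that $h\to\infty$ enters precisely through $\epsilon_z=O(\tilde L/L)=O(1/h)\to 0$, consistent with the remark preceding the lemma.
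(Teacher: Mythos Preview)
Your proposal is correct and follows essentially the same route as the paper: construct auxiliary fields by rescaling the coarse part by $(1-\epsilon_z/\log N)$ and adding independent Gaussian noise of variance $O(\epsilon_z)$ to match variances, apply Slepian's lemma via Lemma~\ref{lem-green-functions-1}, and absorb the perturbation using Lemma~\ref{lem-gff-perturb}. The paper makes this concrete by defining $\zeta_{v,N}=(1-\epsilon_z/\log N)X_{v,N}+Y_{v,N}+\sqrt{\epsilon_z}\bar\phi_{v,N}$ for one direction and $\hat\eta_{v,N}=(1-\epsilon_z/\log N)X_{c_v,N}+Y_{v,N}+\sqrt{\epsilon_z}\hat\phi_{v,N}$ for the other, and handles the term $\epsilon_z X_{u,N}/\log N$ via the crude union bound $\P(\max_u X_{u,N}\ge 10\log N)=O(N^{-4})$, a small step you gestured at but did not spell out.
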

%(Note that  the lemma implies that the limit in both 
%\eqref{eq-ofer230113b} and
%\eqref{eq-ofer230113c} equal to $1$.)
\begin{proof}
Choose $\epsilon_z$ as in Lemma~\ref{lem-green-functions-1}. For $v\in \tilde V_N$, define
$$\zeta_{v,N} = (1 - \epsilon_z/\log N) X_{v,N} + Y_{v,N} + \sqrt{\epsilon_z} \bar \phi_{ v, N},$$
where $\bar \phi_{v, N}$ are independent Gaussian variables with variances such that $\var \zeta_{v, N} = \var \tilde \eta_{v,N}$. Because of Lemma~\ref{lem-green-functions-1}, $\var \bar\phi_{ v, N}$ is bounded
uniformly in $v$ and $N$.  Using Lemma \ref{lem-green-functions-1} again implies
\begin{equation}\label{eq-cor-bar-tilde}
\E \zeta_{u,N} \zeta_{v,N} \leq \E \tilde \eta_{u, N} \tilde \eta_{v, N} \mbox{ for all }u, v\in \tilde V_N\,.
\end{equation}
Combined with Lemma~\ref{lem-slepian}, this implies
% the inequality \eqref{eq-cor-bar-tilde} gives that
 \begin{equation}\label{eq-eta-bar-tilde}
 \P(\max_{u\in \tilde V_N} \zeta_{u,N} \geq \lambda) \geq \P(\max_{u \in \tilde V_N} \tilde \eta_{u,N} \geq \lambda) \mbox{ for all } \lambda\in \mathbb{R}\,.
 \end{equation}
Since $\zeta_{u,N}=\eta_{u,N}-\epsilon_z X_{u,N}/\log N+\sqrt{\epsilon_z} \bar \phi_{u,N}$,
 it follows from this that, for all $z$ large enough so that $10\epsilon_z<\epsilon_z^{1/4}$,
% Maury - I reworded the above couple lines.
\begin{align*}
\P(\max_{u\in \tilde V_N} \zeta_{u,N} \geq m_N+z) &\leq  \P(\max_{u\in \tilde V_N} (\eta_{u,N} + \sqrt{\epsilon_z} \bar \phi_{ u, N})
 \geq m_N+z - \epsilon_z^{1/4}) + \P( \max_{u\in \tilde V_N}  X_{u,N} \geq 10 \log N) \,.
\end{align*}
(The constant $10$ is chosen with the future
bound \eqref{eq-of230113} in mind.)

An identical proof to that of Lemma~\ref{lem-gff-perturb} (applied to the maximum in $\tilde V_N$, as opposed to $V_N$) shows that, for
$\epsilon>0$ and $x>0$,
$$\P(\max_{u\in \tilde V_N}  (\eta_{u,N} + \epsilon \bar
\phi_{ u, N}) \geq m_N+x)  \leq   \P(\max_{u\in \tilde V_N} \eta_{u,N} \geq m_N+x - \sqrt{\epsilon}) (1+ O(\mathrm{e}^{-C^{-1} \epsilon^{-1}}))\,.$$
Substituting $\epsilon=\epsilon_z^{1/2}$ and $x=z-\epsilon_z^{1/4}>0$, for
large $z$, and using the union bound
\begin{equation}
\label{eq-of230113}
\P(\max_{u\in\tilde V_N}X_{u,N}\geq 10\log N)\leq
N^2 \max_{u\in V_N}\P(X_{u,N}\geq 10\log N)=O(N^{-4})
\end{equation}
 implies that
\begin{equation}\label{eq-bound-eta-bar}
\P(\max_{u\in \tilde V_N} \zeta_{u,N} \geq m_N+z)  
\leq  O(N^{-4}) +  \P(\max_{u\in \tilde V_N} \eta_{u,N} \geq m_N+z - 
2 \epsilon_z^{1/4}) (1+ O(\mathrm{e}^{-C^{-1} 
\epsilon_z^{-1/2}}))\,.\end{equation}
(The estimate on the right hand side of \eqref{eq-of230113} follows from
$\max_{u\in V_N} \E X_{u,N}^2\le (2/\pi)\log N +O(1)$ and the Gaussian tail bound.)
Together, \eqref{eq-eta-bar-tilde} and (\ref{eq-of230113})  imply \eqref{eq-ofer230113b}, with $\delta_z=2\epsilon_z^{1/4}$.
(We have used the result that, for fixed $z$, the numerator
in \eqref{eq-ofer230113b} is bounded below by a positive function of $z$, as $N\to\infty$, 
{so that the error term $O(N^{-4})$ can be easily absorbed}; this can be shown by, e.g.,
an easy adaptation of the argument in \cite[Theorem 1.1]{Ding11} that 
dealt with $V_N$ rather than $\tilde V_N$.)

We next turn to the proof of
\eqref{eq-ofer230113c}, which is similar in spirit. 
For every $v\in \tilde V_N$, define
$$\hat \eta_{v,N} = (1 - \epsilon_z/\log N) X_{c_{v},N} + Y_{v,N} + \sqrt{\epsilon_z} \hat \phi_{v,N},$$
where $\hat \phi_{v, N}$ are independent Gaussian variables with variances chosen so that $\var \hat \eta_{v, N} = \var \eta_{v,N}$. We see that $\var \hat \phi_{v, N}$ is bounded
uniformly in $N$ and $v$, by Lemma~\ref{lem-green-functions-1}.  By this lemma,
$$\E \hat \eta_{u,N} \hat \eta_{v,N} \leq \E \eta_{u,N} \eta_{v,N} \mbox{ for all }u, v\in \tilde V_N\,.$$
Lemma~\ref{lem-slepian} implies that
\begin{equation}\label{eq-eta-hat-slepian}
\P(\max_{u\in \tilde V_N} \hat \eta_{u,N}  \geq \lambda) \geq \P(\max_{u\in \tilde V_N} \eta_{u,N }\geq \lambda) \mbox{ for all } \lambda\in \mathbb{R}\,.
\end{equation}
With overwhelming probability in $N$,
$X_{c_v,N}>0$ if $\hat \eta_{v,N} =\max_w  \hat\eta_{w,N}$;
on this event,
%
%Clearly, with probability $1- O(N^{-1})$,
\begin{equation}\label{eq-hat-tilde-eta}
\max_v \hat  \eta_{v,N} \leq \max_v(X_{c_v} + Y_{v,N} + \sqrt{\epsilon_z} \hat \phi_{v, N})  = \max_v (\tilde \eta_{v,N}+ \sqrt{\epsilon_z} \hat \phi_{v, N})\,.\end{equation}
By \eqref{eq-cor-bar-tilde}, for any  $\Gamma \subseteq \tilde V_N$,
\begin{align*}\P(\max_{v\in \Gamma} \tilde \eta_{v,N}  \geq \lambda) \leq \P(\max_{v\in \Gamma} \bar \eta_{v,N}   \geq \lambda) = \P(\max_{v\in \Gamma} (1 - \epsilon_z/\log N)\eta_{v,N} + \sqrt{\epsilon_z} \hat \phi_{v,N} \geq \lambda )\,.
\end{align*}
Repeating the argument in the proof of Lemma~\ref{lem-gff-perturb}, we obtain
$$\P(\max_{u\in \tilde V_N}  (\tilde \eta_{u,N} + \sqrt{\epsilon_z} \hat \phi_{N, z}) \geq m_N+z)  \leq   \P(\max_{u\in \tilde V_N} \tilde \eta_{u,N} \geq m_N+z - 2\epsilon_z^{1/4}) + z \mathrm{e}^{-\sqrt{2\pi }z} O(\mathrm{e}^{C^{-1} \epsilon_z^{-1/2}})\,.$$
Together with \eqref{eq-eta-hat-slepian} and \eqref{eq-hat-tilde-eta}, this completes the proof of the lemma.
\end{proof}

\medskip

\noindent{\bf Step 2.} Define
\begin{equation}
  \label{eq-oops240113a}
  \Xi_N = \{c_{\tilde B}: \tilde B\in \mathcal{\tilde B}_N\}.
\end{equation}
We next approximate $\{X_{v,N}: v\in \Xi_N\}$ by a MBRW, by using the notation of
Section~\ref{sec:MBRW}. 
Let $\phi_{N,j,B}$ and $\xi_{v,N}$ be as in Subsection \ref{sec:MBRW} and,
for $v\in \Xi_N$, define
$$S_{v,N} = \mbox{$\sum_{j=\ell}^n \sum_{B\in \mathfrak{B}_j(v)}$} \phi_{N, j, B}\,.$$
Note that, for $B\in \mathcal{B}$, the process $\{\xi_{v,N} - S_{v,N}: v\in B \cap \Xi_N\}$ is a MBRW (projected onto $\Xi_N$) that is defined with respect to the box $B$, except that the
torus wraps around with respect to $V_N$, rather than $B$.
% ``torus'' effect occurs in a manner with respect to $V_N$ as opposed to $B$.
However, since $\Xi_N \cap B$ is distance $\delta L$ away from $\partial B$, it is clear that
this modification only changes
% the ``torus'' effect only change
the covariance for any pair of vertices by up to an additive constant $C = C_\delta$, which depends only on $\delta$.
% Maury - I changed the wording in the last couple of sentences.  Check to see if you approve.
%Ofer1202 - yes, it's better this way.
Therefore, by Lemma~\ref{lem-covariance},
$$|\cov(\xi_{u,N} - S_{u,N}, \xi_{v,N} - S_{v,N}) - \cov(Y_{u,N}, Y_{v,N})| \leq C_\delta \mbox{ for all } u, v \in \Xi_N\,.$$
Lemma~\ref{lem-covariance} also implies
$$|\cov(\xi_{u,N} , \xi_{v,N}) - \cov(\eta_{u,N}, \eta_{v,N})| \leq C_\delta \mbox{ for all } u, v \in \Xi_N\,.$$
Together, these two inequalities imply
\begin{equation}\label{eq-MBRW-X-S}
|\cov(S_{u,N}, S_{v,N}) - \cov(X_{u,N} , X_{v,N})| \leq C_\delta \mbox{ for all } u, v \in \Xi_N \,.
\end{equation}

Next, let $r=r_{\delta,h}$ be a sequence of integers (specified in 
Lemma \ref{lem-X-MBRW} below) and, for $v\in \Xi_N$, define
\begin{equation}
  \label{eq-def-S-up}
  S^{\mathrm{up}}_{v, N, r} =   \mbox{$\sum_{j=\ell}^{n-r}
  \sum_{B\in \mathfrak{B}_j(v)}$} \phi_{N, j, B}  \mbox{ and }
  S^{\mathrm{lw}}_{v, N, r} =   \mbox{$\sum_{j=\ell + r}^{n}
  \sum_{B\in \mathfrak{B}_j(v)}$} \phi_{N, j, B}\,.
\end{equation}
Also, define
\begin{equation}\label{eq-def-X-up-lw}
X_{v, N, r}^{\mathrm{up}} =  S^{\mathrm{up}}_{v, N, r}  + \phi_{v, N, r} \mbox{ and } X^{\mathrm{lw}}_{v, N, r} =  S^{\mathrm{lw}}_{v, N, r} + a_{v, N, r}  \phi\,,
\end{equation}
where $\phi_{v, N, r}$ are independent mean zero
Gaussian variables so that $\var  X^{\mathrm{up}}_{v, N, r} = \var X_{v, N}$, and $\phi$ is a standard independent Gaussian variable, with $a_{v, N, r}$ chosen so that $\var X^{\mathrm{lw}}_{v, N, r} = \var X_{v,N}$.
\begin{lemma}\label{lem-X-MBRW}
        There exists $C_\delta$ so that, with $r=r_{\delta,h}=C_\delta
        \log h
        > 0$  and 
 $N\in \mathbb{N}$,
\begin{equation}
        \label{eq-morning3}
        \E  X^{\mathrm{up}}_{v, N, r} X^{\mathrm{up}}_{u, N, r} \leq \E X_{u, N} X_{v, N} \leq \E  X^{\mathrm{lw}}_{v, N, r} X^{\mathrm{lw}}_{u, N, r} \mbox{ for all } u, v\in \Xi_N\,.
\end{equation}
\end{lemma}
\begin{proof}
{For $u=v$, the statement holds since  $\var  X^{\mathrm{up}}_{v, N, r} = \var X_{v, N} = \var  X^{\mathrm{lw}}_{v, N, r} $.}
For $u, v\in \Xi_N$, with $0<d_N(u, v) \leq 2^{-r}N$,
$$\E X_{v, N, r}^{\mathrm{up}} X_{u, N, r}^{\mathrm{up}}  = \E S_{v, N, r}^{\mathrm{up}} S_{u, N, r}^{\mathrm{up}}  \leq \E S_{v, N} S_{u, N}  - r/2\,.$$
Employing \eqref{eq-MBRW-X-S}, for $r\ge r_0$, with $r_0$ depending only on $\delta$, 
one has $\E S_{v,N}S_{u,N}\leq \E X_{v,N}X_{u,N}+r/2$ and therefore
 $\E X_{v, N, r}^{\mathrm{up}} X_{u, N, r}^{\mathrm{up}} \leq \E X_{v,N} X_{u,N}$. If $d_N(u, v) > 2^{-r}N$, then
$$\E X_{v, N, r}^{\mathrm{up}} X_{u, N, r}^{\mathrm{up}}  = \E S_{v, N, r}^{\mathrm{up}} S_{u, N, r}^{\mathrm{up}}  = 0 \leq \E X_{u,N} X_{v,N}\,.$$
This demonstrates the left inequality. 

The right inequality 
requires more work.  We first note that,
for given $|u-v|\leq 2^{\ell+r/2}$
and a constant $C$ not 
depending on $\ell$ and $r$, 
\begin{equation}
        \label{eq-1001}
        \E(X_{v,N,r}^{\mathrm{lw}}-
        X_{u,N,r}^{\mathrm{lw}})^2\leq C \sqrt{A(u,v)}[2^{-r/2}+\sqrt{A(u,v)
        }/r],
\end{equation}
where $A(u,v)=(|u-v|/L)^2\wedge 1$.
To show this, we use
$$
        \E(X_{v,N,r}^{\mathrm{lw}}-
        X_{u,N,r}^{\mathrm{lw}})^2=
        \E(S_{v,N,r}^{\mathrm{lw}}-
        S_{u,N,r}^{\mathrm{lw}})^2+ (a_{v,N,r}-a_{u,N,r})^2\,.$$
        One can check that the first term on the right hand side of this display 
        gives the exponential term 
        in \eqref{eq-1001}. For the $1/r$ term in \eqref{eq-1001}, we use
        \begin{equation}
        \label{eq-4new}a_{v,N,r}^2=\E X_{v,N}^2-\frac{2\log 2}{\pi}(n-\ell-r)=
        \E\eta_{v,N}^2-\E Y_{v,N}^2-\frac{2\log 2}{\pi}(n-\ell-r)=r+b(v)\end{equation}
        where $b(v)=O(1)$ and $|b(v)-b(u)|\leq C\sqrt{A(u,v)}$;
        in the last estimate we used the fact that $Y_{v,N}$ is 
        a GFF and \eqref{eq-100113anewadd}, noting that the truncation
        in the definition of $A(u,v)$ is employed for $u,v$ belonging 
        to different
        boxes $B_u,B_v\in {\mathcal B}_N$.
        Since $\sqrt{r+C+\delta}-\sqrt{r+C} \le \delta / (2\sqrt{r})$, the
         term $1/r$  in \eqref{eq-1001} also follows.

        From the lower bound of Lemma \ref{lem-coarsecov}, 
        {there exists a constant $\tilde C_\delta>0$ such that}
$$\E(X_{v,N}-X_{u,N})^2\geq { \tilde C_\delta} A(u,v)\,.$$
Together with \eqref{eq-1001},
this implies the right inequality in \eqref{eq-morning3} 
for $r$ satisfying
$C 2^{-r/2}\le \sqrt{A(u,v)}/2$, i.e., for $r\geq C_\delta
\log h$.   On the other hand, 
 by \eqref{eq-MBRW-X-S}, for $|u-v|\geq 2^{\ell+r/2}$ and
$r/2>2C_\delta+C'_{\delta}$ (where 
$C'_{\delta}$ is a constant depending only on $\delta$),
$$\E X_{v,N}X_{u,N}\leq \E S_{v,N}S_{u,N}+r/4\leq 
\E X_{v,N,r}^\mathrm{lw}
 X_{u,N,r}^\mathrm{lw},$$
which implies the right inequality in 
\eqref{eq-morning3} in this case as well.
%in the same manner, by considering the Gaussian distances between two vertices (that is, $\sqrt{\E(X_{u,N} - X_{v,N})^2}$).
\end{proof}
In light of  Lemma \ref{lem-X-MBRW}, we set $r = C_{\delta}\log h$ from now on 
and drop $r$ from the notation. In particular, we write
$a_{v,N}=a_{v,N,r}$ for $a_{v,N,r}$ as in \eqref{eq-def-X-up-lw}.
By Lemma~\ref{lem-covariance}, for some constant $C$ independent of $N$,
\begin{equation}\label{eq-a-N-v}
a_{v, N}^2  \leq C \,.
\end{equation}
For $v\in \tilde V_N$, we write $v = \bar v + \hat v$, with 
$\bar v = c_{ B_v}$. 
We next claim that there exists a continuous function $g_1: (0, 1)^2 \mapsto [0,\infty)$ and a function  $g_2:(-L/2,L/2)^2\mapsto \R$ so that 
%and functions $g_2:[-(1-\delta)L/2, (1-\delta)L/2]\mapsto \R$ so that 
\begin{equation}\label{eq-a-N-v-converge}
  \limsup_{N\to\infty} \sup_{x\in [\delta,(1-\delta)]^2}
%, \hat v \in [-(1-\delta)L/2, (1-\delta)L/2]}| 
  |a_{c_{B_{[xN]}+\hat v}, N}^2 
        -(g_1(x)+ g_2(\hat v ))|=0.
        \end{equation}
%         \mbox{ for all }
%(x,\hat v)\in [\delta, 1-\delta]^2 \times [-L/2, L/2]^2\,,
%%end{equation}
%where $g_1: [0, 1]^2 \mapsto [0, \infty)$ is a continuous
 %       function. 
 
To see \eqref{eq-a-N-v-converge}, 
        set $B=B_{[xN]}$, $c_x=c_{B_{[xN]}}$ and  $u=c_x+\hat v$. Applying \eqref{eq-4new} and \cite[Lemma 4.6.2]{LL10},
we can write 
$$ a_{u, N}^2=\sum_{w\in \partial B}\P^u(S_{\tau_B}=w)\left[\sum_{z\in \partial V_N} \P^w(S_{\tau_{V_N}}=z)f(z,w,u)\right]\,,$$
where 
$$f(z,w,u)=
(a(z-u)-a(w-u)-\frac{2\log 2}{\pi}(n-\ell-r))=\frac{2\log 2}{\pi}\log (|z-u|/N)+
g(\ell,r,w-u)+O(1/N)\,,$$
%\frac{2\log 2}{\pi} (\ell+r) -a(w-u)\,,$$
%\frac{2\log 2}{\pi}(\log \frac{|z-u|/N}{|w-u|/L}+r)+O(1/L+1/N)\,,$$
for some function $g$, where \eqref{eq-100113c} was used in the second equality. Note that
$\P^u(S_{\tau_B}=w)$ is independent of $N$ and depends only on the relative
position of $u\in B$ and $w\in \partial B$. Applying
\cite[Proposition 8.1.4]{LL10}, one obtains that 
$$a_{u,N}^2=
\sum_{w\in \partial B}
% k((w-c_x)/L,(u-c_x)/L) 
\P^u(S_{\tau_B}=w) 
\frac1N \sum_{z\in \partial V_N} k(c_x/N,z/N) f(z,w,u)+O(1/N),$$
%g(\ell,r,w-u)
%\cdot \left(\frac1N\sum_{z\in \partial V_N} k(z/N,c_x/N) f(z,w,u)\right)
where $k:(0,1)^2\times \partial[0,1]^2 \to \R$ is an explicit function that is uniformly continuous in the first argument on compact subsets of $(0,1)^2$.
Therefore,
$$a_{u,N}^2=\frac{2\log 2}{\pi}\log (|z-c_{x}|/N)-
\sum_{w\in \partial B}
% k((w-c_x)/L,(u-c_x)/L) 
\P^u(S_{\tau_B}=w)
 k(c_x/N,w/N) g(\ell,r,w-u)+O(1/N)\,,$$
and  \eqref{eq-a-N-v-converge} follows. (We remark that it is not difficult to conclude from the above argument that, in fact, $g_2(\hat v)=\bar g_2(\hat v/L)+O(1/L)$ for a function $\bar g_2$ that is continuous in the interior of $(-1/2,1/2)^2$. We will not use this fact.)

% Extending $\phi$ 
%Note that $(a(z,u)-a(w,u)-(n-\ell-r))$ can be extended to a smooth function
%$\phi(z,w,u)$ on   is a smooth function of , uniformly in $z,w$ in the indicated %range.
%note that
%$a_{v,N}^2$ is the difference of the variances of $X_{v,N}$ and the 
%MBRW; By stationarity of the MBRW, it is thus enough to show that, and since %the latter variance is continuous in $v$, we have, by 
%writing $X_{v,N}$ as the difference of GFFs in the boxes $V_N$ and
%$B\in {\cal B}_N$, and
%applying \cite[Proposition 8.1.4]{LL10} and \cite[Lemma 4.6.2]{LL10} as %in the proof of Lemma \ref{lem-green-functions-1},
%that
%\begin{equation}\label{eq-a-N-v-converge}
%        a_{c_{B_{[xN]}+\hat v}, N}^2 
%        \to_{N\rightarrow\infty} g_1(x)+ g_2(\hat v) \mbox{ for all }
%(x,\hat v)\in [\delta, 1-\delta]^2 \times [-L/2, L/2]^2\,,
%\end{equation}
%where $g_1: [0, 1]^2 \mapsto [0, \infty)$ is a continuous
 %       function.
        
        Finally, for every $v\in \tilde V_N$, define
$$\eta^{\mathrm{up}}_{v,N} = X^{\mathrm{up}}_{c_{v},N} + 
Y_{v,N} \mbox{ and } \eta^{\mathrm{lw}}_{v,N} =
X^{\mathrm{lw}}_{c_{v},N} + Y_{v,N} \,,$$
with $\{Y_{v,N}\}$ independent of the fields
$X^{\mathrm{up}}$ and $X^{\mathrm{lw}}$.
By Lemma~\ref{lem-X-MBRW} and Lemma~\ref{lem-slepian},
for all $N\in \mathbb{N}$ and $\lambda \in \mathbb{R}$,
\begin{equation}\label{eq-eta-up-lw}
\P(\max_{v\in \tilde V_N} \eta^{\mathrm{lw}}_{v,N} \geq \lambda )\leq \P(\max_{v\in \tilde V_N} \tilde \eta_{v,N} \geq \lambda) \leq \P(\max_{v\in \tilde V_N} \eta^{\mathrm{up}}_{v,N} \geq \lambda)\,.
\end{equation}
Therefore, by Lemma~\ref{lem-eta-tilde-eta},
\eqref{eq-max-subset} (applied to $V_N\setminus \tilde V_N$)
 and \eqref{eq-right-tail},
 %July162014
\begin{equation}\label{eq-lower-right-tail}
        \liminf_{z\to \infty}\liminf_{N\to\infty}
        \frac{\P(\max_{v\in \tilde V_N} \eta^{\mathrm{up}}_{v,N} \geq  m_N + z)}
        {z \mathrm{e}^{-\sqrt{2\pi} z}}
        \gtrsim 
        1\,.
\end{equation}
The fields $\eta^{\mathrm{up}}$ and $\eta^{\mathrm{lw}}$
are the approximations of the GFF that will be employed in
the proof of Proposition \ref{prop-limiting-tail-gff-delta}.
\subsection{Enumeration of the large clusters determines the limiting tail}
 Write $\gamma = \sqrt{2\log 2/\pi}$ and
 let $\Xi_N$ be as in \eqref{eq-oops240113a}.
For $v\in \Xi_N$, set $n_{v,N} = \gamma^{-2}\var X_{v,N}$;
one can check that $n-n_{v,N}$ is of order $\ell$.
%July162014
%$n_{v,N} = (1+o(1)) n$.
For convenience, we now view each $X_{v,N}$ as
the value at time $n_{v,N}$ of a Brownian motion with variance rate $\gamma^2$.
More precisely, we assign to
each Gaussian variable $\phi_{N, j, B}$ in
\eqref{eq-def-S-up}
%\eqref{eq-def-X-up-lw}
an independent Brownian motion, with variance rate $\gamma^2$,
that runs for $2^{-2j}$ time units and ends at the value $\phi_{N, j , B}$.
% Maury - Inserted "independent" in the sentence above.
 In the same manner, we associate to
$\phi_{v, N, r}$
in
\eqref{eq-def-X-up-lw}
a Brownian motion of variance rate $\gamma^2$ that runs for $\gamma^{-2}\var \phi_{v, N, r}$ time units and ends at the value $\phi_{v, N, r}$.
%Slightly more complicated
For the
Gaussian variable $\phi$ in \eqref{eq-def-X-up-lw},
we employ a standard Brownian motion $\{W_t: t\in [0, 1]\}$, with $W_1 = \phi$. When adding $a_{v,N}\phi$ to a
vertex $v\in \Xi_N$, as in
\eqref{eq-def-X-up-lw},
we consider the Brownian motion $W_{v, N, t} = \gamma W_{\gamma^{-2} a^{-2}_{v, N}t}$, with $t\in [0, \gamma^2 a^2_{v, N} ]$.

We now define a Brownian motion $\{X^{\mathrm{up}}_{v,N}(t) : 0\leq t\leq n_{v,N}\}$ ($\{X^{\mathrm{lw}}_{v,N}(t): 0\leq t\leq n_{v, N}\}$) by concatenating each of the previous Brownian motions associated with $v$, with earlier times corresponding to larger boxes (where we view $\phi_{v,N,r}$ as associated with a box of size $0$ and $\phi$ as associated with a box of size $\infty$). From our construction, we see that $X_{v,N}^{\mathrm{up}}(n_{v,N}) = X_{v,N}^{\mathrm{up}}$ and $X_{v,N}^{\mathrm{lw}}(n_{v,N}) = X_{v,N}^{\mathrm{lw}}$. We write $n^* = \max_{v\in \Xi_N} n_{v,N}$ and define
\begin{equation}\label{eq-big-definition}
\begin{split}
E_{v, N}^{\mathrm{up}}(z) &= \{X_{v,N}^{\mathrm{up}}(t) \leq z + \frac{m_N}{n}t \mbox{ for all } 0\leq t\leq n_{v,N}, \mbox{ and } \max_{u\in \tilde B_v} \eta_{u,N}^{\mathrm{up}} \geq m_N + z\}\,,\\
F_{v, N}^{\mathrm{up}}(z) &= \{X_{v,N}^{\mathrm{up}}(t) \leq z + \frac{m_N}{n}t  + 10 (\log (t \wedge (n^*-t)))_+ + z^{1/20}\\
&\qquad\mbox{ for all } 0\leq t\leq n_{v,N}, \mbox{ and } \max_{u\in \tilde B_v} \eta_{u,N}^{\mathrm{up}} \geq m_N + z\}\,,\\
G_{N}^{\mathrm{up}}(z) &= \bigcup_{v\in \Xi_N}
\bigcup_{0\leq t\leq n_{v,N}}\{X_{v,N}^{\mathrm{up}}(t) >
z+ \frac{m_N}{n}t  + 10 (\log (t \wedge (n^*-t)))_+ + z^{1/20}\}\,,\\
E_{v, N}^{\mathrm{lw}}(z) &= \{X_{v,N}^{\mathrm{lw}}(t) \leq z + \frac{m_N}{n}t \mbox{ for all } 0\leq t\leq n_{v,N}, \mbox{ and } \max_{u\in \tilde B_v} \eta_{u,N}^{\mathrm{lw}} \geq m_N + z\}\,.
%F_{N, v}^{\mathrm{lw}}(z) &= \{X_{N, v}^{\mathrm{lw}}(t) \leq z + \frac{m_N}{N}t  + 10 \log (t \wedge (n-t))\mbox{ for all } 0\leq t\leq n_{N, v}, \mbox{ and } \max_{u\in \tilde %B_v} \eta_{N, u}^{\mathrm{lw}} \geq m_N + z\}\,.
%G_{N, v}^{\mathrm{lw}}(z) &=  \cup_{v\in \Xi_N}\cup_{0\leq t\leq n_{N, v}}\{X_{N, v}^{\mathrm{lw}}(t) \geq z+ \frac{m_N}{N}t  + 10 \log (t \wedge (n-t))\}\,.
\end{split}
\end{equation}
Also define
$$\Lambda^{\mathrm{up}}_{N, z} = \sum_{v\in \Xi_N} \one_{E_{v, N}^{\mathrm{up}}(z) }\,,   \Gamma^{\mathrm{up}}_{N, z} = \sum_{v\in \Xi_N}
\one_{F_{v, N}^{\mathrm{up}}(z) }\,,
\Lambda^{\mathrm{lw}}_{N, z} = \sum_{v\in \Xi_N}
\one_{E_{v, N}^{\mathrm{lw}}(z) }
%\,,
%\Gamma^{\mathrm{lw}}_{N, z} =
%\sum_{v\in \Xi_N} \one_{F_{v, N}^{\mathrm{lw}}(z) }
\,,$$
and, for a box $A\subseteq [\delta, 1-\delta]^2$, define
$$\Lambda_{N, z}^{\mathrm{lw}}(A) = \sum_{v\in \Xi_N \cap N A} \one_{E_{v, N}^{\mathrm{lw}}(z) }\,.$$
In words, the random variable $\Lambda_{N,z}^{\mathrm{up}}$ counts the number
of boxes in $\tilde {\mathcal B}_N$ whose ``backbone'' path $X_{v,N}^{\mathrm{up}}(\cdot)$
stays below a linear path connecting $z$ to roughly $m_N + z$, so that
one of its ``neighbors'' achieves a terminal value that is at least
$m_N+z$; the random variable $\Gamma_{N,z}^{\mathrm{up}}$ similarly counts boxes in $\tilde {\mathcal B}_N$
whose backbone is constrained to stay below a slightly
``upward bent'' curve.
Clearly, $E_{v, N}^{\mathrm{up}}(z) \subseteq F_{v, N}^{\mathrm{up}}(z)$ always holds,
as does $\Lambda_{N,z}^{\mathrm{up}}(z) \le \Gamma_{N,z}^{\mathrm{up}}(z)$.

Note that it follows from their definitions that,
for fixed $v$, the processes $X_{v,N}^\mathrm{up}(\cdot)$ and
$X_{v,N}^\mathrm{lw}(\cdot)$ have the same distribution. Furthermore,
for any fixed $v\in\Xi_N$, and in particular for $c_v=v$,
\begin{equation}
\label{eqnewMB3}
\max_{u\in \tilde B_v} \eta_{u,N}^{\mathrm{up}}=
X_{c_v,N}^{\mathrm{up}}+
\max_{u\in \tilde B_v} Y_{u,N}\,,
\quad
\max_{u\in \tilde B_v} \eta_{u,N}^{\mathrm{lw}}=
X_{c_v,N}^{\mathrm{lw}}+
\max_{u\in \tilde B_v} Y_{u,N}\,,
\end{equation}
where
$\max_{u\in \tilde B_v} Y_{u,N}$ is independent of both
$X_{c_v,N}^{\mathrm{up}}$ and
$X_{c_v,N}^{\mathrm{lw}}$.
Therefore,
for any fixed $v\in\Xi_N$,
%and in particular with $c_v=v$,
the events $E_{v,N}^\mathrm{up}(z)$ and $E_{v,N}^{\mathrm{lw}}(z)$ have the same probability (neither of which depends on the choice of $r=r_{\delta,h}$), which implies that
\begin{equation}
  \label{eq-morningcoffee}
  \E\Lambda_{N,z}^{\mathrm{lw}}=\E \Lambda_{N,z}^{\mathrm {up}}\,.
\end{equation}
The main result of this subsection is the following proposition.
\begin{prop}\label{prop-gff-first-moment-dictates}
There exist $\delta_z\geq 0$ with $\delta_z \searrow_{z\to \infty} 0$ such that,
for any open box $A\subseteq [\delta, 1-\delta]^2$,
\begin{equation}
  \label{eq-prop4.7A}
  \limsup_{z\to \infty} 
  \limsup_{N\to \infty} \frac{\P(\max_{v\in \tilde V_N \cap N A}
  \eta_{v,N} \geq m_N +z )}{\E \Lambda_{N, z - \delta_z}^{\mathrm{lw}}(A)} 
  \leq 1\leq   \liminf_{z\to \infty} 
  \liminf_{N\to \infty} \frac{\P(\max_{v\in \tilde V_N \cap N A} 
  \eta_{v,N} \geq m_N +z )}{\E \Lambda_{N, z + \delta_z}^{\mathrm{lw}}(A)}\,.
\end{equation}
In particular,
\begin{equation}
\label{eq-prop4.7}
\limsup_{z\to \infty} \limsup_{N\to \infty} 
\frac{\P(\max_{v\in \tilde V_N} \eta_{v,N} \geq m_N +z )}{\E 
        \Lambda_{N, z - \delta_z}^{\mathrm{lw}}}  \leq 1\leq   
        \liminf_{z\to \infty} 
        \liminf_{N\to \infty} \frac{\P(\max_{v\in \tilde V_N} \eta_{v,N} 
\geq m_N +z )}{\E \Lambda_{N, z + \delta_z}^{\mathrm{lw}}}\,.
\end{equation}
\end{prop}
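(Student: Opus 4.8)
The plan is to prove \eqref{eq-prop4.7A}; inequality \eqref{eq-prop4.7} follows by specializing $A$ to an exhaustion of $[\delta,1-\delta]^2$ and applying \eqref{eq-max-subset} to the remaining strip $V_N\setminus(\tilde V_N\cap N[\delta,1-\delta]^2)$, as in the proof of Corollary~\ref{cor-no-two-large-clusters}. By \eqref{eq-eta-up-lw} and Lemma~\ref{lem-eta-tilde-eta}, it suffices to sandwich $\P(\max_{v\in\tilde V_N\cap NA}\eta^{\mathrm{lw}}_{v,N}\geq m_N+z)$ and $\P(\max_{v\in\tilde V_N\cap NA}\eta^{\mathrm{up}}_{v,N}\geq m_N+z)$ between $\E\Lambda^{\mathrm{lw}}_{N,z\pm\delta_z}(A)$, and then use \eqref{eq-morningcoffee} together with the fact that $X^{\mathrm{up}}$ and $X^{\mathrm{lw}}$ differ only in $O(1)$-many scales (so one can pay a constant shift $\delta_z$ to pass between the ``up'' and ``lw'' counts, at the cost of a multiplicative factor tending to $1$). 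The heart of the matter is therefore the two-sided comparison between $\P(\max_{v}\eta^{\mathrm{up}}_{v,N}\geq m_N+z)$ and $\E\Lambda^{\mathrm{up}}_{N,z}(A)$.

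For the \textbf{upper bound} $\P(\max\geq m_N+z)\lesssim \E\Lambda^{\mathrm{up}}_{N,z}$ (up to $1+o(1)$), the key is a ``barrier'' or entropic-repulsion argument: on the event that some $\eta^{\mathrm{up}}_{u,N}\geq m_N+z$, with overwhelming probability the backbone $X^{\mathrm{up}}_{c_u,N}(t)$ of the containing $\tilde B$-box never exceeds the linear barrier $z+\tfrac{m_N}{n}t$ at intermediate times. This is exactly the content of Lemma~\ref{lem-a-priori}: the probability of the bad event $G^{\mathrm{up}}_N(z)$ that some backbone crosses the slightly bent barrier (the extra $10(\log(t\wedge(n^*-t)))_+ + z^{1/20}$ slack) is $O(z\mathrm{e}^{-\sqrt{2\pi}z}\mathrm{e}^{-z^2/Cn})$, which is negligible against the target order $z\mathrm{e}^{-\sqrt{2\pi}z}$ once $z\to\infty$ after $N\to\infty$ (so $z^2/n\to 0$ fails — in fact here $n\to\infty$ first, so $\mathrm{e}^{-z^2/Cn}\to 1$, and one instead uses that for fixed large $z$ the bad event still contributes a lower-order term via the explicit bound, or one first restricts attention to $z\le\sqrt{\log N}$ using \eqref{eq-right-tail}; this is precisely where the delicate order of limits matters). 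Off $G^{\mathrm{up}}_N(z)$, whenever the max exceeds $m_N+z$ there is at least one $v\in\Xi_N\cap NA$ with $F^{\mathrm{up}}_{v,N}(z)$ holding, so $\P(\max\geq m_N+z, (G^{\mathrm{up}}_N)^c)\leq\E\Gamma^{\mathrm{up}}_{N,z}$; one then shows $\E\Gamma^{\mathrm{up}}_{N,z}\leq(1+o(1))\E\Lambda^{\mathrm{up}}_{N,z-\delta_z}$ by absorbing the bent-barrier slack into a shift of $z$, using the Brownian estimate \eqref{eq-for-correction-1}/\eqref{eq-BM-monotone} of Lemma~\ref{lem-1DRW} to control the ratio of the corresponding one-dimensional densities (this is the role of the $s^{1/20}$ and $y^{1/20}$ terms in that lemma).

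For the \textbf{lower bound} $\P(\max\geq m_N+z)\gtrsim(1-o(1))\E\Lambda^{\mathrm{up}}_{N,z}$, the natural route is a second-moment / Paley--Zygmund argument applied to $\Lambda^{\mathrm{up}}_{N,z}(A)$: since $\{\max_{v\in\tilde V_N\cap NA}\eta^{\mathrm{up}}_{v,N}\geq m_N+z\}\supseteq\{\Lambda^{\mathrm{up}}_{N,z}(A)\geq 1\}$, it suffices to show $\E[(\Lambda^{\mathrm{up}}_{N,z}(A))^2]\leq(1+o(1))(\E\Lambda^{\mathrm{up}}_{N,z}(A))^2$. The diagonal term contributes $\E\Lambda^{\mathrm{up}}_{N,z}(A)$, which is of the right order (by \eqref{eq-lower-right-tail}, $\E\Lambda^{\mathrm{up}}_{N,z}\gtrsim z\mathrm{e}^{-\sqrt{2\pi}z}$, so $\E\Lambda^{\mathrm{up}}_{N,z}\to\infty$? — no, it stays of order $z\mathrm{e}^{-\sqrt{2\pi}z}\to 0$, so the diagonal term is \emph{not} negligible and a plain second moment method fails). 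This is the main obstacle, and the standard fix — the one the paper's title (``modified second moment method'', mentioned in the introduction) alludes to — is to localize: condition on the percolation-type pattern of which $\tilde B$-boxes have an atypically large fine field $\max_{u\in\tilde B}Y_{u,N}$, and run the second-moment argument on the (rare) surviving boxes, where the count is genuinely large and the off-diagonal correlations between distinct $\tilde B$-boxes are controlled by the near-tree covariance structure \eqref{eq-MBRW-X-S} of the coarse MBRW field $S$ (decorrelation down to scale $2^j$ where $2^{-j}N$ is the separation). Concretely, one partitions pairs $(u,v)$ by $d_N(c_u,c_v)$, uses Lemma~\ref{lem-1DRW} and Lemma~\ref{lem-a-priori} to bound the joint probability $\P(E^{\mathrm{up}}_{u,N}(z)\cap E^{\mathrm{up}}_{v,N}(z))$ by (branching-random-walk two-point estimate)~$\times\,\P(E^{\mathrm{up}}_{u,N}(z))\P(E^{\mathrm{up}}_{v,N}(z))$, and sums the geometric series in the separation scale. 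I expect this off-diagonal estimate — making the BRW two-point function sum to $1+o(1)$ after the correct normalization, uniformly as $N\to\infty$ then $z\to\infty$ — to be the longest and most technical part; the barrier estimates for the upper bound are comparatively routine given Lemmas~\ref{lem-a-priori} and~\ref{lem-1DRW}.
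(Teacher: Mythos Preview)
Your overall architecture matches the paper's: reduce via Lemma~\ref{lem-eta-tilde-eta} and \eqref{eq-eta-up-lw} to comparing $\P(\max \eta^{\mathrm{up}/\mathrm{lw}}\geq m_N+z)$ with $\E\Lambda^{\mathrm{up}/\mathrm{lw}}$, then use a first-moment upper bound (barrier event $G_N^{\mathrm{up}}(z)$ plus the $\Gamma/\Lambda$ comparison) and a second-moment lower bound, and finally invoke \eqref{eq-morningcoffee}. Two points deserve correction.

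\textbf{Upper bound.} Your worry about whether $\P(G_N^{\mathrm{up}}(z))$ is negligible is misplaced. The extra $z^{1/20}$ slack in the definition of $G_N^{\mathrm{up}}(z)$ (compare \eqref{eq-big-definition} with \eqref{eq-def-G-N-prelim}) is there precisely to kill the prefactor $z$ from Lemma~\ref{lem-a-priori}, yielding $\P(G_N^{\mathrm{up}}(z))\lesssim \mathrm{e}^{-\sqrt{2\pi}z}=o(z\mathrm{e}^{-\sqrt{2\pi}z})$ as $z\to\infty$; no appeal to $\mathrm{e}^{-z^2/Cn}$ or to restricting $z\le\sqrt{\log N}$ is needed. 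The passage from $\Gamma$ to $\Lambda$ is done in the paper not by shifting $z$ but by the density comparison \eqref{eq-mu-mu*-asympt} of Lemma~\ref{lem-1DRW}, which directly gives $\E\Gamma^{\mathrm{up}}_{N,z}/\E\Lambda^{\mathrm{up}}_{N,z}\to 1$ (this is Lemma~\ref{lem-Gamma-Lambda}).

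\textbf{Lower bound.} Here you have a genuine misunderstanding. You write that it suffices to show $\E(\Lambda^2)\leq (1+o(1))(\E\Lambda)^2$, observe this is impossible since the diagonal already contributes $\E\Lambda\gg(\E\Lambda)^2$, and then propose a conditional/localized workaround. But the target is not $\P(\Lambda\geq 1)\to 1$; it is $\P(\Lambda\geq 1)\geq (1-o(1))\E\Lambda$. For this, the correct second-moment statement is $\E(\Lambda^{\mathrm{lw}}_{N,z})^2\leq (1+o(1))\,\E\Lambda^{\mathrm{lw}}_{N,z}$, i.e., the off-diagonal sum $\sum_{v\neq w}\P(E_{v,N}^{\mathrm{lw}}\cap E_{w,N}^{\mathrm{lw}})$ is $o(\E\Lambda^{\mathrm{lw}}_{N,z})$. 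This is exactly Lemma~\ref{lem-second-moment}, and Paley--Zygmund then gives $\P(\Lambda^{\mathrm{lw}}\geq 1)\geq (\E\Lambda^{\mathrm{lw}})^2/\E(\Lambda^{\mathrm{lw}})^2=(1-o(1))\E\Lambda^{\mathrm{lw}}$. The barrier truncation built into the events $E_{v,N}^{\mathrm{lw}}$ is precisely what makes the off-diagonal terms collapse to $o(\E\Lambda)$; no further localization or conditioning is needed. The technical work you anticipate (the two-point estimate $\P(E_v\cap E_w)$ split by the branching scale of $v,w$ into three regimes) is indeed the heart of the argument, but it is summed against $\E\Lambda$, not $(\E\Lambda)^2$. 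Note also that the paper runs the second moment on $\Lambda^{\mathrm{lw}}$ rather than $\Lambda^{\mathrm{up}}$, consistent with \eqref{eq-eta-up-lw} giving the \emph{lower} bound on $\tilde\eta$ via $\eta^{\mathrm{lw}}$.
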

The proof of \eqref{eq-prop4.7A} does not require more work than the
proof of \eqref{eq-prop4.7}, but it does involve more notation;
for the sake of economy, we therefore only prove \eqref{eq-prop4.7}. The display \eqref{eq-prop4.7}
is an immediate consequence of
%The first inequality in Proposition~\ref{prop-gff-first-moment-dictates}
%is an immediate consequence of
Lemma~\ref{lem-eta-tilde-eta}, \eqref{eq-eta-up-lw},
\eqref{eq-morningcoffee} and the next proposition.
(The analog of Proposition~\ref{prop-first-moment-is-king}, but with
$\E\Lambda_{N,z}^\cdot(A)$ and $\tilde V_N\cap N A$ replacing
$\E\Lambda_{N,z}^\cdot$ and $\tilde V_N$, also holds using 
the same argument.)
\begin{prop}\label{prop-first-moment-is-king}
With notation as above,
%We have that
\begin{align*}
\lim_{z\to \infty} \limsup_{N\to \infty}
\frac{\P(\max_{v\in \tilde V_N} \eta_{v,N}^{\mathrm{lw}} \geq m_N +z )}
{\E \Lambda_{N, z}^{\mathrm{lw}}}& =
\lim_{z\to \infty} \liminf_{N\to \infty} \frac{\P(\max_{v\in \tilde V_N}
\eta_{v,N}^{\mathrm{lw}} \geq m_N +z )}{\E \Lambda_{N, z}^{\mathrm{lw}}} =
1\,,\\
\lim_{z\to \infty} \limsup_{N\to \infty} \frac{\P(\max_{v\in \tilde V_N}
\eta_{v,N}^{\mathrm{up}} \geq m_N +z )}{\E \Lambda_{N, z}^{\mathrm{up}}}& =
\lim_{z\to \infty} \liminf_{N\to \infty} \frac{\P(\max_{v\in \tilde V_N}
\eta_{v,N}^{\mathrm{up}} \geq m_N +z )}{\E \Lambda_{N, z}^{\mathrm{up}}} =
1\,.
\end{align*}
\end{prop}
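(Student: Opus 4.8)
The plan is to prove the two-sided estimate by a standard second-moment argument, but with the twist that the ``natural'' linear-barrier event $E_{v,N}^\mathrm{lw}(z)$ is not the same as the event appearing in a first-moment bound without a barrier; the point of Proposition~\ref{prop-first-moment-is-king} is precisely that adding the barrier loses nothing in the limit $z\to\infty$. Both displays in the proposition have the same structure, so I will describe the argument for the $\mathrm{lw}$-field; the $\mathrm{up}$-field is identical. Throughout, $N\to\infty$ is taken first, and only afterwards $z\to\infty$, exactly as in the rest of the section.

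First I would establish the \emph{upper bound}, i.e. that $\P(\max_{v\in\tilde V_N}\eta^\mathrm{lw}_{v,N}\ge m_N+z)\le(1+o_z(1))\,\E\Lambda^\mathrm{lw}_{N,z}$. The inclusion $\{\max_{v\in\tilde V_N}\eta^\mathrm{lw}_{v,N}\ge m_N+z\}\subseteq\{\Lambda^\mathrm{lw}_{N,z}\ge1\}\cup G_N^\mathrm{lw}(z)$ holds up to the bad event that the backbone $X^\mathrm{lw}_{c_v,N}(\cdot)$ of a cluster achieving the maximum violates the linear barrier; on the complement of (the $\mathrm{lw}$-analog of) $G_N(\beta)$ this excursion is controlled, and Lemma~\ref{lem-a-priori}, suitably adapted to the $X^\mathrm{lw}$-process, gives $\P(G_N^\mathrm{lw}(z))\lesssim z\mathrm{e}^{-\sqrt{2\pi}z}\mathrm{e}^{-z^2/Cn}$, which is negligible relative to $z\mathrm{e}^{-\sqrt{2\pi}z}$ after $N\to\infty$. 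Hence $\P(\max\ge m_N+z)\le\E\Lambda^\mathrm{lw}_{N,z}+o(z\mathrm{e}^{-\sqrt{2\pi}z})$, and since the lower bound \eqref{eq-lower-right-tail} (transported to the $\mathrm{lw}$-field via \eqref{eq-eta-up-lw} and \eqref{eq-right-tail}) shows $\E\Lambda^\mathrm{lw}_{N,z}\gtrsim z\mathrm{e}^{-\sqrt{2\pi}z}$, the error is $o_z(1)\cdot\E\Lambda^\mathrm{lw}_{N,z}$. This already gives $\limsup\le1$ in the first display.

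Next comes the \emph{lower bound}, which is the heart of the matter and where I expect the main obstacle to lie. By the Paley--Zygmund / second-moment inequality applied to $\Lambda^\mathrm{lw}_{N,z}$,
$$\P\Big(\max_{v\in\tilde V_N}\eta^\mathrm{lw}_{v,N}\ge m_N+z\Big)\ge\P(\Lambda^\mathrm{lw}_{N,z}\ge1)\ge\frac{(\E\Lambda^\mathrm{lw}_{N,z})^2}{\E(\Lambda^\mathrm{lw}_{N,z})^2}\,,$$
so it suffices to show $\E(\Lambda^\mathrm{lw}_{N,z})^2\le(1+o_z(1))(\E\Lambda^\mathrm{lw}_{N,z})^2$. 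Writing $\E(\Lambda^\mathrm{lw}_{N,z})^2=\sum_{u,v\in\Xi_N}\P(E^\mathrm{lw}_{u,N}(z)\cap E^\mathrm{lw}_{v,N}(z))$, the diagonal and near-diagonal terms ($d_N(u,v)$ small) contribute $O(\E\Lambda^\mathrm{lw}_{N,z})=o((\E\Lambda^\mathrm{lw}_{N,z})^2)$ because $\E\Lambda^\mathrm{lw}_{N,z}\to\infty$ (there are $\asymp(N/\tilde L)^2$ summands each of probability $\asymp z\mathrm{e}^{-\sqrt{2\pi}z}N^{-2}$, giving $\E\Lambda^\mathrm{lw}_{N,z}\asymp z\mathrm{e}^{-\sqrt{2\pi}z}(\text{const in }N)$, which does \emph{not} blow up in $N$ — so here one must instead use that for distinct $u,v$ the $\mathrm{lw}$-field backbones \emph{decorrelate at the top $r$ scales by construction}). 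The genuinely delicate pairs are those where $u,v$ lie in different $\mathcal B_N$-boxes but at distances $L\le d_N(u,v)\ll N$: here the two backbones $X^\mathrm{lw}_{c_u,N}(\cdot)$ and $X^\mathrm{lw}_{c_v,N}(\cdot)$ agree up to scale $\log_2 d_N(u,v)$ and are independent afterward (this is exactly the reason $X^\mathrm{lw}$ was built by truncating the MBRW expansion at the top, via \eqref{eq-def-S-up}--\eqref{eq-def-X-up-lw}, so that correlations are a genuine truncated-BRW tree metric). One then decomposes $\P(E^\mathrm{lw}_{u,N}(z)\cap E^\mathrm{lw}_{v,N}(z))$ according to the common value $w$ of the backbone at the branch time $t_{u,v}:=\log_2 d_N(u,v)$, uses the barrier constraint (linear path staying below $z+\frac{m_N}{n}t$) to bound the density of $w$ at the branch point — this is the ballot/barrier estimate, controlled through \eqref{eq-for-correction-1} and \eqref{eq-BM-monotone} of Lemma~\ref{lem-1DRW} — and then, conditionally on the branch value, the two remaining independent excursions each contribute a factor comparable to $\P(E^\mathrm{lw}_{\cdot,N}(z)\mid\text{backbone at }t_{u,v}=w)$. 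Summing over $w$ and over all pairs at a given distance scale, the barrier forces the ``entropic repulsion'' factor $(\psi_{t}^*-\text{value})$-type polynomial corrections to produce a convergent geometric-type series in the scale $t_{u,v}$, whose sum is $1+o_z(1)$; the slack terms $10(\log(t\wedge(n^*-t)))_+$ and $z^{1/20}$ (which are in $F$ but not $E$) are what give room to spare when passing between $E$, $F$ and the a-priori bound. I expect the bookkeeping of these near-diagonal-in-scale contributions — showing the sum over branch scales $t_{u,v}\in[\ell,n]$ of the pairwise over-count is $o_z(1)$, uniformly in $N$, and that the truncation parameter $r=r_\delta$ suffices to kill the genuine near-diagonal ($d_N(u,v)\le 2^{-r}N$ is impossible for distinct $\tilde B$-centers only if $\tilde L$ large, so one really needs $d_N(u,v)\ge\delta\tilde L$ and the independence at the top $r$ scales) — to be the main technical obstacle. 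Once $\E(\Lambda^\mathrm{lw}_{N,z})^2\le(1+o_z(1))(\E\Lambda^\mathrm{lw}_{N,z})^2$ is in hand, Paley--Zygmund gives $\liminf\ge1$, matching the upper bound and completing the proof for the $\mathrm{lw}$-field; the $\mathrm{up}$-field case is verbatim the same, using $\Gamma^\mathrm{up}_{N,z}$ and $G^\mathrm{up}_N(z)$ in place of the a-priori event as needed.
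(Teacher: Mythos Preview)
Your overall architecture (first-moment/union bound for the upper bound, second-moment/Paley--Zygmund for the lower bound) matches the paper, but there is a genuine error in the lower-bound target and a gap in the upper bound.

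\textbf{Lower bound: wrong second-moment target.} You claim it suffices to show $\E(\Lambda^{\mathrm{lw}}_{N,z})^2\le(1+o_z(1))(\E\Lambda^{\mathrm{lw}}_{N,z})^2$. This is false: $\E\Lambda^{\mathrm{lw}}_{N,z}\asymp z\mathrm{e}^{-\sqrt{2\pi}z}\to0$ as $z\to\infty$, so the diagonal alone already gives $\E(\Lambda^{\mathrm{lw}}_{N,z})^2\ge\E\Lambda^{\mathrm{lw}}_{N,z}\gg(\E\Lambda^{\mathrm{lw}}_{N,z})^2$. You notice this in your parenthetical (``does \emph{not} blow up in $N$'') but your attempted fix via decorrelation cannot help, since the exact diagonal $u=v$ is unavoidable. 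The correct target, which the paper proves in Lemma~\ref{lem-second-moment}, is
\[
\frac{\E(\Lambda^{\mathrm{lw}}_{N,z})^2}{\E\Lambda^{\mathrm{lw}}_{N,z}}\to1,\qquad\text{i.e.,}\quad \sum_{u\neq v}\P(E^{\mathrm{lw}}_{u,N}(z)\cap E^{\mathrm{lw}}_{v,N}(z))=o_z(1)\cdot\E\Lambda^{\mathrm{lw}}_{N,z}.
\]
From this, Paley--Zygmund gives $\P(\Lambda^{\mathrm{lw}}_{N,z}\ge1)\ge(\E\Lambda^{\mathrm{lw}}_{N,z})^2/\E(\Lambda^{\mathrm{lw}}_{N,z})^2=(1-o_z(1))\E\Lambda^{\mathrm{lw}}_{N,z}$, which is exactly what matches the upper bound. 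Your sketch of the pairwise estimate (split by branch scale, condition on the backbone value at branching, use the ballot/barrier bound) is the right shape, but the conclusion you need from it is that the off-diagonal is $o(\E\Lambda)$, not $o((\E\Lambda)^2)$.

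\textbf{Upper bound: you skip the $\Gamma$ vs.\ $\Lambda$ step.} On the complement of $G_N^{\mathrm{up}}(z)$ (which is defined with the \emph{bent} barrier $z+\tfrac{m_N}{n}t+10(\log(t\wedge(n^*-t)))_++z^{1/20}$), a cluster with $\max\ge m_N+z$ satisfies $F^{\mathrm{up}}_{v,N}(z)$, not $E^{\mathrm{up}}_{v,N}(z)$. Hence the union bound gives $\P(\max\ge m_N+z)\le\P(G_N^{\mathrm{up}}(z))+\E\Gamma^{\mathrm{up}}_{N,z}$, and you still need to pass from $\Gamma$ to $\Lambda$. This is the content of Lemma~\ref{lem-Gamma-Lambda}, which uses the Brownian barrier comparison of Lemma~\ref{lem-1DRW} to show $\E\Lambda^{\mathrm{up}}_{N,z}/\E\Gamma^{\mathrm{up}}_{N,z}\to1$; you allude to ``using $\Gamma^{\mathrm{up}}_{N,z}$ \ldots\ as needed'' only in the last sentence, but it is an essential ingredient, not an aside.

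\textbf{A structural remark.} The paper does \emph{not} treat the two fields symmetrically: it proves the upper bound for $\eta^{\mathrm{up}}$ (via $G^{\mathrm{up}}$ and $\Gamma^{\mathrm{up}}$) and the lower bound for $\eta^{\mathrm{lw}}$ (via the second moment of $\Lambda^{\mathrm{lw}}$), and then obtains the remaining two inequalities for free from the Slepian sandwich \eqref{eq-eta-up-lw} together with $\E\Lambda^{\mathrm{up}}_{N,z}=\E\Lambda^{\mathrm{lw}}_{N,z}$ in \eqref{eq-morningcoffee}. Your ``verbatim the same'' for the other field would in fact require redoing both estimates; the paper's asymmetry avoids this.
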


In order to prove Proposition \ref{prop-first-moment-is-king},
we separately derive
upper and lower bounds.  For these bounds,
we consider truncations of the MBRW profile with respect to certain
upper and lower curves, as in the
definitions of $F_{v, N}^{\mathrm{up}}(z)$ and $E_{ v, N}^{\mathrm{lw}}(z)$
in \eqref{eq-big-definition}.  In defining these truncations,
the following two requirements are crucial:
\begin{itemize}
  \item[(1)] The two truncations result asymptotically in the
same probability; this will be shown
in Lemma~\ref{lem-Gamma-Lambda}
(the underlying reason being the bounds in Lemma~\ref{lem-1DRW}).
\item[(2)]
  After truncation with respect to the lower curve, the resulting second moment
  is asymptotically the same as the corresponding first moment; this
  will be shown in Lemma~\ref{lem-second-moment}. (In the lemma, we will examine
  $\Lambda_{N, z}^{\mathrm{lw}}$, rather than the total number of vertices whose paths lie below a given curve and end above $m_N +z$; this leads to an improvement of the
  bound in, e.g.,
  \cite{DZ12}, and allows us to give precise asymptotics for our tail estimates.)
  \end{itemize}
% Maury - I modified the above paragraph.

We first compare $\Lambda_{N, z}^{\mathrm{up}}$ and
$\Gamma_{N, z}^{\mathrm{up}}$, and start with the following estimate.
\begin{lemma} \label{lem-decorelate-tilde-B}
There exists $N_0=N_0(z,\ell,\tilde \ell, \delta)$ such that,
for any $\tilde B_1\neq
\tilde B_2 \in \mathcal{\tilde B}_N$, any $\lambda_1, \lambda_2>0$ and
any $N>N_0$,
\begin{align}
&\P(\max_{u\in \tilde B_1} Y_{u,N} \geq \ell m_N/n + \lambda_1, \max_{u\in \tilde B_2} Y_{u,N} \geq \ell m_N/n + \lambda_2) \nonumber \\
\lesssim & (\log z)^C \ell^{-3} (\lambda_1+\log \ell)(\lambda_2 + \log \ell) \mathrm{e}^{-\sqrt{2\pi} ( \lambda_1 + \lambda_2)} \mathrm{e}^{- C^{-1} (\lambda_1^2 + \lambda_2^2 )/\ell}  \label{eq-tilde-B-1-2}\,,
\end{align}
%July162014
where $C>0$ is an absolute constant. Moreover, for $\lambda_1>-\log\ell+1$,
\begin{equation}\label{eq-tilde-B-1}
\P(\max_{u\in \tilde B_1} Y_{u,N} \geq \ell m_N/n + \lambda_1) \lesssim(\log z)^C \ell^{-3/2}(\lambda_1+\log \ell) \mathrm{e}^{-\sqrt{2\pi} \lambda_1} \mathrm{e}^{-C^{-1} \lambda_1^2/\ell} \,.
\end{equation}
\end{lemma}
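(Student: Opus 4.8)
The plan is to estimate the probability that the fine field $Y_{\cdot,N}$ restricted to a single box $\tilde B$ (which is a box of side length $(1-2\delta)\tilde L$, with $\tilde L = 2^{\tilde\ell}$ and $\tilde\ell \geq z^4$) exceeds the level $\ell m_N/n + \lambda_1$, and then a joint analogue for two boxes. The crucial point is that $\mathcal Y_B = \{Y_{u,N}: u\in B\}$ is a GFF on $B$ with Dirichlet boundary data, so after centering it has the law of the GFF on a box of side length roughly $\tilde L$ (up to the modest reduction factor $(1-2\delta)$, which changes the relevant constants only by bounded amounts). Thus $\max_{u\in\tilde B} Y_{u,N}$ is distributed as the maximum of a GFF on a box of side length $\asymp \tilde L$, i.e. of ``height'' $\tilde\ell$. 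One checks that $\ell m_N/n = m_{\tilde L} + O(\log\log z)$: indeed $\ell m_N/n = \tfrac{2}{\pi}\sqrt{2}\,\ell (1 - \tfrac38 \tfrac{\log_2 n}{n})$ while $m_{\tilde L} = \tfrac{2}{\pi}\sqrt{2}(\tilde\ell - \tfrac38\log_2\tilde\ell)\log 2$; since $|\ell - \tilde\ell|\leq \log_2\log z$ and $\tilde\ell\in[z^4, z^4 + \log_2\log z]$, the difference between $\ell m_N/n$ and $m_{\tilde L}$ is $O(\log\log z)$, which accounts for the $(\log z)^C$ prefactor. (This is the source of the $(\log z)^C$ and of the $\log\ell$ replacing the usual $\lambda$ in the polynomial prefactor.)

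With this reduction in hand, \eqref{eq-tilde-B-1} is essentially \eqref{eq-large-tail} of Lemma~\ref{lem-prelim-tail} applied to a GFF on a box of side length $\asymp\tilde L$ (so ``$n$'' there is $\tilde\ell$), with $z$ there replaced by $\lambda_1 + m_{\tilde L} - \ell m_N/n = \lambda_1 + O(\log\log z)$. That gives a bound of the form $(\lambda_1 + \log\ell)\mathrm{e}^{-\sqrt{2\pi}(\lambda_1 + O(\log\log z))}\mathrm{e}^{-C^{-1}(\lambda_1 + O(\log\log z))^2/\tilde\ell}$; absorbing the $O(\log\log z)$ shift in the exponent into the $(\log z)^C$ prefactor and using $\tilde\ell\asymp\ell$ (since $\ell\leq\tilde\ell + \log_2\log z$) yields exactly \eqref{eq-tilde-B-1}. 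One small subtlety: Lemma~\ref{lem-prelim-tail} is stated for $z\geq1$, so for $\lambda_1$ not bounded below one first observes the bound holds trivially when $\lambda_1 + \log\ell \leq 1$ (the right side is $\gtrsim(\log z)^C\ell^{-3/2}$, which exceeds $1$ for $N$ — hence $\ell$, $z$ — in the relevant range, or one simply notes probabilities are $\leq1$); for $\lambda_1 \geq -O(\log\log z)$ one applies the lemma with the shifted level, and for $\lambda_1$ very negative the statement is vacuous since the probability is at most $1$ and the right-hand side is large.

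For the two-box estimate \eqref{eq-tilde-B-1-2}, I would use the independence of $\mathcal Y_{B_1}$ and $\mathcal Y_{B_2}$ when $\tilde B_1, \tilde B_2$ lie in distinct boxes $B_1\neq B_2$ of $\mathcal B_N$: then the two events are independent and the bound is just the product of two copies of \eqref{eq-tilde-B-1}, which gives \eqref{eq-tilde-B-1-2} with room to spare (the squared $(\log z)^C$ is still $(\log z)^C$ after renaming $C$). The genuinely new case is $\tilde B_1, \tilde B_2 \subseteq B$ for the \emph{same} $B\in\mathcal B_N$: here $\max_{u\in\tilde B_1}Y_{u,N}$ and $\max_{u\in\tilde B_2}Y_{u,N}$ are maxima of the \emph{same} GFF on $B$ over two sub-boxes that are at distance at least $2\delta\tilde L$ apart (by construction of $\tilde{\mathcal B}_B$), each of side $\asymp\tilde L$, inside a box of side $\asymp L = h\tilde L$ with $h\leq\log z$. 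This is precisely the kind of ``two distant clusters'' estimate handled by the BRW/MBRW comparison: dominate the GFF on $B$ by a BRW on a box of side $2^\kappa L$ (as in the proof of Lemma~\ref{lem-prelim-tail}), and for the BRW the two maxima are controlled by two independent Brownian-motion tails after the branches separate (which happens within $\log_2(L/\tilde L) = \log_2 h \leq \log_2\log z$ generations of the root of $B$). Summing over the branch-point location as in the proof of Lemma~\ref{lem-a-priori} produces the factor $\ell^{-3}$ (versus $\ell^{-3/2}$ for one box) and the product of the two tail factors $(\lambda_i + \log\ell)\mathrm{e}^{-\sqrt{2\pi}\lambda_i}\mathrm{e}^{-C^{-1}\lambda_i^2/\ell}$, with an extra bounded-in-$z$... rather $(\log z)^C$ loss from the height mismatch and from the $\log_2 h$ shared generations.

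The main obstacle is the same-$B$ case of \eqref{eq-tilde-B-1-2}: one must correctly track that the two maxima decorrelate after $\log_2 h = O(\log\log z)$ shared levels, so that the shared part of the ``backbone'' contributes only an $\mathrm{e}^{O((\log\log z)^2)} = (\log z)^{O(\log\log z)}$ — wait, that is too large; the correct accounting is that the shared levels contribute a factor polynomial in $\log z$ because the backbone is forced below a curve that is only $O(\log\log z)$ below $m_N\ell/n$ there, i.e. the relevant Gaussian has $O(\log\log z)$ room, giving a $(\log z)^C$ factor — and then the subsequent independence gives the clean product. Carrying out this bookkeeping via the BRW union bound and the reflection-principle density estimates \eqref{eq-reflection-principle}–\eqref{eq-for-correction-1}, exactly as in Lemmas~\ref{lem-a-priori} and \ref{lem-prelim-tail} but now splitting the root box into two independent branches at level $\ell + \log_2 h$, is the bulk of the work; everything else is the routine reduction "$Y$ on $\tilde B$ $=$ GFF on a box of side $\asymp\tilde L$'' plus the $m$-constant comparison $\ell m_N/n = m_{\tilde L} + O(\log\log z)$.
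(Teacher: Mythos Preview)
Your argument contains a genuine error in the identification of the field. The process $\{Y_{u,N}:u\in\tilde B\}$ is \emph{not} a GFF on a box of side $\asymp\tilde L$; it is the restriction to $\tilde B$ of a GFF on $B$, which has side $L$. Relatedly, your level comparison $\ell m_N/n = m_{\tilde L}+O(\log\log z)$ is false: the term $\tfrac38\cdot 2\sqrt{2/\pi}\log\tilde\ell$ in $m_{\tilde L}$ is of order $\log\ell\gtrsim\log z$ (since $\tilde\ell\ge z^4$), not $O(\log\log z)$. The correct comparison is $\ell m_N/n = m_L + \tfrac34\sqrt{2/\pi}\log\ell + o_N(1)$, and applying \eqref{eq-large-tail} to the GFF on $B$ with this shift produces exactly the factor $\mathrm{e}^{-\sqrt{2\pi}\cdot(3/4)\sqrt{2/\pi}\log\ell}=\ell^{-3/2}$ in \eqref{eq-tilde-B-1}. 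So the $(\log z)^C$ does \emph{not} come from the level mismatch, as you claim; with the right box size, the single-box bound follows from Lemma~\ref{lem-prelim-tail} with no $(\log z)^C$ at all.

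For the two-box estimate when $\tilde B_1,\tilde B_2\subset B$, the paper takes a different and cleaner route than your BRW comparison. It uses the Markov property \emph{inside} $B$: enlarge each $\tilde B_i$ slightly to a box $\hat B_i$ of side $(1-\delta)\tilde L$ (the $\hat B_i$ are still disjoint), and write $Y=\Phi+\Psi$ with $\Phi_{u,N}=\E(Y_{u,N}\mid\{Y_{w,N}:w\in\partial\hat B_i\})$. Then the $\Psi$-fields on $\tilde B_1$ and $\tilde B_2$ are independent GFFs on $\hat B_1,\hat B_2$, while $\var\Phi_{u,N}\lesssim\log(L/\tilde L)=\log h\le\log\log z$. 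One controls $W_i=\max_{\tilde B_i}\Phi$ by Fernique's criterion (Lemma~\ref{lem-ferniquecriterion}) plus Borell--Tsirelson (Lemma~\ref{lem-gaussian-concentration}), obtaining $\P(W_i\ge\lambda)\lesssim\mathrm{e}^{-\lambda^2/(C\log\log z)}$, and then integrates over $\lambda$ against the product of two independent single-box tails for $\Psi$. The Gaussian integral $\int\mathrm{e}^{-\lambda^2/(C\log\log z)}\mathrm{e}^{2\sqrt{2\pi}\lambda}\,d\lambda\asymp(\log z)^{C}$ is the actual source of the $(\log z)^C$ prefactor. Your BRW route (share $\le\log_2 h$ levels, then branch) encodes the same ``shared variance $\lesssim\log\log z$, then independent'' structure and can be pushed through, but the Markov-property decomposition avoids the comparison step and the level-by-level bookkeeping you were wrestling with at the end.
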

Comparing (\ref{eq-tilde-B-1-2}) and (\ref{eq-tilde-B-1}), note that
the constant $C$ in \eqref{eq-tilde-B-1-2} has been chosen large enough to
absorb the contribution of the correlation between the two events to the upper
bounds (up to a power of $\log z$).
\begin{proof}
We give a proof for \eqref{eq-tilde-B-1-2} and omit the proof of \eqref{eq-tilde-B-1} (which is simpler).
Let $\hat B_1$ and $\hat B_2$ be boxes of side length
$\tilde L (1 - \delta)$ that have the same centers as $\tilde B_1$ and $\tilde B_2$ (and thus $\tilde B_i\subset \hat B_i$).
For $u\in \tilde B_i$ (i=1,2), we define
$$\Phi_{u, N} = \E (Y_{u,N} \mid \{Y_{w,N} :  w\in \partial \hat B_i\}) \mbox{ and } \Psi_{u, N} = Y_{u,N} - \Phi_{u, N}\,.$$
Clearly, $\{\Psi_{u, N} : u\in \tilde B_1\}$ is independent of $\{\Psi_{u, N}: u\in \tilde B_2\}$.
Repeating the computations from Lemma
%s \ref{lem-coarselimit} and
\ref{lem-coarsecov}, we have that, for $u,v$ in the same box $\tilde B_i$,
\begin{equation}
\label{eq-friday1}
\E(\Phi_{u, N}-\Phi_{v, N})^2\leq c(\delta) \frac{|u-v|}{\tilde L(1-2\delta)}\,.
 \end{equation}
 Furthermore, as in \eqref{eq-110113g} in the proof of Lemma \ref{lem-coarselimit},
 $\var \Phi_{u,N}$ can be represented as the difference of the variances
 of GFFs in boxes of side length $L$ and $(1-\delta)\tilde L$, which leads to
% two independent
%Furthermore, we have that
\begin{equation}
\label{eq-friday2}
\sigma^2 : = \max_{u\in \tilde B_1 \cup \tilde B_2} \var \Phi_{u, N} \lesssim \log\log z\,,\end{equation}
where we have used $h=L/\tilde L\leq \log z$ and Lemma \ref{lem-covariance}.
By \eqref{eq-friday1}
and Lemma \ref{lem-ferniquecriterion} (applied to
boxes of side length $\tilde L(1-2\delta)$),
$\E W_i \lesssim 1$, where $W_i := \max_{u\in \tilde B_i} \Phi_{u, N}$ for $i \in \{1, 2\}$.
By the last inequality, \eqref{eq-friday2} and
Lemma~\ref{lem-gaussian-concentration}, there exists
a constant $C>0$ such that
%July162014
\begin{equation}
  \label{eq-ouf240113}
  \P(W_i \geq \lambda-1) \lesssim \mathrm{e}^{-\lambda^2/(2C \log\log z)}\,.
\end{equation}

We now write
%July162014
\begin{align*}
&\P(\max_{u\in \tilde B_1} Y_{u,N} \geq \ell m_N/n + \lambda_1, \max_{u\in \tilde B_2} Y_{u,N} \geq \ell m_N/n + \lambda_2)\\
  \lesssim&
  \int_0^\infty \max_{i\in \{1, 2\}}\P(W_i\geq \lambda-1)
  \P(\max_{u\in \hat B_1} \Psi_{u,N}
  \geq \ell m_N/n + \lambda_1 - \lambda) \P(\max_{u\in \hat B_2}
  \Psi_{u,N}
  \geq \ell m_N/n+\lambda_2 - \lambda) d\lambda\\
  =&
  \int_0^\infty \max_{i\in \{1, 2\}}\P(W_i\geq \lambda-1)
  \P(\max_{u\in \hat B_1} \Psi_{u,N}
  \geq m_\ell  + \bar c \log \ell+\lambda_1 - \lambda+O(\ell\log n/n))
 \\
 &\quad \quad\quad \quad
 \cdot \P(\max_{u\in \hat B_2}
  \Psi_{u,N}
  \geq m_\ell+\bar c\log \ell+\lambda_2 - \lambda+O(\ell\log n/n)) d\lambda\,,
\end{align*}
where $\bar c=3\sqrt{2/\pi}/4$; we have used \eqref{eq-friday3} in the equality
to approximate $\ell m_N /n$ by $m_{\ell}$.
From the last estimate, \eqref{eq-large-tail} in Lemma~\ref{lem-prelim-tail}
(applied in the boxes $\hat B_1,\hat B_2$ instead of $V_N$)
and  \eqref{eq-ouf240113}, it follows that
%July162014
\begin{align*}
&\P(\max_{u\in \tilde B_1} Y_{u,N} \geq \ell m_N/n + \lambda_1, \max_{u\in \tilde B_2} Y_{u,N} \geq \ell m_N/n + \lambda_2)\\
  \lesssim &  \int_0^\infty \mathrm{e}^{-\lambda^2/(2 C\log\log z)} \ell^{-3} (\lambda_1+\log \ell)(\lambda_2+\log \ell) \mathrm{e}^{-\sqrt{2\pi} (\lambda_1 + \lambda_2 - 2\lambda)}  \mathrm{e}^{- C^{-1} ((\lambda_1 - \lambda)^2 + (\lambda_2 - \lambda)^2 )/\ell}  d\lambda\\
\lesssim&   \ell^{-3} (\lambda_1+\log \ell)(\lambda_2+\log \ell) \mathrm{e}^{- C^{-1} (\lambda_1^2 + \lambda_2^2 )/\ell} \int_0^\infty \mathrm{e}^{-\lambda^2/(2 C\log\log z)} \mathrm{e}^{2(\sqrt{2\pi} + C(\lambda_1 + \lambda_2)/\ell)\lambda} d\lambda\\
\lesssim&   \ell^{-3} (\lambda_1+\log \ell)(\lambda_2+\log \ell) \mathrm{e}^{-\sqrt{2\pi} ( \lambda_1 + \lambda_2)} \mathrm{e}^{-\frac{\lambda_1^2 + \lambda_2^2}{C \ell}}\mathrm{e}^{2C(\log\log z) (\sqrt{2\pi} + \frac{C}{\ell}(\lambda_1 + \lambda_2))^2}\\
\qquad\qquad& \times \int_0^\infty \mathrm{e}^{-\frac{(\lambda -2C(\log\log z)
(\sqrt{2\pi} + C(\lambda_1 + \lambda_2)/\ell) )^2}{2 C\log\log z}} d\lambda\\
  \lesssim &   (\log z)^{C'} \ell^{-3} (\lambda_1+\log \ell)(\lambda_2 + \log \ell) \mathrm{e}^{-\sqrt{2\pi} ( \lambda_1 + \lambda_2)} \mathrm{e}^{- C'^{-1} (\lambda_1^2 + \lambda_2^2 )/\ell} \,,
  %\qedhere
\end{align*}
where $C'$ is a large absolute constant. (In the last inequality, we used the assumption that $z^4\leq \ell$.)
\end{proof}
\begin{lemma}\label{lem-Gamma-Lambda}
For $\Lambda^{\mathrm{up}}_{N, z}$ and $\Gamma^{\mathrm{up}}_{N, z}$ as above,
\begin{equation}
  \label{eq-clear240113}
\lim_{z\to \infty} \liminf_{N\to \infty}
\frac{\E \Lambda^{\mathrm{up}}_{N, z}}{\E \Gamma^{\mathrm{up}}_{N, z}}
= 1\,.
\end{equation}
\end{lemma}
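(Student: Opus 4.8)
Since in \eqref{eq-big-definition} the curve constraining $F^{\mathrm{up}}_{v,N}(z)$ dominates pointwise the line constraining $E^{\mathrm{up}}_{v,N}(z)$, we have $E^{\mathrm{up}}_{v,N}(z)\subseteq F^{\mathrm{up}}_{v,N}(z)$, hence $\E\Lambda^{\mathrm{up}}_{N,z}\le\E\Gamma^{\mathrm{up}}_{N,z}$ and the ratio in \eqref{eq-clear240113} is at most $1$. The plan is to establish the matching lower bound by producing a sequence $\delta_z\searrow_{z\to\infty}0$, not depending on $N$, such that $\P(F^{\mathrm{up}}_{v,N}(z))\le(1+\delta_z)\P(E^{\mathrm{up}}_{v,N}(z))$ for every $v\in\Xi_N$ and all large $N$; summing over $v\in\Xi_N$ then gives $\E\Gamma^{\mathrm{up}}_{N,z}\le(1+\delta_z)\E\Lambda^{\mathrm{up}}_{N,z}$, and letting $N\to\infty$ and then $z\to\infty$ finishes the proof.

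For the per-site estimate I would condition on the fine-field maximum $M_v:=\max_{u\in\tilde B_v}Y_{u,N}$, which by \eqref{eq-independence} is independent of the backbone $\{X^{\mathrm{up}}_{v,N}(t)\}_{0\le t\le n_{v,N}}$. Using that $\max_{u\in\tilde B_v}\eta^{\mathrm{up}}_{u,N}=X^{\mathrm{up}}_{v,N}(n_{v,N})+M_v$ (recall $c_v=v$ on $\Xi_N$), both $\P(E^{\mathrm{up}}_{v,N}(z)\mid M_v)$ and $\P(F^{\mathrm{up}}_{v,N}(z)\mid M_v)$ become the probability that the drifted path $X^{\mathrm{up}}_{v,N}(\cdot)$ stays below a prescribed curve and ends above the level $m_N+z-M_v$, the two curves being the line $z+\tfrac{m_N}{n}t$ and its small perturbation $z+\tfrac{m_N}{n}t+10(\log(t\wedge(n^*-t)))_+ +z^{1/20}$. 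Performing the Girsanov change of measure of Lemma~\ref{lem-a-priori} (which makes $X^{\mathrm{up}}_{v,N}(t)-\tfrac{m_N}{n}t$ a centered Brownian motion of variance rate $\gamma^2$) turns these into integrals, against the Radon--Nikodym weight $x\mapsto e^{-(m_N/\gamma^2 n)x}$ over the half-line $\{x\ge m_N+z-M_v-\tfrac{m_N}{n}n_{v,N}\}$, of the constrained endpoint densities $\mu_{n_{v,N},z}$ (for $E$) and $\mu^*_{n_{v,N},z}$ (for $F$) of Lemma~\ref{lem-1DRW}, with $\sigma^2=\gamma^2$. Here I would use $n^*-n_{v,N}=O_\delta(1)$, which follows from $\var X_{v,N}=\tfrac{2\log 2}{\pi}(n-\ell)+O_\delta(1)$ uniformly on $\Xi_N$ (Lemma~\ref{lem-covariance}): this makes $10(\log(t\wedge(n^*-t)))_+$ equal to $O_\delta(1)$ near the ends of $[0,n_{v,N}]$ and bounded by $C(t\wedge(n_{v,N}-t))^{1/20}$ in the bulk, so the $F$-curve is dominated by the curve $z+z^{1/20}+C(t\wedge(n_{v,N}-t))^{1/20}$ to which Lemma~\ref{lem-1DRW} applies.

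The comparison of the two integrals then splits according to the location of the endpoint $x$. Below the starting level, \eqref{eq-mu-mu*-asympt} gives $\mu^*_{n_{v,N},z}(x)\le(1+\delta_z)\mu_{n_{v,N},z}(x)$ outright. On the thin sliver where $x$ lies within $O(z^{1/20})$ of the barrier, I would bound $\mu^*_{n_{v,N},z}$ using \eqref{eq-for-correction-1}, and combine this with \eqref{eq-BM-monotone} and the observation that the terminal constraint $x\ge m_N+z-M_v-\tfrac{m_N}{n}n_{v,N}$ pins the bulk of the mass an order-$z$ distance below the barrier, so that this sliver contributes at most an $O(\delta_z)$ fraction of $\int\mu_{n_{v,N},z}(x)\,e^{-(m_N/\gamma^2 n)x}\,dx$. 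Equivalently, $F^{\mathrm{up}}_{v,N}(z)\setminus E^{\mathrm{up}}_{v,N}(z)$ forces the backbone into the strip between the two curves, and decomposing over the first time it enters the strip, applying the Markov property there and again at the time realizing $\max_{u\in\tilde B_v}\eta^{\mathrm{up}}_{u,N}$ exactly as in the proof of Lemma~\ref{lem-a-priori}, yields the same bound, the smallness coming from the negligible width of the strip on the relevant scale. Integrating over $M_v$ and summing over $v\in\Xi_N$ gives the claim.

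I expect the main obstacle to be the uniform control of that sliver --- the scenario in which the backbone hugs the constraining curve --- uniformly in $N$. The additive $z^{1/20}$ and the logarithmic correction in $F^{\mathrm{up}}_{v,N}(z)$ are tailored so that the strip is thin relative to the order-$z$ slack enforced by the terminal constraint, and so that the sum over the first-crossing time converges with room to spare (as in the proof of Lemma~\ref{lem-a-priori}); making $\delta_z$ genuinely independent of $N$ hinges on the uniform bound $n^*-n_{v,N}=O_\delta(1)$ and on the $N$-free estimates of Lemma~\ref{lem-1DRW}.
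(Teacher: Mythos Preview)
Your overall architecture is correct and matches the paper's: apply the Girsanov tilt \eqref{eq-change-of-measure} to reduce to the constrained Brownian densities $\mu_{n_{v,N},z}$ and $\mu^*_{n_{v,N},z}$, use \eqref{eq-mu-mu*-asympt} on the bulk $x\le z$, and handle separately the sliver $x\in[z,z+z^{1/20}]$. The gap is in your treatment of the sliver.

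Your claim that, conditional on $M_v$, the sliver contributes an $O(\delta_z)$ fraction of the $E$-integral is false. The terminal constraint reads $\bar X_{v,N}(n_{v,N})\ge z-(M_v-\ell m_N/n)+O(1)$, and the sliver $\bar X\in[z,z+z^{1/20}]$ becomes relevant exactly when $M_v$ lies in the window $[\ell m_N/n - z^{1/20},\, \ell m_N/n]+O(1)$. For such $M_v$ one has $\P(E^{\mathrm{up}}_{v,N}(z)\mid M_v)=0$ (the $E$-barrier forbids $\bar X\ge z$) while $\P(F^{\mathrm{up}}_{v,N}(z)\mid M_v)>0$, so the conditional inequality $\P(F\mid M_v)\le(1+\delta_z)\P(E\mid M_v)$ cannot hold. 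Integrating over $M_v$ afterwards does not repair this: you are comparing the sliver against the wrong reference.

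The paper avoids this by not conditioning on $M_v$. It integrates over the endpoint $x$ first and inserts the fine-field tail bound of Lemma~\ref{lem-decorelate-tilde-B} (which you never invoke) to get an \emph{absolute} sliver estimate
\[
\int_z^{z+z^{1/20}}\frac{d\P}{d\mathbb Q}(x)\,\mu^*_{n_{v,N},z}(x)\,\P\Big(\max_{u\in\tilde B_v}Y_{u,N}\ge \ell m_N/n+z-x\Big)\,dx
\;\lesssim\; 4^{-n^*} z^{3}(z+\log\ell)(\log z)^C \ell^{-3/2}\,e^{-\sqrt{2\pi}z}.
\]
Summing this over $v\in\Xi_N$ does not yield something small relative to individual $\P(E_v)$'s; instead one compares it to the \emph{aggregate} lower bound $\E\Gamma^{\mathrm{up}}_{N,z}\gtrsim z e^{-\sqrt{2\pi}z}$, which comes from \eqref{eq-lower-right-tail} together with Lemma~\ref{lem-a-priori}. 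The $\ell^{-3/2}$ from Lemma~\ref{lem-decorelate-tilde-B}, combined with $\ell\ge z^4$, is precisely what kills the ratio. Your per-site plan would need a per-site lower bound on $\P(E^{\mathrm{up}}_{v,N}(z))$, which is not supplied (and is not how the paper closes the argument).
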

(Of course, the $\liminf$ in \eqref{eq-clear240113} also implies the same statement, but with $\limsup$ replacing $\liminf$,
since the ratio is always bounded above by $1$.)
\begin{proof}
To simplify notation, we drop the superscript ``$\mathrm{up}$" from the
notation in this proof.
For any $v\in \Xi_N$,
we write $\bar X_{v,N}(t) = X_{v,N}(t) - \tfrac{m_N t}{n}$,
and define the probability measure $\mathbb{Q}$ by
\begin{equation}\label{eq-change-of-measure}
\frac{d\mathbb{P}}{d\mathbb{Q}} = \mathrm{e}^{-\tfrac{m_N}{n\gamma^2} \bar X_{v,N}(n_{v,N}) - \tfrac{m_N^2}{2 \gamma^2 n^2} n_{v,N}}\,.
\end{equation}
Under $\mathbb{Q}$, $\bar X_{v,N}(t)$ is a Brownian motion
with variance rate $\gamma^2$.

We continue to use the notation $\mu_{t, y}(\cdot)$ and
$\mu_{t, y}^*(\cdot)$ from \eqref{eq-def-mu-mu*} (with variance rate $\sigma^2 = \gamma^2$).
With a slight abuse of notation,
we write $d\P/d\mathbb{Q}=(d\P/d\mathbb{Q})(\bar X_{v,N})$. We have
%July162014
\begin{align*}
\P(F_{v, N} (z)\setminus E_{v, N} (z))  \leq & \int_{0}^{z+z^{1/20}}
\frac{d \mathbb{P}}{d\mathbb{Q}} (x) \mu^*_{n_{v,N}, z} (x)
\P(\max_{u\in \tilde B_{v, N}} Y_{u,N} \geq \ell m_N/n + z- x ) dx \\
&+ \int_{-\infty}^0 \frac{d \mathbb{P}}{d\mathbb{Q}} (x)
(\mu^*_{n_{v,N}, z} (x) - \mu_{n_{v,N}, z}(x))
\P(\max_{u\in \tilde B_{v, N}} Y_{u,N} \geq \ell m_N/n + z- x ) dx\\
&\lesssim 4^{-n^*} z^{3} (z+\log \ell) (\log z)^C \ell^{-3/2} \mathrm{e}^{-\sqrt{2\pi} z} + \delta_z \P(E_{v, N}(z))\,,
\end{align*}
where $\delta_z \searrow_{z\to \infty} 0$;
the last inequality follows for large $\ell$ by rewriting $\ell m_N/n$ in terms
of $m_\ell$,
and applying Lemmas~\ref{lem-1DRW} and
\ref{lem-decorelate-tilde-B}. Therefore,
\begin{equation}\label{eq-difference-Gamma-Lambda}
\E \Gamma_{N, z}- \E \Lambda_{N, z} \lesssim z^{3} (z+\log \ell) (\log z)^C \ell^{-3/2} \mathrm{e}^{-\sqrt{2 \pi}z} + \delta_z \E \Lambda_{N, z}\,.
\end{equation}
By \eqref{eq-lower-right-tail} and Lemma~\ref{lem-a-priori} (applied with
$\beta=z+z^{1/20}$),
\begin{equation}
\label{eq-lower-Gamma}
\liminf_{z\rightarrow\infty}\liminf_{N\rightarrow\infty} \frac{\E \Gamma_{N, z}}{ z \mathrm{e}^{-\sqrt{2\pi} z}} \gtrsim 1\,.
\end{equation} Since $z^4 \le \ell$,
 \eqref{eq-difference-Gamma-Lambda}  and (\ref{eq-lower-Gamma}) imply (\ref{eq-clear240113}),
which completes the proof of the lemma.
\end{proof}

We next estimate the second moment of $\Lambda^{\mathrm{lw}}_{N, z}$.
\begin{lemma}\label{lem-second-moment}
With notation as above,
\begin{equation}
\label{newequation72}
\lim_{z\to \infty} \limsup_{N \to \infty} \frac{\E (\Lambda^{\mathrm{lw}}_{N, z})^2}{\E \Lambda^{\mathrm{lw}}_{N, z}} = 1\,.
\end{equation}
\end{lemma}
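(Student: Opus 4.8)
The plan is to use a standard first-and-second-moment argument, but carried out on the level of \emph{clusters} indexed by $\Xi_N$ rather than individual vertices of $\tilde V_N$. Since $\Lambda_{N,z}^{\mathrm{lw}}=\sum_{v\in\Xi_N}\one_{E_{v,N}^{\mathrm{lw}}(z)}$ is a sum of indicators,
$$\E(\Lambda_{N,z}^{\mathrm{lw}})^2=\E\Lambda_{N,z}^{\mathrm{lw}}+\sum_{v\neq w\in\Xi_N}\P(E_{v,N}^{\mathrm{lw}}(z)\cap E_{w,N}^{\mathrm{lw}}(z))\,,$$
so the lemma is equivalent to showing that the double sum over pairs is $o(\E\Lambda_{N,z}^{\mathrm{lw}})$ as $N\to\infty$ and then $z\to\infty$. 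By \eqref{eq-morningcoffee}, \eqref{eq-clear240113} and \eqref{eq-lower-Gamma}, we already know $\E\Lambda_{N,z}^{\mathrm{lw}}=\E\Lambda_{N,z}^{\mathrm{up}}\gtrsim z\mathrm{e}^{-\sqrt{2\pi}z}$ (and it is $\lesssim z\mathrm{e}^{-\sqrt{2\pi}z}$ by Lemma~\ref{lem-a-priori}), so the target is to prove $\sum_{v\neq w}\P(E_{v,N}^{\mathrm{lw}}\cap E_{w,N}^{\mathrm{lw}})\lesssim o_z(1)\,z\mathrm{e}^{-\sqrt{2\pi}z}$.

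The key step is a pairwise bound obtained by exploiting the tree-like structure of the MBRW $X^{\mathrm{lw}}$. For a pair $v\neq w$, let $2^{j_0}\asymp d_N(c_v,c_w)$ be their branching scale; the Brownian paths $X_{v,N}^{\mathrm{lw}}(\cdot)$ and $X_{w,N}^{\mathrm{lw}}(\cdot)$ coincide up to time $\approx n-j_0$ (sharing the same coarse increments $\phi_{N,j,B}$ with $j\le j_0$) and are independent thereafter. On the event $E_{v,N}^{\mathrm{lw}}(z)\cap E_{w,N}^{\mathrm{lw}}(z)$, the common portion must stay below the linear barrier $z+\frac{m_N}{n}t$ up to the branch point, contributing (via the reflection estimate \eqref{eq-reflection-principle}/Lemma~\ref{lem-1DRW}) a factor of order $2^{-2(n-j_0)}\cdot z\cdot (z-$common value at branching$)$ times the change-of-measure factor; then the two independent continuations each from height $\approx \frac{m_N}{n}(n-j_0)$ to a near-$m_N+z$ terminal value, combined with the independent maxima $\max_{u\in\tilde B_v}Y_{u,N}$ and $\max_{u\in\tilde B_w}Y_{u,N}$ (here Lemma~\ref{lem-decorelate-tilde-B} is used for the fine-field maxima, and Lemma~\ref{lem-prelim-tail} for the BRW continuations). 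Multiplying through and summing over the $\asymp(N/\tilde L)^2$ choices of $v$ and, for each $v$, over $w$ with $d_N(c_v,c_w)\asymp 2^{j}$ (there are $\asymp 4^{j}/\tilde L^2$ such $w$ for each scale $j$ from $\ell$ to $n$), one finds the contribution of pairs branching at scale $2^{j}$ is of order $z\,\mathrm{e}^{-\sqrt{2\pi}z}$ times a factor that decays in $j$, and the total over all scales is $\lesssim z\,\mathrm{e}^{-\sqrt{2\pi}z}\,\epsilon_z$ with $\epsilon_z\to 0$; the decay comes from the slackness built into counting clusters (the $\tilde L$-spacing and the fact that a cluster's contribution is concentrated, via the ballot-type estimate, near the extremal value so that ``two nearly-extremal clusters'' is genuinely rare).

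The main obstacle is the careful bookkeeping in the pairwise estimate, specifically handling pairs that branch at scales $j$ near $\ell$ (i.e. $v,w$ in the same $\mathcal B$-box or nearby boxes, where the shared path is long and the coarse and fine decompositions interact), since for these pairs the naive bound is only barely summable and one must use the $z^{1/20}$-type slack curves and the $z^4\le\ell$ assumption (as in Lemma~\ref{lem-decorelate-tilde-B}) to gain the needed room. For pairs at larger scales the independence makes the estimate essentially a product of two single-cluster first-moment bounds, which are controlled by the same computation that gave \eqref{eq-E-v-est} and Lemma~\ref{lem-a-priori}, so those contribute the bulk of $\E\Lambda_{N,z}^{\mathrm{lw}}$ without overshooting. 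Once the pairwise sum is shown to be $o_z(1)\cdot\E\Lambda_{N,z}^{\mathrm{lw}}$, \eqref{newequation72} follows immediately.
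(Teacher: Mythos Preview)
Your strategy matches the paper's: expand the second moment as the first moment plus cross terms, condition on the value of the backbone at the branching time, and show the cross sum is $o_z(1)\cdot\E\Lambda_{N,z}^{\mathrm{lw}}$ via ballot/reflection estimates together with Lemma~\ref{lem-decorelate-tilde-B}. Two refinements are needed to turn your sketch into a proof.

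First, for the MBRW the paths $X_{v,N}^{\mathrm{lw}}(\cdot)$ and $X_{w,N}^{\mathrm{lw}}(\cdot)$ do \emph{not} coincide before the split: at a scale $j$ with $2^j>|v-w|$ the sets $\mathfrak B_j(v)$ and $\mathfrak B_j(w)$ overlap but are not equal, so the pre-split portions are only highly correlated. (Your inequality on $j$ is also reversed: it is the large-$j$ increments that are shared.) The paper handles this by simply dropping the barrier constraint on $w$'s path before the split time $t_s$ and then bounding the two post-$t_s$ continuations, which are genuinely independent by the definition of $v\sim_s w$.

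Second, the paper carries out the sum over pairs via an explicit trichotomy in $s$ (with $|v-w|\asymp 2^s$): for $s\ge n^*-z^{1/10}$ the common segment is short and is bounded by the Gaussian density alone (no ballot factor); for $z^{1/10}\le s<n^*-z^{1/10}$ the ballot theorem is applied to both the common and the continuation pieces, and the $Y$-maxima in $\tilde B_v,\tilde B_w$ are independent because $v,w$ lie in different $\mathcal B$-boxes; for $1\le s<z^{1/10}$ the two $\tilde B$-boxes may sit in the same $\mathcal B$-box, so the $Y$-maxima are correlated and the joint bound \eqref{eq-tilde-B-1-2} of Lemma~\ref{lem-decorelate-tilde-B} is essential, with a joint continuation quantity $\Gamma_{v,w,x,z,s}$ replacing the product $\Gamma_{v,x,z,s}\Gamma_{w,x,z,s}$. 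Your identification of close pairs as the delicate regime is correct, but the ``independent maxima'' phrasing is wrong there, and the three regimes really do require separate arguments.
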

\begin{proof}
  %Ofer1202 removed extra parenthesis
Recall from \eqref{eq-morningcoffee} that $\E \Lambda^{\mathrm{lw}}_{N, z} =
\E \Lambda^{\mathrm{up}}_{N, z}$.
Combined with
Lemma~\ref {lem-Gamma-Lambda}  and \eqref{eq-lower-Gamma}, this implies
\begin{equation}\label{eq-Lambda-first-moment}
\liminf_{z\rightarrow\infty}\liminf_{N\rightarrow\infty} \frac{\E \Lambda^{\mathrm{lw}}_{N, z}}{ z \mathrm{e}^{-\sqrt{2\pi} z}} \gtrsim 1\,.
\end{equation}
The main work is to estimate the above second moment, which we rewrite as
\begin{equation}
\label{newequation73}
\E(\Lambda^{\mathrm{lw}}_{N, z})^2 =
\E\Lambda^{\mathrm{lw}}_{N, z}  +
\sum_{v,w\in \Xi_N,v\neq w} \P(E_{v, N}^{\mathrm{lw}}(z) \cap E_{w, N}^{\mathrm{lw}}(z))\,.
\end{equation}
We will proceed by decomposing the terms in the sum on the right hand side of (\ref{newequation73}) 
according to first the time and then the position at which pairs of sites ``branch".
(This is a modification of an argument that has been applied repeatedly in the 
literature for branching Brownian motion and branching random walk.)

To simplify notation, we drop the superscript $\mathrm{lw}$  in the remainder
of the proof of the lemma.  We also employ the following terminology.
Recalling the formula \eqref{eq-change-of-measure},
for any $v\in \Xi_N$,
we write $\bar X_{v,N}(t) = X_{v,N}(t) - {m_N t}/{n}$, with
$\bar X_{v,N}(t) = \bar X_{v,N}(n_{v,N})$ for $n_{v,N} \leq t\leq n$ and
$n^* = \max_{v\in \Xi_N} n_{v,N}$.
By \eqref{eq-a-N-v},
$|n^* -n_{v, N} |= O(1)$ uniformly in $v\in \Xi_N$.
For $v, w \in \Xi_N$, we say that $v$ and $w$ \emph{split}
at time $t_s = n^*-s$, denoted by $v\sim_s w$,
if $s$ is the maximal integer
such that $\{X_{v,N}(t) - X_{v,N}(t_s): t_s\leq t\leq n^*\}$
is independent of $\{X_{w,N}(t) - X_{w,N}(t_s): t_s\leq t\leq n^*\}$.
We note that $0\leq s \leq n^*- r$ and that the number of pairs of sites $v$ and $w$ satisfying $v,w \in \Xi_N$, and for which $v\sim_s w$,  is within a
constant multiple of $h^4 4^{n^*} 4^{s+r}$.  Recall that $r= C_{\delta}\log h$ and $h = L/\tilde{L} \le \log z$.

%
%By \eqref{eq-def-X-up-lw}, we see that $1\leq s \leq n^*- r$ ($s$ cannot be 0 since we consider $v\neq w$).  In addition, for $v\sim_s w$,
%%July162014
%we have $|v-w|\asymp 2^{s+\ell + r} $. Here $r$ is chosen as in Lemma~\ref{lem-X-MBRW} such that $r = C_\delta \log h \leq C_\delta \log\log z$ for a constant $C_\delta>0$ depending only on $\delta$.

We will show that for large $z$, after letting $N\rightarrow\infty$, the sum in (\ref{newequation73}) is small in comparison with the first term on
the right hand side, by decomposing it into three parts, with $v$ and $w$ 
satisfying
$v\sim_s w$, with $s$ restricted to $[n^*- z^{1/10}, n^* - r]$,  $[z^{1/10}, n^*- z^{1/10})$ and
$[1, z^{1/10})$, respectively. For $v\sim_s w$, with given $s$ in either of 
        the first two intervals, we will employ the upper bound
\begin{align}\label{eq-P-A-v-w}
&\P(E_{v, N} (z) \cap E_{w, N}(z))\nonumber\\
= &\P(\bar X_{v,N}(t) , \bar X_{w,N}(t) \leq z \mbox{ for all }t\in [0,n^*];  \max_{u\in \tilde B_v} \eta_{u,N},  \max_{u\in \tilde B_w} \eta_{u,N} \geq m_N + z)\nonumber\\
= &\sum_{x\leq z}\P(\bar X_{v,N}(t), \bar X_{w,N}(t) \leq z \mbox{ for all }t\in [0,n^*]; \max_{u\in \tilde B_v} \eta_{u,N} , \max_{u\in \tilde B_w} \eta_{u,N} \geq m_N + z; \bar X_{v,N}(t_s) \in (x-1, x])\nonumber\\
\leq &\sum_{x\leq z}\P(\bar X_{v,N}(t) \leq z  \mbox{ for all } t\in [0,t_s], \bar X_{v,N}(t_s) \in [x-1, x]) \Gamma_{v, x, z, s} \Gamma_{w, x, z, s}\,,
\end{align}
where, in the above sum, $x\leq z$ is a shorthand notation for
$x=z,z-1,\ldots$, and
$$\Gamma_{v, x, z, s} := \sup_{\bar X_{v,N}(t_s)\in [x-1,x]}
\P(\bar X_{v,N} (t)\leq z \mbox{ for all } t_s<t\leq n^*,   \max_{u\in \tilde B_v} \eta_{u,N} \geq m_N + z
 \mid \bar X_{v,N}(t_s) ).$$

In order to obtain an upper bound on the sum in (\ref{newequation73}), we first consider the case $v\sim_s w$, with
$n^*- z^{1/10} \leq s \leq n^*$.
Here, $|v-w|_2 \asymp_{z, \tilde L} N$; therefore,
$\tilde B_v$ and $\tilde B_w$ belong
to different boxes in $\mathcal{B}_N$ when $N$ is large and, in particular,
$\{Y_{u,N}: u\in \tilde B_v\}$ is independent of $\{Y_{u,N}: u\in \tilde B_w\}$.
% Maury - I reworded the above sentence somewhat.
By a change of measure that transforms $\bar X_{v,N}(\cdot)$
into Brownian motion and by the
ballot theorem (see \cite[Theorem 1]{ABR08}),
\begin{align}\label{eq-Gamma-y-z-s}
\Gamma_{v, x, z, s } \leq & \sum_{y\leq z} \P(\bar X_{v,N}(t)\leq z - x \mbox{ for all } t \in [0,s], \bar X_{v,N}(s) \in [y-1-x, y-x]) \gamma_{v, y}\nonumber \\
\lesssim & \sum_{y\leq z} \mathrm{e}^{-\frac{\alpha_n^2}{2} s}
\mathrm{e}^{-\alpha_n (y-x)/\gamma}  \frac{(z-x)(z-y)}{s^{3/2}} \gamma_{v, y}\,,
\end{align}
where $\alpha_n = {m_N}/{\gamma n}$ (with $\gamma=\sqrt{2\log 2/\pi}$, as before)
and $\gamma_{v, y} = \P(\max_{u\in \tilde B_v} Y_{u,N} \geq \ell m_N/N + z - y) $.
On the other hand, obviously
\begin{align*}
\P(\bar X_{v,N}(t) \leq z,  \mbox{ for all } t\in [0,t_s], \bar X_{v,N}(t_s) \in [x-1, x]) & \leq \P(\bar X_{v,N}(t_s) \in [x-1,x])\\
&\lesssim  \frac{1}{\sqrt{t_s}} \mathrm{e}^{-\frac{(\alpha_n t_s + x/\gamma)^2}{2t_s}} \,.
\end{align*}
Substituting
the preceding inequality and \eqref{eq-Gamma-y-z-s} into \eqref{eq-P-A-v-w}, it follows that, for an absolute constant $C>0$,
\begin{align*}
\P(E_{v, N} (z) \cap E_{w, N}(z)) &\lesssim    \mathrm{e}^{C z^{1/10}}4^{-n^*-s}
\sum_{y\leq z} \mathrm{e}^{-\alpha_n y/\gamma} \gamma_{v, y}  (z-y)\sum_{y\leq z} \mathrm{e}^{-\alpha_n y/\gamma}  \gamma_{w, y} (z-y)\,.
\end{align*}
Recalling that $\alpha_n/\gamma = \sqrt{2\pi} + O(\log n)/n$,
an application of Lemma~\ref{lem-decorelate-tilde-B} therefore yields
\begin{align*}
\P(E_{v, N} (z) \cap E_{w, N}(z)) \lesssim &  \frac{ \mathrm{e}^{2C z^{1/10}}}{4^{n^*+s} \mathrm{e}^{2\sqrt{2\pi} z}}
\sum_{y_1, y_2\leq z} \mathrm{e}^{O(\frac{\log n (y_1+y_2)}{n})}   \frac{ (z-y_1)^2(z-y_2)^2}{ \ell^{3} \mathrm{e}^{((z- y_1)^2 + (z - y_2)^2 )/C \ell}} \\
\lesssim & \mathrm{e}^{2C z^{1/10}} 4^{-n^*-s}  \mathrm{e}^{-2\sqrt{2\pi}z} \,.
\end{align*}
Applying (\ref{eq-Lambda-first-moment}),  this implies
\begin{equation}
\label{eq-second-moment-regime-1}
\limsup_{z\rightarrow\infty}\limsup_{N\rightarrow\infty}
\frac{\sum_{n^* - z^{1/10} \leq s\leq n^* -r} \sum_{v\sim_s w} \P(E_{v, N}(z) \cap E_{w, N}(z))} {h^4 4^r\mathrm{e}^{2C z^{1/10}} \mathrm{e}^{-\sqrt{2\pi}z} \E \Lambda_{N, z}} \lesssim 1\,.
\end{equation}

We next consider the case $z^{1/10} \leq s < n^* - z^{1/10}$.
% Maury - So that the intervals for $s$ are disjoint, in this case and the next, the intervals are half-open.
Here,
\eqref{eq-Gamma-y-z-s}
still
holds (since the distance between $v$ and $w$ is large
enough such that they belong to different boxes in $\mathcal{B}_N$).
By the
ballot theorem together with the change of measure that transforms
$\bar X_{v,N}(\cdot)$ into Brownian motion,
%July172014
\begin{align}\label{eq-decompose-x}
\P(\bar X_{v,N}(t) \leq z,  \mbox{ for all } t\in [0,t_s], \bar X_{v,N}(t_s) \in [x-1, x]) & \lesssim \frac{z(z -x + 1)}{t_s^{3/2}}\mathrm{e}^{-\frac{\alpha_n^2 t_s}{2}} \mathrm{e}^{-\frac{\alpha_n x}{\gamma}}\,.
\end{align}
%Note also that \eqref{eq-Gamma-y-z-s} holds for all $s$.
Substitution of
\eqref{eq-Gamma-y-z-s} and the above estimate into \eqref{eq-P-A-v-w} implies that
%July172014
\begin{align*}
&\P(E_{v, N} (z) \cap E_{w, N}(z)) \\
\lesssim & \frac{z (n^*)^{3/2}}{ 4^{n^*+s} s^{3/2} t_s^{3/2}}
\sum_{x\leq z} \mathrm{e}^{\alpha_n x/\gamma} (z-x)^3  \sum_{y_1, y_2\leq z} 
\mathrm{e}^{-\alpha_n (y_1+y_2) /\gamma}  (z-y_1)(z-y_2)  
%July172014
\gamma_{v, y_1}\gamma_{w, y_2}\\
\lesssim & \frac{z(n^*)^{3/2} \mathrm{e}^{\sqrt{2\pi}z}}{ 4^{n^*+s} s^{3/2} t_s^{3/2}}
\sum_{y_1, y_2\leq z} \mathrm{e}^{-\alpha_n (y_1+y_2) /\gamma}  (z-y_1)(z-y_2)  \gamma_{v, y_1} \gamma_{w, y_2}\,.
\end{align*}
Combining this with Lemma~\ref{lem-decorelate-tilde-B}, it follows that
%July172014
\begin{align*}
\P(E_{v, N} (z) \cap E_{w, N}(z))  \lesssim &  \frac{ (\log z)^{C}z(n^*)^{3/2} \mathrm{e}^{-\sqrt{2\pi}z}}{ 4^{n^*+s}s^{3/2} t_s^{3/2}}
\sum_{y_1, y_2\leq z} \mathrm{e}^{O(\frac{\log n(y_1+y_2)}{ n})} \frac{ (z-y_1)^2(z-y_2)^2}{ \ell^{3} \mathrm{e}^{((z- y_1)^2 + (z - y_2)^2 )/C\ell}} \\
\lesssim&  \frac{ (\log z)^{C}z(n^*)^{3/2} \mathrm{e}^{-\sqrt{2\pi}z}}{ 4^{n^*+s}s^{3/2} t_s^{3/2}}\,.
\end{align*}
Therefore, again applying (\ref{eq-Lambda-first-moment}),
\begin{equation}\label{eq-second-moment-regime-2}
        %July172014
\limsup_{z\rightarrow\infty}\limsup_{N\rightarrow\infty}
\frac{\sum_{z^{1/10} \leq s 
< n^* - z^{1/10}} \sum_{v\sim_s w} \P(E_{v, N}(z) \cap E_{w, N}(z))}
%
%{(\log z)^C z^{-1/20}  z \mathrm{e}^{-\sqrt{2\pi} z}} 
%
{h^4 4^r(\log z)^C z^{-1/20}  \E \Lambda_{N, z}}  \lesssim 1\,.
\end{equation}

Lastly, we consider the case $1\leq s < z^{1/10}$.
Let $v\sim_s w$, with $v\neq w$, and define $\Gamma_{v, w, x, z, s}$ by
$$\sup_{\bar X_{v,N}(t_s)\in [x-1,x]}
\P(\bar X_{v,N} (t), \bar X_{w,N}(t)\leq z \mbox{ for all } t_s<t\leq n^*;  \max_{u\in \tilde B_v} \eta_{u,N},  \max_{u\in \tilde B_w} \eta_{u,N}  \geq m_N + z
 \mid \bar X_{v,N}(t_s) ).$$
 Analogous to \eqref{eq-P-A-v-w},
\begin{equation}\label{eq-P-A-v-w-2}
\P(E_{ v, N} (z) \cap E_{ w, N}(z)) \leq \sum_{x\leq z}\P(\bar X_{v,N}(t) \leq z,  \mbox{ for all } t\in [0,t_s], \bar X_{v,N}(t_s) \in [x-1, x]) \Gamma_{v, w, x, z, s} \,.
\end{equation}
Furthermore,
\begin{align*}
\Gamma_{v, w, x, z, s } \leq & \sum_{y_1, y_2\leq z} \P(\bar{X}_{v,N}(t), \bar X_{w,N}(t)\leq z - x \mbox{ for all } t \in [0,s]; \\
&\qquad \qquad \bar X_{v,N}(s) \in [y_1-1-x, y_1-x]; \bar X_{w,N}(s)\in [y_2 - x- 1, y_2 -x]) \gamma_{v, y_1, w, y_2}\nonumber \\
\lesssim & \sum_{y_1, y_2 \leq z} \mathrm{e}^{-\alpha_n^2 s}
\mathrm{e}^{-\alpha_n (y_1+y_2-2x)/\gamma}  s^{-1} \mathrm{e}^{-\frac{(y_1-x)^2 + (y_2-x)^2}{2\gamma^2 s}} \gamma_{v, y_1, w, y_2}\,,
\end{align*}
where $\gamma_{v, y_1, w, y_2} := \P(\max_{u\in \tilde B_v} Y_{u,N} \geq \ell m_N/N + z - y_1, \max_{u\in \tilde B_w} Y_{u,N} \geq \ell m_N/N + z - y_2)$.  Together with Lemma~\ref{lem-decorelate-tilde-B},
the last display implies
\begin{align*}
\Gamma_{v, w, x, z, s } &\lesssim (\log z)^C\mathrm{e}^{-\alpha_n^2 s} \mathrm{e}^{2\alpha_n x/\gamma} \mathrm{e}^{-2\sqrt{2\pi} z} \sum_{y_1, y_2 \leq z}
\frac{\mathrm{e}^{O(\log n(y_1+y_2)/n )}}{ \mathrm{e}^{((y_1-x)^2 + (y_2-x)^2)/2\gamma^2 s}}  \frac{(z-y_1)(z-y_2 )}{\ell^3  s} \\
&\lesssim  (\log z)^C\mathrm{e}^{-\alpha_n^2 s} \mathrm{e}^{2\alpha_n x/\gamma} \mathrm{e}^{-2\sqrt{2\pi} z} (z - x + \sqrt{s} +\log \ell)^2 \ell^{-3} \,.
\end{align*}
Note that \eqref{eq-decompose-x} also holds in this region. Plugging \eqref{eq-decompose-x} and the above inequality into \eqref{eq-P-A-v-w-2}, we obtain
\begin{align*}
\P(E_{v, N} (z) \cap E_{w, N}(z)) &\lesssim  4^{-n^*-s} s^{3/2}  (\log z)^{C} \ell^{-3} z \mathrm{e}^{-\sqrt{2\pi }z} \sum_{x\leq z} (z-x+\sqrt{s} +\log \ell)^3 \mathrm{e}^{-\alpha_n(z-x)/\gamma}\\
& \lesssim 4^{-n^*-s} s^{3/2} (\sqrt{s} + \log \ell)^3 (\log z)^{C} \ell^{-3}  z\mathrm{e}^{-\sqrt{2\pi} z}\,.
\end{align*}
Therefore, since $\ell\geq z^4$, another application of (\ref{eq-Lambda-first-moment}) implies
\begin{equation}
\label{newequation79}
\limsup_{z\rightarrow\infty}\limsup_{N\rightarrow\infty}
\frac{\sum_{1\leq s < z^{1/10}} \sum_{v\sim_s w, v\neq w} \P(E_{v, N}(z) \cap E_{w, N}(z))}
%
% \lesssim  (\log z)^{C}\ell^{3/10} \ell^{-3}  z\mathrm{e}^{-\sqrt{2\pi} z}  
%
 {h^4 4^r (\log z)^{C} \ell^{-2}  \E \Lambda_{N, z}} \lesssim   1\,.
\end{equation}

Since $h^4 4^r \leq (\log z)^{4+C_\delta \log 4}$ and $\ell \geq z^4$, it follows that, in each of the limits \eqref{eq-second-moment-regime-1},
\eqref{eq-second-moment-regime-2} and (\ref{newequation79}), the ratio of the double sum to
$\E\Lambda_{N,z}$  goes to $0$ as first $N\rightarrow\infty$ and then  $z\rightarrow\infty$.
This shows that the sum in (\ref{newequation73}) is small in comparison with the preceding term for large
$z$, after letting $N\rightarrow\infty$, and completes the proof of the lemma.
\end{proof}

\begin{proof}[Proof of Proposition~\ref{prop-first-moment-is-king}]
  %Ofer1202 Maury made changes here, which are fine with me.
%  First note that $\E\Lambda_{N, z}^{\mathrm{up}} = \E\mathrm{\Lambda}_{N, z}^{\mathrm{lw}}$.
%  By an almost identical argument to that in Lemma~\ref{lem-a-priori}, we obtain that
%       \begin{equation}\label{eq-def-BRW}
%       \vartheta_{v,N} = \mbox{$\sum_{j=0}^{n-1}$} \bar\phi_{N, j, \mathcal{BD}_j(v)}\,.
%\end{equation}
We first show that, for appropriate $C$,
\begin{equation}
        \label{eq-sep162014}
        \P(G_N^{\mathrm{up}}(z)) 
        %\lesssim i
        \leq C \mathrm{e}^{-\sqrt{2\pi}z}\,.
\end{equation}

To show (\ref{eq-sep162014}), let $\bar S_{v,N,r}^{\mathrm{up}}$ be defined
as $S_{v,N,r}^{\mathrm{up}}$ was defined,  using the
BRW construction rather than MBRW:
$$\bar S_{v,N,r}^{\mathrm{up}}=\sum_{j=\ell}^{n-r} \bar \phi_{N,j,\mathcal{BD}_j(v)}$$
(compare with \eqref{eq-def-BRW}). Using $\bar S_{v,N,r}^{\mathrm{up}}$,  
define 
$\bar X_{v,N}^{\mathrm{up}}(t)$
%and $\bar G_N^{\mathrm{up}}(z)$
analogously to 
$X_{v,N}^{\mathrm{up}}(t)$.
%and $G_N^{\mathrm{up}}(z)$.
Note that, for any $t\in [0,n_{v,N}]$ and $s\in [0,n_{u,N}]$,
$$\E 
\bar S_{v,N,r}^{\mathrm{up}}(t) \bar S_{u,N,r}^{\mathrm{up}}(s)=
\E\bar S_{v,N,r}^{\mathrm{up}}(t\wedge s) \bar S_{u,N,r}^{\mathrm{up}}(t\wedge
s)\leq
\E  S_{v,N,r}^{\mathrm{up}}(t)  S_{u,N,r}^{\mathrm{up}}(s)+C',$$
where $C'$ is a universal constant following from an explicit computation 
of the covariances for BRW and MBRW.
Define 
$\bar G_N^{\mathrm{up}}(z)$
analogously to 
$G_N^{\mathrm{up}}(z)$, using $N_v+\bar X_{v,N}^{\mathrm{up}}(t)$ 
instead of 
$\bar X_{v,N}^{\mathrm{up}}(t)$, where $\{N_v\}$ are i.i.d. standard
Gaussians.
Then, by Lemma \ref{lem-slepian}
(after rescaling space),
$$\P(G_N^{\mathrm{up}}(z))\leq
\P(\bar G_N^{\mathrm{up}}(z))\,.$$
The probability on the right hand side of this display is then 
dominated by the probability that
the event in Lemma \ref{lem-a-priori} occured  in one of 
$2^{2r}$ independent BRW's of depth $n+ \sqrt{C'}$. The conclusion
\eqref{eq-sep162014} follows.
%The difference between the MBRW and the BRW does not affect this estimate; nor does the modification of the MBRW used for the
%  process $\{X_{v,N}^{\mathrm{up}}\}$.
  %July172014    
%  (The proof differs in the derivation of the analog of
%  \eqref{eq-G-N-v}. There, after partitioning $V_N$ into the squares
%$V_N^{n-j-1,i}$, $i=1,\ldots, 4^{j+1}$, sites $v_i \in V_N^{n-j-1,i}$ are selected, and
%Lemmas \ref{lem-gaussian-concentration} and  \ref{lem-ferniquecriterion} are employed to bound the maximum of the difference in value of the MBRW at every other site $v_i' \in V_N^{n-j-1,i}$ and the value at $v_i$.) 
Note that the slackness factor $z^{1/20}$ has been employed to kill
both the factor $2^{2r}$ (using $r= C_\delta \log h\leq
C_\delta \log\log z$) and
the prefactor $z$ in Lemma \ref{lem-a-priori}
(compare the definitions in \eqref{eq-big-definition} and \eqref{eq-def-G-N-prelim}).
%Note that we obtain a better bound on $\P(G_N^{\mathrm{up}}(z))$
%than the one on $\P(G_N(z))$
%in Lemma~\ref{lem-a-priori}. This is because in the definition
%of $G_{N}^{\mathrm{up}}(z)$ in \eqref{eq-big-definition}, we added
%an additional slackness term of $z^{1/20}$ compared with the definition
%of $G^N(z)$ in \eqref{eq-def-G-N-prelim} (this slackness term decreases
%the probability for an atypical profile, leading to an improved estimate).

%Choose now $h=h(z)\to\infty$ so that $C(r)=o(z)$ 
%(recall that $r<C_\delta \log h$, see Lemma \ref{lem-X-MBRW}).
Combining \eqref{eq-sep162014} with
%the equality
%  $\E\Lambda_{N, z}^{\mathrm{up}} = \E\mathrm{\Lambda}_{N, z}^{\mathrm{lw}}$
%  and
Lemma \ref{lem-Gamma-Lambda}  and the trivial
estimate
$$\P(G_{N}^{\mathrm{up}}(z))+\E\Gamma_{N,z}^{\mathrm{up}}\geq
\P(\max_{v\in \tilde V_N} \eta_{v,N}^\mathrm{up}\geq m_N+z)\,,$$
%\geq
%\P(\max_{v\in \tilde V_N} \eta_{v,N}^\mathrm{lw}\geq m_N+z)\,,$$
the upper bound
$$\limsup_{z\to \infty} \limsup_{N\to \infty}
\frac{\P(\max_{v\in \tilde V_N} \eta_{v,N}^{\mathrm{up}} \geq m_N +z )}
{\E \Lambda_{N, z}^{\mathrm{up}}} \le 1$$
follows.
The lower bound
$$\liminf_{z\to \infty} \liminf_{N\to \infty} \frac{\P(\max_{v\in \tilde V_N}
\eta_{v,N}^{\mathrm{lw}} \geq m_N +z )}
{\E \Lambda_{N, z}^{\mathrm{lw}}}\geq 1$$follows from
%the equality
%\eqref{eq-morningcoffee},
%$\E\Lambda_{N, z}^{\mathrm{up}} = \E\mathrm{\Lambda}_{N, z}^{\mathrm{lw}}$,
%see
%
%
%follows in . The lower bound follows from
%\eqref{eq-eta-up-lw},
Lemma~\ref{lem-second-moment} and
\begin{align*}
&\P(\max_{v\in \tilde V_N} \eta_{v,N}^\mathrm{lw}\geq m_N+z)
\geq
\P(\bigcup_{v\in \Xi_N} E_{v,N}^{\mathrm{lw}}(z))
\geq \frac{(\E\Lambda_{N,z}^\mathrm{lw})^2}
{\E(\Lambda_{N,z}^\mathrm{lw})^2}
\,.%\qedhere
\end{align*}
The other statements follow from
\eqref{eq-eta-up-lw} and \eqref{eq-morningcoffee}.
\end{proof}

For future reference, we note here that the same proof as for Lemma
\ref{lem-Gamma-Lambda} also implies that,
for any box $A\subset [\delta,1-\delta]^2$,
\begin{equation}
  \label{eq-290113a}
\liminf_{z\rightarrow\infty}\liminf_{N\rightarrow\infty}
 \frac{\E \Lambda_{N,z}^{\mathrm{lw}}(A)}  {ze^{-\sqrt{2\pi} z}}
  \gtrsim |A|\,,
  %\gtrsim
  %|A|ze^{-\sqrt{2\pi} z}
%  \gtrsim_{|A|} ze^{-\sqrt{2\pi}z}\,,
\end{equation}
where $|A|$ denotes the area of $A$.

\subsection{Asymptotics for the enumeration of large clusters and completion
of the proof
of Proposition \ref{prop-limiting-tail-gff-delta}.}
This subsection is devoted to demonstrating Proposition \ref{prop-asymptotic-first-moment},
which gives the asymptotic behavior of $\E\Lambda_{N, z}^{\mathrm{lw}}$ for large $N$ and $z$.
% Maury - I reworded and strengthened this first sentence.
\begin{prop}\label{prop-asymptotic-first-moment}
There exists a constant $\alpha^*_\delta>0$ such that
$$\lim_{z\to \infty}\limsup_{N\to \infty}
\frac{\E \Lambda^{\mathrm{lw}}_{N, z}}{\alpha^*_\delta
z \mathrm{e}^{-\sqrt{2\pi}z}} = \lim_{z\to \infty}\liminf_{N\to \infty}
\frac{\E \Lambda^{\mathrm{lw}}_{N, z}}{\alpha^*_\delta z \mathrm{e}^{-\sqrt{2\pi}z}} = 1\,.$$
Furthermore, there exist continuous functions
$\psi_\delta: [\delta, 1-\delta]^2 \mapsto (0, \infty)$, with
$\int_{[\delta, 1-\delta]^2} \psi_\delta(x) dx = 1$, and
a continuous function $\psi: (0, 1)^2 \mapsto (0, \infty)$
such that
$\psi_\delta(x) \to \psi(x)$ uniformly in $x$ on closed sets, as $\delta \searrow 0$, and such that,
for any open box $A\subseteq [\delta, 1-\delta]^2$,
$$\lim_{z\to \infty}\limsup_{N\to \infty}
\frac{\E \Lambda^{\mathrm{lw}}_{N, z}(A)}{\alpha^*_\delta
z \mathrm{e}^{-\sqrt{2\pi}z}} = \lim_{z\to \infty}
\liminf_{N\to \infty} \frac{\E \Lambda^{\mathrm{lw}}_{N, z}(A)}{\alpha^*_\delta
z \mathrm{e}^{-\sqrt{2\pi}z}} = \int_{A} \psi_\delta(x) dx\,.$$
\end{prop}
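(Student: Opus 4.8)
The plan is to compute $\E\Lambda_{N,z}^{\mathrm{lw}}(A)$ essentially exactly, via a change of measure and the ballot theorem, letting $N\to\infty$ first and $z\to\infty$ afterwards. \emph{Step 1 (a single-cluster formula).} We write $\E\Lambda_{N,z}^{\mathrm{lw}}(A)=\sum_{v\in\Xi_N\cap NA}\P(E_{v,N}^{\mathrm{lw}}(z))$, reducing the problem to the asymptotics of $\P(E_{v,N}^{\mathrm{lw}}(z))$ for a single $v$. For $v\in\Xi_N$ one has $c_v=v$ and $\max_{u\in\tilde B_v}\eta_{u,N}^{\mathrm{lw}}=X_{v,N}^{\mathrm{lw}}(n_{v,N})+M_v$, with $M_v:=\max_{u\in\tilde B_v}Y_{u,N}$ independent of the backbone $\{X_{v,N}^{\mathrm{lw}}(t):0\le t\le n_{v,N}\}$, which is a Brownian motion of variance rate $\gamma^2$. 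The barrier defining $E_{v,N}^{\mathrm{lw}}(z)$ is exactly linear, so after the change of measure analogous to \eqref{eq-change-of-measure} the centred process $\bar X_{v,N}(t)=X_{v,N}^{\mathrm{lw}}(t)-\tfrac{m_N}{n}t$ becomes a Brownian motion required to stay below the \emph{flat} level $z$. Conditioning on $M_v$, integrating over $\bar X_{v,N}(n_{v,N})$ using the reflection identity \eqref{eq-reflection-principle}, and invoking the ballot-type asymptotics of \cite[Theorem 1]{ABR08} (with Lemma~\ref{lem-1DRW} controlling the passage from the backbone random walk to Brownian motion), one obtains
\[
\P(E_{v,N}^{\mathrm{lw}}(z))=(1+o_N(1))\,z\,\mathrm e^{-\sqrt{2\pi}z}\,4^{-n}\,4^{\ell}\,\kappa_\delta(v/N)\,\E\big[\Theta_{\tilde\ell,h}\big(M_v-\ell m_N/n\big)\big],
\]
where $\kappa_\delta(\cdot)$ is a continuous function carrying the position dependence of $n_{v,N}$ and $a_{v,N}$ (via \cite[Proposition 8.1.4, Lemma 4.6.2]{LL10}, exactly as in Lemma~\ref{lem-green-functions-1} and \eqref{eq-a-N-v-converge}), and $\Theta_{\tilde\ell,h}(\lambda)=\int_0^{(\lambda+O(1))_+}u\,\mathrm e^{\sqrt{2\pi}u}\,du$ is the ballot weight produced by the endpoint integration. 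The mechanism is that on $E_{v,N}^{\mathrm{lw}}(z)$ the coarse backbone rises to $\approx m_N+z-\ell m_N/n$ and the fine field on $\tilde B_v$ must supply the remaining $\approx\ell m_N/n$, an atypically large value for $M_v$.

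\emph{Step 2 (the limit $N\to\infty$ with $z$ fixed).} Here $L,\tilde L,h$ are fixed: the law of $\{Y_{u,N}:u\in B\}$ is that of a GFF on a fixed box, hence independent of $N$; the backbone random walk converges to Brownian motion, so the ballot asymptotics stabilise; $n_{v,N}-(n-\ell)$ and $a_{v,N}$ converge to continuous functions of $v/N$; and $|\Xi_N\cap NA|=(1+o_N(1))\,c_\delta\,|A|\,(N/\tilde L)^2$ for every box $A\subseteq[\delta,1-\delta]^2$. Since $4^{-n}4^{\ell}(N/\tilde L)^2=h^2$, summing the display of Step~1 over $v$ and letting $N\to\infty$ gives $\E\Lambda_{N,z}^{\mathrm{lw}}(A)\to\alpha_\delta(z)\int_A\psi_{\delta,z}(x)\,dx$ for an explicit $\alpha_\delta(z)>0$ and a continuous density $\psi_{\delta,z}$ on $[\delta,1-\delta]^2$ of total mass $1$; in particular the $\limsup_N$ and $\liminf_N$ in the proposition agree, and equal $\alpha_\delta(z)/(z\mathrm e^{-\sqrt{2\pi}z})$ and $\int_A\psi_{\delta,z}$.

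\emph{Step 3 (the limit $z\to\infty$): the main obstacle.} It remains to show that $z^{-1}\mathrm e^{\sqrt{2\pi}z}\alpha_\delta(z)$ converges to a positive limit $\alpha^*_\delta$ and that $\psi_{\delta,z}\to\psi_\delta$ uniformly on closed sets. By the a priori estimates $\E\Lambda_{N,z}^{\mathrm{lw}}\asymp z\mathrm e^{-\sqrt{2\pi}z}$ and $\E\Lambda_{N,z}^{\mathrm{lw}}(A)\asymp|A|z\mathrm e^{-\sqrt{2\pi}z}$ (\eqref{eq-Lambda-first-moment}, \eqref{eq-290113a}, and the first-moment upper bound from the proof of Lemma~\ref{lem-prelim-tail}), these quantities are bounded above and below by positive constants, so only convergence is at issue. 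Substituting the single-cluster formula, this reduces to the convergence, as $\tilde\ell\to\infty$ and uniformly over the admissible $(L,\tilde L,h)$, of a suitably normalised version of $\E[\Theta_{\tilde\ell,h}(M_v-\ell m_N/n)]$. Decompose $M_v=G+\widehat M_v$, where $G$ is the component of $Y$ common to all of $\tilde B_v$ (a centred Gaussian of variance of order $\log h$) and $\widehat M_v$ is distributed essentially as the maximum of a GFF on a box of side $\tilde L$; the Gaussian integration over $G$ is explicit and absorbs the dependence on $h$, so that matters reduce to the convergence of a normalised functional of $\widehat M_v$ of the form $(\text{explicit})\cdot\E[(\widehat M_v-m_{\tilde L}+O(1))_{+}\,\mathrm e^{\sqrt{2\pi}(\widehat M_v-m_{\tilde L})}]$. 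Its boundedness, uniformly in $\tilde\ell$, follows from the refined tail estimates — the upper bound \eqref{eq-large-tail} of Lemma~\ref{lem-prelim-tail} at scale $\tilde L$, which supplies the Gaussian factor $\mathrm e^{-cs^2/\tilde\ell}$ rendering the integral finite, together with the lower bound \eqref{eq-right-tail} and a truncated second-moment estimate on $\widehat M_v$ parallel to Lemma~\ref{lem-second-moment} — while its convergence is obtained by a telescoping (renormalisation) argument: splitting the fine field $\widehat M_v$ once more into a coarser and a finer piece and comparing shows that the limiting value does not depend on the admissible choice of $(L,\tilde L,h)$, hence exists. This is the step that requires the bulk of the work, being exactly the point at which the asymptotics of the GFF maximum at the fine scale must be reconciled with those at the coarse scale. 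Finally $\psi_{\delta,z}\to\psi_\delta$ because the position dependence of $\psi_{\delta,z}$ enters only through the (renormalised) Green's-function corrections, which stabilise as $z\to\infty$.

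\emph{Step 4 (the limit $\delta\to0$).} As $\delta\to0$ the domain-dependent corrections ($\kappa_\delta$ and the limit of $a_{v,N}$) converge, on compact subsets of $(0,1)^2$, to the corresponding quantities for the full square $[0,1]^2$ by continuity of the Brownian exit measures in the domain, while the removed boundary region has Lebesgue density tending to $0$; defining $\psi$ as this limit gives $\psi_{\delta}\to\psi$ uniformly on closed subsets of $(0,1)^2$, completing the proof.
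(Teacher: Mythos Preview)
Your Steps~1, 2 and 4 are essentially correct and parallel the paper's Lemmas~\ref{lem-Lambda-J-k} and~\ref{lem-Lambda-z}: the single-cluster formula via change of measure and reflection, the $N\to\infty$ limit giving a quantity $\Lambda^*_z$ independent of $N$ and of the particular choice of $(L,\tilde L,h)$, and the $\delta\searrow 0$ limit via continuity of exit measures.

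Step~3, however, has a genuine gap. You reduce convergence of $z^{-1}\mathrm e^{\sqrt{2\pi}z}\alpha_\delta(z)$ to the convergence, as $\tilde\ell\to\infty$, of a normalised version of a functional of the form $\E\big[(\widehat M_v-m_{\tilde L})_+\,\mathrm e^{\sqrt{2\pi}(\widehat M_v-m_{\tilde L})}\big]$. But this is essentially the original problem restated at scale $\tilde L$: by integration by parts the functional is governed by the tail $s\mapsto\P(\widehat M_v-m_{\tilde L}>s)$, whose precise asymptotic constant is exactly what Proposition~\ref{prop-limiting-tail-gff-delta} (equivalently, the present proposition via Proposition~\ref{prop-gff-first-moment-dictates}) is meant to establish. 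Your proposed ``telescoping/renormalisation'' step --- splitting $\widehat M_v$ again into coarse and fine pieces --- merely iterates this reduction; and the inference ``the limiting value does not depend on the admissible choice of $(L,\tilde L,h)$, \emph{hence exists}'' is a non sequitur: independence from the auxiliary parameters (which is Lemma~\ref{lem-Lambda-z}) does not by itself give convergence in $z$.

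The paper closes this loop by a different, and much shorter, device that avoids computing the fine-field functional altogether. For $z_1<z_2$ one chooses a \emph{single} pair $(L,\tilde L)$ satisfying \eqref{eq-L-tilde-L} for both values (e.g.\ $\tilde L=2^{z_2^4}$, $h=\log z_1$), and compares $\E\Lambda_{N,z_1}$ and $\E\Lambda_{N,z_2}$ directly. After the shift $y\mapsto y+z_i$ in the integral representation \eqref{eq-def-lambda}, the fine-field probability $\P(\max_{u\in\tilde B_v}Y_{u,N}\ge\ell m_N/n - y)$ is \emph{identical} in the two expressions and cancels in the ratio; only the ballot density $\nu_{v,N}$ remains, and by \eqref{eq-mu-N-v} its ratio is $(z_1/z_2)\,\mathrm e^{-\sqrt{2\pi}(z_1-z_2)}(1+O(z_2^{-3/5}))$ uniformly on the window $J_\ell$. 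Combined with Lemma~\ref{lem-Lambda-J-k} and Lemma~\ref{lem-Lambda-z}, this shows that $\Lambda^*_z/(z\,\mathrm e^{-\sqrt{2\pi}z})$ is Cauchy as $z\to\infty$, hence converges. This cancellation is the missing idea in your Step~3.
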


\noindent Together, Propositions~\ref{prop-gff-first-moment-dictates}
and \ref{prop-asymptotic-first-moment} imply
Proposition~\ref{prop-limiting-tail-gff-delta} for open boxes $A\subseteq [\delta, 1-\delta]^2$.
{Using 
Lemma \ref{lem-prelim-tail},
the statement extends to open sets  $A\subset [\delta, 1-\delta]^2$.}

To simplify
notation, we drop the $\mathrm{lw}$ superscript in the rest of the subsection.
For $v\in \Xi_{N}$, let $\nu_{v, N}(\cdot)$ be the
density function (of a subprobability measure on $\R$)
such that, for all $I\subseteq \mathbb{R}$,
$$\int_I\nu_{v, N}(y) dy = \P(X_{v,N}(t) \leq z + \frac{m_N}{n}t \mbox{ for all } 0\leq t\leq n_{v,N}; X_{v,N}(n_{v,N}) - (n-\ell) m_N/n \in I)\,.$$
Clearly, by (\ref{eqnewMB3}),
%July172014
$$\P(E_{v, N}(z)) = \int_{-\infty}^z\nu_{v, N}(y) 
\P(\max_{u\in \tilde B_{v,N}}Y_{u,N}\geq \ell m_N/n + z - y) dy\,.$$
Recall the variables $\ell,\tilde \ell$ defined at the beginning
of the section and, for a given interval $J$, define
\begin{equation}\label{eq-def-lambda}
\lambda_{v, N, z , J} = \int_{J}\nu_{v, N}(y) 
\P(\max_{u\in \tilde B_{v,N}} Y_{u,N}\geq \ell m_N/n + z - y) dy\,.
\end{equation}
% We will need the following estimate. In what follows, the exponent $2/5$ is somewhat arbitrary - we only use

Set $J_{\ell} = [-\ell, -\ell^{2/5}]$.  The following estimate shows that the main contribution to
$\E\Lambda_{N,z}(A)$ is from values $y\in J_{\ell}$, as in (\ref{eq-def-lambda}).
(The choice of the exponent $2/5$ here is somewhat arbitrary; only $0<2/5<1/2$ is used.)
% Maury - I reworded the above several lines.
\begin{lemma}\label{lem-Lambda-J-k}
For any
%July172014
box $A\subseteq [\delta, 1-\delta]^2$ and any sequences $x_{v,N}$ such that 
$|x_{v,N}| \lesssim \ell^{2/5}$,
$$\lim_{z\to \infty} \liminf_{N\to \infty} \frac{\sum_{v\in \Xi_N \cap N A} \lambda_{v, N, z, x_{v, N} +J_\ell}}{\E \Lambda_{N, z}(A)} = 1\,.$$
\end{lemma}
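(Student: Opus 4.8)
Throughout, $z$ is fixed while $N\to\infty$, and $\ell,\tilde\ell$ are the ($N$‑independent) functions of $z$ fixed at the start of the section, so that $\ell\ge\tilde\ell\ge z^4$, $h=L/\tilde L\le\log z$, and $|x_{v,N}|\lesssim\ell^{2/5}$. Since $\E\Lambda_{N,z}(A)=\sum_{v\in\Xi_N\cap NA}\lambda_{v,N,z,(-\infty,z]}$, since $\lambda_{v,N,z,x_{v,N}+J_\ell}=\lambda_{v,N,z,(-\infty,z]}-\lambda_{v,N,z,(x_{v,N}+J_\ell)^c}$, and since the ratio in the lemma is always $\le1$, it suffices to prove that for some $\epsilon_z\searrow0$,
\[
\sum_{v\in\Xi_N\cap NA}\lambda_{v,N,z,(x_{v,N}+J_\ell)^c}\ \lesssim\ \epsilon_z\,|A|\,z\,e^{-\sqrt{2\pi}z}\qquad\text{for all large }N .
\]
Indeed, \eqref{eq-290113a} gives $\E\Lambda_{N,z}(A)\gtrsim|A|ze^{-\sqrt{2\pi}z}$, so dividing through and letting $N\to\infty$ then $z\to\infty$ yields the claim. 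Inside the support $\{y\le z+O(1)\}$ of $\nu_{v,N}$, the complement $(x_{v,N}+J_\ell)^c$ splits into a \emph{near} part ($y>x_{v,N}-\ell^{2/5}$, i.e.\ $w:=z-y\lesssim\ell^{2/5}$) and a \emph{far} part ($y<x_{v,N}-\ell$, i.e.\ $w\gtrsim\ell$).

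\textbf{Two uniform bounds.}
First I would obtain a pointwise bound on $\nu_{v,N}$. Writing $\nu_{v,N}$ through the change of measure \eqref{eq-change-of-measure}, under $\mathbb{Q}$ the process $\bar X_{v,N}$ is a Brownian motion of variance rate $\gamma^2$; the reflection principle and $e^{-a}-e^{-b}\le b-a$ bound the $\mathbb{Q}$‑density of $\{\bar X_{v,N}\le z\text{ on }[0,n_{v,N}],\ \bar X_{v,N}(n_{v,N})\in\cdot\}$ at the point $\tilde y=y+O(1)$ by $\lesssim z(z-\tilde y)/n^{3/2}$, while $d\mathbb{P}/d\mathbb{Q}=\exp(-\tfrac{m_N}{n\gamma^2}\tilde y-\tfrac{m_N^2 n_{v,N}}{2\gamma^2 n^2})$. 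Because $\tfrac{m_N}{n\gamma^2}=\tfrac{\alpha_n}{\gamma}=\sqrt{2\pi}(1-\tfrac38\tfrac{\log\log N}{\log N})\le\sqrt{2\pi}$, the factor $e^{-\frac{m_N}{n\gamma^2}\tilde y}$ is $\le e^{-\sqrt{2\pi}\tilde y}$ when $\tilde y\le0$ and $\le Ce^{-\sqrt{2\pi}\tilde y}$ when $0<\tilde y\le z$ for $N$ large; hence for all $y\le z$ and $N$ large,
\[
\nu_{v,N}(y)\ \lesssim\ \frac{z\,((z-y)\vee 1)}{n^{3/2}}\,e^{-\sqrt{2\pi}y}\,e^{-m_N^2 n_{v,N}/(2\gamma^2 n^2)}\,,
\]
and a direct computation with $m_N^2/(2\gamma^2 n)=2\log N-\tfrac32\log\log N+o(1)$, $n=\log_2N$, $n_{v,N}=n-\ell+O(1)$ gives $n^{-3/2}e^{-m_N^2 n_{v,N}/(2\gamma^2 n^2)}\asymp L^2/N^2=4^{\ell-n}\asymp 4^{-n^*}$, where $n^*=\max_v n_{v,N}$. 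Second, I apply \eqref{eq-tilde-B-1} to $\tilde B_v$ with $\lambda_1=z-y$: since $\ell m_N/n=m_\ell+\bar c\log\ell+o(1)$ already sits a height $\bar c\log\ell\to\infty$ above $m_\ell$, \eqref{eq-tilde-B-1} (extended to the $O(1)$ window $w\in[-O(1),0]$ at the edge of $\mathrm{supp}\,\nu_{v,N}$ via \eqref{eq-right-tail} applied in $B$) yields, for all $w=z-y\ge-O(1)$,
\[
\P\big(\max_{u\in\tilde B_v}Y_{u,N}\ge \ell m_N/n+w\big)\ \lesssim\ (\log z)^C\,\ell^{-3/2}\,\big((w\vee1)+\log\ell\big)\,e^{-\sqrt{2\pi}w}\,e^{-w^2/(C\ell)}\,.
\]

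\textbf{Integrating and summing.}
Multiplying the two displays, the exponentials $e^{-\sqrt{2\pi}y}$ and $e^{-\sqrt{2\pi}w}$ combine into $e^{-\sqrt{2\pi}z}$, so, writing $\mathcal I$ for the set of $w\ge-O(1)$ with $z-w\notin x_{v,N}+J_\ell$,
\[
\lambda_{v,N,z,(x_{v,N}+J_\ell)^c}\ \lesssim\ (\log z)^C\,\ell^{-3/2}\,4^{-n^*}\,z\,e^{-\sqrt{2\pi}z}\int_{\mathcal I}(w\vee1)\big((w\vee1)+\log\ell\big)e^{-w^2/(C\ell)}\,dw\,.
\]
On the far range $w\gtrsim\ell$ the integral is $\le\ell^{O(1)}e^{-\ell/C'}$, hence negligible; on the near range $-O(1)\le w\lesssim\ell^{2/5}$ (using $|x_{v,N}|\lesssim\ell^{2/5}$ and $z\le\ell^{1/4}$) it is $\lesssim(z+\ell^{2/5})^3\asymp\ell^{6/5}$. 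Thus $\lambda_{v,N,z,(x_{v,N}+J_\ell)^c}\lesssim(\log z)^C\ell^{-3/10}4^{-n^*}z e^{-\sqrt{2\pi}z}$. Summing over the $\asymp|A|N^2/\tilde L^2$ points of $\Xi_N\cap NA$, and using $4^{-n^*}\asymp L^2/N^2$ and $h=L/\tilde L\le\log z$,
\[
\sum_{v\in\Xi_N\cap NA}\lambda_{v,N,z,(x_{v,N}+J_\ell)^c}\ \lesssim\ |A|\,h^2(\log z)^C\,\ell^{-3/10}\,z\,e^{-\sqrt{2\pi}z}\ \le\ |A|\,(\log z)^{C+2}\,z^{-6/5}\,z\,e^{-\sqrt{2\pi}z}\,,
\]
where $\ell\ge z^4$ was used. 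This is the required bound with $\epsilon_z=(\log z)^{C+2}z^{-6/5}\searrow0$, and the proof concludes as in the reduction step.

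\textbf{Main difficulty.}
The one place that needs care is the uniform bound on $\nu_{v,N}$: one must verify that the exponential tilt in \eqref{eq-change-of-measure} contributes exactly the factor $e^{-m_N^2 n_{v,N}/(2\gamma^2 n^2)}\asymp4^{-n^*}$, so that it cancels the cardinality $|\Xi_N|\asymp4^{n-\tilde\ell}$ and leaves only the benign factor $h^2\le(\log z)^2$; the favorable sign $\alpha_n/\gamma\le\sqrt{2\pi}$ is precisely what makes this estimate hold uniformly down to arbitrarily negative $y$, after which the potentially awkward very‑negative‑$y$ regime is controlled at no cost by the Gaussian slackness $e^{-w^2/(C\ell)}$ of \eqref{eq-tilde-B-1}, which defeats the linear growth of $\nu_{v,N}$ in $z-y$.
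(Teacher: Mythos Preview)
Your proposal is correct and follows essentially the same approach as the paper: both bound $\nu_{v,N}$ via the change of measure \eqref{eq-change-of-measure} and the reflection principle, bound the $Y$-tail via \eqref{eq-tilde-B-1}, split the complement of $x_{v,N}+J_\ell$ into a near range (controlled by the factor $\ell^{-3/2}$ against the short integration window) and a far range (killed by the Gaussian slackness $e^{-w^2/(C\ell)}$), and then compare to $\E\Lambda_{N,z}(A)$ through \eqref{eq-290113a}, arriving at the same error $\ell^{-3/10}(\log z)^C$. Your write-up is somewhat more explicit than the paper's in tracking the tilt factor $e^{-m_N^2 n_{v,N}/(2\gamma^2 n^2)}\asymp 4^{-n^*}$ and in carrying out the summation over $\Xi_N\cap NA$ using $|\Xi_N\cap NA|\asymp |A|N^2/\tilde L^2$ and $h\le\log z$, but the underlying argument is the same.
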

\begin{proof}
Note that, by containment, the above ratio is always at most $1$.  
We prove the lemma for the case when $x_{v,N} = 0$; the general case follows in the same manner.
Application of the reflection principle \eqref{eq-reflection-principle} to the
Brownian motion with drift,
$\bar X_{v,N}(\cdot)=X_{v,N}(\cdot)-m_Nt/n$, together with the change of measure
that removes the drift $m_N t/n$,  implies that
$$\nu_{v,N}(y)\lesssim e^{-\sqrt{2\pi} y}4^{-n_{v,N}} z\ell $$
%July172014
for $y\le -\ell$, over the given range $z\in (0,\ell)$ (which implies 
$\ell+z\asymp
\ell$).
% Maury - I reworded the previous several lines.
Together with Lemma~\ref{lem-decorelate-tilde-B}, this implies
the  crude bound
$$\int_{-\infty}^{-\ell}  \nu_{v, N}(y) \P(\max_{u\in \tilde B_{v, N}} Y_{u,N}\geq \ell m_N/n + z - y) dy \lesssim 4^{-n_{v,N}} \mathrm{e}^{-C^{-1} \ell }$$
for an absolute constant $C>0$. Similarly,
for $y\leq z$ (and therefore, for $z-y\geq 0$),
application of the reflection principle and
Lemma~\ref{lem-decorelate-tilde-B} again implies that
$$\int_{-\ell^{2/5}}^{z}  \nu_{v, N}(y) \P(\max_{u\in \tilde B_{v, N}} Y_{u,N} \geq \ell m_N/n + z - y) dy \lesssim 4^{-n_{v,N}} \ell^{-3/10} (\log z)^C z \mathrm{e}^{-\sqrt{2\pi} z}\,.$$
Together with
\eqref{eq-290113a},
this implies that $\E \Lambda_{N, z}(A) - \sum_{v\in \Xi_N \cap NA} \lambda_{v, N, z, J_\ell} \lesssim \ell^{-3/10} (\log z)^C \E \Lambda_{N, z}(A)$, as needed.
\end{proof}
\begin{lemma}\label{lem-Lambda-z}
%July172014
There exists $\Lambda^*_z>0$ depending only on $z$
such that, for all functions $L$ and $\tilde L$ of $z$
satisfying \eqref{eq-L-tilde-L},
% Maury - Since L and L~ are now defined as functions of z, I think this is clearer.
\begin{equation}
        \label{eq-092114a}
        \lim_{z\to \infty}\limsup_{N\to \infty} \frac{\E \Lambda_{N, z}}{\Lambda^*_z} = \lim_{z\to \infty}\liminf_{N\to \infty} \frac{\E \Lambda_{N, z}}{\Lambda^*_z} = 1\,.
\end{equation}
Furthermore, there exist continuous functions $\psi_\delta
: [\delta, 1-\delta]^2 \mapsto (0, \infty)$, with
$\int_{[\delta, 1-\delta]^2} \psi_\delta(x) dx =  1$, and
a continuous function $\psi: (0, 1)^2 \mapsto (0, \infty)$
such that
$\psi_\delta \to \psi$ uniformly in $x$ on closed subsets of $(0,1)^2$, as $\delta \searrow 0$,
%and
%a continuous function $\psi: [0, 1]^2 \mapsto (0, \infty)$,
and such that, for any box $A\subseteq [\delta, 1-\delta]^2$,
\begin{equation}
        \label{eq-092114b}
\lim_{z\to \infty}\limsup_{N\to \infty} \frac{\E \Lambda_{N, z}(A)}{\Lambda^*_z} = \lim_{z\to \infty}\liminf_{N\to \infty} \frac{\E \Lambda_{N, z}(A)}{\Lambda^*_z} =\int_{A} \psi_\delta(x) dx \,.
\end{equation}
\end{lemma}
\begin{proof}
        Applying Lemma \ref{lem-prelim-tail} and 
Proposition~\ref{prop-first-moment-is-king}, the limiting behavior of 
$\E\Lambda_{N,z}$ is the same for different choices of
$\tilde L(z)$. Similarly,  employing the analog of 
Proposition~\ref{prop-first-moment-is-king} with $\E\Lambda_{N,z}(A)$ instead
of $\E\Lambda_{N,z}$, the limiting behavior of $\E\Lambda_{N,z}(A)$ 
is also the same for different choices of 
$\tilde L(z)$.
%By Proposition~\ref{prop-first-moment-is-king},
%if the claim holds for a particular choice of $\tilde L(z) \geq 2^{z^4}$,
%then it must hold for all choices.
So, it suffices to consider the case when
$\tilde L(z) = 2^{z^4}$.

Write
$x_{v,N} = m_N(1 - n_{v,N}/n) - \gamma \sqrt{2\pi} \ell$.
It follows from \eqref{eq-friday3}, \eqref{eq-def-X-up-lw} and
\eqref{eq-a-N-v} that $x_{v,N} = O(1)$.
(Recall that $\gamma=\sqrt{2\log 2/\pi}$ and
$\var(S_{v,N,r}^{\mathrm{lw}})=\gamma^2(n-\ell-r+1)$.)
For $\hat v\in [-L/2, L/2]^2$,
set $\Xi_{\hat v, N} = \{v\in \Xi_N: v - c_{B_v} = \hat v\}$, where
$c_{B_v}$ is the center of the box $B_v\in\mathcal{B}_N$ containing $v$.
%July172014
(In words, 
$\Xi_{\hat v, N}$ consists of the centers of the boxes in
$\tilde {\mathcal{B}}_N$ that are translated by $\hat v$ from the centers
of the 
corresponding larger boxes
in $\mathcal{B}_N$. When $\Xi_{\hat v,N}$ is not empty,
$|\Xi_{\hat v,N}|$ is approximately
$(N/L)^2$.
Note that there are about $h^2 = (L/\tilde L)^2$ values
of $\hat v$ with  $\Xi_{\hat v, N} \neq  \emptyset$.)

Define
\begin{align*}
\Lambda_{\hat v, N, z, J_\ell}& = \sum_{v\in \Xi_{\hat v, N}}  \int_{J_\ell - x_{v,N}}\nu_{v, N}(y) \P(\max_{u\in \tilde B_v} Y_{u,N}\geq \ell m_N/n + z - y) dy\,,\\
\Lambda_{\hat v, N, z,  J_\ell}(A)& = \sum_{v\in \Xi_{\hat v, N}\cap A N}  \int_{J_\ell - x_{v,N}}\nu_{v, N}(y) \P(\max_{u\in \tilde B_v} Y_{u,N}\geq \ell m_N/n + z - y) dy\,,
\end{align*}
where $J_\ell$ is as in Lemma
\ref{lem-Lambda-J-k}.
%In light of Lemma~\ref{lem-Lambda-J-k},
%it suffices to 
We will show that, for arbitrary
$\hat v\in [-L/2, L/2]^2\cap \Z^2$ with
$\Xi_{\hat v, N} \neq \emptyset$, that
there exists $\Lambda^*_{\hat v, z}$ satisfying
\begin{eqnarray}
\limsup_{N\to \infty} \frac{\E \Lambda_{\hat v, N, z, J_\ell}}
{\Lambda^*_{\hat v, z}} &=
& 1+O(z^{-1})=
%\,,\quad
\liminf_{N\to \infty}
\frac{\E \Lambda_{\hat v, N, z,  J_\ell}}{\Lambda^*_{\hat v, z}}\,,
%= 1+O(z^{-1})\,,
\label{eq-to-show-1}\\
\limsup_{N\to \infty}
\frac{\E \Lambda_{\hat v, N, z, J_\ell}(A)}{\Lambda^*_{\hat v, z}} &=
& (1+O(z^{-1}))
\int_{A} \psi_{\delta}(x) dx
%\,,
%\nonumber \\
=\liminf_{N\to \infty} \frac{\E \Lambda_{\hat v, N, z, J_\ell}(A)}
{\Lambda^*_{\hat v, z}}\,,
%& = & (1+O(z^{-1}))
%\int_{A} \psi_{\delta}(x) dx\,,
\label{eq-to-show-2}
\end{eqnarray}
where $\psi_{\delta}: [\delta, 1-\delta]^2\mapsto (0, \infty)$ is a continuous function with $\int_{[\delta, 1-\delta]^2} \psi_{\delta}(x) = 1$, 
and $\psi_{\delta}$ converges to a continuous function as $\delta \searrow 0$. 
From Lemma \ref{lem-Lambda-J-k}, \eqref{eq-092114a} and
\eqref{eq-092114b} will follow from 
\eqref{eq-to-show-1}
and
\eqref{eq-to-show-2}
after summing over the $h^2$ values of $\hat v$.
Note that, crucially, the function $\psi_{\delta}$ is required to be \emph{independent}
of the choice of $(\hat v, z, L, \tilde L)$.
It is clear that, for all $v\in \Xi_{\hat v, N}$, the distribution of
%$\max_{u\in \tilde B_v} Y_{u,N}$ has the same
%distribution as a
%random variable $M_{\hat v, z}$ whose distribution depends only on $\hat v$,
%July172014
$M_{\hat v, v}:=\max_{u\in \tilde B_v} Y_{u,N}$ depends only on 
$\hat v$, $L$, $\tilde L$.
%and $z$.
% Maury - I changed the wording in the last several lines.

By \eqref{eq-change-of-measure} and the
reflection principle,
%July172014 killed 2 in 2z
\begin{equation}\label{eq-mu-N-v}
  \nu_{v, N}(y + x_{v,N}) =  4^{-n_{v,N}} \mathrm{e}^{-\sqrt{2\pi} (y+x_{v,N})} \frac{z(z-y-x_{v,N})}{\sqrt{2\pi} \gamma} (1+ O(\ell^3/n)) \one_{\{x_{v,N} + y\leq z\}}\,.
\end{equation}
Therefore,
%July172014
\begin{align*}
\Lambda_{\hat v, N, z, J_\ell} &= \sum_{v\in \Xi_{\hat v, N}} \int_{J_\ell}\nu_{v, N}(y + x_{v,N}) \P(M_{\hat v, v} \geq \sqrt{2\pi} \gamma \ell + z - y) dy\\
&= 
(1+ O(\ell^3/n))
\int_{J_\ell} \sum_{v\in \Xi_{\hat v, N}} 4^{-n_{v,N}} \frac{z(z-y-x_{v,N})}{\sqrt{2\pi} \gamma \mathrm{e}^{\sqrt{2\pi} (y+x_{v,N})} }   \P(M_{\hat v, v} \geq \sqrt{2\pi} \gamma \ell + z - y) dy 
%(1+ O(\ell^3/n))
\end{align*}
and
%July172014
$$\Lambda_{\hat v, N, z, J_\ell} (A)  = 
(1+ O(\ell^3/n))
\int_{J_\ell} \sum_{v\in \Xi_{\hat v, N}\cap N A} 4^{-n_{v,N}} \frac{z(z-y-x_{v,N})}{\sqrt{2\pi} \gamma \mathrm{e}^{\sqrt{2\pi} (y+x_{v,N})} }   \P(M_{\hat v, v} \geq \sqrt{2\pi} \gamma \ell + z - y) dy \,.$$
Note that $x_{v,N}$ is a linear continuous function and $n_{v,N}$ is a quadratic continuous function of $a_{v, N}$
(as defined in \eqref{eq-def-X-up-lw}).
% Maury - I made a correction here.
For any point $v^*$ in the unit square, denote by $ v^*_N$ the vertex in $\Xi_{\hat v, N}$ that is closest to $N v^*$.
% Maury - I reworded a little bit here.
By \eqref{eq-a-N-v-converge}, for any $v^*\in [\delta, 1-\delta]^2$, the limit
% (denoting by $ v^*_N$ the vertex in $\Xi_{\hat v, N}$ that is closest to $N v^*$)
$$\phi_{\hat v}(v^*) =  \lim_{N\to \infty } 4^{n - \ell} 4^{-n_{v^*_N,N}} \mathrm{e}^{-\sqrt{2\pi} x_{v^*_N,N}} $$
exists (up to an $O(1/L)$ possible error) and $\phi_{\hat v}(v^*)$ is a continuous function on $[0, 1]^2$.
% since $a_{v, N}$ varies smoothly in $v\in \Xi_{\hat v, N}$.
In fact, there exist constants $C,C_1,C_2$ such that
$$\phi_{\hat{v}}(v^*)=Ce^{-C_1g_1(v^*)-C_2 g_2(\hat{v})}\,.$$
Thus, 
%it follows from the definitions of $a_{v, N}$, $n_{v, N}$ and $x_{v, N}$ that
\begin{equation}\label{eq-independent-hat-v}
\frac{\phi_{\hat v}(v^*)}{\phi_{\hat v}(u^*)} \mbox{ is a function depending only on $ (v^*, u^*)$}.
% up to $(1+O(1/L))$ multiplicative error}\,.
\end{equation}

By the bounded convergence theorem,
\begin{align*}
\limsup_{N\to \infty}\Lambda_{\hat v, N, z,  J_\ell} = (1+O(z^{-1}))
\left(\int_{J_\ell} \frac{z(z-y)}{\sqrt{2\pi} \gamma \mathrm{e}^{\sqrt{2\pi} y} }    \P(M_{\hat v, v} \geq \sqrt{2\pi} \gamma \ell + z - y) dy\right) 
h^2 \int_{[\delta, 1-\delta]^2} \phi_{\hat v}(x) dx\,,
\end{align*}
with a similar estimate holding when $\limsup$ is replaced by $\liminf$.
Since $\P( M_{\hat v, v} \geq \sqrt{2\pi} \gamma \ell + z - y)$
is a function of just $(\hat v, z, y)$, this
completes the proof of \eqref{eq-to-show-1}. Similarly,
\begin{align*}
\limsup_{N\to \infty}\Lambda_{\hat v, N, z, J_\ell} (A)= (1+O(z^{-1}))
\left(\int_{J_\ell} \frac{z(z-y)}{\sqrt{2\pi} \gamma \mathrm{e}^{\sqrt{2\pi} y} }    \P(M_{\hat v, v} \geq \sqrt{2\pi} \gamma \ell + z - y) dy \right) h^2 \int_{A} \phi_{\hat v}(x) dx\,,
\end{align*}
with a similar estimate holding when $\limsup$ is replaced by $\liminf$.
Setting $$\psi_{\delta, \hat v}(x) = \phi_{\hat v}(x)/\int_{[\delta, 1-\delta]^2} \phi_{\hat v}(x) dx\,,$$  it follows that $\psi_{\delta, \hat v}$ satisfies all of the desired properties. In particular,
by \eqref{eq-independent-hat-v}, the function
$\psi_{\delta, \hat v}$  is independent of $(\hat v, z, L, \tilde L)$. This completes the proof of \eqref{eq-to-show-2} and hence the proof of the lemma.
\end{proof}

We are now ready to prove
Proposition~\ref{prop-asymptotic-first-moment}.
\begin{proof}[Proof of Proposition~\ref{prop-asymptotic-first-moment}]
The second display
in Proposition~\ref{prop-asymptotic-first-moment} follows
directly from the first display
and the second display in Lemma~\ref{lem-Lambda-z}.
It therefore suffices to prove the first display
in Proposition~\ref{prop-asymptotic-first-moment}. To this end, consider $z_1 < z_2$, and set $\tilde L = 2^{z_2^4}$ and $h = \log z_1$.  For $v\in \Xi_N$ and $i=1,2$, recall  that
$$\lambda_{v, N,  z_i, z_i+J_\ell} = \int_{J_\ell + z_i}\nu_{v, N}(y) \P(\max_{u\in \tilde B_ {v, N}} Y_{u,N}\geq \ell m_N/n + z_i - y) dy\,.$$
By \eqref{eq-mu-N-v}, for any $y\in J_\ell$,
%July172014
\begin{align*}
&\frac{\nu_ {v, N}(y + z_1) \P(\max_{u\in \tilde B_{ v, N}} Y_{u,N}\geq \ell m_N/n - y)}{\nu_{v, N}(y+z_2) \P(\max_{u\in \tilde B_{v, N}} Y_{u,N}\geq \ell m_N/n - y)}\\
& = \frac{\nu_{v, N}(y+z_1)}{\nu_{v, N}(y+z_2)} = 
(1+ O(\ell^3/n))
\frac{z_1 (z_1 - y)}{z_2 (z_2 - y)} \mathrm{e}^{-\sqrt{2\pi}(z_1-z_2)}
=
(1+ O(\ell^3/n))
\frac{z_1}{z_2} \mathrm{e}^{-\sqrt{2\pi}(z_1 - z_2)} (1 + z_2^{-3/5})\,.
\end{align*}
This implies that
$$\frac{\lambda_{v, N, z_1, z_1 + J_\ell}}{\lambda_{v, N, z_2, z_2 + J_\ell}} 
=(1+ O(\ell^3/n))
 \frac{z_1}{z_2} \mathrm{e}^{-\sqrt{2\pi}(z_1 - z_2)} (1 + z_2^{-3/5})\,.$$
Together with Lemma~\ref{lem-Lambda-J-k}, the above display implies that
%July172014
$$\lim_{z_1, z_2\to \infty} \limsup_{N \to \infty} \frac{z_2e^{-\sqrt{2\pi}z_2}
\E
\Lambda_{N, z_1}}{z_1e^{-\sqrt{2\pi}z_1}\E
\Lambda_{N, z_2}} = \lim_{z_1, z_2\to \infty} \liminf_{N \to \infty} 
\frac{z_2e^{-\sqrt{2\pi}z_2} \E
\Lambda_{N, z_1}}{
        z_1e^{-\sqrt{2\pi}z_1}  \E\Lambda_{
N, z_2}}  =  1\,.$$
%<++>\frac{z_1}{z_2} \mathrm{e}^{-\sqrt{2\pi}(z_1 - z_2)} \,.$$
Along with Lemma~\ref{lem-Lambda-z}, this
completes the proof of the proposition.
\end{proof}

%\end{proof}

\section{A pair of approximations}
\label{sec-pairap}
The main results in this section are Propositions \ref{prop-Delta} and \ref{prop-local}.
Proposition \ref{prop-Delta} will be applied in Section \ref{sec-proof-of-theorem}, and allows
us to restrict our attention to the sets $V_N^{K,\delta}=\cup_i V_N^{K,\delta,i}$ when computing
the maximum of $\eta_{v,N}$.
\begin{prop}
  \label{prop-Delta}
  With notation as defined earlier,
  %Ofer erased limsup on delta
  \begin{equation}
    \label{eq-smallerbox}
    \lim_{\delta\searrow 0}\limsup_{K\to\infty}\limsup_{N\to\infty}
    \P(\max_{v\in V_N^{K,\delta}} \eta_{v,N}\neq  \eta_N^*)=0.
  \end{equation}
\end{prop}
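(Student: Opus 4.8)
The plan is to split the bad event according to whether the global maximum $\eta_N^*$ is unusually small. On the event $\{\max_{v\in V_N^{K,\delta}}\eta_{v,N}\neq\eta_N^*\}$, the maximum over $V_N$ is attained in $\Delta_N=\Delta_N^{K,\delta}=V_N\setminus V_N^{K,\delta}$, so for any constant $C\ge 1$ this event is contained in
$$\{\eta_N^*<m_N-C\}\ \cup\ \Big\{\max_{v\in\Delta_N}\eta_{v,N}\ge m_N-C\Big\}.$$
The first event is controlled by tightness of the recentered maximum; the second, by the a priori tail bound \eqref{eq-max-subset} together with the fact that $\Delta_N$ is a thin set.

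More precisely, I would fix $\epsilon>0$. Since $\eta_N^*-m_N$ is tight (by \cite{BZ10}; alternatively via the left-tail estimate of \cite[Theorem 1.1]{Ding11}), there is a constant $C_\epsilon\ge 1$, not depending on $N$, $K$ or $\delta$, such that $\P(\eta_N^*<m_N-C_\epsilon)\le\epsilon$ for all large $N$. For the second event, apply \eqref{eq-max-subset} of Lemma~\ref{lem-prelim-tail} with $A=\Delta_N$, $z=1$ and $y=C_\epsilon+1$; using $|\Delta_N|\le 4\delta|V_N|$ (the bound recorded after \eqref{eq-maury160113a}, which is uniform in $K$ because the $K^2$ sub-boxes each contribute a frame of area $\asymp\delta(N/K)^2$), this gives
$$\P\Big(\max_{v\in\Delta_N}\eta_{v,N}\ge m_N-C_\epsilon\Big)\ \lesssim\ \Big(\tfrac{|\Delta_N|}{|V_N|}\Big)^{1/2}\mathrm{e}^{\sqrt{2\pi}\,C_\epsilon}\ \le\ (4\delta)^{1/2}\,\mathrm{e}^{\sqrt{2\pi}\,C_\epsilon}.$$
Combining the two bounds, there is an absolute constant $C$ such that, for all $K$, all $\delta\in(0,1)$ and all sufficiently large $N$,
$$\P\Big(\max_{v\in V_N^{K,\delta}}\eta_{v,N}\neq\eta_N^*\Big)\ \le\ \epsilon+C\sqrt{\delta}\,\mathrm{e}^{\sqrt{2\pi}\,C_\epsilon}.$$

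Since the right-hand side does not depend on $N$ or $K$, taking first $\limsup_{N\to\infty}$, then $\limsup_{K\to\infty}$, and finally $\limsup_{\delta\searrow 0}$ leaves $\epsilon$; letting $\epsilon\searrow 0$ then yields \eqref{eq-smallerbox}. The argument is short and has no substantive obstacle; the only point demanding attention is that every estimate above — the constant $C_\epsilon$ from tightness, the area bound $|\Delta_N|/|V_N|\le 4\delta$, and the implied constant in \eqref{eq-max-subset} — is uniform in $K$ and $N$, which is precisely what makes the iterated limit collapse to $0$.
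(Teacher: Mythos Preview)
Your proof is correct and follows essentially the same approach as the paper's: reduce via tightness of $\eta_N^*-m_N$ to controlling $\P(\max_{v\in\Delta_N}\eta_{v,N}\ge m_N-C)$, and then invoke \eqref{eq-max-subset} together with $|\Delta_N|\le 4\delta|V_N|$. Your version is simply more explicit about the choice of parameters $z=1$, $y=C_\epsilon+1$ and about uniformity in $K$.
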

\begin{proof}
  Due to the tightness of the sequence of random variables $(\eta_N^*-m_N)$
  (see \cite{BZ10}), it suffices to show that, for any fixed $x\in \R$,
\begin{equation}
    \label{eq-dilul}
    \lim_{\delta\searrow0}\limsup_{K\to\infty}\limsup_{N\to\infty}
    \P( \max_{v\in \Delta_N^{K,\delta} }
    \eta_{v,N}-m_N\geq x)=0\,.
  \end{equation}
  The claim
  \eqref{eq-dilul} follows at once from  \eqref{eq-max-subset}.
  \end{proof}

Proposition \ref{prop-local} will be applied in conjunction with Proposition \ref{prop-Delta},
and implies that the local maxima of the GFF occur at the local maxima of the fine field, at least when
restricted to $V_K^{N,\delta}$.
% Maury - I reworded the above sentence.
\begin{prop}
  \label{prop-local}
  Let $z_i=z_i^{N,K,\delta} 
  {\in V_N^{K,\delta,i}}$ be such that
  $$ \max_{v\in V_N^{K,\delta,i}} X_v^f=X_{z_i}^f\,.$$
  Let $\bar z=\bar z(N,K,\delta) 
  {\in \{z_i: 1\leq i\leq K^2\}}$ be such that
  $$\max_i \eta_{z_i,N}=\eta_{\bar z,N}\,.$$
  Then, for any fixed $\epsilon>0$ and small enough $\delta>0$,
  %Ofer Took of limsup in K
  \begin{equation}
    \label{eq-star0103a}
    \lim_{K\to\infty}
    \limsup_{N\to\infty}
    \P(\max_{v\in V_N^{K,\delta}}\eta_{v,N}\geq \eta_{\bar z,N}+\epsilon)
    =0\,.
  \end{equation}
  Furthermore,
  there exists a function $g:\N\to \R_+$, with
  $g(K)\to_{K\to\infty} \infty$, such that
  \begin{equation}
    \label{eq-star0103b}
    \lim_{K\to\infty}
    \limsup_{N\to\infty}
    \P(X^f_{\bar z}\leq m_{N/K}+g(K))=0\,.
  \end{equation}
\end{prop}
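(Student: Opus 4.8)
The plan is to deduce both statements from two ingredients. The first is a tightness statement for the coarse field: $\max_{v\in V_N^{K,\delta}}X_v^c$, suitably recentered, is tight uniformly in $N$, with recentering $m_K+O(1)$ as $N\to\infty$ (the $O(1)$ depending on $\delta$). I would obtain this by observing that, by Lemma~\ref{lem-coarselimit} together with a direct comparison of covariance structures, $\{X_v^c\}$ is logarithmically correlated at scale $K$ — its increments within and between the sub-boxes $V_N^{K,\delta,i}$ agree, up to additive constants, with those of the GFF on a box of side $\asymp K$ — so that Lemmas~\ref{lem-sudakov-fernique} and \ref{lem-slepian} (the latter after adding an independent Gaussian to match variances) transfer the tightness of \cite{BZ10}, the right tail \eqref{eq-right-tail}, and the left tail of \cite{Ding11}. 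The second ingredient is the conditional localization of the GFF maximum: for the GFF on a box of side $M$, conditionally on its maximum exceeding $m_M+t$, the probability that some point at macroscopic distance comes within a fixed constant $A$ of the maximum tends to $0$ as $t\to\infty$, uniformly in $M$. This follows from Corollary~\ref{cor-no-two-large-clusters}: covering the box by $O(\rho^{-2})$ sub-boxes of side $\rho M$, the relevant probability is at most the ratio of $\P(\text{two macroscopically separated clusters}\ge m_M+t-A)$, which is $o_t(t\mathrm{e}^{-\sqrt{2\pi}t})$, to $\P(\max\ge m_M+t)\asymp t\mathrm{e}^{-\sqrt{2\pi}t}$ from \eqref{eq-right-tail}.

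Granting these, I would prove \eqref{eq-star0103a} as follows. By Proposition~\ref{prop-Delta} and the tightness of $\eta_N^*-m_N$, it suffices to show $\eta_{\bar z,N}\ge\eta_N^*-\epsilon$ with probability tending to $1$ in the iterated limit. Let $v^{**}$ attain $\eta_N^*$, and let $j$ be the index with $v^{**}\in V_N^{K,\delta,j}$ (this holds with probability tending to $1$, by Proposition~\ref{prop-Delta}). Since $X_{z_j}^f\ge X_{v^{**}}^f$, we get $\eta_{\bar z,N}\ge\eta_{z_j,N}=X_{z_j}^c+X_{z_j}^f\ge\eta_{v^{**},N}-(X_{v^{**}}^c-X_{z_j}^c)$, so it is enough that $X_{v^{**}}^c-X_{z_j}^c<\epsilon$ with high probability. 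On the event $\{\eta_N^*\ge m_N-C\}\cap\{\max_{v\in V_N^{K,\delta}}X_v^c\le m_K+C\}$, which has probability close to $1$ for $C$ large by the first ingredient and tightness, one has $X_{v^{**}}^f=\eta_{v^{**},N}-X_{v^{**}}^c\ge m_N-m_K-2C=m_{N/K}+t_K$ with $t_K:=(m_N-m_K-m_{N/K})-2C$; since $m_N-m_K-m_{N/K}\to\tfrac34\sqrt{2/\pi}\log\log K$ as $N\to\infty$, we have $t_K\to\infty$ when $K\to\infty$. Thus the GFF $\{X_v^f\}_{v\in V_N^{K,j}}$ (see the remark after \eqref{eq-of1}) has maximum at least $m_{N/K}+t_K$, so by the second ingredient its near-maxima — and $v^{**}$ is one, because $\eta_{v^{**},N}\ge\eta_{z_j,N}$ forces $X_{z_j}^f-X_{v^{**}}^f\le X_{v^{**}}^c-X_{z_j}^c$, which is $O_{\P}(1)$ inside a single box — are confined to a small ball around $z_j$, whence $|v^{**}-z_j|$ is small relative to $N/K$. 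Lemma~\ref{lem-coarsecov} then gives $\E(X_{v^{**}}^c-X_{z_j}^c)^2\le c_\delta|v^{**}-z_j|/(N/K)$, arbitrarily small, so Lemma~\ref{lem-ferniquecriterion} and Lemma~\ref{lem-gaussian-concentration} yield $X_{v^{**}}^c-X_{z_j}^c<\epsilon$ with high probability.

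For \eqref{eq-star0103b} I would simply combine \eqref{eq-star0103a} with the first ingredient. By \eqref{eq-star0103a} and Proposition~\ref{prop-Delta}, $\eta_{\bar z,N}\ge\eta_N^*-\epsilon\ge m_N-C$ with probability close to $1$; by the coarse-field tightness, $X_{\bar z}^c\le\max_{v\in V_N^{K,\delta}}X_v^c\le m_K+C$ with probability close to $1$. Hence $X_{\bar z}^f=\eta_{\bar z,N}-X_{\bar z}^c\ge m_N-m_K-2C=m_{N/K}+(m_N-m_K-m_{N/K})-2C$, and since $m_N-m_K-m_{N/K}\to\tfrac34\sqrt{2/\pi}\log\log K$ as $N\to\infty$, taking $g(K):=\tfrac14\sqrt{2/\pi}\log\log K$ — any positive slowly growing function strictly below $\tfrac34\sqrt{2/\pi}\log\log K$ will do — gives $X_{\bar z}^f\ge m_{N/K}+g(K)$ with probability tending to $1$ in the iterated limit, which is \eqref{eq-star0103b}. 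This choice of $g$ is consistent with the one announced for Proposition~\ref{prop-jian}.

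The hard part will be the first ingredient: making rigorous that the coarse field behaves at scale $K$ like a GFF (or MBRW) on a box of side $\asymp K$ to the extent needed for a sharp upper bound on its maximum — the covariance comparisons are plausible from Lemma~\ref{lem-coarselimit}, but the variances must be matched carefully to run Slepian's lemma, and one genuinely needs the sharp right tail \eqref{eq-right-tail} rather than the cruder Lemma~\ref{lem-prelim-tail}. A second delicate point in the argument for \eqref{eq-star0103a} is that the index $j$ with $v^{**}\in V_N^{K,\delta,j}$ is random and its selection favours boxes where the coarse field oscillates more, so a naive union bound over the $K^2$ boxes is too lossy; this is handled by first conditioning on the whole coarse field $\{X_v^c\}$ — making the fields $\{X_v^f\}_{v\in V_N^{K,i}}$ independent across $i$ — and on the values and locations of the $z_i$, which reduces the estimate to a first‑moment bound over the $O(1)$ boxes whose combined maximum $\max_{v\in V_N^{K,\delta,i}}(X_v^c+X_v^f)$ lies within $O(1)$ of $m_N$; on that conditional event the gap $t_K$ above is available for each relevant box and Corollary~\ref{cor-no-two-large-clusters} applies uniformly. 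Finally, \eqref{eq-max-subset} is used throughout to discard the negligible contribution of $\Delta_N=V_N\setminus V_N^{K,\delta}$.
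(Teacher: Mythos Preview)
Your approach is genuinely different from the paper's, and the main gap is in the argument for \eqref{eq-star0103a}. After establishing (via your first ingredient) that $X_{v^{**}}^f\ge m_{N/K}+t_K$ in the box $j^*$ containing $v^{**}$, you invoke Corollary~\ref{cor-no-two-large-clusters} to localize $v^{**}$ near $z_{j^*}$. But $t_K\asymp\tfrac34\sqrt{2/\pi}\log\log K$, so $\mathrm{e}^{-\sqrt{2\pi}t_K}\asymp(\log K)^{-3/2}$; since $j^*$ is random and determined by both fields, one is effectively summing the two-cluster probability $o(t_K\mathrm{e}^{-\sqrt{2\pi}t_K})$ over $K^2$ boxes, and $K^2(\log K)^{-3/2}\to\infty$. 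Your proposed fix --- restricting to the $O(1)$ boxes whose GFF maximum is within $O(1)$ of $m_N$ --- presupposes a control on the number of near-maximal clusters of the full GFF, which is exactly the type of result one is trying to establish; Corollary~\ref{cor-no-two-large-clusters} does not give this quantitatively enough. The paper sidesteps the issue entirely: it never isolates the fine field in a single box. Instead it uses \cite[Theorem~1.1]{DZ12} as a single global statement that the GFF on $V_N$ has no two near-maxima at mesoscopic distance (event $\mathcal{A}_3$), and separately controls the perturbed field $Y_{u,v}=\eta_{u,N}+X_v^c-X_u^c$ by showing $\E\max_{u,v}Y_{u,v}\le m_N+c_\delta$ via Sudakov--Fernique against a BRW-plus-MBRW hybrid (Lemma~\ref{lem-step4ofwriteup}); no union bound over boxes is needed.

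Your first ingredient --- that $\max_{v\in V_N^{K,\delta}}X_v^c=m_K+O_\P(1)$ --- is stronger than anything the paper proves or uses. For \eqref{eq-star0103b} the paper needs only $\max_i X_{z_i}^c<c^*k-\alpha\log k$ for small $\alpha>0$, which follows from a one-line union bound over the $K^2$ points $z_i$ (crucially, these are measurable with respect to $X^f$ and hence independent of $X^c$); no comparison to a GFF at scale $K$ is required. Your route for \eqref{eq-star0103b} would work with this weaker input, but as written it leans on the unproved sharp coarse-field tightness.
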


The proof of Proposition \ref{prop-local} occupies the remainder of the section.
\begin{proof}
 The strategy for the proof of (\ref{eq-star0103a}) is as follows.
Consider the event on the left hand side of \eqref{eq-star0103a},
  %and that $\max_{v\in V_K^{N,\delta}}\eta_v^N\geq m_N-C$.
 % 
i.e., for the box $i$ where  
 $\eta_{v_i}:=
  \max_{v\in V_N^{K,\delta,i}} \eta_{v,N} = \max_{v\in V_N^{K,\delta}} \eta_{v,N}$, the event where $\eta_{v_i} - \eta_{z_i} \ge \epsilon$.
For appropriate $f(k)$, with $k = \log_2 K$, this event will not typically occur when 
$|v_i - z_i| \le f(k)$ due to the continuity of the field $X^c_{\cdot }$
whereas, on
$|v_i - z_i| > f(k)$, the event will not typically occur 
because \cite[Theorem 1.1]{DZ12}
that prohibits near-maxima from coexisting at intermediate distances.
(The argument actually requires consideration of all boxes $i$ where 
$\eta_{v_i}$ is not much smaller than $\max_{v\in V_N^{K,\delta}} \eta_{v,N}$\,.)

%The strategy for the proof of (\ref{eq-star0103a}) is as follows.
% %July172014
%  Consider the event on the left hand side of \eqref{eq-star0103a},
%  %and that $\max_{v\in V_K^{N,\delta}}\eta_v^N\geq m_N-C$.
% % 
%i.e., in the box $i$ where  
%$\max_{v\in V_N^{K,\delta}} \eta_{v,N}$ occurs, 
% $\eta_{v_i}:=
%  \max_{v\in V_N^{K,\delta,i}} \eta_{v,N}$ and $\eta_{z_i}$ are not within distance
%$\epsilon$.
%For appropriate $f(k)$, with $k = \log_2 K$, this event will not typically occur when 
%$|v_i - z_i| \le f(k)$ due to the continuity of the field $X^c_{\cdot }$
%whereas, on
%$|v_i - z_i| > f(k)$, the event will not typically occur on account of \cite{DZ12}.

%This means, in particular, that either in one of the boxes $V_N^{K,\delta,i}$
%  the difference between $\eta_{v_i}:=
%  \max_{v\in V_N^{K,\delta,i}} \eta_{v,N}$ and $\eta_{z_i}$ is large,
%  or that two near-maxima of the GFF lie within distance $N/K$ of one another.
%  By properties of Gaussian fields (in particular,
%  the uniform continuity of $X^c_v$), the former event is unlikely.
%  By \cite{DZ12}, the latter event is possible only if
%  the distance between $v_i$ and $z_i$ is of order $1$, not depending
%  on $K$. However, the continuity of the field of $X^c$ implies that,
%  within such distances, the difference $|\eta_{v_i,N}-\eta_{z_i,N}|$
%  will be small.
%
%
% Maury - I reworded the previous paragraph a little.

Turning to the actual proof of (\ref{eq-star0103a}), fix two constants $C,C'>0$ and
a function $f:\N\to \R_+$, with
  $f(k)\to_{k\to\infty}\infty$.
  Suppose that the event on the left hand side of \eqref{eq-star0103a} occurs.
  Then (keeping in mind the above description),
  one of the following events must occur:
  \begin{itemize}
    \item ${\cal A}_0:=  
    {\{}
  \max_{v\in V_N^{K,\delta}}\eta_{v,N} < m_N-C 
  {\}}$,
    \item ${\cal A}_1:= \{\max_{v\in V_N^{K,\delta}}
      \max_{w\in V_N^{K,\delta}: |v-w|\leq f(k)} |X_v^c-X_w^c|\geq
      \epsilon\}
      \,,$
    \item ${\cal A}_2:=\{
      \max_i   \max_{u,v\in V^{K,\delta,i}_N}
      (\eta_{u,N}+X_u^c-X_v^c)\geq m_N+C'\}\,,$
    \item ${\cal A}_3:=
      \{\exists i,v:
      N/K\geq d(v,z_i)> f(k),\,
      \eta_{v,N}\geq m_N-C, \eta_{z_i,N}\geq m_N-2C-C'\}.$
  \end{itemize}
 {To see this, we first claim that, on the event $({\cal A}_0 \cup {\cal A}_1 \cup {\cal A}_2 \cup {\cal A}_3)^c$, the maximizer $\tau$ for the field $\{\eta_{v, N}:v\in V_{N}^{K, \delta} \}$ is within distance $f(k)$ of $z_i$ (where we assumed $\tau \in V_{N}^{K, \delta, i}$). Otherwise, on $({\cal A}_0\cup {\cal A}_2)^c$,
  $$X_\tau^c - X_{z_i}^c \leq m_N + C' - (m_N - C) \leq C + C'$$
  and thus
  $$\eta_{z_i} \geq \eta_{\tau} - (X_\tau^c - X_{z_i}^c) \geq m_N - 2C - C'\,,$$
  which is not consistent with being in ${\cal A}^c_3$. Hence, $|\tau - z_i| \leq f(k)$.   But, on ${\cal A}^c_1$, this inequality and the event on the left hand side of \eqref{eq-star0103a} cannot simultaneously occur.  Consequently, at least one of the events ${\cal A}_i$ must occur when the event in \eqref{eq-star0103a} occurs, as claimed. }
  
Next, we will show that the limiting probability of each of these four events is small as 
$C,C' \rightarrow\infty$ appropriately, after $N\rightarrow\infty$.

%July182014 
To control ${\cal A}_0$, we employ an argument similar to that used in
the proof of \cite[Proposition 5.2]{BZ10}.
From Proposition \ref{prop-limiting-tail-gff} and \eqref{eq-max-subset}, one obtains, for some $\mu_0 < 1$, that
%for some $\kappa>0$ large enough, any $\delta<\delta_0$ and $C_0<\kappa$,
%Ofer changed here - it is important (for the induction) that the N_0 can 
%be taken independent of K so I added rthe word all 
for large enough $C_0$ and $N$, and small 
enough $\delta >0$ (all not depending on $K$),
\begin{equation}
        \label{eq-18072014-a}
        \P(
  \max_{v\in V_N^{K,\delta}}\eta_{v,N} < m_N-C_0/2)\leq \mu_0\,.
  \end{equation}
  Decomposing $\eta_{v,N}$ as the sum of 
the coarse and fine fields $X^c_{v,N,2}$ and $X^f_{v,N,2}$
(with the latter field producing $4$ independent copies of the GFF in disjoint boxes of side length $N/2$), (\ref{eq-18072014-a}) implies that
  \begin{eqnarray*}
          \label{eq-18072014-b}
&&      \P(
  \max_{v\in V_N^{K,\delta}}\eta_{v,N} < m_N-C_0)\leq 
\left(  \P(
  \max_{v\in V_{N/2}^{K,\delta}}\eta_{v,N/2} < m_N-C_0/2)\right)^4+
  \P(\max_{v\in V_N^{K,\delta}} |X^c_{v,N,2}|\geq C_0/2)\\
  &\leq & \mu_0^4+\epsilon(C_0)\,,
  \end{eqnarray*} where $\epsilon(C_0)\to_{C_0\to\infty} 0$ by Lemma
  \ref{lem-ferniquecriterion}. Thus, 
  there exists $C_1>2C_0$ large enough so that,
  for large enough $N$,
  \begin{equation}
          \label{eq-18072014-c}
                \P(
  \max_{v\in V_N^{K,\delta}}\eta_{v,N} < m_N-C_1/2)\leq 
  2\mu_0^4=:\mu_1<\mu_0\,.
  \end{equation}
 Repeatedly applying this argument, with the analog of \eqref{eq-18072014-c} at each step replacing the analog of
  \eqref{eq-18072014-a}, one concludes that
  \begin{equation}
          \label{eq-18072014-d}
          \lim_{C\to\infty}\limsup_{K\to\infty}\limsup_{N\to\infty}     \P(
  \max_{v\in V_N^{K,\delta}}\eta_{v,N} < m_N-C)=0\,.
  \end{equation}

  %July182014
  We now consider ${\cal A}_i, i=1,2,3$.
  By a union bound and the upper bound in
  Lemma \ref{lem-coarsecov}, with
${\cal N}$ denoting a standard Gaussian random variable,
\begin{equation}
  \label{eq-A1}
  \P({\cal A}_1)\lesssim N^2 f(k)^2\P(\sqrt{K c_{\delta }f(k)/N} {\cal N}>\epsilon)
  \leq N^2  f(k)^2 e^{-\epsilon^2 N/2Kc_\delta f(k)}\to_{N\to\infty} 0\,.
\end{equation}
%
%  {\it Here proof of ${\cal A}_1$ - uses entropy bounds; any $f(k)\to\infty$
%  should work.}
On the other hand, by \cite[Theorem 1.1]{DZ12}, for any fixed $C,C'$,
$$ \lim_{K\to\infty}\limsup_{N\to\infty} \P({\cal A}_3)=0\,.
$$
This limit employs $f(k)\to_{k\to\infty} \infty$.

We will show below that, for some constant $C_\delta$ not depending on $N,K,C$,
\begin{equation}
  \label{eq-010313d}
  \P({\cal A}_2)\leq \frac{C_\delta}{C'-C_\delta}\leq \frac{2C_\delta}{C'}\,,
\end{equation}
for large enough $C'$.
Taking $N\to\infty$, followed by $K\to\infty$,
then $C\to\infty$ and then $C'\to\infty$, implies $P({\cal A}_2) \rightarrow 0.$
%July182014
 The above limits on $P({\cal A}_i)$, $i=0,1,2,3$, together imply \eqref{eq-star0103a}.
% Maury - I reworded somewhat the material after the definition of the $A_i$.

In order to estimate $\P({\cal A}_2)$, we require a couple of lemmas. 
Write $Y_{u,v}=Y_{u,v,N}:=
\eta_{u,N}+X_{u,N}^c-X_{v,N}^c$, and recall that $u,v$ belong to the same
  box $V_K^{N,\delta,i}$. Write $V_{N,K,\delta}^{\times 2}:=
  \{(u,v):u,v\in V_K^{N,\delta,i} \, \mbox{\rm for some } i\}$.
  The proof of the first lemma is
  a straightforward application of the upper bound in 
  Lemma \ref{lem-coarsecov}.
  \begin{lemma}
    \label{lem-covY}
    There exists a constant $c_1$ independent of $K, N$ such that,
    for $(u,v),(u',v')\in V_{N,K,\delta}^{\times 2}$, 
    \begin{equation}
    \label{eq-covY}
    \E(Y_{u,v}-Y_{u',v'})^2
    \leq \E(\eta_u-\eta_{u'})^2+
    c_1\left( {\bf 1}_{u=u'}\left(\frac{|v-v'|}{N/K}\right)^2+{\bf 1}_{u\neq u'}\right)\,.
  \end{equation}
  \end{lemma}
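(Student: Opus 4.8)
The plan is to decouple the coarse and fine contributions to $Y_{u,v}$ via the orthogonal decomposition $\eta_{\cdot,N}=X^c_\cdot+X^f_\cdot$. Under it, $Y_{u,v}=\eta_{u,N}+X^c_v-X^c_u=X^f_u+X^c_v$, and, since $\{X^f_\cdot\}$ and $\{X^c_\cdot\}$ are independent, for $(u,v),(u',v')\in V_{N,K,\delta}^{\times2}$,
\[
\E(Y_{u,v}-Y_{u',v'})^2=\E\bigl(X^f_u-X^f_{u'}\bigr)^2+\E\bigl(X^c_v-X^c_{v'}\bigr)^2 .
\]
The first summand is at most $\E(\eta_u-\eta_{u'})^2$, again by orthogonality, and, since $u$ and $v$ lie in a common box $V_N^{K,\delta,i}$ where $\eta_{\cdot,N}$ has only logarithmically varying increments, it is controlled by $\E(\eta_u-\eta_v)^2$ up to a correction already present in the target right-hand side. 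So the whole estimate reduces to bounding the residual \emph{coarse}-field increment $\E(X^c_v-X^c_{v'})^2$ by a constant multiple of $\bigl(|v-u|+|v'-u'|\bigr)/(N/K)$ plus a term absorbed by $\E(\eta_u-\eta_v)^2$.

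This last point is exactly the content of Lemma~\ref{lem-coarsecov}. Writing $X^c_v-X^c_{v'}=(X^c_u-X^c_{u'})+(X^c_v-X^c_u)-(X^c_{v'}-X^c_{u'})$ and using that $u,v$ (resp.\ $u',v'$) lie in a common box, Lemma~\ref{lem-coarsecov} bounds the last two differences by $c_\delta|v-u|/(N/K)$ and $c_\delta|v'-u'|/(N/K)$; an elementary expansion of the square, with the cross terms treated by Cauchy--Schwarz together with the same lemma, then produces the term $c_1\max(|v-u|,|v'-u'|)/(N/K)$. When $(u,v)$ and $(u',v')$ belong to a common sub-box one also uses that $\E(X^c_u-X^c_{u'})^2$ is itself $O_\delta(1)$ by Lemma~\ref{lem-coarsecov}, which simplifies the bound; when they belong to different sub-boxes one first replaces $v,v'$ by the respective box centres — the replacement costing $O_\delta(1)$, again by Lemma~\ref{lem-coarsecov} — and then estimates $\E(X^c_v-X^c_{v'})^2$ via the explicit logarithmic form of the limiting coarse covariance $C^c_K$ of Lemma~\ref{lem-coarselimit} together with the potential-kernel asymptotics \eqref{eq-100113c}--\eqref{eq-lawler2}.

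I expect the only mildly delicate point to be the bookkeeping in this different-box regime: one must verify that the part of $\E(X^c_v-X^c_{v'})^2$ of order $\log K$ — the contribution of the boundedly-many scales coarser than $N/K$ — is genuinely absorbed by the corresponding part of $\E(\eta_u-\eta_{u'})^2$, and hence into $\E(\eta_u-\eta_v)^2$, rather than merely up to a multiplicative factor. This amounts to re-reading the coarse/fine splitting of the Green function of $V_N$ already performed in the proof of Lemma~\ref{lem-coarselimit}, so no new ingredient is needed; this is why the authors describe the estimate as a straightforward application of Lemma~\ref{lem-coarsecov}.
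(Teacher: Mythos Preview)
The statement in the paper contains a typo: the term $\E(\eta_u-\eta_v)^2$ on the right-hand side should read $\E(\eta_u-\eta_{u'})^2$. The printed version is actually false: take $v=u$ and $v'=u'$ with $u\neq u'$; then $Y_{u,u}=\eta_u$, so the left-hand side equals $\E(\eta_u-\eta_{u'})^2$ while the right-hand side vanishes. The corrected version is also what is needed immediately afterwards for the Sudakov--Fernique comparison \eqref{eq-010313a}, since $\E(\bar Y_{u,v}-\bar Y_{u',v'})^2=\E(\eta_u-\eta_{u'})^2+C_2^2\,\E(Z^N_{u,v}-Z^N_{u',v'})^2$.

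Your decomposition $Y_{u,v}=X^f_u+X^c_v$ is exactly right, and with it the corrected lemma is nearly immediate: by independence of $X^f$ and $X^c$,
\[
\E(Y_{u,v}-Y_{u',v'})^2-\E(\eta_u-\eta_{u'})^2
=\E(X^c_v-X^c_{v'})^2-\E(X^c_u-X^c_{u'})^2,
\]
and one controls the right-hand side by writing $X^c_v-X^c_{v'}=(X^c_u-X^c_{u'})+(X^c_v-X^c_u)-(X^c_{v'}-X^c_{u'})$ and invoking Lemma~\ref{lem-coarsecov} on the two last differences (each pair lies in a common box). This is the ``straightforward application of Lemma~\ref{lem-coarsecov}'' the paper refers to; its own proof is omitted.

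The gap in your write-up is that you take the printed $\E(\eta_u-\eta_v)^2$ at face value and then try to argue that $\E(X^f_u-X^f_{u'})^2$ ``is controlled by $\E(\eta_u-\eta_v)^2$''. This step is false in general: if $u,u'$ lie in different sub-boxes and $v=u$, the fine-field increment $\E(X^f_u-X^f_{u'})^2=\var X^f_u+\var X^f_{u'}$ is of order $\log(N/K)$, while $\E(\eta_u-\eta_v)^2=0$. All of the subsequent case analysis (same box versus different boxes, replacing $v,v'$ by box centres, invoking Lemma~\ref{lem-coarselimit} and the potential-kernel asymptotics) is an attempt to repair something that cannot be repaired and is unnecessary once the typo is fixed. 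Correct the target to $\E(\eta_u-\eta_{u'})^2$ and your first two lines already give the result via Lemma~\ref{lem-coarsecov}; the rest should be deleted.
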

\begin{proof}
	Using the decomposition $\eta_{u,N}=X_{u,N}^f+
	X_{u,N}^c$ and the independence 
	of $\{X_{u,N}^f\}$ and $\{X_{u,N}^c\}$, 
	we have  
\begin{align}
	\label{e-satmorning}
	\E(Y_{u,v}-Y_{u',v'})^2 &= 
	\E\left((X_{u,N}^f-X_{u',N}^f)+  
	2\left(X^c_{u,N} - X^c_{u',N}\right) - \left(X^c_{v,N} - 
	X^c_{v',N}\right)\right)^2 \nonumber\\
	&= \E (\eta_{u,N}-\eta_{u',N})^2\\
	&+ 3 \E(X_{u,N}^c-X_{u',N}^c)^2+
	\E(X_{v,N}^c-X_{v',N}^c)^2-
	4 \E\left( (X_{v,N}^c-X_{v',N}^c)(X_{u,N}^c-X_{u',N}^c) \right)\,.\nonumber
\end{align}
 If $u=u'$, then 
    \eqref{eq-covY} follows by an application of Lemma 
  \ref{lem-coarsecov}. 
 On the other hand, if $u\neq u'$, but $u,u'$ belong to the
	  same box $V_K^{N,\delta,i}$, then again \eqref{eq-covY}
	  follows from Lemma \ref{lem-coarsecov}.
  
If $u,u'$ do not belong to the
  same box $V_K^{N,\delta,i}$,
  we can rewrite
  \eqref{e-satmorning} as
  \begin{equation}
	  \label{eq-kavita}
	  \E(Y_{u,v}-Y_{u',v'})^2  
	 =\E (\eta_{u,N}-\eta_{u',N})^2
	 + \E\left(\Delta(\Delta-2(X^c_{u,N}-X^c_{u',N}))\right), 
 \end{equation}
	where 
	$\Delta=X_{v,N}^c-X_{u,N}^c-X_{v',N}^c+X_{u',N}^c$. 
	By Lemma \ref{lem-coarsecov}, $\E\Delta^2\leq c$, for some $c$, and hence 
	  $$\E(Y_{u,v}-Y_{u',v'})^2  
	 \leq \E (\eta_{u,N}-\eta_{u',N})^2 +c-
	 2\E(\Delta(X^c_{u,N}-X^c_{u',N}))\,.$$
Also, by (\ref{eq-100113anewadd}) of Lemma \ref{lem-coarsecov}, 
$\E(X^c_{u,N} - X^c_{v,N})^2 \ge 2|\E(X^c_{u,N})^2 - \E 
X^c_{u,N}X^c_{v,N}| - c'$
for some $c'$, with the analogous inequality holding with $(u',v')$ in place of
$(u,v)$;
 it follows that
	  $$\E(Y_{u,v}-Y_{u',v'})^2  
	 \leq \E (\eta_{u,N}-\eta_{u',N})^2 +c''+
	2 |\E(2X^c_{u,N}X^c_{u',N}-X^c_{v,N}X^c_{u',N}-X^c_{v',N}X^c_{u,N})|\,,$$
	for appropriate $c''$. 
Since $u$ and $u'$ are in distinct boxes $V^{N,\delta,i}_K$, the previous
inequality continues to hold if $X^c_{\cdot,N}$ is replaced by $\eta_{\cdot,N}$
everywhere inside the absolute values on the right hand side.  Since $u$ and $v$ 
(respectively, $u'$ and $v'$) belong
to the same box, \eqref{eq-covY} follows from this and Lemma \ref{lem-covariance}.
\end{proof}

 We next construct a MBRW $\mbrw_u$ in a box of size $N$, using only the
  top $k$ levels. That is,
  with
   $\BB_j^N$ denoting the collection of subsets of $\Z^2$ consisting of
squares of side length $2^j$ with lower left corner in $V_N$,
 and with
  $\{b_{j,B}\}_{j\geq 0, B\in \BB_j^N}$ denoting  an i.i.d.
family of centered Gaussian random variables of variance $2^{-2j}$,
independent of $\eta_{\cdot,N}$,
let
$$\mbrw_{u,N}=\sum_{j=n-k}^n \sum_{B\in \BB_j(u)} b_{j,B}^N\,.$$
(Here, $\BB_j(u)$ denotes those elements of $\BB_j^N$ that contain $u$.) Let $\{\mbrw^{i}_N\}_{i}$ denote an i.i.d. family of copies of
$\mbrw_N$ and,
for $u,v\in
V^N_{K,\delta,i}$, set
$Z_{u,v}^N=\mbrw^i_{u,N}-\mbrw^i_{v,N}$.
Thus, $Z^N$ is a Gaussian field with index set
$V_{N,K,\delta}^{\times 2}$.
Similarly, introduce the field $\bar Z_{u,v}$ so that,
for $u\in V_N^{K,\delta}$,
the random vectors
$\{\bar Z_{u,v}\}_{v\in B_{N/K,\delta}(u)}$
have the same law as
$\{Z_{u,v}\}_{v\in B_{N/K,\delta}(u)}$, but these random vectors
are independent for
different $u$.
(Here,
$B_{N/K,\delta}(u)$ denotes the box $V_N^{K,\delta,i}$ to which $u$ belongs.)

Fix a constant $C_2>0$ and set $\bar Y_{u,v}=\eta_{u,N}+ C_2( \bar 
Z_{u,v}^N+ N_u)$ where $\{N_u\}_u$ are i.i.d. standard centered 
Gaussian random variables.
It follows immediately from Lemma \ref{lem-covY} and a direct computation
(using the fact that, for $u\neq u'$, the vectors  $\{\bar Z_{u,v}^N\}_v$ and
$\{\bar Z_{u',v}\}_v$ are independent)
that there is a choice of $C_2$ such that, for any
    $(u,v),(u',v')\in V_{N,K,\delta}^{\times 2}$,
\begin{equation}
  \label{eq-010313a}
  \E(\bar Y_{u,v}-\bar Y_{u',v'})^2 \geq
  \E( Y_{u,v}- Y_{u',v'})^2.
\end{equation}
In particular, with $Y^*_N:=\max_{(u,v)\in V_{N,K,\delta}^{\times 2}} Y_{u,v}$ and
$\bar Y^*_N:=\max_{(u,v)\in V_{N,K,\delta}^{\times 2}} \bar Y_{u,v}$,
by lemma \ref{lem-sudakov-fernique},
\begin{equation}
  \label{eq-010313b}
  \E Y^*_N\leq \E\bar Y^*_N\,.
\end{equation}

We make one more comparison. Let $\brw_{\cdot,N}$ be the 4-ary
BRW  indexed by $V_N$, chosen independently
of $Z^N_{\cdot,\cdot}$,
and set $\tilde Y_{u,v}=\brw_{u,N}+C_2 (\bar Z_{u,v}^N+N_u)$
and $\tilde Y^*_N=\max_{(u,v)\in V_{N,K,\delta}^{\times 2}}\tilde Y_{u,v}$.
By the domination of the correlation distance of
GFF by that of the BRW (see \cite{BZ10}),
Lemma~\ref{lem-sudakov-fernique}
and \eqref{eq-010313b}, it follows that
\begin{equation}
  \label{eq-010313c}
  \E Y^*_N\leq \E\bar Y^*_N\leq \E\tilde Y^*_N\,.
\end{equation}

We need the following lemma, whose proof is postponed until later in this section.
\begin{lemma}
  \label{lem-step4ofwriteup}
  There exists a constant $c_\delta>0$, not depending on $K,N$,
 %  (but depending on $\delta$),
so that
  \begin{equation}
    \E \tilde Y^*_N\leq m_N+c_\delta\,.
  \end{equation}
\end{lemma}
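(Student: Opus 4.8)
The plan is to discard the subtracted MBRW increment $-C_2\mbrw^i_{v,N}$ in a way that decouples the MBRW from the BRW, reducing the statement to a perturbation estimate for the maximum of $\brw_{\cdot,N}$ alone. For $(u,v)$ in a common sub-box $V_N^{K,\delta,i}$ one has $\tilde Y_{u,v}=\brw_{u,N}+C_2(\mbrw^i_{u,N}-\mbrw^i_{v,N})\le \brw_{u,N}+C_2\,\mathrm{osc}_i$, where $\mathrm{osc}_i:=\max_{w,w'\in V_N^{K,\delta,i}}(\mbrw^i_{w,N}-\mbrw^i_{w',N})\ge 0$. Writing $\zeta_u:=C_2\,\mathrm{osc}_{i(u)}$ for the field that is constant on each box $V_N^{K,i}$, equal to $C_2\,\mathrm{osc}_i$ there, and noting that $\zeta$ is nonnegative and independent of $\brw_{\cdot,N}$, this yields
\[
 \tilde Y^*_N \;\le\; \max_{u\in V_N}\big(\brw_{u,N}+\zeta_u\big),
\]
so it suffices to bound the right-hand side by $m_N+c_\delta$.

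First I would record the properties of $\zeta$ that are needed, all uniform in $K,N$: the $\mathrm{osc}_i$ are i.i.d.\ in $i$, independent of $\brw_{\cdot,N}$, nonnegative, with $\E\,\mathrm{osc}_i\le C_\delta$ and Gaussian upper tails $\P(\mathrm{osc}_i\ge \E\,\mathrm{osc}_i+s)\le 2e^{-s^2/C_\delta}$. The independence/i.i.d.\ structure is immediate from the construction. Since $\mbrw^i_{\cdot,N}$ uses only scales $\ge N/K$, the computation behind Lemma~\ref{lem-coarsecov} (equivalently, Lemma~\ref{lem-covariance}) gives $\E(\mbrw^i_{w,N}-\mbrw^i_{w',N})^2\le C_\delta\,|w-w'|K/N$ for $w,w'$ in a box of side $N/K$; Fernique's criterion (Lemma~\ref{lem-ferniquecriterion}, after dividing by $\sqrt{C_\delta}$ and using the side length $N/K$) then yields $\E\max_w \mbrw^i_{w,N}=O_\delta(1)$, hence $\E\,\mathrm{osc}_i=O_\delta(1)$ by the symmetry $-\mbrw^i\overset{d}{=}\mbrw^i$; and Borell--TIS (Lemma~\ref{lem-gaussian-concentration}) applied to the centered Gaussian process $(w,w')\mapsto\mbrw^i_{w,N}-\mbrw^i_{w',N}$, whose supremal variance is $O_\delta(1)$, gives the stated concentration.

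The core step is the BRW analogue of the expectation bound in Lemma~\ref{lem-gff-perturb}, namely $\E\max_{u\in V_N}(\brw_{u,N}+\zeta_u)\le m_N+c_\delta$. I would follow the proof of Lemma~\ref{lem-gff-perturb}, decomposing on the value of $\zeta$ at the maximizer. With $\brw^*_N:=\max_{u\in V_N}\brw_{u,N}$ and $\Gamma_j:=\{u\in V_N:\zeta_u\in[2^{j-1},2^j]\}$ (a random union of whole boxes $V_N^{K,i}$, independent of $\brw_{\cdot,N}$), for $x\ge 1$,
\[
 \P\big(\max_{u\in V_N}(\brw_{u,N}+\zeta_u)\ge m_N+x\big)\le \P(\brw^*_N\ge m_N+x-1)+\sum_{j\ge 1}\E\big[\P\big(\max_{u\in\Gamma_j}\brw_{u,N}\ge m_N+x-2^j \,\big|\, \Gamma_j\big)\big].
\]
The BRW version of \eqref{eq-max-subset} (which is what is actually proved in Lemma~\ref{lem-prelim-tail}) bounds each conditional probability by $\lesssim(|\Gamma_j|/N^2)^{1/2}(x\vee 1)e^{-\sqrt{2\pi}(x-2^j)}$; since $\Gamma_j$ is a union of $\#_j$ boxes of side $N/K$ we have $|\Gamma_j|/N^2=\#_j/K^2$, so by Jensen $\E(|\Gamma_j|/N^2)^{1/2}\le \P(C_2\,\mathrm{osc}_1\in[2^{j-1},2^j])^{1/2}$, which by the Gaussian tail of $\mathrm{osc}_1$ decays like $e^{-c_\delta 4^j}$; hence $\sum_{j\ge1}e^{\sqrt{2\pi}2^j}\,\E(|\Gamma_j|/N^2)^{1/2}=O_\delta(1)$. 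Plugging this in gives $\P(\max_u(\brw_{u,N}+\zeta_u)\ge m_N+x)\lesssim \P(\brw^*_N\ge m_N+x-1)+C_\delta(x\vee 1)e^{-\sqrt{2\pi}x}$, and integrating in $x$ — using $\E\brw^*_N=m_N+O(1)$ together with the right-tail estimates \eqref{eq-right-tail} and \eqref{eq-large-tail} for the BRW, which give $\E(\brw^*_N-m_N)^+=O(1)$ — yields the claimed bound, hence the lemma.

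The step I expect to be the main obstacle is this last one. A naive union bound over the $K^2$ boxes loses a factor of order $\sqrt{\log K}$ (the first-moment overcount for the number of sub-boxes of $V_N$ whose BRW-maximum comes within $O(1)$ of $m_N$); the whole point is that the square-root improvement in \eqref{eq-max-subset} — the ``modified second moment'' gain — together with the fact that $\mathrm{osc}_i$ has a Gaussian rather than merely exponential upper tail is exactly what cancels the factor $K^2$ against the $(N/K)^2/N^2=1/K^2$ in $|\Gamma_j|/N^2$, producing a constant $c_\delta$ independent of $K$ and $N$.
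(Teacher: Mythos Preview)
Your argument is correct, and its overall architecture matches the paper's: bound $\tilde Y_{u,v}$ above by $\brw_{u,N}+\zeta_u$ for a nonnegative perturbation $\zeta_u$ with $O_\delta(1)$ mean and Gaussian upper tails, then invoke the BRW analogue of Lemma~\ref{lem-gff-perturb} via Lemma~\ref{lem-prelim-tail}.

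Where you and the paper diverge is in how the decoupling is achieved. You take $\zeta_u=C_2\,\mathrm{osc}_{i(u)}$, which is constant on each box $V_N^{K,i}$ and i.i.d.\ across boxes; this avoids any further Gaussian comparison but means the perturbation variables are not independent across \emph{all} $u$, so you cannot cite Lemma~\ref{lem-gff-perturb} directly and must redo its proof (as you did), observing that the Jensen step $\E(|\Gamma_j|/N^2)^{1/2}\le \P(C_2\,\mathrm{osc}_1\in[2^{j-1},2^j])^{1/2}$ still holds when $\Gamma_j$ is a union of whole boxes. The paper instead spends one extra Sudakov--Fernique step to replace $Z_{u,v}^N$ by copies $\bar Z_{u,v}^N$ that are independent across $u$ (at the price of a harmless constant factor in front of $\bar Z$), after which $\zeta_u=2C_2\max_v\bar Z_{u,v}^N$ is genuinely i.i.d.\ in $u$ and Lemma~\ref{lem-gff-perturb} applies off the shelf. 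Your route is slightly more elementary (no second comparison inequality); the paper's is more modular (it reuses the perturbation lemma verbatim). Both land on the same Fernique/Borell--TIS estimates for the perturbation and the same $\sqrt{|A|/N^2}$ gain from \eqref{eq-max-subset}, which is exactly what makes the bound uniform in $K$.
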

By Lemma \ref{lem-step4ofwriteup} and \eqref{eq-010313c},
$\E Y^*_N\leq m_N+c_\delta$. On the other hand,
by definition, $Y^*_N\geq X^*_N := \max_{v\in V_{N}^{K,\delta}}\eta_{v,N}$.
Together with $\E(X^*_N-m_N)_-\leq C'' $, which follows from
 the same argument as for $\E(\eta_N^*-m_N)_-< C''$ (see
 \cite[Pages 12--15]{BZ10}),
this implies that
$\E|Y^*_N-m_N|\leq C_\delta$ for some constant $C_\delta$ not depending on
$N,K$. This demonstrates \eqref{eq-010313d} and hence \eqref{eq-star0103a}.

To demonstrate \eqref{eq-star0103b}, fix $\epsilon'>0$ and, using Proposition \ref{prop-Delta}, recall that
%Ofer killed liminf/sup on K where needed
$$\lim_{K\to\infty}  \liminf_{N\to\infty} \P(\max_{z\in V_N^{K,\delta}} \eta_{z,N}\geq m_N-\epsilon' \log k)=1\,.$$
By \eqref{eq-star0103a}, this implies
$$\lim_{K\to\infty}  \liminf_{N\to\infty} \P( \eta_{\bar z,N}\geq m_N-\epsilon' \log k)=1\,.$$
Therefore, since $m_N-m_{N/K} = c^*k + C_N(K)$,
% Maury - I replaced < with =.
with $c^*=2\log 2\cdot \sqrt{2/\pi}$
% Maury - I inserted the definition of $c^*$, which I could not find.$
and $C_N(K)\to_{N\to\infty} 0$, for any fixed $K$,
\eqref{eq-star0103b} will follow from
$$\lim_{K\to\infty}\limsup_{N\to\infty} \P(\max_{i=1}^{K^2} X_{z_i}^c\geq c^*k-\epsilon' \log k-g(K))=0\,.$$
But, for appropriate $g(\cdot)$, the latter is a consequence of a simple union bound:
Setting $g(K)=\alpha \log k$, with $\alpha > 0$, and $\alpha' = \alpha + \epsilon'$,
$$\P(\max_{i=1}^{K^2}
X_{z_i}^c\geq c^*k-\alpha' \log k))
  \leq
  \sum_{i=1}^{K^2} \P(X_{z_i}^c\geq c^*k-\alpha' \log k)\,.$$
  Applying Lemma \ref{lem-covariance} together with the analog of \eqref{eq-110113g},
the mean zero normal $X_{z_i}^c$
% Maury - In the line above, I changed v to z and inserted the c.
  has variance bounded above by
  $(c^*)^2k/4\log 2 + c'$, for appropriate $c'$.
% Maury - One needs the constant c'.
So, the
  right hand side of the last display is bounded above by
  $$C K^2  \frac{\mathrm{e}^{- 2(\log 2)(c^*k -\alpha' \log k)^2/(c^*)^2 k}}{k^{1/2}}
  \leq C \mathrm{e}^{((4\alpha' (\log 2)/c^*)-1/2)\log k}\,.$$
% Maury - I changed the factor of \alpha' from 2 to 4.$
  Choosing $\alpha\in (0,c^*/(8\log 2))$ and $\epsilon'$ small enough
implies that the right hand side of this display $\rightarrow 0$ as $K\rightarrow\infty$, which
  completes the proof of \eqref{eq-star0103b}.
\end{proof}

We turn to the proof of Lemma \ref{lem-step4ofwriteup}.
\medskip

\noindent
\textit{Proof of Lemma \ref{lem-step4ofwriteup}.}
For $u\in V_N^{K,\delta}$, set
$\zeta_u=C_2 \max_{v\in B_{N/K,\delta}(u)}  \bar Z_{u,v}^N +N_u$.
Note that $\E |\bar Z_{u,v}-\bar Z_{u,v'}|^2\leq C_\delta |v-v'|/(N/K)$.
For $u\in V_N\setminus V_{N^{K,\delta}}$, set $\zeta_u=0$.
A direct application of Fernique's criterion (Lemma \ref{lem-ferniquecriterion},
with the box $B$ taken to be $B_{N/K,\delta}$)
shows that $\E \zeta_u\leq c_0=c_0(\delta)$.
On the other hand, since for any $u,v$, $\E  (\bar Z_{u,v}+N_u)^2\leq c_1=c_1(\delta)$,
we conclude by Lemma~\ref{lem-gaussian-concentration} that
$$ \P(\zeta_u\geq c_0+y)\leq 2\mathrm{e}^{-y^2/2c_1}\,,\quad y\geq 0\,.$$
An application of Lemma
  \ref{lem-gff-perturb} then completes the proof of Lemma
 \ref{lem-step4ofwriteup}.

\section{Proofs of Theorems \ref{limit-law} and \ref{explicit-limit-law}}
\label{sec-proof-of-theorem}
We first prove Theorem \ref{limit-law}; Theorem \ref{explicit-limit-law} will then
follow quickly.  The proof of Theorem \ref{limit-law} is based on 
a coupling of the independent random variables
$(Y_i^K,z_i^K)$ of Subsection \ref{subsec-limit} with the values and locations of the local
maxima of the fine field $X^f$. We begin with a construction
of this coupling, 
%July172014
which relies heavily on
Proposition \ref{prop-jian}. We next prove a continuity property
of the coarse field and then employ these two steps to demonstrate
Theorem \ref{limit-law}.

For probability measures $\nu_1,\nu_2$ on $\R$,
we denote by
$d(\nu_1,\nu_2)$ the L\'{e}vy distance between $\nu_1$ and $\nu_2$,
i.e.,
$$ d(\nu_1,\nu_2)=\min \{\delta>0: \nu_1(B)\leq \nu_2(B^\delta)+\delta\quad
\mbox{\rm for all open sets $B$}\},$$
% Maury - I replaced max with min in the display.
where $B^\delta=\{y: |x-y|<\delta \mbox{  for some }x\in B\}$.
With a slight abuse of notation, when $X$ and $Y$ are random variables
with laws $\mu_X$ and $\mu_Y$, respectively, we will also write
$d(X,Y)$ for $d(\mu_X,\mu_Y)$.
\subsection{The coupling construction}
\label{subsec-coupling}
We begin with a preliminary lemma.  Recall that $k = \log_2 K$.
\begin{lemma}
  \label{lem-upperbound}
  There exists a constant $C^* > 0$ so that, for all $K$,
  \begin{equation}
    \label{eq-200113a}
    \limsup_{N\to\infty} \P(\max_{v\in V_{N/K}}\eta_{v,N/K}\geq m_{N/K}+C^* k)
    \leq K^{-3}\,.
  \end{equation}
\end{lemma}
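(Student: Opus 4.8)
The plan is to bound the tail $\P(\max_{v\in V_{N/K}}\eta_{v,N/K}\geq m_{N/K}+C^* k)$ directly using the preliminary tail estimate already established in Lemma~\ref{lem-prelim-tail}, applied on the box $V_{N/K}$ (so that $n$ is replaced by $\log_2(N/K) = n-k$). Setting $z = C^* k$ in the first inequality \eqref{eq-large-tail}, we obtain
$$\P(\max_{v\in V_{N/K}}\eta_{v,N/K}\geq m_{N/K}+C^*k)\lesssim C^*k\,\mathrm{e}^{-\sqrt{2\pi}C^*k}\,\mathrm{e}^{-C^{-1}(C^*k)^2/(n-k)}\,.$$
Since $k = \log_2 K$, the factor $\mathrm{e}^{-\sqrt{2\pi}C^*k} = K^{-\sqrt{2\pi}C^*/\ln 2}$ decays polynomially in $K$ with an exponent we can make as large as we like by taking $C^*$ large. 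The remaining factors $C^*k$ and the Gaussian term $\mathrm{e}^{-C^{-1}(C^*k)^2/(n-k)}\le 1$ do not spoil this: the polynomial-in-$k$ prefactor is swallowed by a slightly larger choice of the exponent. Concretely, choosing $C^*$ so that $\sqrt{2\pi}C^*/\ln 2 \ge 4$ (say), we get, for some absolute constant $c$,
$$\P(\max_{v\in V_{N/K}}\eta_{v,N/K}\geq m_{N/K}+C^*k)\le c\,k\,K^{-4}\le K^{-3}$$
for all $K$ large enough; adjusting $C^*$ upward handles small $K$ as well, since there are only finitely many and each probability is at most $1$.

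The only subtlety is that \eqref{eq-large-tail} is stated with the understanding that $m_N$ and $n$ there refer to the box on which the GFF lives; here we apply it to $V_{N/K}$, whose side length is $N/K = 2^{n-k}$, so $m_{N/K}$ and $n-k$ are the correct substitutions, and the constant $C$ in \eqref{eq-large-tail} is absolute and in particular does not depend on $N$ or $K$. Taking $\limsup_{N\to\infty}$ is then immediate since the bound is uniform in $N$ (indeed it does not depend on $N$ at all once we drop the harmless factor $\mathrm{e}^{-C^{-1}(C^*k)^2/(n-k)}\le 1$).

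I do not expect any real obstacle here; the lemma is essentially a bookkeeping consequence of Lemma~\ref{lem-prelim-tail} with the right choice of the absolute constant $C^*$. If one wanted to avoid even invoking \eqref{eq-large-tail} and instead use only the cruder right-tail bound, one could alternatively apply \eqref{eq-max-subset} with $A = V_{N/K}$ and $y=0$, but \eqref{eq-large-tail} is cleaner and directly gives the stated form. The point worth stating carefully in the writeup is simply that the exponential decay in $k$ dominates the polynomial prefactor, so a single large absolute constant $C^*$ works for all $K$ simultaneously.
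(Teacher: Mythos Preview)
Your proof is correct. It differs from the paper's route: the paper simply cites \eqref{eq-301212b} (with $A=(0,1)^2$), i.e., the precise tail asymptotic established in Proposition~\ref{prop-jian}, whereas you invoke only the preliminary upper bound \eqref{eq-large-tail} from Lemma~\ref{lem-prelim-tail}. Both yield a bound of the form $\lesssim C^*k\,\mathrm{e}^{-\sqrt{2\pi}C^*k}=C^*k\,K^{-\sqrt{2\pi}C^*/\ln 2}$, after which the choice of $C^*$ is identical. Your approach is arguably cleaner here, since Lemma~\ref{lem-prelim-tail} is elementary and avoids the substantial work of Section~\ref{sec-limittail}; the paper's citation works too but relies on the much deeper result. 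One small wording point: your phrase ``each probability is at most $1$'' is not the operative reason small $K$ can be handled; rather, for each fixed $K\geq 2$ the bound $cC^*k\,\mathrm{e}^{-\sqrt{2\pi}C^*k}\to 0$ as $C^*\to\infty$, so a single large $C^*$ covers the finitely many remaining cases (and $K=1$ is trivial).
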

  \begin{proof} Apply
    \eqref{eq-301212b} with $A=(0,1)^2$. \end{proof}

% As a second preparatory lemma, we have the following.
Let $g(K)$ be as in
Proposition \ref{prop-local}, and
recall from the proof of the proposition that we can choose
$g(K)=\alpha \log k$ for an appropriate $\alpha > 0$.
Also, recall the variables $Y_i^K$ in \eqref{eq-indeptails}.
Set $\theta_K(x)= \mathrm{e}^{\sqrt{2\pi}(x + g(K))}/(g(K)+x)$, for $x\geq 0$, and set
 $\eta_{N/K,\delta}^*=\max_{v\in V_{N/K}^\delta}\eta_{v,N/K}$.
% Maury - I slightly reworded the above several lines.
\begin{lemma}
  \label{lem-intervalsplit}
  There exist $\epsilon_K\rightarrow_{K\rightarrow\infty} 0$ and a sequence of numbers
  $\alpha_{1,K,N}<\alpha_{2,K,N}<\ldots< \alpha_{_{C^*k,K,N}}$
  satisfying
\begin{equation}
\label{equation97a}
|\alpha_{j,K,N}-(g(K)+j-1)|\leq \epsilon_K
\end{equation}
%July172014
such that, for all $i$,
% Maury - I reworded this.  The new display is referenced in P6.3.
  \begin{equation}
    \label{eq-200113b}
    \theta_K(0)^{-1}
    \P(Y_i^K\in [j-1,j])=\P(\eta_{N/K}^*-m_{N/K}\in  [\alpha_{j,K,N},\alpha_{j+1,K,N}))
    \,,\text{ for } j=1,\ldots,C^*k\,.
  \end{equation}
\end{lemma}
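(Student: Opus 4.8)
The plan is to build the desired points $\alpha_{j,K,N}$ by inverting the (approximately known) distribution function of $\eta^*_{N/K}-m_{N/K}$ near the scale $g(K)$, matching the atoms of $Y_i^K$ on each interval $[j-1,j]$. First I would record the two pieces of input we already have. From \eqref{eq-indeptails}, $\P(Y_i^K\in[j-1,j])=\frac{g(K)+j-1}{g(K)}\mathrm{e}^{-\sqrt{2\pi}(j-1)}-\frac{g(K)+j}{g(K)}\mathrm{e}^{-\sqrt{2\pi}j}$, which, after multiplying by $\theta_K(0)^{-1}=g(K)\mathrm{e}^{-\sqrt{2\pi}g(K)}$, equals $(g(K)+j-1)\mathrm{e}^{-\sqrt{2\pi}(g(K)+j-1)}-(g(K)+j)\mathrm{e}^{-\sqrt{2\pi}(g(K)+j)}$; in other words, setting $h(x)=(g(K)+x)\mathrm{e}^{-\sqrt{2\pi}(g(K)+x)}$ we want $\theta_K(0)^{-1}\P(Y_i^K\in[j-1,j])=h(j-1)-h(j)$. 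On the other side, Proposition~\ref{prop-jian} (equivalently \eqref{eq-301212b} applied with $A=(0,1)^2$ and the sequence $x_K$), together with \eqref{eq-301212a}, gives, uniformly over $x\geq 0$ in the allowed range,
\[
\lim_{K\to\infty}\lim_{N\to\infty}\frac{\mathrm{e}^{\sqrt{2\pi}(g(K)+x)}}{g(K)+x}\,\P\big(\eta^*_{N/K}-m_{N/K}\geq m_{N/K}\cdot 0+g(K)+x\big)=\alpha^*\cdot\!\!\!\!\phantom{\int}
\]
so that $\P(\eta^*_{N/K}-m_{N/K}\geq g(K)+x)=(\alpha^*+o_{K}(1)+o_{N}(1))(g(K)+x)\mathrm{e}^{-\sqrt{2\pi}(g(K)+x)}=(\alpha^*+o(1))h(x)$, the error being uniform in $x\in[0,C^*k]$ by the uniformity clause in Proposition~\ref{prop-jian} and by Lemma~\ref{lem-upperbound} for the upper cutoff. (The normalization $\theta_K(0)^{-1}\P(Y_i^K\ge 0)$ equals $g(K)\mathrm{e}^{-\sqrt{2\pi}g(K)}\cdot\frac{g(K)+0}{g(K)}\mathrm{e}^0$... one checks $\theta_K(0)^{-1}=g(K)\mathrm{e}^{-\sqrt2\pi g(K)}$ makes $\theta_K(0)^{-1}\P(Y^K_i\in[j-1,j])$ collapse exactly onto the telescoping increments of $h$, and $\alpha^*$ is absorbed into the $\alpha_{j,K,N}$; if $\alpha^*\neq1$ one rescales $h$ by $\alpha^*$ throughout, which does not affect the estimate \eqref{equation97a} since it only shifts all $\alpha_{j,K,N}$ by the same $O(1/g(K))$ amount.)

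Given this, define $\alpha_{j,K,N}$ implicitly by
\[
\P\big(\eta^*_{N/K}-m_{N/K}\geq \alpha_{j,K,N}\big)=\theta_K(0)^{-1}\,\P(Y_i^K\geq j-1)=\sum_{\ell\geq j-1}\big(h(\ell-1)-h(\ell)\big)=h(j-1),
\]
interpolating/taking a suitable quantile if the law of $\eta^*_{N/K}$ has atoms (it does not on the relevant range, but one can always pick the smallest such value). Since the left side, as a function of the threshold $t$, is within $o(1)\cdot h(t-g(K))$ of $\alpha^* h(t-g(K))$ uniformly, and $h$ is strictly decreasing and Lipschitz on $[0,C^*k]$ with derivative bounded below in absolute value by $\asymp \mathrm{e}^{-\sqrt{2\pi}(g(K)+j)}$ near argument $j$, inverting shows $\alpha_{j,K,N}$ is within $o(1)/|h'| \cdot h = o(1)$ of the exact solution $g(K)+j-1$ of $\alpha^* h(t-g(K))=\alpha^* h(j-1)$; tracking constants gives a bound of the form $|\alpha_{j,K,N}-(g(K)+j-1)|\leq \epsilon_K$ with $\epsilon_K\to 0$, which is \eqref{equation97a}. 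Monotonicity $\alpha_{1,K,N}<\cdots<\alpha_{C^*k,K,N}$ is automatic since $t\mapsto\P(\eta^*_{N/K}-m_{N/K}\ge t)$ is nonincreasing and $h(j-1)$ is strictly decreasing in $j$. Finally \eqref{eq-200113b} follows by subtracting consecutive defining equations: $\P(\eta^*_{N/K}-m_{N/K}\in[\alpha_{j,K,N},\alpha_{j+1,K,N}))=h(j-1)-h(j)=\theta_K(0)^{-1}\P(Y_i^K\in[j-1,j])$.

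The main obstacle is purely the bookkeeping of the two independent error scales: we must take $N\to\infty$ \emph{first} (so that $\P(\eta^*_{N/K}-m_{N/K}\ge g(K)+x)$ is genuinely comparable to $\alpha^* h(x)$) and only then let $K\to\infty$, and we must ensure the inversion error $\epsilon_K$ does not depend on $N$ once the $N$-limit has been taken, nor blow up as $j$ ranges up to $C^*k$. The uniformity in $x_K$ in Proposition~\ref{prop-jian} is exactly what makes this work; the cutoff $C^*k$ from Lemma~\ref{lem-upperbound} guarantees $h(j)$ stays comfortably above the $o(1)$ error (it is at least $\mathrm{e}^{-\sqrt{2\pi}(g(K)+C^*k)}$, still polynomially large against the relative error), so the quantile inversion is stable on the whole range. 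No other step involves anything beyond monotone inversion of a one-dimensional distribution function, so I would keep those computations terse.
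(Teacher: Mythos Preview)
Your approach is essentially the paper's: both use \eqref{eq-301212a}--\eqref{eq-301212b} (Proposition~\ref{prop-jian} with $A=(0,1)^2$) to approximate $\P(\eta^*_{N/K}-m_{N/K}\ge g(K)+x)$ by $\alpha^*\,h(x)$, where $h(x)=(g(K)+x)\mathrm{e}^{-\sqrt{2\pi}(g(K)+x)}=1/\theta_K(x)$, and then invert. Where the paper simply invokes ``uniform continuity of $1/\theta_K$,'' you spell out the quantile matching and the Lipschitz inversion; the content is the same.

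There is one genuine slip. You claim that if $\alpha^*\ne 1$ the discrepancy ``only shifts all $\alpha_{j,K,N}$ by the same $O(1/g(K))$ amount.'' That is false: the equation you are actually solving is $\alpha^*\,h(\alpha_{j,K,N}-g(K))\approx h(j-1)$ (not $\alpha^* h=\alpha^* h$ as you write two lines later), and since $|h'|\asymp\sqrt{2\pi}\,h$, the inverse shifts by $\frac{\log\alpha^*}{\sqrt{2\pi}}+O\!\bigl(\frac{1}{g(K)+j}\bigr)$, a fixed nonzero constant that does \emph{not} vanish as $K\to\infty$, violating \eqref{equation97a}. The resolution is not to dismiss $\alpha^*$ as lower order but to notice that the lemma, as applied in Proposition~\ref{prop-coupling}, is really matching $\P(\B Y\in[j-1,j))=\alpha^*\theta_K(0)^{-1}\P(Y\in[j-1,j))$ for $j\ge 2$; i.e.\ the intended left side of \eqref{eq-200113b} carries an extra factor $\alpha^*$ (equivalently $\beta_{N,K}=\theta_K(0)\P(\mathcal{A}_{N,K})$, which is what the paper's proof actually introduces). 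With that normalization your inversion argument goes through cleanly and yields \eqref{equation97a} with $\epsilon_K\to 0$.
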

  \begin{proof}
Setting $\beta=
    \beta_{N,K,\delta}=\theta_K(0) \P({\cal A}_{N,K,\delta})$,
    it follows from
    \eqref{eq-301212a} of Proposition \ref{prop-jian}
    that, for all $N$ large, $|\beta-\alpha^*m_\delta|\leq \delta_K$,
    with $\delta_{K}\rightarrow_{K\rightarrow\infty} 0$. 
    Using the uniform continuity of
    the function $1/\theta_K(\cdot)$ and \eqref{eq-301212b}, with
    $A=(0,1)^2$, the conclusion follows for an appropriate choice of
    $\epsilon_K$.
  \end{proof}

  We now construct the required coupling.
%  Recall the set $V_{N/K}^{\delta}$ defined in (\ref{eq-maury160113a}), with $V_N^{K,\delta,1} \subset V_{N/K}$ and consisting
%of points further than $\delta N/K$ from $\partial V_{N/K}$.
Choose the enumeration
   of $W^i$ in Section \ref{subsec-limit} so that
  % ${\cal W}_1$ be such that
  $N{ W}^1\cap \Z^2=V_{N/K}$.
  Denote by $(\B,Y,z^{K,\delta})$ a copy of the random vector $(\B_1^K,Y_1^K,z_1^{K,\delta})$.
 % Recall the random variable $v^*$ defined by $\eta_{v^*,N/K}=\eta_{N/K}^*$.
  Recall the random variable $v^*_\delta$ defined by 
  $\eta_{v^*_\delta,N/K}=\max_{v\in V_{N/K}^\delta} \eta_{N/K}(v)$.
  \begin{prop}
    \label{prop-coupling}
    There exists a sequence $\bar\epsilon_K=\bar\epsilon_K(\delta)\rightarrow_{K\rightarrow\infty} 0$ such that
    $(\bar \B_{N,K,\delta},\eta_{N/K,\delta}^*-m_{N/K}, v^*_\delta)$ and $(\B,Y,z^{K,\delta})$
    can be constructed on the same
    probability space, with $\B=\bar \B_{N,K,\delta}:=
    1_{\{\eta_{N/K,\delta}^* - m_{N/K}\geq
    \alpha_{1,K,N}\}}$ holding with probability $1$, and such that,
    on the event where
   % $v^*\in V_N^{K,\delta/2,1}$ and 
    $\eta_{N/K,\delta}^*-m_{N/K}
    \leq \alpha_{C^*k,K,N}$,
% Maury - I reworded somewhat the last several lines.  There was a typo (v_* instead of v^*).
    \begin{equation}
      \label{eq-210113d}
      \B|g(K)+Y-(\eta_{N/K,\delta}^*-m_{N/K})|+K|z^{K,\delta}
      -v^*_\delta/N|\leq \bar\epsilon_K\,.
% Maury - I changed $v_*$ to $v^*$ in the line above.
    \end{equation}
  \end{prop}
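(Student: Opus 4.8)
The plan is to build the coupling in two stages --- one for the \emph{height} $\eta_{N/K}^*-m_{N/K}$ (together with the Bernoulli variable) and one for the \emph{location} $v^*$ --- and then to glue them, the gluing being legitimate because Proposition~\ref{prop-jian} shows that height and location decouple in the limit.

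\emph{Step 1: the height and the Bernoulli variable.} Since the maximum of a non-degenerate Gaussian field has an atomless law, we may take $\alpha_{1,K,N}$ to be the exact upper quantile for which $\P(\eta_{N/K}^*-m_{N/K}\ge\alpha_{1,K,N})=\P(\B_1^K=1)=\alpha^*g(K)\mathrm{e}^{-\sqrt{2\pi}g(K)}$; by \eqref{eq-301212a} this forces $|\alpha_{1,K,N}-g(K)|\le\epsilon_K$ for some $\epsilon_K\to 0$, consistent with \eqref{equation97a}, so that $\bar\B_{N,K}=\1_{\{\eta_{N/K}^*-m_{N/K}\ge\alpha_{1,K,N}\}}$ has the law of $\B_1^K$ and we may set $\B:=\bar\B_{N,K}$. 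On $\{\B=0\}$ we draw $Y$ independently from its law. On $\{\B=1\}$ we specialize \eqref{eq-301212b} to $A=(0,1)^2$: the conditional survival function $x\mapsto\P(\eta_{N/K}^*-m_{N/K}-g(K)\ge x\mid\mathcal{A}_{N,K})$ then converges, \emph{uniformly in $x$}, to $\P(Y\ge x)=\tfrac{g(K)+x}{g(K)}\mathrm{e}^{-\sqrt{2\pi}x}$, of which Lemma~\ref{lem-intervalsplit} is the quantized version. Hence the two conditional distribution functions differ by at most some $\rho(N,K)\to_{N\to\infty}0$, and the monotone (quantile) coupling of these laws, together with the fact that on $[0,C^*k]$ the density of $Y$ stays above the fixed positive number $\sqrt{2\pi}\,\mathrm{e}^{-\sqrt{2\pi}C^*k}$, yields $|g(K)+Y-(\eta_{N/K}^*-m_{N/K})|\lesssim\rho(N,K)\,\mathrm{e}^{\sqrt{2\pi}C^*k}$ on $\{\B=1\}\cap\{\eta_{N/K}^*-m_{N/K}\le\alpha_{C^*k,K,N}\}$, which is $\le\bar\epsilon_K$ once $N$ exceeds a threshold $N_0(K)$. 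The excluded event $\{\eta_{N/K}^*-m_{N/K}>\alpha_{C^*k,K,N}\}$ has probability $O(K^{-3})$ by Lemma~\ref{lem-upperbound}, so $Y$ may be left unconstrained there.

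\emph{Step 2: the location.} The crucial point is that the convergence in \eqref{eq-301212b} is uniform in the nonnegative height shift, so the conditional law of $Kv^*/N$ given that $\eta_{N/K}^*-m_{N/K}$ falls in a fixed quantile cell converges (as $N\to\infty$ and then $K\to\infty$) to $\psi(y)\,dy$ regardless of the cell; moreover, since $\P(\B=1)\to0$, the \emph{unconditional} law of $Kv^*/N$ has the same limit $\psi(y)\,dy$, which is what is needed on $\{\B=0\}$. Fix a dyadic partition of $(0,1)^2$ into boxes of side $1/m_K$, with $m_K\to\infty$ slowly. As $Kz^K$ has density $\psi$ while the law of $Kv^*/N$ (conditional on the relevant cell, or unconditional) puts mass within $o(1)$ (as $N\to\infty$) of $\int_Q\psi$ on every partition box $Q$, we may couple $Kv^*/N$ with $Kz^K$ so that they lie in a common box whenever $v^*\in V_N^{K,\delta/2,1}$, steering the residual mismatch onto the complement of the event in \eqref{eq-210113d} --- namely $\{v^*\notin V_N^{K,\delta/2,1}\}\cup\{\eta_{N/K}^*-m_{N/K}>\alpha_{C^*k,K,N}\}$, of probability $O(\delta)+O(K^{-3})$ --- together with a within-box rearrangement. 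On the event of \eqref{eq-210113d} this forces $K|z^K-v^*/N|\le\sqrt2/m_K\le\bar\epsilon_K$ for $K$ large.

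\emph{Step 3: gluing, and the main obstacle.} Gluing Steps 1 and 2 --- legitimate precisely because \eqref{eq-301212b} makes height and location asymptotically independent, uniformly over cells --- yields the asserted coupling, with $\bar\epsilon_K$ the larger of the two errors above. I expect the delicate part to be Step 2: turning the weak convergence in \eqref{eq-301212b} into a bound that holds \emph{pathwise} on the (probability $\to1$) event of \eqref{eq-210113d}, and in particular treating the case $\B=0$, where one must first identify the unconditional limiting law of $Kv^*/N$ with $\psi$ --- via comparison with the conditional statement of Proposition~\ref{prop-jian} and the smallness of $\P(\B=1)$ --- before the common-box coupling can be set up. Step 1, by contrast, is essentially routine once the prefactor-sharp tail estimate of \cite{DZ12}, as repackaged in Proposition~\ref{prop-jian}, is in hand.
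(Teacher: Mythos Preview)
Your two-stage plan has a genuine gap at Step~3, which you dismiss too quickly. The triple $(\B,Y,z^K)$ is by definition a copy of $(\B_1^K,Y_1^K,z_1^K)$ from Section~\ref{subsec-limit}, where $Y_1^K$ and $z_1^K$ are \emph{independent}. Your quantile coupling in Step~1 makes $Y$ a deterministic bijective function of the height $H:=\eta_{N/K}^*-m_{N/K}$, and your Step~2 makes $z^K$ a function of $v^*$ (plus auxiliary randomness), coupled to it conditionally on the height \emph{cell}. But $H$ and $v^*$ are correlated; so within a cell, $Y$ still varies with the exact height and $z^K$ with the exact location, and nothing you have done forces $Y\perp z^K$. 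Conditioning only on the cell index gives $z^K\perp(\text{cell index})$, not $z^K\perp Y$. The asymptotic independence in \eqref{eq-301212b} says that the \emph{joint} law of $(H,v^*)$ is close to a product measure --- which is the right input --- but it does not say that two separate marginal couplings produce a pair with product law. (Your concern about the case $\B=0$ in Step~2 is a side issue by comparison; the real obstacle is the one you labelled ``legitimate''.)

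The paper resolves this by doing a \emph{joint} coupling, not two separate ones. On each interval $[j-1,j)$ it considers the conditional law $\mu_g^j$ of the pair $(\ell(H),K\bar\ell(v^*)/N)$ on $[j-1,j)\times\mathcal{W}^\delta\subset\mathbb{R}^3$, and the target $\mu_c^j$, which is the \emph{product} of the conditional law of $Y$ on $[j-1,j)$ with $\psi$ restricted to $\mathcal{W}^\delta$. Proposition~\ref{prop-jian} gives $d(\mu_g^j,\mu_c^j)\le\epsilon_K''\to0$; since $\mu_c^j$ has density uniformly bounded below on a bounded domain, an $L^\infty$ optimal-transport estimate (the paper cites \cite{BJR}) yields a coupling with sup-norm displacement tending to~$0$. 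This single step delivers both the pathwise closeness of $(Y,z^K)$ to $(H,v^*)$ \emph{and} the exact product law for $(Y,z^K)$; one then patches over~$j$. The piece you flagged as routine is exactly where the nontrivial input --- a sup-norm transport bound in dimension~$3$ --- is needed.
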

In words, Proposition \ref{prop-coupling} states that the two processes can be coupled so that,
on the
rare
%Ofer1202 I chenged exceptional to rare, as exceptional invokes the image
%of zero probability
set where $\B = 1$, $\eta_{N/K,\delta}^*-m_{N/K}$ is always closely approximated
by $g(K) + Y$, and $v^*_\delta/N$ is closely approximated by $z^{K,\delta}$.
% Maury - I added explanation in the lines just above and just below the proposition.
  \begin{proof}
    Employing Lemma
    \ref{lem-intervalsplit}, there is a piecewise linear map $\ell:[\alpha_{1,K,N},\alpha_{C^*k,K,N})
    \mapsto [g(K),g(K)+C^*k)$,
with $\ell (\alpha_{j,K,N}) = g(K) + j-1$, for $j=1,\ldots,C^*k$, and
having Lipschitz constant contained in
    $(1-\epsilon_K,1+\epsilon_K)$ and
     satisfying $\epsilon_K\rightarrow_{K\rightarrow\infty} 0$,
     so that
     %July172014
    $$\P(\ell(\eta^*_{N/K,\delta}-m_{N/K}) - g(K) \in [j-1,j))=
    \P( Y\in [j-1,j))\P(\B=1)\,.$$
  Because of (\ref{equation97a}), it suffices to prove \eqref{eq-210113d} with
    $\ell(\eta^*_{N/K,\delta}-m_{N/K})$ in place of $\eta^*_{N/K,\delta}-m_{N/K}$.
% Maury - I reworded the preceding material.

    We restrict attention
    to a fixed interval $[j-1,j)$.
    %Introduce a linear scaling $\bar \ell$ of the variable
    %$v^*$ (again, with Lipshitz constant in $(1-\epsilon_K',1+\epsilon_K')$,
    %satisfying $\epsilon_K'\rightarrow_{K\rightarrow\infty} 0$) so that
    %July172014
% Maury - again, I am assuming $v_*$ was a misprint.
    %$$\P(\ell(\eta^*_{N/K}-m_{N/K})-g(K)
    %\in [j-1,j), K\bar \ell(v^*)/N\in {\cal W}^\delta)=
    %\P(Y\in [j-1,j),Kz^K\in {\cal W}^\delta)\P(\B=1)\,.$$
%($\bar \ell$ serves to shrink or expand $\cal{W}^\delta$ slightly.)
% Maury - I added the above comment.
    Let $\mu_g^j$ and $\mu_c^j$
    denote the  probability measures
    on $[j-1,j)\times {\cal W}^\delta$ defined by
    \begin{eqnarray*}
      \mu_g^j(I_1\times I_2)&=&
      \frac{
      \P(\ell(\eta^*_{N/K,\delta}-m_{N/K})-g(K)\in I_1,Kv^*_\delta/N\in I_2)}
      {
      \P(\ell(\eta^*_{N/K,\delta}-m_{N/K})-g(K)
    \in [j-1,j))},\\
            %, K\ell'(v^*)/N\in {\cal W}^\delta)},\\
    \mu_c^j(I_1\times I_2)&=&
    \frac{
            \P(Y\in I_1,Kz^{K,\delta}\in I_2)}
            {\P(Y\in [j-1,j))},
                    %,Kz^K\in {\cal W}^\delta)},
  \end{eqnarray*}
for intervals $I_1 \subseteq [j-1,j)$ and $I_2 \subseteq \mathcal W^{\delta}$.
% Maury - in the preceding two displays, I changed $v_*$ to $v^*$ three times.  Am I confusing something?
  Note that $\mu_c^j$ has a positive density on $[j-1,j)\times
  {\cal W}^{\delta}$, which is uniformly  bounded from below with a bound not depending on
either $j$ or $K$,
% Maury - I reworded the above phrase.
 and that
  the L\'{e}vy distance between $\mu_g^j$ and $\mu_c^j$ is bounded from above
  by $\epsilon_K'\rightarrow_{K\rightarrow\infty} 0$, due to \eqref{eq-301212b-new}.
  Since $[j-1,j)\times {\cal W}^\delta$ is a bounded subset of
  $\R^3$, an elementary coupling
  (see, e.g., \cite[Theorem 1.2]{BJR}; the analog for one-dimensional couplings is easy to check) yields a coupling satisfying the analog of (\ref{eq-210113d}), but
restricted to $[j-1,j)$.
     The claim  \eqref{eq-210113d} then follows by combining the couplings
     for different $j$.
Further details are omitted.
  \end{proof}

  \subsection{A continuity lemma}
  \label{subsec-contlem}
  We will also need the following continuity result, which shows
  that the maximum value of the GFF is not affected by
  slightly changing the position at which the coarse field is sampled.
In what follows,
  let $z_i$ be as in Proposition \ref{prop-local}, let
$\bar \epsilon_K\rightarrow_{K\rightarrow\infty} 0$
  be as in Proposition
  \ref{prop-coupling}, and let $\{z_i'\}_{i=1}^{K^2}$ denote a family of
  independent random variables chosen so that $z_i'$ is measurable with respect to
  $\sigma(X_v^f, v\in V_N^{K,i})$ and that satisfies $K|z_i-z_i'|/N\leq \bar\epsilon_K$.
(Recall that $\{X_v^f, v\in V_N^{K,i}\}$ are independent for distinct $i$.)
%Ofer killed limsup here
  \begin{lemma}
    \label{lem-cont}
    With notation as above,
    \begin{equation}
      \label{eq-contlem}
    \lim_{K\to\infty} \limsup_{N\to\infty}
    d(\max_{i=1}^{K^2} (X_{z_i}^f+X_{z_i}^c), \max_{i=1}^{K^2}
   (X_{z_i}^f+X_{z_i'}^c)))=0\,.
  \end{equation}
\end{lemma}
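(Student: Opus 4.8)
The plan is to show that perturbing the sampling location of the coarse field from $z_i$ to $z_i'$ changes the maximum $\max_i(X_{z_i}^f+X_{z_i}^c)$ by only $o(1)$ with high probability, which by definition of the L\'evy distance gives \eqref{eq-contlem}. The key point is that $z_i$ and $z_i'$ lie within $\bar\epsilon_K N/K$ of one another and, crucially, both lie (with high probability) well inside $V_N^{K,\delta/2,i}$, where Lemma~\ref{lem-coarsecov} gives the Gaussian increment bound $\E(X_{z_i}^c-X_{z_i'}^c)^2\le c_{\delta/2}\,|z_i-z_i'|/(N/K)\le c_{\delta/2}\,\bar\epsilon_K$. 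The first step is therefore to set up a good event: using Proposition~\ref{prop-Delta} together with Proposition~\ref{prop-local}, restrict to the event that $\eta_N^*=\max_i(X_{z_i}^f+X_{z_i}^c)$ is attained at some $z_i$ that lies in $V_N^{K,\delta,i}$ (so that $z_i'$, being within $\bar\epsilon_K N/K$, lies in $V_N^{K,\delta/2,i}$ once $\bar\epsilon_K<\delta/2$), and also that $\eta_N^*-m_N$ is contained in a fixed compact interval (using tightness, \cite{BZ10}).

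On this good event, the main estimate is a uniform control of $\Delta_i:=X_{z_i}^c-X_{z_i'}^c$. Since $z_i'$ is $\sigma(X^f_{\cdot}\!\restriction V_N^{K,i})$-measurable and $X^f$ is independent of $X^c$, I would condition on the fine field; then $z_i,z_i'$ are deterministic points within distance $\bar\epsilon_K N/K$ inside $V_N^{K,\delta/2,i}$, and by Lemma~\ref{lem-coarsecov} and a Gaussian tail bound, $\P(|\Delta_i|\ge \bar\epsilon_K^{1/4}\mid X^f)\le 2\exp(-\bar\epsilon_K^{1/2}/(2c_{\delta/2}))$ (using $\bar\epsilon_K^{1/2}/(c_{\delta/2}\bar\epsilon_K)\gtrsim\bar\epsilon_K^{-1/2}$). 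A union bound over $i=1,\dots,K^2$ then needs the tail to beat $K^2$; however $\bar\epsilon_K$ may decay too slowly for a crude union bound, so instead I would bound $\E\max_i|\Delta_i|$ directly via Fernique's criterion (Lemma~\ref{lem-ferniquecriterion}): after the parabolic rescaling the field $\{X^c_v-X^c_{z_i'}\}$ restricted to a box of side $\bar\epsilon_K N/K$ around $z_i'$ satisfies the required increment bound up to a constant $c_{\delta/2}$, giving $\E\max_{v: K|v-z_i'|/N\le\bar\epsilon_K}|X_v^c-X_{z_i'}^c|\le C_F\sqrt{c_{\delta/2}\bar\epsilon_K}$, and then Gaussian concentration (Lemma~\ref{lem-gaussian-concentration}) across the $K^2$ independent-in-$X^f$-conditioned boxes controls $\max_i|\Delta_i|$ by $C\sqrt{\bar\epsilon_K}+C\sqrt{\bar\epsilon_K\log K}\to0$ as $K\to\infty$ once we know $\bar\epsilon_K\log K\to 0$ — which we are free to arrange, since Proposition~\ref{prop-coupling} only asserts the existence of \emph{some} sequence $\bar\epsilon_K\to0$ and we may always pass to a slower-decaying majorant; alternatively one simply replaces $\bar\epsilon_K$ throughout by $\max(\bar\epsilon_K,1/\log\log K)$.

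Once $\max_i|\Delta_i|\le \rho_K$ with probability $1-o_K(1)$ (uniformly in $N$ large), the conclusion is immediate: on this event, for every $i$, $|(X_{z_i}^f+X_{z_i}^c)-(X_{z_i}^f+X_{z_i'}^c)|\le\rho_K$, hence $|\max_i(X_{z_i}^f+X_{z_i}^c)-\max_i(X_{z_i}^f+X_{z_i'}^c)|\le\rho_K$, so the two maxima can be coupled to differ by at most $\rho_K$ except on an event of probability $o_K(1)$, which bounds their L\'evy distance by $\max(\rho_K,o_K(1))$; letting $N\to\infty$ and then $K\to\infty$ yields \eqref{eq-contlem}. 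The main obstacle is precisely the interplay between the size of $\bar\epsilon_K$ and the number $K^2$ of boxes over which we take a maximum: a naive union bound does not close, and one must either invoke Fernique's criterion plus Gaussian concentration as above, or simply observe that $\bar\epsilon_K$ in Proposition~\ref{prop-coupling} can be chosen to decay as slowly as desired (e.g.\ $\bar\epsilon_K\ge 1/\log\log K$) so that $\bar\epsilon_K\log K\to0$ fails to be an issue — but this requires care since $\bar\epsilon_K$ is also used in Lemma~\ref{lem-cont}'s hypothesis $K|z_i-z_i'|/N\le\bar\epsilon_K$, so the two uses of $\bar\epsilon_K$ must be made consistent, which I would handle by fixing at the outset a single sequence $\bar\epsilon_K\to0$ with $\bar\epsilon_K\log K\to0$ and using it everywhere.
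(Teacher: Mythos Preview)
Your approach of directly controlling $\max_i|\Delta_i|$ via Fernique's criterion and Gaussian concentration is natural, but it has a genuine gap at the step where you claim $\bar\epsilon_K\log K\to 0$ can be arranged. You write that one may ``pass to a slower-decaying majorant'' of $\bar\epsilon_K$; this is backwards. To force $\bar\epsilon_K\log K\to 0$ you would need $\bar\epsilon_K$ to decay \emph{faster} than $1/\log K$, i.e.\ you would need a \emph{smaller} $\bar\epsilon_K$---but $\bar\epsilon_K$ is fixed by the coupling in Proposition~\ref{prop-coupling} (ultimately by the convergence rates in Proposition~\ref{prop-jian}) and cannot be made smaller at will. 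Enlarging $\bar\epsilon_K$ only loosens the hypothesis $K|z_i-z_i'|/N\le\bar\epsilon_K$ while making your bound $\sqrt{\bar\epsilon_K\log K}$ worse; in particular your concrete suggestion $\bar\epsilon_K\ge 1/\log\log K$ gives $\bar\epsilon_K\log K\ge \log K/\log\log K\to\infty$. So the union-bound obstacle you correctly identified is not actually removed.

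The paper avoids this obstacle by a different route. Rather than bounding $\max_i|\Delta_i|$, it introduces the auxiliary field $\zeta_{u,v}=\eta_{u,N}+X_v^c-X_u^c$ over \emph{all} pairs $(u,v)\in V_N^2$ with $|u-v|\le\bar\epsilon_K N/K$, and observes that (up to the approximation of Propositions~\ref{prop-Delta} and~\ref{prop-local}) the difference of the two maxima in \eqref{eq-contlem} is controlled by $\zeta^*_{N,K}-\eta_N^*$, where $\zeta^*_{N,K}=\max_{(u,v)}\zeta_{u,v}$. The problem then reduces to showing $\E\zeta^*_{N,K}\le\E\eta_N^*+o_K(1)$, which is done exactly as in the proof of Lemma~\ref{lem-step4ofwriteup}: one dominates the coarse-field increment by an independent per-vertex perturbation with sub-Gaussian tails (via Lemma~\ref{lem-ferniquecriterion} applied in each small box, together with Lemma~\ref{lem-gaussian-concentration}) and then invokes the robustness result Lemma~\ref{lem-gff-perturb}. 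The crucial point is that Lemma~\ref{lem-gff-perturb} absorbs the factor $K^2$ (indeed $N^2$) through the estimate~\eqref{eq-max-subset} on the GFF maximum over thin subsets, so no condition whatsoever on the decay rate of $\bar\epsilon_K$ relative to $K$ is needed. This is precisely the missing ingredient in your argument.
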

\begin{proof}
  The argument is similar to that for $\P({\cal A}_2)\to_{N\to\infty} 0$, which was employed
  while proving Proposition \ref{prop-local}.
% We therefore give only a sketch.
% Maury - I removed the above line.
Denote by $V_{N, K}^{\times 2} = \{(u, v): u, v \in V_N, |u-v| \leq \bar \epsilon_K N/K\} $. For $(u, v)\in V_{N, K}^{\times 2}$, set $\zeta_{u, v, N, K} = \eta_{u, N} + X_u^c - X_v^c$ and $\zeta^*_{N, K} = \max_{(u, v) \in V_{N, K}^{\times 2}}\zeta_{u, v, N, K}$.
%By Proposition~\ref{prop-local},

By Proposition \ref{prop-Delta} and \eqref{eq-star0103a} of Proposition \ref{prop-local}, for given $\epsilon >0$,
$$ |\max_{i=1}^{K^2} (X_{z_i}^f+X_{z_i}^c)- \eta^*_{N,K}|\leq \epsilon$$
with  probability $\rightarrow 1$, as first $N\to\infty$ and then $K\to\infty$.
On the other hand, for $z_i'$ as chosen above,
 $$|\max_{i=1}^{K^2} (X_{z_i}^f+X_{z_i}^c)- \max_{i=1}^{K^2}
 (X_{z_i}^f+X_{z_i'}^c)|\leq \zeta^*_{N,K}-\eta_{N,K}^*\,.$$
 It therefore follows from the definition of the L\'{e}vy distance that
  \begin{equation}\label{eq-levy-metric}
 \limsup_{K\to \infty} \limsup_{N\to \infty} d(\max_{i=1}^{K^2} (X_{z_i}^f+X_{z_i}^c), \max_{i=1}^{K^2}
   (X_{z_i}^f+X_{z_i'}^c)) \lesssim \E(\zeta^*_{N, K} - \eta^*_{N})\,.
  \end{equation}
By arguments that are essentially identical to those in the proof of Lemma~\ref{lem-step4ofwriteup} (where we used Lemma~\ref{lem-ferniquecriterion} and 
the upper bound in Lemma~\ref{lem-coarsecov}),
$\E \zeta^*_{N, K} \leq \E (\max_{u\in V_N} \eta_{u, N} + \tilde \epsilon_K \phi_{u, N})$, where $\tilde \epsilon_{K} \to_{K\to \infty} 0$ and $\{\phi_{u, N}\}$ are independent variables satisfying
$$\P(\phi_{u, N} \geq 1 + \lambda) \leq \mathrm{e}^{-\lambda^2} \mbox{ for all }  u\in V_N\,.$$
Application of Lemma~\ref{lem-gff-perturb} then implies $\E \zeta^*_{N, K} \leq \E \eta^*_N + C \sqrt{ \tilde \epsilon_K}$. Together with \eqref{eq-levy-metric}, this implies (\ref{eq-contlem}).
    \end{proof}
\subsection{Proofs of Theorems \ref{limit-law} and \ref{explicit-limit-law}}
We first prove Theorem \ref{limit-law}.
\proof[Proof of Theorem \ref{limit-law}]
Fix $\epsilon>0$.
Let $\alpha_{1,K,N}$ be as in Proposition \ref{prop-coupling} and
recall that $|\alpha_{1,K,N}-g(K)|\leq  \epsilon_K$.
Set
$$\bar \eta_{N}^*=\max_{\{i: X_{z_i}^f> m_{N/K}+g(K)\}}
(X_{z_i}^f+X_{z_i}^c)\,.$$
By applying Proposition \ref{prop-Delta} together with
\eqref{eq-star0103a} and \eqref{eq-star0103b} of
Proposition \ref{prop-local}, it follows, for small enough $\delta =
\delta(\epsilon)>0$
and large enough $K_0$, that,  for each $K\ge K_0$ and large enough $N$,
%$\P(max_{v\not \in \Delta_N} \eta_{v,N}\neq \eta_N^*)<\epsilon$
%and
$$\P(\eta_N^*>\bar \eta^*_{N}+\epsilon)
<\epsilon \,.$$
  Let
$\nu_{N}^{K,\delta}$ denote the law of
$\bar\eta_{N}^*-m_N$.
%For probability measures $\nu_1,\nu_2$ on $\R$,
%let $d(\nu_1,\nu_2)$ denote the L\'{e}vy distance between $\nu_1,\nu_2$,
%i.e.
%$$ d(\nu_1,\nu_2)=\max \{\delta>0: \nu_1(B)\leq \nu_2(B^\delta)+\delta\quad
%\mbox{\rm for all open sets $B$}\},$$
%where $B^\delta=\{y: |x-y|<\delta \mbox{\rm for some $x\in B$}\}$.
Since $\eta_N^*\geq \bar\eta^*_{N}$, it follows from the
% Maury - I corrected at typo in the equation in the line above.
definition of $\mu_N$ that
$d(\mu_N,\nu_N^{K,\delta})\leq \epsilon$.

%Note that $\nu_N^{K,\delta}$ is the law of
%    $\max_{i=1}^{K^2} (X_{z_i}^f+X_{z_i}^c)$.
    Set $X_i^{f,*}=\max_{v\in V_N^{K,\delta,i}} X_v^f$, for $i=1,\ldots,K^2$,
    and recall that $X_i^{f,*}=X_{z_i}^f$.
    Set $\bar \B_i=1_{\{X_i^{f,*}
    -m_{N/K}
    \geq \alpha_{1,K,N}\}}$, and
    let $\{\B_i^{K,\delta},Y_i^K,z_i^{K,\delta}\}_{i=1}^{K^2}$ be an i.i.d. sequence
    of random vectors, with
    $(\B_i^{K,\delta},Y_i^K,z_i^{K,\delta})$ coupled to $(\bar \B_i,X_i^{f,*}-m_{N/K},z_i)$
    as in
    Proposition
    \ref{prop-coupling}.
    (Note that, for each $N$, the variables
    $(\B_i^{K,\delta},Y_i^K,z_i^{K,\delta})$
    require a different coupling; since their law does not depend $N$,
    we omit $N$ from the notation.)
% Maury - I reworded the previous two paragraphs.

By Proposition
    \ref{prop-coupling},
    $K|z_i^{K,\delta}-z_i/N|\leq \bar \epsilon_K$.
    Let $\bar \nu_N^{K,\delta}$ denote the law of
    $\max_{\{i: \B_i^K=1\}}(g(K)+Y_i^K+X_{z_i^K}^c-(m_N-m_{N/K}))$.
    It follows from Proposition \ref{prop-coupling} and
    Lemma \ref{lem-cont} that, for some $K_1\geq K_0$,
    $d(\bar \nu_N^{K,\delta},\nu_N^{K,\delta})<\epsilon$
    for each $K>K_1$ and large enough $N$.

    Finally, by the convergence of $X^c_N$ to $Z^c_{K,\delta}$, as $N\to\infty$,
    it follows that
    $d(\bar \nu_N^{K,\delta},\mu_{K,\delta})\to_{N\to\infty} 0$.
    We conclude that
    \begin{equation}
            \label{eq-Cauchy}
            \limsup_{K\to\infty} \limsup_{N\to\infty} 
    d(\mu_N,\mu_{K,\delta})<2\epsilon\,.
    \end{equation}
    Letting $\delta,\epsilon\to 0$ appropriately, demonstrates
    \eqref{eq-limiteq}. Furthermore, \eqref{eq-Cauchy} implies that,
    for given $\epsilon>0$, there exists $K(\epsilon)$ so that 
    $$\limsup_{N\to\infty} d(\mu_N,\mu_{K(\epsilon),\delta(\epsilon)})<
    3\epsilon\,.$$
    In particular,
$\mu_N$ is a Cauchy sequence, which implies the existence
of a limiting measure $\mu_{\infty}$ and
completes the proof of Theorem \ref{limit-law}.
\qed

\medskip

Before proving Theorem \ref{explicit-limit-law}, we need a preliminary
estimate on $Z^{c,*}_{K,\delta}:=\max_i Z_{K,\delta}^c(z_i^{K,\delta})$.

\begin{lemma}
        \label{lem-coarsecont}
        With notation as above, there exists $\gamma>0$ so that
        $$\lim_{K\to\infty}
        \P(Z^{c,*}_{K,\delta}\geq 2\sqrt{2/\pi}\log K-\gamma \log\log K)=0.$$
\end{lemma}
\noindent
\proof
%\textbf{Proof of Lemma \ref{lem-coarsecont}}
Recall from \eqref{eq-page4} that, conditionally on
the collection  $\{z_i^{K,\delta}\}$,
the Gaussian random variables
$Z_{K,\delta}^c(z_i^{K,\delta})$ have mean zero and variance bounded
above by $\sigma_{K,\delta}^2:=(2/\pi)\log K+c_\delta$. Therefore,
by the obvious union bound,
\begin{eqnarray*}
        &\P(Z^{c,*}_{K,\delta}\geq 2\sqrt{2/\pi}\log K-\gamma\log\log K)
        \leq \,\, K^2 
        \max_i\P(Z^c_{K,\delta}(z_i^{K,\delta})\ge 2\sqrt{2/\pi}\log K-\gamma 
        \log\log K)\\
        &\leq \frac{C K^2}{\sigma_{K,\delta}} \exp(-(4/\pi)(\log K)^2/\sigma_{K,\delta}^2)
        \exp (\gamma \sqrt{2\pi}\log\log K)
        \le C(\delta)/(\log K)^{1/2-\gamma \sqrt{2\pi}}\,,
\end{eqnarray*}
for appropriate $C(\delta)$.
The conclusion follows for $\gamma<1/(2\sqrt{2\pi})$. \qed

\medskip

The proof of Theorem \ref{explicit-limit-law} follows from estimates on the
right tails of the random variables underlying the distributions $\mu_{K,\delta}$,
which are employed along with Theorem \ref{limit-law}.

\proof[Proof of Theorem \ref{explicit-limit-law}]
We set $g(K)=\gamma' \log\log K$, with $\gamma' \in (0,\gamma)$, and,
as in Subsection \ref{subsec-limit},  denote by $G^*_{K,\delta}$ a random
variable with law $\mu_{K,\delta}$. 
We will construct
random variables $Z_{K,\delta}$ so that
\begin{equation}
        \label{eq-gumbel1}
\lim_{\delta\searrow 0} \lim_{K\to\infty} 
\frac{\mu_{K,\delta}( (-\infty,x] )}{
        \E(\text{e}^{-\alpha^*Z_{K,\delta} \text{e}^{-\sqrt{2\pi}x}})} =1 
\end{equation}
for all $x$.
Theorem \ref{explicit-limit-law} then follows quickly from this and 
Theorem  \ref{limit-law}.  (Note that, after a change in parameters, the denominator
 is a Laplace transform.)

To demonstrate \eqref{eq-gumbel1}, let ${\cal F}^c$ denote the sigma-algebra generated
by the random variables $\{Z_{K,\delta}^c(z_i^{K,\delta})_ {i=1,\ldots, K^2}\}$. Then,
for any real $x$,
\begin{equation}
        \label{eq-gumbel2}
        \P(G^*_{K,\delta}\leq x)= 
        \E\left(\prod_{i=1}^{K^2}\left(1-
        \P(\B_i^{K,\delta}(Y_i^K+g(K))
        > x- \bar Z^c_{K,\delta}(i)\,|\,{\cal F}^c)
        \right)\right)\,,
\end{equation}
        where $\bar Z^c_{K,\delta}(i):=Z^c_{K,\delta}(z_i^{K,\delta})-
        2\sqrt{2/\pi}\log K$. For $H>0$, introduce the events
        ${\cal D}_i(x,H,K,\delta):=
        \{\bar Z^c_{K,\delta}(i)< -x-H\}$
        and ${\cal D}(x,H,K,\delta)=\cap_{i=1}^{K^2} {\cal D}_i(x,H,K,\delta)$.
        By Lemma \ref{lem-coarsecont} there exists a sequence $H=H(K)\to\infty$
        so that $\P({\cal D}(x,H,K,\delta))\to_{K\to\infty} 1$. 
%We fix this sequence $L=L(K)$ in what follow and write ${\cal A}(x,K):={\cal A}(x,L,K)$.
        On the event ${\cal D}(x,H,K,\delta)$,
                $$\P(\B_i^{K,\delta}(Y_i^K+g(K))
        > x- \bar Z^c_{K,\delta}(i)\,|\,{\cal F}^c)=
        \alpha^* m_\delta 
        (x-\bar Z^c_{K,\delta}(i))
        e^{-\sqrt{2\pi}(x-\bar Z^c_{K,\delta}(i))}\to_{K\to\infty} 0\,.  $$
        Therefore, on ${\cal D}(x,H,K,\delta)$,
        \begin{eqnarray}        
                \label{eq-gumbel3}
                &\exp(-(1+\epsilon_{K,\delta})
        \alpha^* 
        (-\bar Z^c_{K,\delta}(i))
        \text{e}^{-\sqrt{2\pi}(x-\bar Z^c_{K,\delta}(i))})\,\leq \,
        \P(\B_i^{K,\delta}(Y_i^K+g(K))
        \leq x- \bar Z^c_{K,\delta}(i)\,|\,{\cal F}^c)\nonumber \\
        &\leq 
        \exp(-(1-\epsilon_{K,\delta})
        \alpha^* 
        (-\bar Z^c_{K,\delta}(i))
        \text{e}^{-\sqrt{2\pi}(x-\bar Z^c_{K,\delta}(i))})\,,
\end{eqnarray}
for $\epsilon_{K,\delta} > 0$ with 
$$\limsup_{\delta\searrow 0}\limsup_{K\to\infty}
\epsilon_{K,\delta}= 0.$$
(In the last display, $m_\delta\to 1$ as $\delta\searrow 0$ was used.)
        Define $Z_{K,\delta}=\sum_{i=1}^{K^2} (-\bar Z^c_{K,\delta}(i))e^{
                \sqrt{2\pi}\bar Z^c_{K,\delta}(i)}$. 
                Substituting  \eqref{eq-gumbel3} into \eqref{eq-gumbel2}
                and using that 
                $\P({\cal D}(x,H,K,\delta))
                \to_{K\to\infty} 1$ completes the proof of \eqref{eq-gumbel1} and hence of the theorem. \qed

\noindent{\bf Acknowledgment} We thank Pascal Maillard for the reference to \cite{BJR}, and 
Oren Louidor, 
Marek Biskup and Javier Acosta for helpful comments on an earlier 
version of the manuscript. We thank 
Russ Lyons and Yuval Peres for a suggestion 
that led to the proof of Lemma \ref{lem-coarsecov} presented here. 
We also thank the referee for a very careful reading of the paper 
and for his/her various constructive comments.

\small

\end{document}